\newtheorem{theorem}{Theorem}
\newtheorem{corollary}{Corollary}
\newtheorem{lemma}{Lemma}
\newtheorem{proposition}{Proposition}
\newtheorem{definition}{Definition}
\newtheorem{remark}{Remark}
\newtheorem{example}{Example}
\newtheorem{notation}{Notation}
\newcommand{\CC}{\mathbb{C}}
\newcommand{\FF}{\mathbb{F}}
\newcommand{\KK}{\mathbb{K}}
\newcommand{\NN}{\mathbb{N}}
\newcommand{\RR}{\mathbb{R}}
\newcommand{\ZZ}{\mathbb{Z}}
\def\a{\mathsf{a}\,}
\def\b{\mathsf{b}\,}
\def\r{\mathsf{r}\,}
\def\s{\mathsf{s}\,}
\def\t{\mathsf{t}\,}
\def\u{\mathsf{u}\,}
\def\v{\mathsf{v}\,}
\def\w{\mathsf{w}\,}
\def\x{\mathsf{x}\,}
\def\z{\mathsf{z}\,}
\def\Pn{\mathdf{P}_n}
\def\I{\mathrm{I}}
\def\P{\mathrm{P}}
\def\X{\mathrm{X}}
\def\tr{\mathrm{tr}}
\def\Pn{\mathsf{P}_n}
\newcommand{\Hn}{\mathrm{H}_n}
\newcommand{\Hnn}{\mathrm{H}_{n+1}}
\newcommand{\Yn}{{\rm Y}_{d,n}}
\newcommand{\En}{\mathcal{E}_n}
\begin{document}

\title{Polynomial  invariants for links and tied links}

 \author{Jes\'us Juyumaya}
 \address{IMUV, Universidad de Valpara\'{\i}so,
 Gran Breta\~{n}a 1111, Valpara\'{i}so 2340000, Chile.}
 \email{juyumaya@gmail.com}

\keywords{ Links, diagrams of links, braid group, Hecke algebras}
\thanks{
}

\subjclass{57M25, 20C08, 20F36}

\date{}

\maketitle

Notes  based on lessons  given   at  {\sc Escuela \lq Fico González Acuña\rq\ de Nudos y 3-variedades},
Mérida Yucatán, México,   7--10 (2015) and  {\sc Encuentro de nudos, trenzas y \'algebras}, Oaxaca--M\'exico, 3--10 October (2018).

I would like to thank for the invitation and hospitality received from Mario  Eudave
at Merida and  Bruno Cisneros at Oaxaca. Also, I would like to thank
 Francesca Aicardi  and Nicoletta Zar for their contribution in the writing of these notes.

\begin{center}
\maketitle
\begin{minipage}{10cm}
\tableofcontents
\end{minipage}
\end{center}

\section{Background}

\begin{definition}
A  knot is a subset of $\RR^3$
 homeomorphic to $S^1$. A link is a disjoint union of $n$ knots, $n$ is called the number of  components of the link.
\end{definition}
\begin{center}
\begin{figure}[H]
\includegraphics[scale=.5]{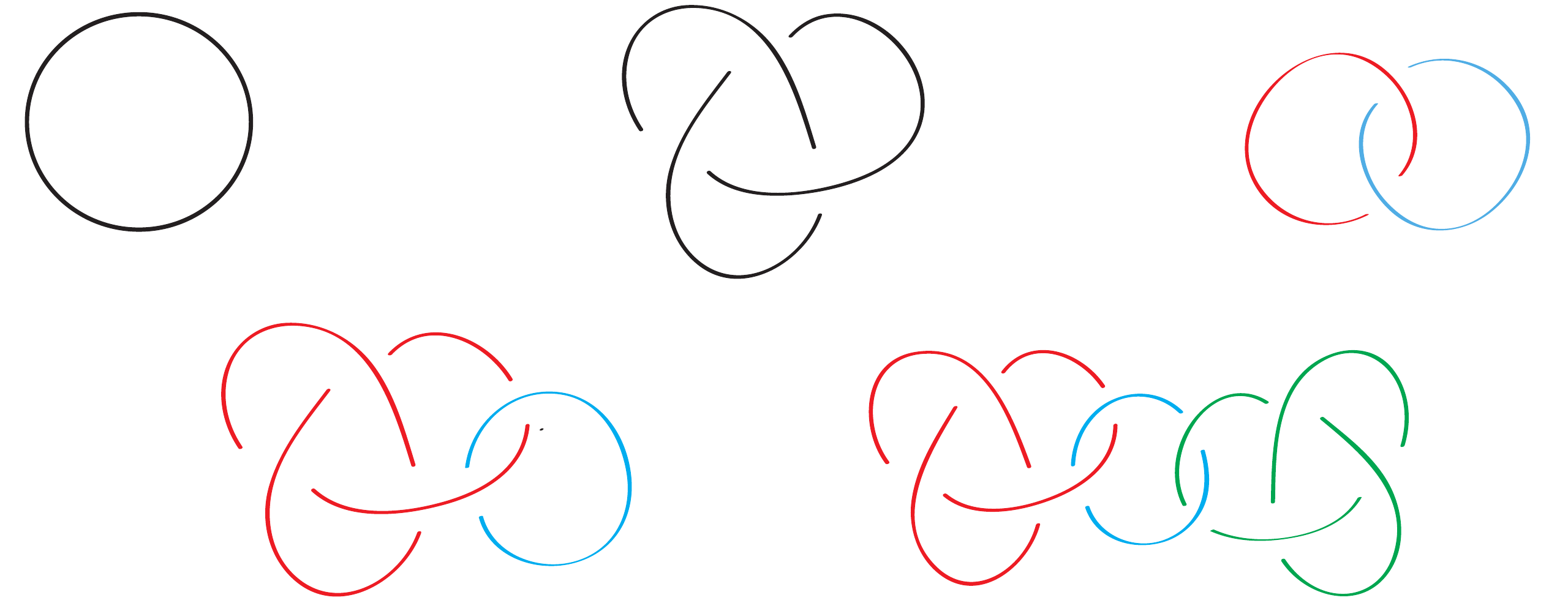}
\caption{}\label{KnotsLinks}
\end{figure}
\end{center}
In Fig. \ref{KnotsLinks}:	 the first knot is called the unknot, the second  is the left trefoil knot, the third is the  Hopf link, the fourth is a link with two components and the fifth is a link with three components.

\smallbreak
Since a knot is a simple closed curve we can provide it with an orientation, in such case we say that the {\it knot is oriented}. If each component of a  link is oriented, we say that  {\it the link is oriented}.
\begin{center}
\begin{figure}[H]
\includegraphics
[scale=.5]{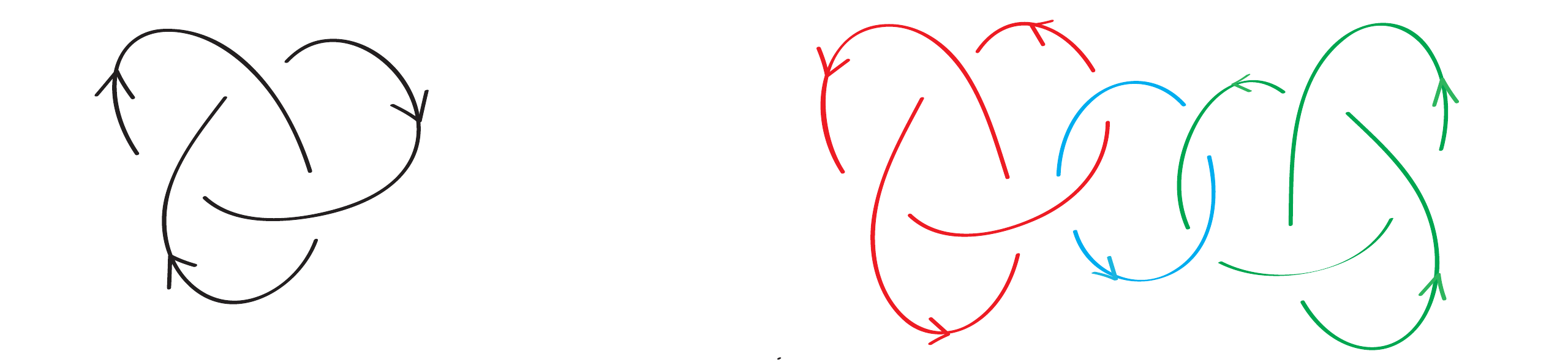}
\caption{}\label{KnotsLinksOriented}
\end{figure}
\end{center}
\begin{definition}
Two links $L_1$ and $L_2$ (both oriented or not) are called ambient isotopic (or equivalent)  if there exists an ambient isotopy  of $\mathbb{R}^3$ that carries $L_1$ in $L_2$, that is, there is  a continue map $\phi: \mathbb{R}^3\times\left[0,1\right]\longrightarrow\mathbb{R}^3$ such that:
\begin{enumerate}
\item $\phi (L_1,0)=L_1$ and $\phi (L_1,1)=L_2$,
\item For every $t\in[0,1]$, the maps $\phi_t$'s are continues and injective, where $\phi_t$ denotes to $\phi$ restricted to $\mathbb{R}^3\times\{t\}$.
\end{enumerate}
The fact that $L_1$ and $L_2$ are ambient isotopic is denoted by $L_1\sim L_2$.
\end{definition}

From now on we denote, respectively,  by $\mathfrak{L}$  and $\vert\mathfrak{L}\vert$ the set of all oriented links  and unoriented links in $\mathbb{R}^3$.
\begin{proposition}
The relation $\sim$ is an equivalence relation on $\mathfrak{L}$.
\end{proposition}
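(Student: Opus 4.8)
The plan is to verify directly the three defining properties of an equivalence relation—reflexivity, symmetry, and transitivity—by exhibiting an explicit ambient isotopy in each case. Throughout I would adopt the standard convention that each slice $\phi_t$ is a homeomorphism of $\mathbb{R}^3$ with $\phi_0 = \mathrm{id}$, which is the intended reading of condition (2); the three constructions are then just the constant isotopy, the time-reversed isotopy, and the concatenation of two isotopies.

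For reflexivity, given $L \in \mathfrak{L}$ I would take the constant isotopy $\phi(x,t) = x$. Then $\phi(L,0) = \phi(L,1) = L$ and each $\phi_t = \mathrm{id}$ is trivially continuous and injective, so $L \sim L$. For symmetry, suppose $L_1 \sim L_2$ via $\phi$, and define the reversed isotopy $\psi(x,t) = \phi_{1-t}\bigl(\phi_1^{-1}(x)\bigr)$. A short computation gives $\psi_0 = \mathrm{id}$, hence $\psi(L_2,0) = L_2$, and $\psi_1 = \phi_1^{-1}$, hence $\psi(L_2,1) = \phi_1^{-1}(L_2) = \phi_1^{-1}(\phi_1(L_1)) = L_1$; since each $\psi_t$ is a composition of homeomorphisms and $\psi$ is jointly continuous, we conclude $L_2 \sim L_1$.

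For transitivity, suppose $L_1 \sim L_2$ via $\phi$ and $L_2 \sim L_3$ via $\eta$. I would concatenate the two isotopies at double speed, setting $\theta_t = \phi_{2t}$ for $t \in [0,1/2]$ and $\theta_t = \eta_{2t-1} \circ \phi_1$ for $t \in [1/2,1]$. The two formulas agree at $t = 1/2$ (both equal $\phi_1$), so the pasting lemma yields joint continuity of $\theta$; moreover $\theta_0 = \mathrm{id}$, each $\theta_t$ is a homeomorphism, and $\theta(L_1,1) = \eta_1(\phi_1(L_1)) = \eta_1(L_2) = L_3$, which gives $L_1 \sim L_3$.

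The routine points—the joint continuity of the reparametrized and composed maps, and the injectivity of each slice—are handled by the pasting lemma and by the fact that compositions of homeomorphisms are homeomorphisms. The one genuine obstacle is symmetry: inverting $\phi$ requires each $\phi_t$ to be \emph{surjective}, i.e. a homeomorphism, which is strictly more than the bare injectivity recorded in condition (2). I would therefore make explicit at the outset that an ambient isotopy consists of homeomorphisms with $\phi_0 = \mathrm{id}$, so that $\phi_1^{-1}$ is defined on all of $\mathbb{R}^3$ and the reversed isotopy makes sense.
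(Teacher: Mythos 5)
Your proof is correct, and it is worth noting that the paper itself states this proposition without any proof at all, so there is nothing to compare against: your argument (constant isotopy for reflexivity, time-reversal composed with $\phi_1^{-1}$ for symmetry, double-speed concatenation glued by the pasting lemma for transitivity) is the standard one and all three constructions check out. Your closing observation is the genuinely valuable part. As literally written, the paper's Definition requires only that each slice $\phi_t$ be continuous and injective, and an injective continuous self-map of $\mathbb{R}^3$ need not be surjective (by invariance of domain it is an open embedding, but its image can be a proper open subset), so $\phi_1^{-1}$ need not be defined on all of $\mathbb{R}^3$ and symmetry would not follow from the definition as stated. Transitivity as you set it up also quietly uses $\eta_0 = \mathrm{id}$ to make the two pieces agree at $t = 1/2$, which is another place where the paper's weaker condition ($\eta_0$ merely fixes $L_2$ setwise) would be insufficient without first normalizing; your upfront convention that each $\phi_t$ is a homeomorphism with $\phi_0 = \mathrm{id}$ — the standard definition of ambient isotopy — repairs both points at once. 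So your proof is not only correct but also identifies and fixes a real imprecision in the paper's definition.
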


The main problem in knot theory is to find a subset of representatives of equivalence class of $\mathfrak{L}$  under the relations $\sim$, or equivalently, given two links to decide if they are ambient isotopic  or not. This problem is far from solution; however, we have a tool that helps to decide when  two links are not ambient isotopic: the invariant of links.
\begin{definition}
Let $\mathrm{Set}$  be a  set, an invariant of link is a function $\I: \mathfrak{L}\longrightarrow \mathrm{Set}$ such that:
$$
\text{If} \quad L_1 \sim L_2,\quad \text{then}\quad \I  (L_1) = \I (L_2).
$$
In the case that $\mathrm{Set}$ is  a polynomial ring or a field of rational functions, we say that $\I$ is a polynomial invariant of links.
\end{definition}
 To have an invariant when  $\mathrm{Set}$ is well understood allows  to compare easily  the images of the links by $\I$; therefore,  $\I$ is useful in the sense  that if the images of the links
 are different then the  links are not equivalent.  Some famous classical invariants are:
the linking number, the 3--coloration, the fundamental group of a link and the following polynomial invariants:
\begin{enumerate}
\item The Alexander polynomial,
\item The Jones polynomial,
\item The Homflypt polynomial,
\item The Kauffman polynomial.
\end{enumerate}

The Alexander and Jones invariants are polynomials in one variable. The Homflypt and Kauffman polynomial are in two variables. The Jones polynomial can be obtained as a specialization both of the Kauffman and Homflypt polynomials; the Alexander polynomial can be obtained  also as a specialization  of the Homflypt polynomial, but not as a specialization of the normalized Kauffman polynomial . We note that all these polynomial  invariants are defined for oriented links.

\section{Planar diagrams of links }
The links can be studied through diagrams in $\RR^2$: we associate to each link a {\it generic projection} on
 a plane, this projection is  provided with codes to indicate, at every double point, what  portion of  the  projected curve  (shadows) is coming from an over or under crossing. Thanks to a theorem of  Kurt  Reidemeister, the study of links can be translated to the study of  diagrams of links, allowing thus a combinatoric treating   of  the study of links.
\begin{definition}
A  generic   projection  of a curve of $\RR^3$ on a plane is one that only admits simple  crossings.
\end{definition}
\begin{center}
\begin{figure}[H]
\includegraphics
[scale=.5]
{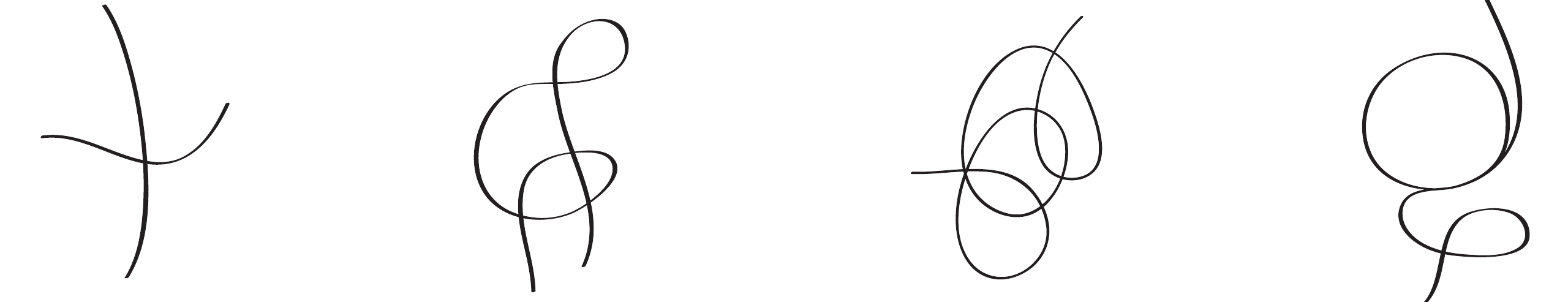}
\caption{}
\label{ProjectionMT}
\end{figure}
\end{center}
In Fig. \ref{ProjectionMT}, the first figure is a simple crossing, the second figure  is a generic projection of a curve and the third and fourth figures are not generic projections of a curve.
\begin{definition}
A diagram of a link is a generic projection (or shadow)   of it on a plane, where each further simple crossing  is codified, depending  if it is originated from an over or under  crossing in the link, by one of the two {\it crossing codifications}  below.
\begin{center}
\begin{figure}[H]
\includegraphics
[scale=.4]
{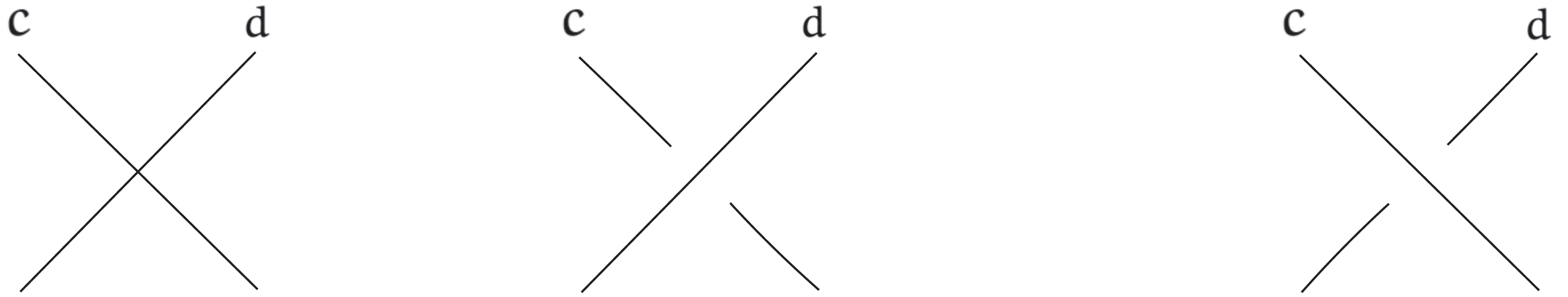}
\caption{}
\label{CodificationCrossing}
\end{figure}
\end{center}
More precisely, in  Fig. \ref{CodificationCrossing},
the first picture  is a simple crossing in the diagram  produced by the projection  of  the curves $c$ and $d$  of the link, the second   picture shows the code used  to indicate that the curve $c$ is under the curve $d$ in the link and the last  picture is to indicate that the curve $c$ is over  the curve   $d$ in the link.
\end{definition}
\begin{example}
Fig.  \ref{ProyectionTrefoil} shows the procedure to obtain a projection of a left trefoil knot.
\begin{center}
\begin{figure}[H]
\includegraphics
[scale=.35]{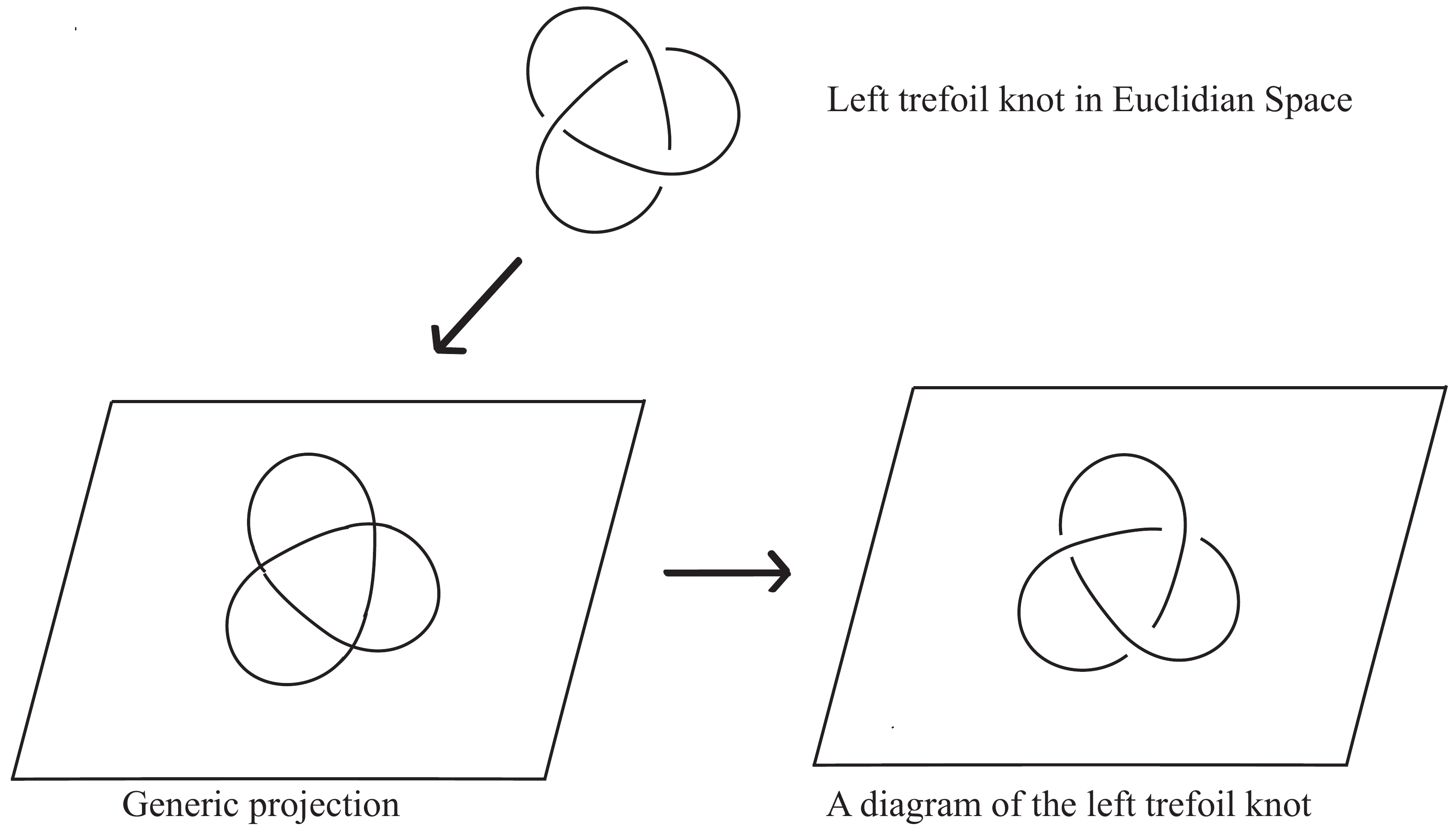}
\caption{}
\label{ProyectionTrefoil}
\end{figure}
\end{center}
\end{example}
Oriented  links yields  oriented diagrams; thus a diagram is oriented if every crossing codification  is in one of the following situations:
\begin{center}
\begin{figure}[H]
\includegraphics
[scale=.4]
{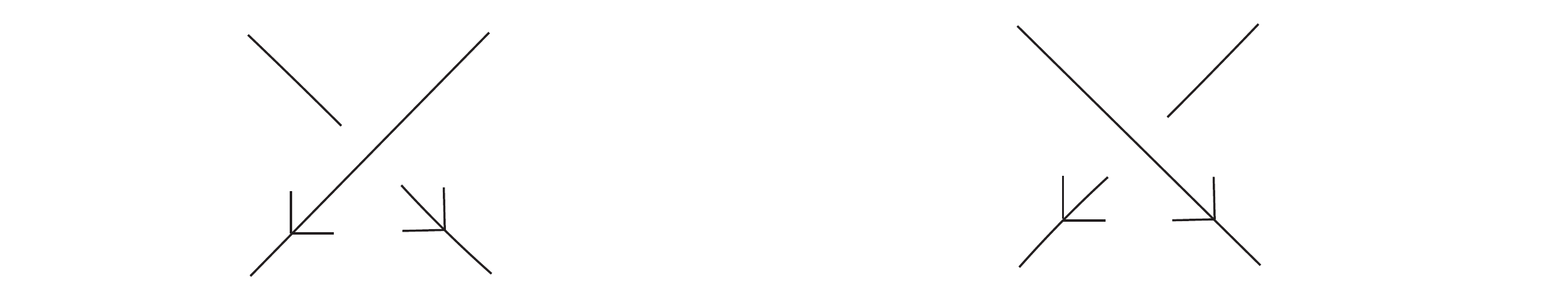}
\caption{}
\label{Fig5}
\end{figure}
\end{center}
The first  codification is  called (by convention)  {\it positive crossing} and the other one  {\it negative crossing}.
The {\it sign of the crossing} is $1$ if it is a positive crossing, otherwise is $-1$.
\begin{definition}\label{writhe}
Let $D$ be an oriented diagram, the writhe of $D$, denoted by $w(D)$,  is the sum over the sign of all crossings of $D$:
$$
w(D) := \sum \mathrm{sig}(p),
$$
where $p$ running on the crossing of $D$ and $\mathrm{sig}(p)$ is the sign of the crossing $p$.
\end{definition}

\begin{definition}\label{ReidemeisterMov}
Two diagrams of unoriented links are R--isotopic if one of them can be transformed in the other, by  so--called Reidemeister moves \rm{R0, R1, R2 }and/or \rm{R3}, where in the case unoriented links are:
\begin{center}
\begin{figure}[H]
\includegraphics
[scale=.5]
{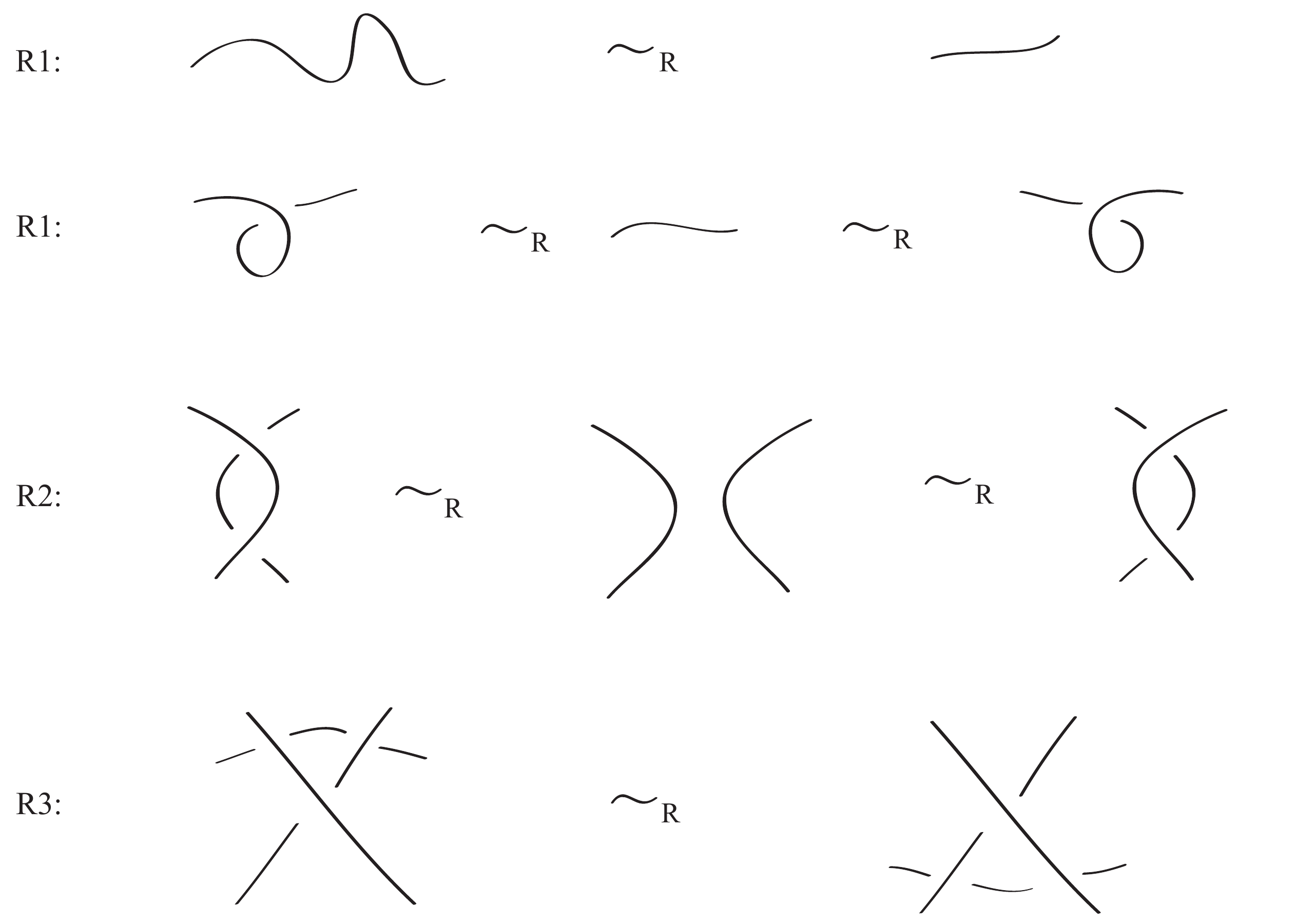}
\caption{}
\label{Reidemeister}
\end{figure}
\end{center}
We have the analogous definition for diagrams of oriented links but by  adding now all possible orientations
to  the moves R1--R3 above. For instance, some of these oriented Reidemeister are shown in Fig. \ref{OrientedReidemeister}.
\begin{center}
\begin{figure}[H]
\includegraphics
[scale=.5]
{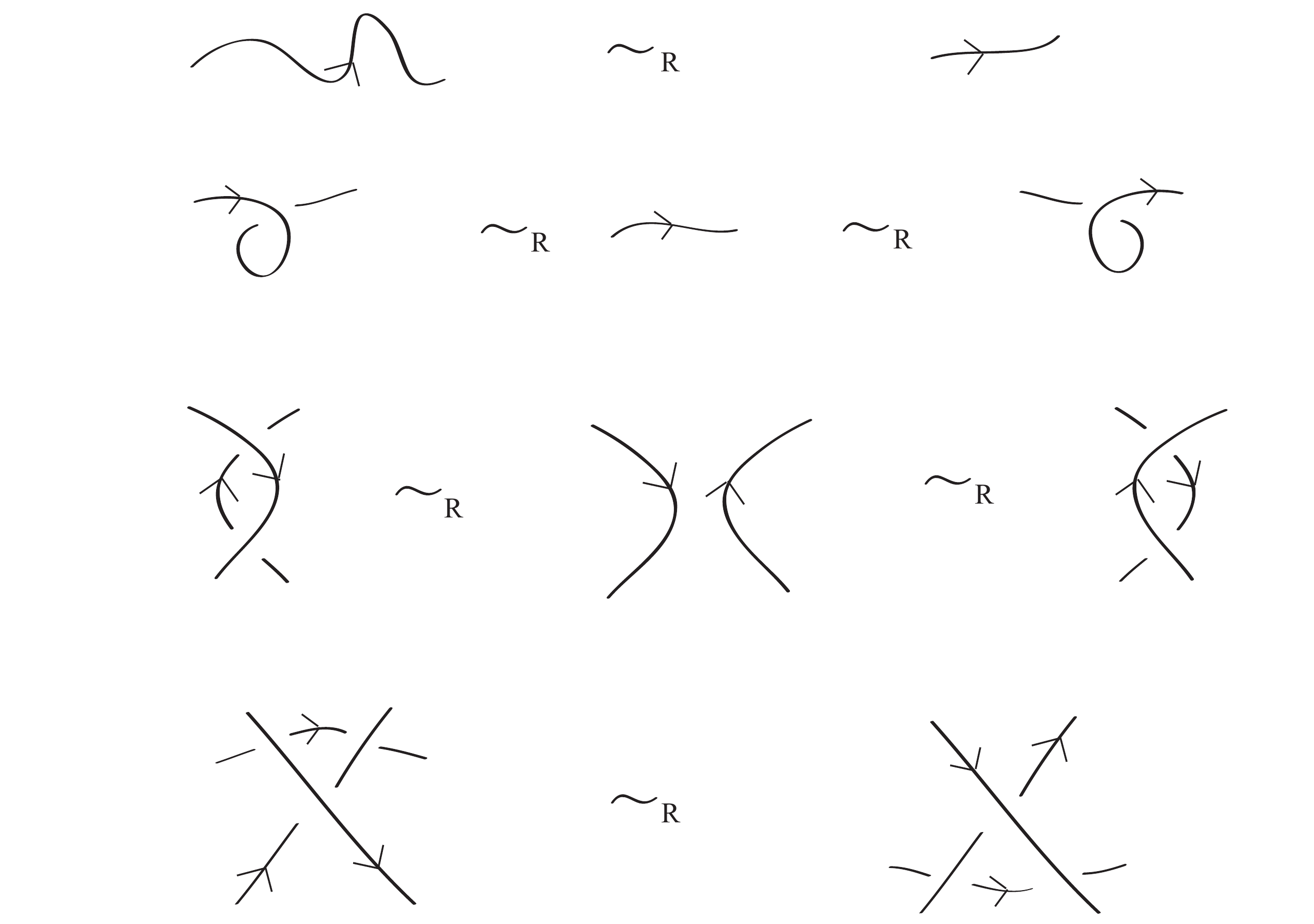}
\caption{}
\label{OrientedReidemeister}
\end{figure}
\end{center}
\end{definition}

\begin{proposition}
The relation of  R--isotopic,  denoted by $\sim_{\rm{R}}$, defines an equivalence relation on the set of planar diagrams.
\end{proposition}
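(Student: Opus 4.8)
The plan is to verify directly the three defining properties of an equivalence relation---reflexivity, symmetry, and transitivity---for the relation $\sim_{\rm R}$ on the set of planar diagrams. The conceptual point on which everything rests is that $\sim_{\rm R}$ is, by its very definition, the relation generated by applying \emph{finite sequences} of the local moves R0, R1, R2, R3 inside small disks of the diagram, leaving the rest of the diagram unchanged. Thus the proof amounts to unpacking this definition rather than to any geometric computation.

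First I would treat reflexivity. For any diagram $D$ we have $D \sim_{\rm R} D$ by the empty sequence of moves (equivalently, by the trivial application of R0, which is planar isotopy). Next I would establish symmetry, which is the one place where the structure of the moves is actually used: each Reidemeister move in Fig.~\ref{Reidemeister} is \emph{reversible}, in the sense that its two sides play symmetric roles, so that whenever a local picture $A$ may be replaced by a local picture $B$, the replacement of $B$ by $A$ is also an admissible move of the same type. Consequently, if a sequence of moves $m_1, m_2, \dots, m_k$ carries $D_1$ to $D_2$, then the reversed sequence $m_k^{-1}, \dots, m_2^{-1}, m_1^{-1}$ (each $m_i^{-1}$ being a move of the same type) carries $D_2$ back to $D_1$, giving $D_2 \sim_{\rm R} D_1$.

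Finally, transitivity follows by concatenation: if $D_1 \sim_{\rm R} D_2$ via a finite sequence of moves and $D_2 \sim_{\rm R} D_3$ via another finite sequence, then performing the first sequence followed by the second is again a finite sequence of Reidemeister moves taking $D_1$ to $D_3$, whence $D_1 \sim_{\rm R} D_3$. Since each move acts locally and the intermediate diagrams are genuine planar diagrams, no compatibility issues arise in the concatenation.

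The only genuinely delicate point---and the one I would flag as the main obstacle---is making the informal phrase ``one diagram can be transformed into the other'' precise enough that the three verifications become formal. Concretely, one must fix that the moves are applied in a disk meeting the diagram exactly in the depicted arcs, and that R0 (planar isotopy) is itself the ambient equivalence modulo which the combinatorial moves R1--R3 are read. Once this bookkeeping is in place the reversibility of each move is immediate from the figures, and the rest of the argument is the routine checking described above.
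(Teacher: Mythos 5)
Your proof is correct: reflexivity via the empty sequence (or trivial R0), symmetry via the reversibility of each move, and transitivity via concatenation of finite sequences is exactly the standard argument, and your remark that the real work lies in fixing the local-disk formulation of the moves is well placed. The paper states this proposition without proof, so there is nothing to diverge from; your write-up supplies precisely the routine verification the author leaves to the reader.
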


\begin{notation}
\begin{enumerate}
\item
If $D$ is an oriented diagram, we denote by $\vert  D\vert$  the unoriented diagram obtained by forgetting  the orientation in $D$.
\item
We denote, respectively, by $\mathfrak{D}$ and $\vert \mathfrak{D}\vert$ the set of diagrams of, respectively,  oriented and unoriented links in $\RR^3$.
\end{enumerate}
\end{notation}
\begin{theorem}[Reidemeister, 1932]\label{ReidemeisterTh}
Let $L$ and $L'$ be two links (oriented or not) and set $D$ and $D'$  diagrams, respectively,  of $L$ and  $L'$, we have:
$$
L \sim L' \quad \text{ if and only if} \quad D \sim_{\rm{R}} D'.
$$
So, $\mathfrak{L}/\sim$ is in  bijection with $\mathfrak{D}/\sim_{\rm{R}}$.
\end{theorem}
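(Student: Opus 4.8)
The plan is to establish the two implications separately and then package them into the asserted bijection. The backward direction, that $D \sim_{\rm{R}} D'$ implies $L \sim L'$, is the more tractable one. First I would observe that each Reidemeister move R0--R3 is a purely local modification of the diagram, confined to a small disk in which the rest of the diagram does not appear. I would then check, one move at a time, that the corresponding modification of the link is realized by an ambient isotopy of $\RR^3$ that is the identity outside a small ball lying over that disk: for R1 one pulls a small kink straight, for R2 one separates two strands that were not genuinely linked, and for R3 one slides a strand across a crossing. Composing these local isotopies along the finite sequence of moves witnessing $D \sim_{\rm{R}} D'$, and using that $\sim$ is an equivalence relation, yields $L \sim L'$.

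The forward direction, that $L \sim L'$ implies $D \sim_{\rm{R}} D'$, is where the real content lies, and I would approach it through the piecewise-linear model. The first step is to replace the given links and the ambient isotopy between them by polygonal ones, so that $L$ and $L'$ are unions of finitely many straight segments and the isotopy is PL. The second step is the standard decomposition of a PL ambient isotopy into a finite sequence of elementary \emph{triangle moves} (also called $\Delta$-moves): a triangle move replaces one edge $AB$ of the polygon by the two edges $AC$, $CB$ of a triangle $ABC$ whose interior meets the link only along $AB$, or performs the reverse substitution. It therefore suffices to show that a single triangle move, read through a generic projection, changes the diagram only by a finite sequence of Reidemeister moves.

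The core of the argument, and the step I expect to be the main obstacle, is the analysis of how one triangle move acts on the diagram. By subdividing the triangle $ABC$ into many smaller triangles I can assume it is small enough that its projection meets the rest of the diagram in a controlled way and that the projection stays generic except at finitely many transitional instants. I would then enumerate the possible local pictures according to how the projected triangle sits relative to the existing strands -- whether the projection of the interior is empty, contains a single strand passing over or under, or contains a single crossing -- and verify in each case that the passage from the old edge $AB$ to the new edges $AC \cup CB$ is accomplished by one of R0, R1, R2, R3. Keeping track of the over/under data forces the subdivision to separate the cases where a strand crosses above versus below the triangle; this bookkeeping is the delicate part, since one must ensure that every configuration that can arise is covered and that no move outside the Reidemeister list is ever required.

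Finally I would assemble these pieces into the bijection $\mathfrak{L}/\sim \; \to \; \mathfrak{D}/\sim_{\rm{R}}$. Surjectivity is immediate, since every diagram lifts to a link by pushing its over-strands slightly out of the plane. That the forward assignment $[L] \mapsto [D]$ is well defined, independently of the chosen generic projection, follows from the forward direction, because any two generic projections of the same link are diagrams of ambient isotopic links; and injectivity is exactly the backward direction. Combining well-definedness, injectivity, and surjectivity gives the stated bijection.
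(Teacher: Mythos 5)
The paper does not prove this theorem at all: it is quoted as classical background (Reidemeister, 1932) with no argument supplied, so there is no in-paper proof to compare against. Your outline is the standard classical proof and is correct as a proof sketch: the easy direction by realizing each of R0--R3 as a local ambient isotopy supported in a ball over the disk of the move, and the hard direction by passing to the PL category, decomposing a PL ambient isotopy into finitely many triangle ($\Delta$-)moves, and showing by subdivision and a general-position case analysis that a single small triangle move alters a generic projection by at most one Reidemeister move. Your assembly of the bijection is also sound: well-definedness of $[L]\mapsto[D]$ follows from the forward direction applied with $L'=L$ (so different generic projections of the same link give $\sim_{\rm R}$-equivalent diagrams, since a change of projection direction is itself an ambient isotopy), injectivity is the backward direction, and surjectivity is the lifting observation. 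Two caveats worth recording if you were to write this out in full. First, the paper's definition of a knot (any subset of $\RR^3$ homeomorphic to $S^1$) admits wild knots, for which your very first step---polygonal approximation of the link and of the isotopy---fails; the theorem, and your proof, implicitly assume tame (PL or smooth) links, which is the universal convention but should be said. Second, the case analysis for one triangle move is where all the labor hides: after subdivision one must enumerate configurations where the projected triangle contains at most one vertex of the diagram (a crossing) or at most two strand segments, check that over/under data is respected in each case, and verify exhaustiveness; this is tedious but standard, and your sketch correctly identifies it as the crux rather than eliding it.
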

\begin{example}
\begin{figure}
\begin{figure}[H]
\includegraphics
[scale=.45]
{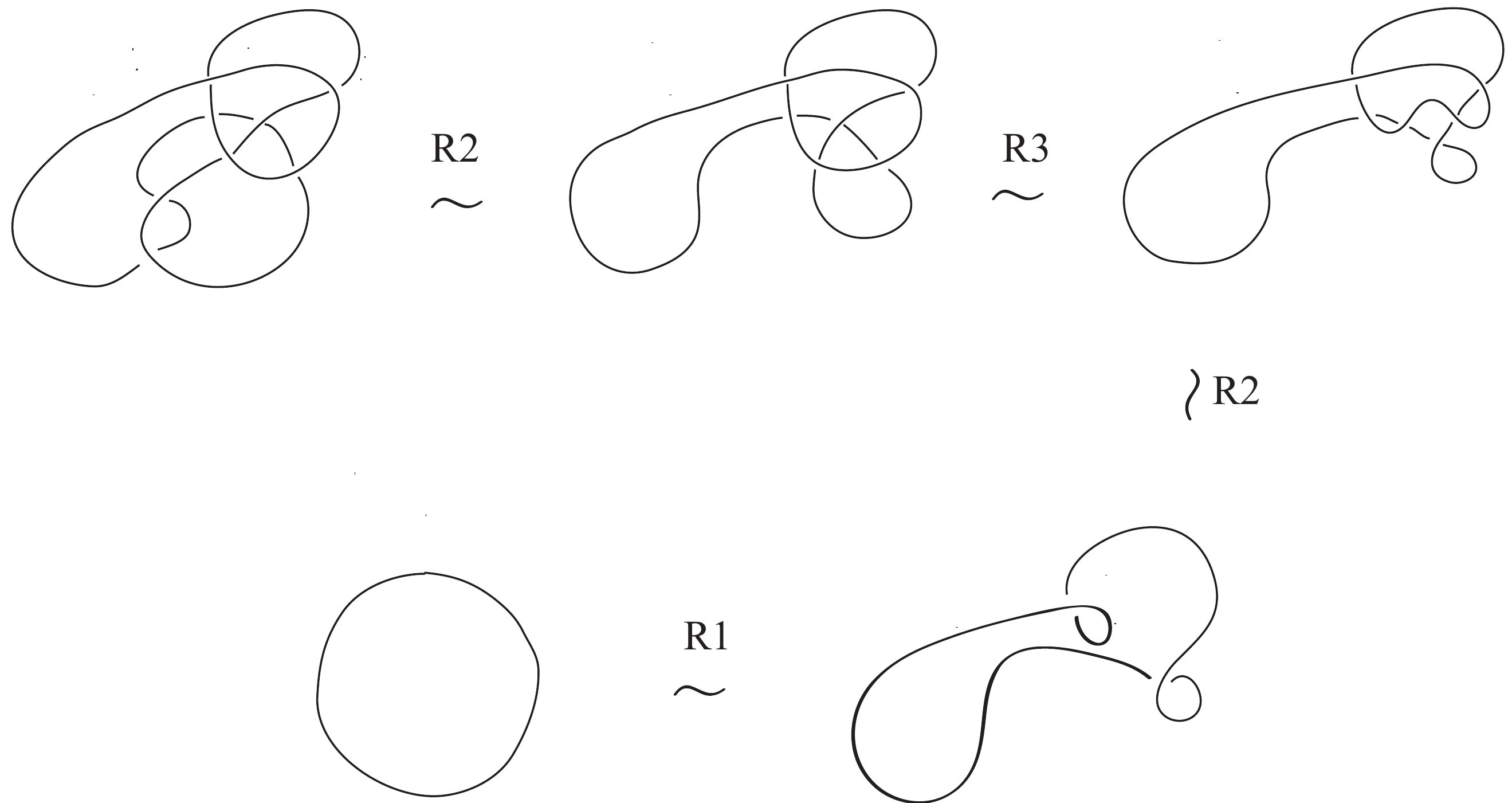}
\caption{}
\label{ExampleReidemeister}
\end{figure}
\end{figure}
\end{example}
The Reidemeister theorem says that  constructing  an invariant of links is equivalent to defining a function $\I:\mathfrak{D} \longrightarrow \mathrm{Set}$, such that it  takes the same values on diagrams that differ in R1, R2 and/or R3.
\begin{definition}
We say that  $\I$ is an invariant of regular isotopy  if the values of $\I$ does not change on unoriented links that are equal, up to the  moves   $R2$ and $R3$.
\end{definition}

\begin{proposition}\label{wregularisotopy}
$w$ is an invariant of regular isotopy.
\end{proposition}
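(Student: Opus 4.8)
The plan is to use the fact that, by Definition~\ref{writhe}, the writhe $w(D)$ depends only on the finite set of crossings of $D$ together with their signs; hence it suffices to track how this data changes under each allowed move. Since regular isotopy is generated by the oriented versions of $R2$ and $R3$ (planar isotopy $R0$ creates no crossings and is therefore harmless), I would treat the two moves separately and show that in each case the collection of crossing signs entering the sum $\sum \mathrm{sig}(p)$ is unchanged.

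For $R2$: this move deletes or creates exactly two crossings, formed by the same two oriented arcs $\alpha$ and $\beta$, whose orientations are untouched by the move. The claim to verify is that these two crossings always carry opposite signs, so that their joint contribution $\mathrm{sig}(p_1)+\mathrm{sig}(p_2)$ is $0$ while every other crossing is left alone. I would confirm this by a short case check. For a removable bigon the \emph{same} strand, say $\alpha$, lies over $\beta$ at both crossings; fixing the orientation of $\beta$ and following $\alpha$ as it crosses $\beta$ twice, the over-arc turns from incoming to outgoing across the bigon, so the two local tangent pictures are mirror images of one another. Thus the orientation of the pair $(\dot\alpha,\dot\beta)$ reverses between the two crossings while the over/under datum stays the same, and by the sign convention of Fig.~\ref{Fig5} a mirror reflection flips the sign. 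Running through the two relative orientations of $\alpha$ and $\beta$ exhausts all cases and always produces one $+1$ and one $-1$.

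For $R3$: here the number of crossings is preserved, there being three crossings before and three after, and the move neither reverses any strand nor alters which strand lies over which at any of the three pairwise crossings. I would therefore set up the evident bijection between the three crossings of the two diagrams, matching them by the unordered pair of strands involved, and observe that each matched pair carries the same orientations and the same over/under relation; hence the sign is preserved crossing-by-crossing, and summing gives equal writhes. Combining the two moves yields $w(D)=w(D')$ whenever $D$ and $D'$ differ by $R2$ and $R3$ moves, which is exactly the assertion that $w$ is an invariant of regular isotopy.

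The main obstacle is the $R2$ computation: one must check that the cancellation of the two signs is genuinely independent of the chosen orientations, since it is tempting to conflate ``same over/under, reversed local orientation'' with ``opposite over/under.'' Once the sign convention is pinned down precisely and all relative-orientation cases are listed, both moves reduce to routine bookkeeping; in particular $R3$ requires no sign arithmetic at all, only the observation that it induces a bijection of crossings preserving each crossing's defining data.
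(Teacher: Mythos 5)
Your proof is correct and follows essentially the same route as the paper: check that under any orientation the two crossings involved in an R2 move carry opposite signs and hence cancel, and that an R3 move preserves the signs of the three crossings, so the sum $\sum \mathrm{sig}(p)$ is unchanged. Your version merely spells out the case analysis for R2 and the crossing-by-crossing bijection for R3 that the paper's terse proof leaves implicit.
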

\begin{proof}
To check that  $w$   agrees with the oriented move R2, it is enough to observe that introducing any orientation  to  $\includegraphics[scale=0.15,trim=0 0.2cm 0 0]{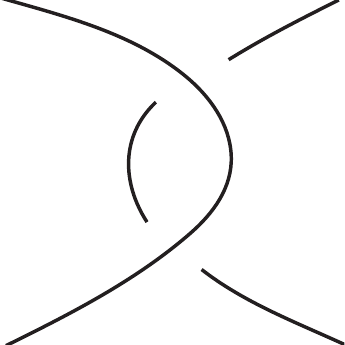}$, it turns out  that the sum of the   crossing signs is $0$; the same happens with  $\includegraphics[scale=0.15,trim=0 0.2cm 0 0]{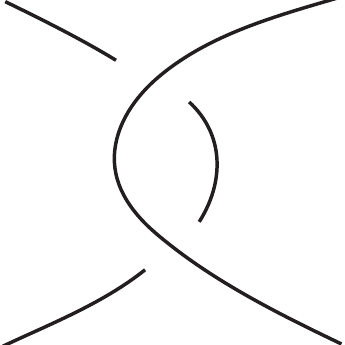}$.
Also, it is easy to see that by introducing any orientation to one of  the crossing configurations of R3, the sum of the signs of the three crossings in it  doesn't change after applying the move R3.
\end{proof}
 The following lemma gives a recipe, due to L. Kauffman,  to produce invariants of oriented links. This recipe was applied to define the Kauffman and  Dubrovnik polynomials.
\begin{lemma}[{\cite[Lemma 2.1]{kaTAMS}}] \label{KauffmanRecipe}
Let $\mathfrak{R}$ be a ring  and $a$ an invertible element of $\mathfrak{R}$.  Suppose that $f$ is a regular isotopy invariant of unoriented  diagram links  taking values in $\mathfrak{R}$, such that:
$$
f(\includegraphics[scale=0.2,trim=0 0.4cm 0 0]{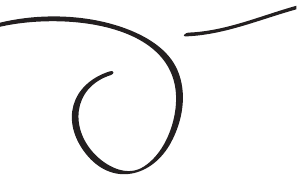}) = af(\includegraphics[scale=0.2,trim=0 0.4cm 0 0]{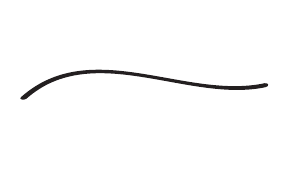})
\quad \text{and} \quad f(\includegraphics[scale=0.2,trim=0 0.4cm 0 0]{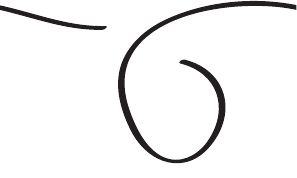})= a^{-1}f(\includegraphics[scale=0.2,trim=0 0.4cm 0 0]{Curler0.pdf}).
$$
Then,  the function $F$ defined as
$$
F (L) := a^{-w(D)} f (\vert D\vert )
$$
is an invariant for  oriented  links, where $D$ is a diagram for the oriented link $L$.
\end{lemma}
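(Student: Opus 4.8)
The plan is to invoke Reidemeister's theorem (Theorem~\ref{ReidemeisterTh}) in order to reduce the claim to a purely local check on diagrams. Since $F(L)$ is defined through a choice of diagram $D$ for $L$, proving that $F$ is a well-defined invariant of oriented links amounts to showing that $a^{-w(D)}f(\vert D\vert)$ is unchanged when $D$ is replaced by any diagram obtained from it by an oriented Reidemeister move R0, R1, R2 or R3; for then equivalent links, having diagrams related by such moves, must receive the same value, and independence of the chosen diagram and invariance become one and the same statement. I would treat the regular--isotopy moves and the R1 move separately, exploiting that the correcting factor $a^{-w(D)}$ is built precisely to absorb the failure of $f$ to be an R1--invariant.

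First I would dispose of R0, R2 and R3. Under these moves the writhe is unchanged by Proposition~\ref{wregularisotopy}, so the factor $a^{-w(D)}$ is untouched; and $f(\vert D\vert)$ is unchanged because $f$ is assumed to be a regular isotopy invariant of unoriented diagrams, while the unoriented diagrams underlying the two sides of an oriented R2 or R3 move differ exactly by an unoriented R2 or R3 move (R0 being a planar isotopy). Hence each of the two factors, and therefore their product $F$, is preserved.

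It remains to check the R1 move, which is where the two hypotheses on $f$ enter and where the cancellation happens. Inserting a curl of the first type replaces $\vert D\vert$ by a diagram on which $f$ is multiplied by $a$, while inserting a curl of the second type multiplies $f$ by $a^{-1}$. Correspondingly such a curl changes the writhe by $+1$ in the first case and by $-1$ in the second, so the factor $a^{-w(D)}$ is multiplied by $a^{-1}$ resp.\ by $a^{+1}$; the two effects cancel and $F$ is preserved. The one point requiring genuine care, and the main obstacle, is to confirm that the sign of the crossing created by a curl is correctly paired with the corresponding curl relation for \emph{every} orientation of the R1 move. This is exactly where the orientation--independence of a kink's sign is used: reversing the orientation of the strand reverses both arrows at the self--crossing and hence leaves its sign unchanged, so each of the two unoriented curl types carries a well--defined writhe contribution ($+1$ and $-1$, respectively) that does not depend on how the move is oriented. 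Matching these contributions against the factors $a$ and $a^{-1}$ supplied by the hypotheses closes the argument and shows that $F(L)$ is independent of $D$ and invariant under ambient isotopy.
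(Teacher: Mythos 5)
Your argument is correct, and it is essentially the standard one: the paper itself gives no proof of Lemma~\ref{KauffmanRecipe}, simply citing \cite[Lemma 2.1]{kaTAMS}, and your reduction via Theorem~\ref{ReidemeisterTh} --- R0, R2, R3 handled by Proposition~\ref{wregularisotopy} together with the regular--isotopy invariance of $f$, and R1 handled by the cancellation of the curl factor against the writhe correction --- is exactly Kauffman's reasoning. You correctly isolate the key point, namely that the sign of a kink is independent of the orientation of the strand, so each unoriented curl type contributes a well--defined $\pm 1$ to $w(D)$; the only detail left tacit is that an oriented R1 move can also place the loop on either side of the strand, whereas the hypotheses display one picture per sign, but this is harmless since two same--sign curls with loops on opposite sides are related by R2 and R3 moves, so the regular--isotopy invariance of $f$ already forces them to take the same value. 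With that observation your case analysis covers all oriented R1 variants and the proof is complete.
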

\section{Bracket polynomial}
Let $D$ be the diagram of a link. We will attach to $D$ a polynomial in the variables $A$, $B$ and  $z$  by using the following inductive process:
\begin{enumerate}
\item
We consider a crossing of $D$ to   assign the variables $A$ and $B$: 
\begin{center}
\begin{figure}
\includegraphics[scale=0.5]{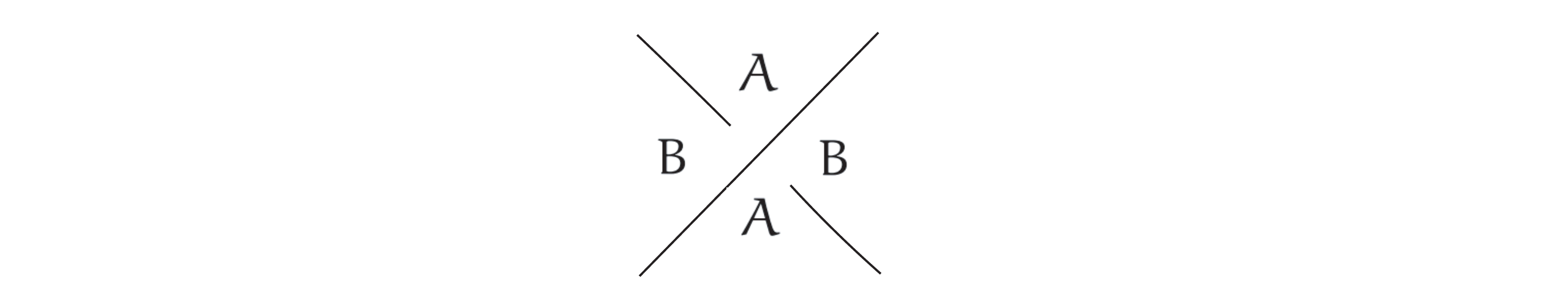}
\caption{}
\label{AandBcrossing}
\end{figure}
\end{center}
\noindent That is, we assign the variable $A$ to the region obtained by sweeping  the continuous arc  in counterclockwise and the variable $B$ to the remaining regions.
 \item
We smooth the crossing  (1) by replacing it with the following two configurations:
\begin{center}
\begin{figure}[H]
\includegraphics
[scale=.4]
{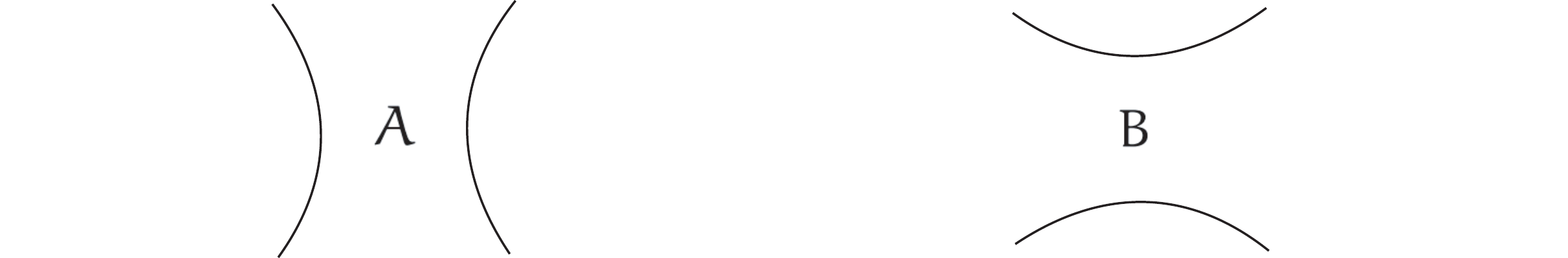}
\caption{}
\label{Fig9}
\end{figure}
\end{center}
 \noindent We obtain thus two links,  say $D_1$ and $D_2$, to which we assign  the  marks  $A$ and $B$ respectively. $D_1$ and $D_2$ have both  a crossing number  less than $D$.
 \item
 We choose now a crossing in $D_1$ and $D_2$ and we apply again the process (2); then we obtain 4 diagrams with marks $A$ and $B$. We apply again this process until we have only links without crossing.
\end{enumerate}
  Observe that if the original diagram   has $n$--crossings, by applying (1)--(3) we obtain finally $2^n$ diagrams, without crossings, having   marks    $A$ and $B$. We call these  $2^n$ diagrams (with their marks) the {\it states} of $D$, the set of states of $D$  is denoted by $\mathrm{St}(D)$.

\begin{example}
Below the procedure  (1)--(3)  described above for   the Hopf link of Fig. \ref{KnotsLinks}.
\begin{center}
\begin{figure}[H]
\includegraphics
[scale=.45]
{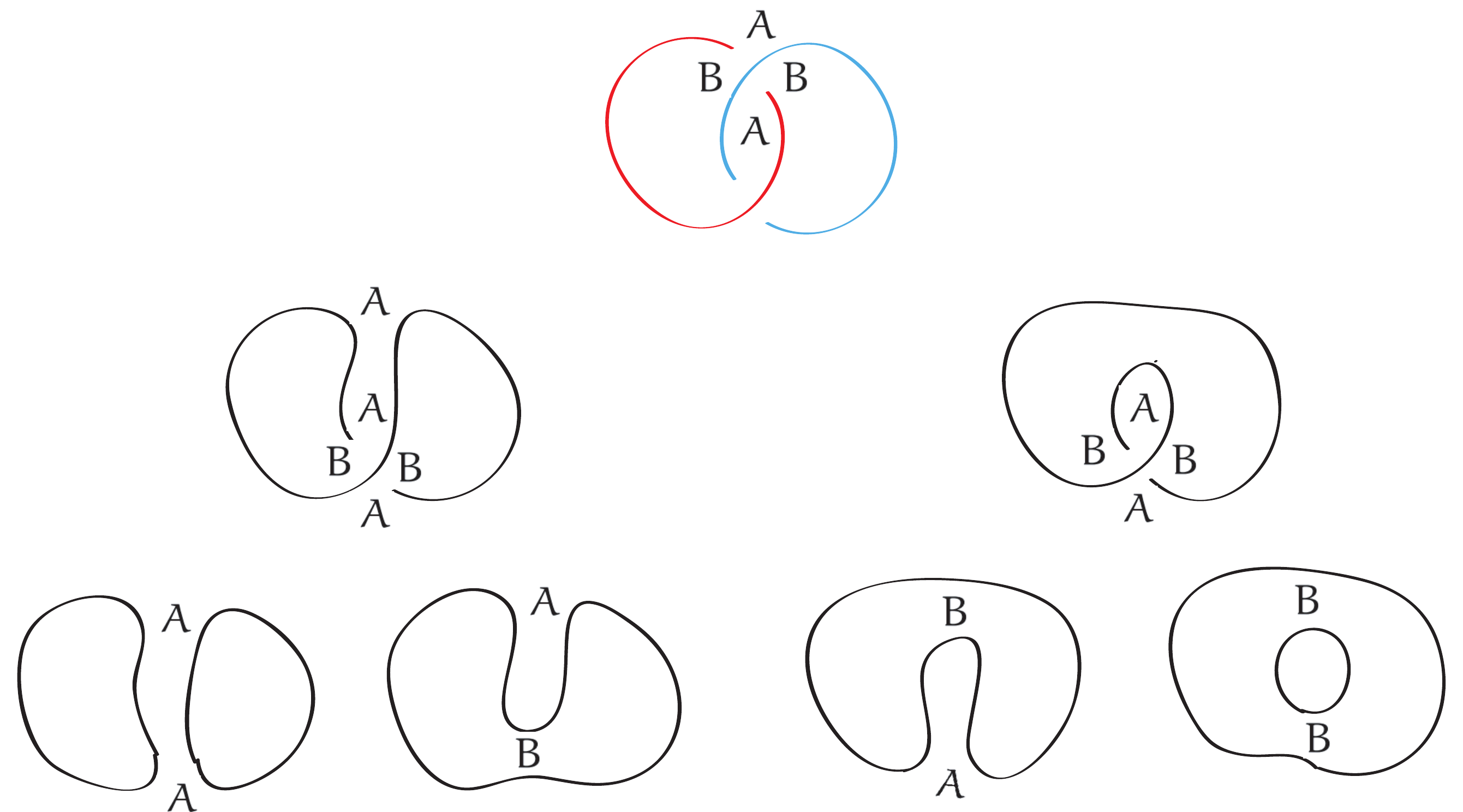}
\end{figure}
\end{center}
\end{example}

Given $S\in \mathrm{St}(D)$, we denote by $\vert S\vert$ the number of components of $S$ and by $\langle D, S  \rangle$ the product (commutative) of the marks appearing in $S$.

\begin{definition}
The Bracket polynomial of an unoriented diagram  $D$, denoted by $\langle D\rangle$, is the polynomial in $\ZZ[A,B,z]$ defined by
$$
\langle D\rangle = \sum_{S\in \mathrm{St}(D)}z^{\vert S\vert } \langle D, S  \rangle.
$$
\end{definition}
\begin{example}
For the Hopf link, denoted by $H$, we have
$$
 \mathrm{St}(H)= \left\{ \includegraphics[scale=0.2,trim=0 1.5cm 0 0]{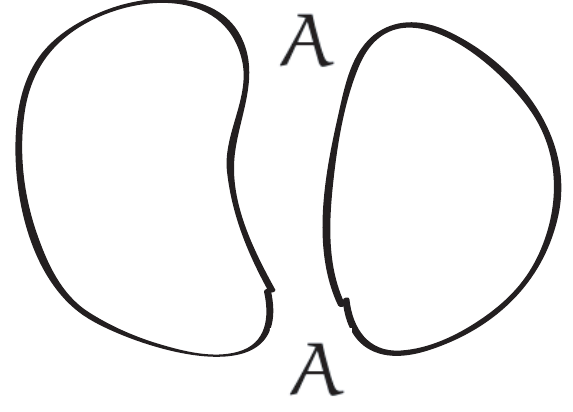},  \includegraphics[scale=0.2,trim=0 1.5cm 0 0]{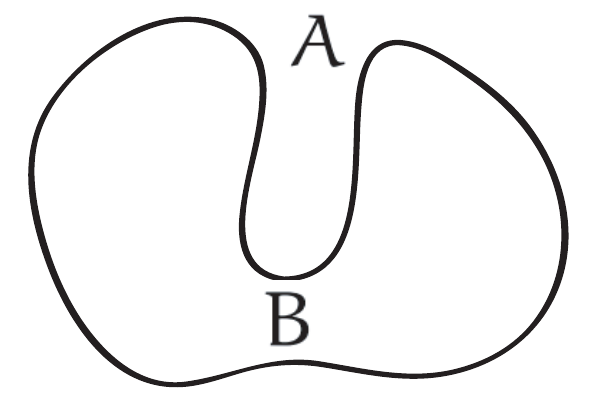},  \includegraphics[scale=0.2,trim=0 1.5cm 0 0]{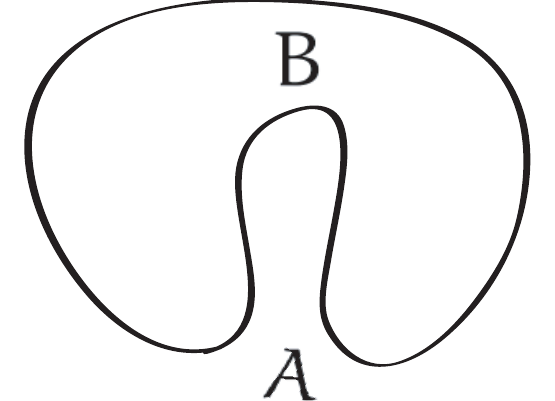},  \includegraphics[scale=0.2,trim=0 1.5cm 0 0]{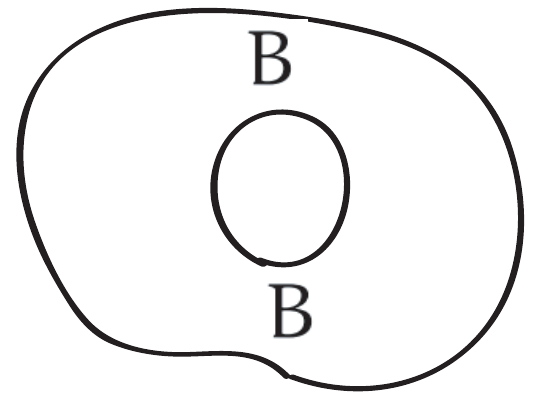} \right\}.
$$
Thus, in this  example  it seems  that  $|S|$ in  the number of components $-1$, then
$$
\langle D\rangle  = z^1A^2 + z^0AB+ z^0AB + z^1B^2=zA^2 + 2AB + zB^2.
$$
\end{example}
\begin{proposition}
\begin{enumerate}	
\item $\left\langle \bigcirc \sqcup D \right\rangle = z \left \langle D\right\rangle$,
\item $\left\langle \,\includegraphics[scale=0.12,trim=0 .8cm 0 0]{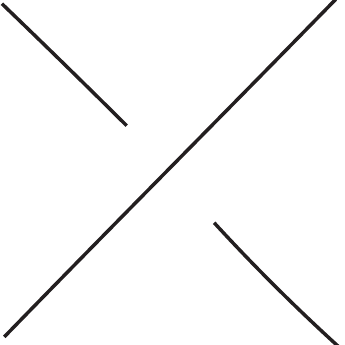} \, \right\rangle = B \left\langle \, \includegraphics[scale=0.12,trim=0 .8cm 0 0]{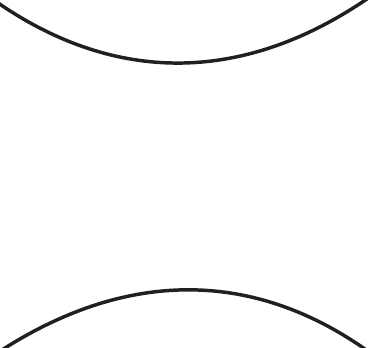} \,\right\rangle +
A\left\langle \, \includegraphics[scale=0.12,trim=0 .8cm 0 0]{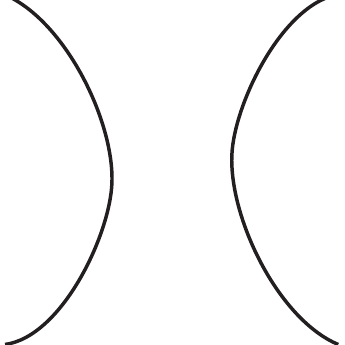}   \,\right\rangle$,
\item $\left\langle\,\includegraphics[scale=0.12,trim=0 .8cm 0 0]{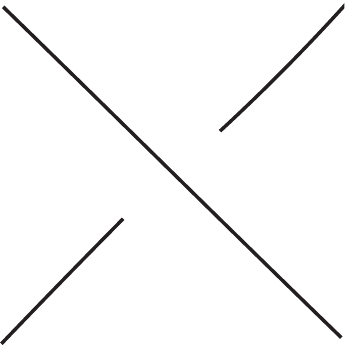} \,\right\rangle= A \left\langle \,\includegraphics[scale=0.12,trim=0 .8cm 0 0]{TanlgeSmothingCrossing.pdf} \,\right\rangle + B \left\langle\,\includegraphics[scale=0.12,trim=0 .8cm 0 0]{IdentitySmothingCrossing.pdf} \,\right\rangle$.
\end{enumerate}
\end{proposition}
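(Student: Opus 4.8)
The plan is to read off all three identities directly from the defining sum $\langle D\rangle = \sum_{S\in \mathrm{St}(D)} z^{|S|}\langle D,S\rangle$, in each case by partitioning the set of states $\mathrm{St}(D)$ according to one distinguished feature of the diagram and matching each block with the states of the simpler diagram(s) on the right-hand side.

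For (1), first I would note that the extra component $\bigcirc$ carries no crossing, so the smoothing procedure (1)--(3) never acts on it. Thus there is a canonical bijection $\mathrm{St}(\bigcirc\sqcup D)\to\mathrm{St}(D)$ sending $\bigcirc\sqcup S$ to $S$, under which the collection of marks is unchanged, so $\langle\bigcirc\sqcup D,\bigcirc\sqcup S\rangle=\langle D,S\rangle$, while the number of loops increases by exactly one, $|\bigcirc\sqcup S|=|S|+1$. Substituting into the defining sum factors out a single $z$ and gives $\langle\bigcirc\sqcup D\rangle=z\langle D\rangle$. I would remark that this step is insensitive to whether $|S|$ counts components or components minus one, since only the difference $|\bigcirc\sqcup S|-|S|=1$ is used.

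For (2) and (3) the key step is to single out the distinguished crossing $p$ and split $\mathrm{St}(D)=\mathrm{St}_A\sqcup\mathrm{St}_B$ according to whether $p$ is resolved with mark $A$ or with mark $B$ in Step (2). Forgetting the resolved crossing gives a bijection from $\mathrm{St}_A$ onto $\mathrm{St}(D_A)$, where $D_A$ is the $A$-smoothing of $p$; under this bijection the loop count is preserved, $|S|=|S'|$, and the marks satisfy $\langle D,S\rangle=A\,\langle D_A,S'\rangle$, the only difference being the single factor $A$ contributed by $p$. The same holds verbatim with $A,D_A$ replaced by $B,D_B$. Splitting the defining sum along this partition then yields the recursion $\langle D\rangle=A\langle D_A\rangle+B\langle D_B\rangle$.

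It then remains to identify, for each crossing type, which planar smoothing is the $A$-smoothing and which is the $B$-smoothing; this is where the counterclockwise sweeping convention of Step (1) is used. For the positive crossing the marking assigns $A$ to the identity smoothing and $B$ to the tangle smoothing, which turns the recursion into (2); passing to the negative crossing interchanges the over- and under-strands, hence swaps the $A$- and $B$-regions and the two smoothings, so the same recursion now reads as (3). I expect the main obstacle to be exactly this bookkeeping: verifying (i) that the loop count $|S|$ is genuinely preserved by the restriction bijection, and (ii) that the sweeping rule distributes the marks $A$ and $B$ over the two smoothings precisely as the figures prescribe, since reversing this assignment would interchange $A$ and $B$ in (2) and (3).
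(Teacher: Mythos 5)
Your proof is correct: the paper itself gives no argument for this proposition, deferring entirely to Kauffman's \emph{Knots and Physics} (Proposition 3.2 there), and your state-sum computation --- the mark-preserving bijection on states for (1), and the partition of $\mathrm{St}(D)$ by the mark assigned to the distinguished crossing for (2) and (3), with the factor $A$ or $B$ split off and the loop count preserved --- is precisely the standard proof found in that reference. You also rightly isolate the only convention-dependent step (matching the $A$- and $B$-smoothings to the figures via the counterclockwise sweep, with the over/under exchange swapping them between (2) and (3)), and your remark that (1) is insensitive to whether $|S|$ counts components or components minus one correctly disposes of the ambiguity the paper itself flags in its Hopf link example.
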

\begin{proof}
See \cite[Proposition 3.2]{{kaSNE}}.
\end{proof}
We are going now to study the behaviour  of the Bracket polynomial under  the  Reidemeister moves.
\begin{lemma}\label{respectR1R2}
\begin{enumerate}
\item $\left\langle \, \includegraphics[scale=0.12,trim=0 .8cm 0 0]{SmothingR2.pdf}   \,\right\rangle = AB\left \langle \, \includegraphics[scale=0.12,trim=0 .8cm 0 0]{IdentitySmothingCrossing.pdf}   \,\right\rangle + (A^2 + B^2 + ABz)\left \langle \,\includegraphics[scale=0.12,trim=0 .8cm 0 0]{TanlgeSmothingCrossing.pdf} \,\right\rangle $,
\item $\left\langle \includegraphics[scale=0.2,trim=0 0.4cm 0 0]{Curler1.pdf} \right\rangle = (Az + B)\left \langle \includegraphics[scale=0.2,trim=0 0.4cm 0 0]{Curler0.pdf} \right\rangle$,
\item $\left\langle\includegraphics[scale=0.2,trim=0 0.4cm 0 0]{Curler2.pdf} \right\rangle = (Az + B)\left \langle \includegraphics[scale=0.2,trim=0 0.4cm 0 0]{Curler0.pdf}\right\rangle$.
\end{enumerate}
\end{lemma}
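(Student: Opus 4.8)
The plan is to read off each of the three identities directly from the expansion rules of the preceding Proposition, resolving the crossings that appear on the left-hand side and then collapsing any resulting free loop by means of the disjoint-circle relation $\langle \bigcirc \sqcup D\rangle = z\langle D\rangle$.

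For part (1) I would apply the single-crossing expansion (which replaces a crossing by an $A$-weighted smoothing plus a $B$-weighted smoothing) to \emph{both} crossings of the poke $\includegraphics[scale=0.12,trim=0 .8cm 0 0]{SmothingR2.pdf}$, producing $2^2 = 4$ states. The key pictorial observation is that the two crossings of the poke are related by a vertical flip, so that the $A$-smoothing at one crossing is the identity-type resolution $\includegraphics[scale=0.12,trim=0 .8cm 0 0]{IdentitySmothingCrossing.pdf}$ while the $A$-smoothing at the other is the tangle-type resolution $\includegraphics[scale=0.12,trim=0 .8cm 0 0]{TanlgeSmothingCrossing.pdf}$ (and symmetrically for the $B$-smoothings). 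Consequently exactly one state recombines, after an isotopy of the smoothed arcs, into the identity resolution, with weight $AB$; two states give the tangle resolution, with weights $A^2$ and $B^2$; and the last state gives the tangle resolution together with a single free circle bounding the bigon. Applying the disjoint-circle relation to this last state replaces its circle by the scalar $z$, turning its weight into $ABz$. Summing the identity contribution and the three tangle contributions gives $AB\langle\,\includegraphics[scale=0.12,trim=0 .8cm 0 0]{IdentitySmothingCrossing.pdf}\,\rangle + (A^2 + B^2 + ABz)\langle\,\includegraphics[scale=0.12,trim=0 .8cm 0 0]{TanlgeSmothingCrossing.pdf}\,\rangle$.

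For parts (2) and (3) the left-hand configuration carries a single self-crossing. I would expand that crossing by the same rule: one of its two smoothings straightens the curl and returns $\includegraphics[scale=0.2,trim=0 0.4cm 0 0]{Curler0.pdf}$ unchanged, while the other detaches a small disjoint circle and leaves $\bigcirc\sqcup\includegraphics[scale=0.2,trim=0 0.4cm 0 0]{Curler0.pdf}$; the disjoint-circle relation then rewrites the second term as $z\langle\includegraphics[scale=0.2,trim=0 0.4cm 0 0]{Curler0.pdf}\rangle$. Reading off the weights, the loop-creating smoothing contributes $Az$ and the straightening smoothing contributes $B$, giving the coefficient $Az+B$ in part (2); part (3) is obtained by the identical argument applied to the curl of opposite handedness $\includegraphics[scale=0.2,trim=0 0.4cm 0 0]{Curler2.pdf}$.

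The algebra in all three cases is forced once the states are identified, so the main obstacle is entirely pictorial: in part (1), correctly matching the four smoothed states to $\includegraphics[scale=0.12,trim=0 .8cm 0 0]{IdentitySmothingCrossing.pdf}$ and $\includegraphics[scale=0.12,trim=0 .8cm 0 0]{TanlgeSmothingCrossing.pdf}$ and, in particular, pinning down the unique state that encloses a free circle; and in parts (2)--(3), deciding which smoothing of the curl detaches a circle and with which of the weights $A$, $B$ it is labelled, since it is exactly this determination that fixes the coefficient $Az+B$. Once these identifications are settled, each identity follows by substituting the disjoint-circle relation and collecting monomials.
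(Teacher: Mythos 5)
Your strategy is the standard one---note that the paper itself gives no proof of this lemma, deferring entirely to \cite[Proposition 3.3]{kaSNE}, and your state-expansion argument is precisely the proof found there. Parts (1) and (2) are handled correctly: in (1) the four states of the double-crossing bigon sort into one identity-type state of weight $AB$, two tangle-type states of weights $A^{2}$ and $B^{2}$, and one tangle-plus-free-circle state of weight $AB$ whose circle the disjoint-circle rule converts into the factor $z$, giving $AB$ and $A^{2}+B^{2}+ABz$ as claimed; and in (2) the loop-detaching smoothing carries $A$ while the straightening smoothing carries $B$, yielding $Az+B$.

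The gap is in part (3). Reflecting the curl exchanges the $A$- and $B$-labelled regions at its crossing, and therefore exchanges \emph{which} smoothing detaches the circle: for the curl of opposite handedness the loop-creating smoothing is the $B$-smoothing and the straightening one is the $A$-smoothing, so the ``identical argument'' you invoke actually produces the coefficient $A+Bz$, not $Az+B$. And indeed $Az+B$ cannot be right for both curls: under the specialization $B=A^{-1}$, $z=-(A^{2}+A^{-2})$ both curls would then contribute $-A^{3}$, contradicting the paper's subsequent proposition that the second curl contributes $-A^{-3}$, and ruining the applicability of Lemma \ref{KauffmanRecipe}, which requires the two curls to scale the bracket by $a$ and $a^{-1}$ respectively. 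The coefficient printed in part (3) of the statement is evidently a misprint (Kauffman's book records $(A+Bd)$ for the mirrored curl, $d$ the loop value); your proposal, by asserting that symmetry delivers the printed formula, silently reproduces the misprint even though your own correctly described mechanism---``deciding which smoothing of the curl detaches a circle and with which of the weights $A$, $B$ it is labelled''---forces the opposite labelling. You should either flag the inconsistency or carry out the expansion of the second curl explicitly and record the coefficient $A+Bz$.
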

\begin{proof}
See \cite[Proposition 3.3]{{kaSNE}}.
\end{proof}

According to (1) Lemma \ref{respectR1R2}, in order  to obtain that the Bracket polynomial respects the move R2  we must  have:
$AB=1$ and $ABz +  A^2 + B^2 =0$, or equivalently:
\begin{equation}\label{conditionsABz}
B=A^{-1}\quad \text{and}\quad z  = -(A^2 + A^{-2}).
\end{equation}
\begin{proposition}
Under the conditions of (\ref{conditionsABz}), the Bracket polynomial   agrees with  the Reidemeister move R3.
\end{proposition}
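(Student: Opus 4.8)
The plan is to derive invariance under R3 from two ingredients already in hand: the smoothing relations (parts (2) and (3) of the Proposition, expressing the bracket at a crossing as an $A$- and $B$-weighted sum of its two smoothings) together with the R2-invariance that the conditions (\ref{conditionsABz}) guarantee, via part (1) of Lemma \ref{respectR1R2}. The guiding idea is that R3 carries no independent algebraic content: once the bracket is known to respect R2, resolving a single crossing of the R3 configuration by the skein identity reduces the move to two term-by-term matchings, each of which is settled either by R2 or by planar isotopy.

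First I would fix the local picture. The move R3 replaces a diagram $D$, in which one strand runs past the crossing of two other strands, by the diagram $D'$ obtained by sliding that strand to the opposite side of the crossing; $D$ and $D'$ agree outside a disk and differ inside it only by R3. Within the disk I single out the crossing formed by the sliding strand with one of the two fixed strands, and designate it as the crossing to be resolved.

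Next I would apply the smoothing relation to that single crossing, invoking part (2) or part (3) of the Proposition according to its sign. This writes $\langle D\rangle = A\langle D_0\rangle + B\langle D_\infty\rangle$ and, with identical coefficients, $\langle D'\rangle = A\langle D'_0\rangle + B\langle D'_\infty\rangle$, so that the desired equality $\langle D\rangle = \langle D'\rangle$ follows from the two identities $\langle D_0\rangle = \langle D'_0\rangle$ and $\langle D_\infty\rangle = \langle D'_\infty\rangle$. In exactly one of the two smoothings the sliding strand is detached from the singled-out crossing, so that $D_0$ and $D'_0$ become related by a planar isotopy (R0) and hence have equal brackets; in the other smoothing the strand still meets the remaining fixed strand but is now separated from it by a bigon, so that $D_\infty$ and $D'_\infty$ are related by R2, and R2-invariance under (\ref{conditionsABz}) yields the second identity. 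Combining the two gives $\langle D\rangle = \langle D'\rangle$.

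The main obstacle I anticipate is bookkeeping rather than algebra: one must choose the crossing to be smoothed and keep track of the over/under data so that precisely one smoothing collapses by R0 and the other by R2, and so that parts (2) and (3) of the Proposition are invoked with the correct signs on both $D$ and $D'$. I expect no new constraints on $A$, $B$, $z$ to emerge beyond those already imposed in (\ref{conditionsABz})---the entire algebraic burden is discharged by R2-invariance---so once the diagrammatic matching is arranged the identity is immediate.
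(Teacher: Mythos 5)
Your overall skeleton --- resolve a single crossing of the R3 configuration by the skein expansion with equal coefficients on both sides, then match the two resulting terms using R0 and R2 --- is exactly the shape of the argument the paper relies on (the paper offers no proof of its own: it cites Corollary 3.4 of Kauffman's \emph{Knots and Physics}, whose proof has precisely this structure). But the crossing Kauffman resolves is the one \emph{between the two fixed strands}, i.e.\ the crossing past which the third strand slides, not a crossing involving the sliding strand. That crossing is unchanged by the move, and its resolution gives: in one smoothing, literally the same diagram on both sides (the sliding strand now runs past two disjoint arcs; note it is not ``detached from the singled-out crossing'' --- it was never part of it); in the other smoothing, a turnback arc which the sliding strand crosses twice with consistent over/under data, so that \emph{two} R2 moves (one removing a bigon on each side) reduce both diagrams to a common one. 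This works uniformly for the R3 variant in which the sliding strand passes under (or over) both fixed strands; the remaining over/under variants of R3 are then derived from that one by R2 detours --- a reduction your argument also needs but does not mention.

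By contrast, resolving a crossing of the sliding strand with a fixed strand, as you propose, does not in general admit the term-by-term matching you describe, and the ``bookkeeping'' you defer is exactly where the proof can die. In braid form the standard R3 figure reads $\sigma_1^{-1}\sigma_2^{-1}\sigma_1=\sigma_2\sigma_1^{-1}\sigma_2^{-1}$ (sliding strand under both fixed strands). If you resolve the crossing of the sliding strand with the first fixed strand it meets (the initial $\sigma_1^{-1}$ on the left, the final $\sigma_2^{-1}$ on the right), the identity-smoothing terms are $\sigma_2^{-1}\sigma_1$ and $\sigma_2\sigma_1^{-1}$: these tangles are mirror-distinct, are related by neither R0 nor R2 (the signed crossing counts between the strand pairs already differ), and have different brackets, namely $1+A^{-2}e_1+A^{2}e_2+e_2e_1$ versus $1+A^{2}e_1+A^{-2}e_2+e_2e_1$ in Temperley--Lieb notation ($e_i$ the cup--cap on strands $i,i+1$); the equality $\langle D\rangle=\langle D'\rangle$ holds only by cancellation \emph{across} the two smoothing terms, not termwise. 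Resolving instead the crossing with the \emph{second} fixed strand does produce your dichotomy (identity smoothing: a bigon $\sigma_1^{-1}\sigma_1$, resp.\ $\sigma_2\sigma_2^{-1}$, removed by R2; the other smoothing: planar-isotopic diagrams $\sigma_1^{-1}e_2\sigma_1$ and $\sigma_2e_1\sigma_2^{-1}$) --- but which of the two crossings is the good one depends on the variant: in the all-positive configuration $\sigma_1\sigma_2\sigma_1=\sigma_2\sigma_1\sigma_2$ it is the other way around, the middle-crossing choice failing with $\sigma_1^2$ versus $\sigma_2^2$. So your plan is repairable, but as stated the crossing selection is unjustified and for one admissible choice the key step is false; the clean route is the cited one, resolving the fixed crossing, where one term matches on the nose and no case analysis arises.
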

\begin{proof}
See \cite[Corollary 3.4]{{kaSNE}}.
\end{proof}
\begin{proposition}
Under the conditions of (\ref{conditionsABz}), we have:
\begin{enumerate}
\item $\left\langle\includegraphics[scale=0.2,trim=0 0.4cm 0 0]{Curler1.pdf}  \right\rangle = -A^3 \left\langle \includegraphics[scale=0.2,trim=0 0.4cm 0 0]{Curler0.pdf} \right\rangle $,
\item $\left\langle \includegraphics[scale=0.2,trim=0 0.4cm 0 0]{Curler2.pdf}  \right\rangle = -A^{-3} \left\langle \includegraphics[scale=0.2,trim=0 0.4cm 0 0]{Curler0.pdf}  \right\rangle $.
\end{enumerate}
\end{proposition}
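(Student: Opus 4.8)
The plan is to read off both identities directly from Lemma~\ref{respectR1R2} once the specialization (\ref{conditionsABz}) has been imposed; no new diagrammatic input is required. First I would recall that parts (2) and (3) of Lemma~\ref{respectR1R2} already express the bracket of a single curl as a scalar multiple of the bracket of the uncurled strand, the scalar being the polynomial in $A$, $B$, $z$ produced by smoothing the one crossing of the curl. For the curl $\includegraphics[scale=0.2,trim=0 0.4cm 0 0]{Curler1.pdf}$ the $A$-smoothing produces a disjoint loop (hence a factor $Az$) while the $B$-smoothing leaves the bare strand (a factor $B$), giving the coefficient $Az+B$; for the oppositely crossed curl $\includegraphics[scale=0.2,trim=0 0.4cm 0 0]{Curler2.pdf}$ the roles of the two smoothings are exchanged, so its coefficient is $Bz+A$.

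It then remains only to evaluate these two scalars at $B=A^{-1}$ and $z=-(A^2+A^{-2})$. For the first curl,
$$
Az+B = A\bigl(-(A^2+A^{-2})\bigr)+A^{-1} = -A^{3}-A^{-1}+A^{-1} = -A^{3},
$$
which gives (1); for the second,
$$
Bz+A = A^{-1}\bigl(-(A^2+A^{-2})\bigr)+A = -A-A^{-3}+A = -A^{-3},
$$
which gives (2).

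The one point worth flagging is the bookkeeping of which smoothing carries the extra loop for each of the two curls: although the bracket construction treats the marks $A$ and $B$ symmetrically, the two curls assign them in opposite roles, and it is precisely this $A\leftrightarrow B$ exchange that turns the common linear factor into $-A^{3}$ in one case and into its inverse $-A^{-3}$ in the other. (Accordingly, the coefficient attached to $\includegraphics[scale=0.2,trim=0 0.4cm 0 0]{Curler2.pdf}$ should be read as $Bz+A$.) Since only the move R2, already used in deriving Lemma~\ref{respectR1R2}, together with the single-crossing smoothing are involved, the regular-isotopy invariance of $\langle\cdot\rangle$ established above is all the argument needs; the expected obstacle is merely this sign-and-role bookkeeping rather than any genuine computation.
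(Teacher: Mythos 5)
Your proposal is correct, and it is actually more self-contained than the paper's treatment: the paper gives no argument at all for this proposition, deferring to \cite[Corollary 3.4]{kaSNE}, whereas you supply the standard one-line computation that that reference contains. Your arithmetic is right: smoothing the single crossing of a curl gives a disjoint-loop term (factor $z$ by the rule $\langle \bigcirc \sqcup D\rangle = z\langle D\rangle$) plus a bare-strand term, so the two curls carry the coefficients $Az+B$ and $Bz+A$ respectively, and under (\ref{conditionsABz}) these specialize to $A\bigl(-(A^2+A^{-2})\bigr)+A^{-1}=-A^3$ and $A^{-1}\bigl(-(A^2+A^{-2})\bigr)+A=-A^{-3}$. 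Your flagged ``bookkeeping point'' is also well taken, and is worth making explicit: as printed, parts (2) and (3) of Lemma \ref{respectR1R2} both display the coefficient $(Az+B)$, which is evidently a typo in the paper --- if both curls carried the same factor, both parts of the present proposition would yield $-A^3$, contradicting part (2). Your corrected reading $(Bz+A)$ for the oppositely crossed curl is the one forced by the $A\leftrightarrow B$ symmetry of the two smoothings (and is what appears in Kauffman's original). So the only substantive difference between your route and the paper's is that the paper outsources the computation to a citation while you perform it; in doing so you also silently repair the misprinted lemma on which the direct argument rests.
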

\begin{proof}
See \cite[Corollary 3.4]{{kaSNE}}.
\end{proof}
Denote by $\langle\hspace{.5cm}\rangle$ the function
$$
\begin{array}{ccll}
\langle\hspace{.5cm}\rangle: & \vert\mathfrak{D}\vert&\longrightarrow &\ZZ[A,A^{-1}]\\
&D&\mapsto &\langle D\rangle
\end{array}
$$
This function is called function Bracket polynomial and is characterized in the following theorem.
\begin{theorem}
The function Bracket polynomial $\langle \quad\rangle$ is the unique function that  satisfies:
\begin{enumerate}
\item $\langle \bigcirc\rangle =1$,
\item $\langle \bigcirc \sqcup D\rangle =-(A^{-2} + A^2)\langle \bigcirc\rangle$,
\item $\left\langle \,\includegraphics[scale=0.12,trim=0 .8cm 0 0]{PositiveCrossing.pdf} \, \right\rangle  = A\langle  \, \includegraphics[scale=0.12,trim=0 .8cm 0 0]{IdentitySmothingCrossing.pdf}   \,\rangle + A^{-1}\langle  \, \includegraphics[scale=0.12,trim=0 .8cm 0 0]{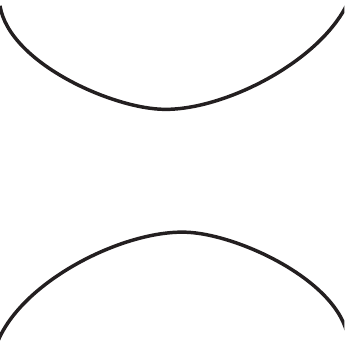}   \,\rangle$.
\end{enumerate}
\end{theorem}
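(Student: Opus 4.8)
The plan is to prove the statement in two halves: \emph{existence}, that the state--sum Bracket polynomial (renormalized so that $\langle\bigcirc\rangle=1$) does obey the three listed relations, and \emph{uniqueness}, that these three relations already pin down the value of $\langle D\rangle$ on every diagram $D$. For existence I would simply specialize the propositions proved above. Relation (3) is the skein identity of the earlier proposition expressing a positive crossing through its two smoothings, after imposing $B=A^{-1}$; relation (2) is the split--circle rule $\langle\bigcirc\sqcup D\rangle=z\langle D\rangle$ evaluated at the loop value $z=-(A^2+A^{-2})$; and relation (1) is the normalization $\langle\bigcirc\rangle=1$, which is precisely why one passes to the renormalized bracket whose state--sum exponent is $\vert S\vert-1$ rather than $\vert S\vert$, exactly the shift already observed in the Hopf--link example.

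For uniqueness I would take two functions $f,g\colon\vert\mathfrak{D}\vert\to\ZZ[A,A^{-1}]$, each satisfying (1)--(3), and prove $f=g$ by strong induction on the number $n$ of crossings of a diagram $D$. In the base case $n=0$ the diagram is a disjoint union of, say, $k$ circles, so iterating relation (2) and finishing with (1) forces $f(D)=(-(A^2+A^{-2}))^{\,k-1}$, and likewise for $g$; hence $f(D)=g(D)$. For the inductive step with $n\ge 1$, I would choose any crossing $c$ of $D$ and apply relation (3) there, writing $f(D)=A\,f(D')+A^{-1}f(D'')$, where $D'$ and $D''$ are the two smoothings of $D$ at $c$, each having $n-1$ crossings; the identical relation holds for $g$. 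The inductive hypothesis gives $f(D')=g(D')$ and $f(D'')=g(D'')$, whence $f(D)=g(D)$. Combining the two halves yields that the Bracket polynomial is \emph{the} function satisfying (1)--(3).

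The step I expect to be the main obstacle is the consistency lurking behind the recursion: the inductive computation rests on choices---which crossing to smooth, and how to match the two smoothings to the coefficients $A$ and $A^{-1}$---so a priori the value it produces could depend on those choices. The key point I would stress is that the \emph{uniqueness} argument need not re-prove this well--definedness. Since $f$ and $g$ are assumed from the outset to satisfy the axioms, at every stage they transform by one and the same linear relation and therefore stay equal regardless of the choices made. The genuine consistency---that a function obeying (1)--(3) exists at all---is delivered entirely by the existence half, namely by the explicit state--sum construction together with the earlier lemmas, so no circularity arises. The only remaining care is in the base case, where I would note that a crossingless diagram is, up to the planar move R0, a disjoint union of circles, so that relations (1) and (2) do evaluate it; this is legitimate because each state of a diagram, and hence the bracket, is unchanged under R0.
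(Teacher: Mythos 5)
Your argument is correct, but there is nothing in the paper to compare it against line by line: the paper states this theorem without any proof, just as the surrounding Bracket-polynomial results are delegated to Kauffman's \emph{Knots and Physics} (\cite{kaSNE}). Your two-half structure --- existence by specializing the earlier propositions at $B=A^{-1}$ and $z=-(A^{2}+A^{-2})$, uniqueness by strong induction on the crossing number --- is the standard argument and is sound. Two points in your write-up deserve emphasis precisely because the paper is loose there. First, the state-sum as defined, $\langle D\rangle=\sum_{S}z^{\vert S\vert}\langle D,S\rangle$, gives $\langle\bigcirc\rangle=z$ rather than $1$; the paper only hints at the discrepancy in the Hopf-link example (\lq\lq it seems that $\vert S\vert$ is the number of components $-1$\rq\rq), and your explicit renormalization to the exponent $\vert S\vert-1$ is exactly the repair needed for axiom (1) --- without it the existence half fails. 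Second, your remark that the uniqueness induction is immune to the choices (which crossing to smooth, how the two smoothings are matched to $A$ and $A^{-1}$), since $f$ and $g$ are \emph{assumed} to satisfy the axioms and hence transform by the same linear relation, correctly isolates well-definedness as a burden carried solely by the explicit state-sum in the existence half; this is the one place a naive recursive \lq\lq definition\rq\rq\ would be circular. One detail worth making explicit: rule (3) resolves an \emph{arbitrary} crossing of an unoriented diagram because the $A$/$A^{-1}$ labelling of the two smoothings (sweeping the over-arc counterclockwise, as in the paper's Fig.~\ref{AandBcrossing}) is invariant under rotating the crossing picture, so a single pictured relation suffices. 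Finally, note that axiom (2) as printed in the statement has $\langle\bigcirc\rangle$ on the right where $\langle D\rangle$ is plainly intended; your reading silently and correctly repairs this, and with that reading your base-case evaluation $(-(A^{2}+A^{-2}))^{k-1}$ on $k$ crossingless circles is right.
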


\subsection{}
Let $D$ be a diagram of the oriented link $L$,  we define $f(L)$ as follows:
$$
f(L) := (-A^3)^{-w(D)}\langle \vert D\vert \rangle \in \ZZ[A, A^{-1}].
$$
Then, thanks to the  Lemma \ref{KauffmanRecipe}, we have the following theorem.
\begin{theorem}[{\cite[Proposition 2.5 ]{kaTAMS}}]\label{JonesLAKauffman}
The map $f:L \mapsto f(L)  $ is an invariant of oriented links.
\end{theorem}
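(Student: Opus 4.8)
The plan is to apply Kauffman's recipe, Lemma \ref{KauffmanRecipe}, to the normalized Bracket polynomial. Concretely, I would take the ring to be $\mathfrak{R} = \ZZ[A, A^{-1}]$, the invertible element to be $a = -A^3$, and the regular isotopy invariant (called $f$ in the statement of Lemma \ref{KauffmanRecipe}) to be the function Bracket polynomial $\langle \quad\rangle \colon |\mathfrak{D}| \to \ZZ[A,A^{-1}]$, all evaluated under the substitution (\ref{conditionsABz}), namely $B = A^{-1}$ and $z = -(A^2 + A^{-2})$. With these choices the output of the recipe is exactly $F(L) = (-A^3)^{-w(D)}\langle |D|\rangle = f(L)$, so it suffices to verify the three hypotheses of the lemma.

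First I would record that $\langle \quad\rangle$ is a regular isotopy invariant, i.e.\ unchanged under the moves R2 and R3. Invariance under R2 is forced by the choice (\ref{conditionsABz}): substituting $B = A^{-1}$ and $z = -(A^2+A^{-2})$ into part (1) of Lemma \ref{respectR1R2} makes the coefficient $AB$ equal to $1$ and the coefficient $A^2 + B^2 + ABz$ equal to $0$, so the right-hand side collapses to the bracket of the uncrossed tangle. Invariance under R3 is the content of the earlier proposition asserting that, under (\ref{conditionsABz}), the Bracket polynomial agrees with R3. Together these say precisely that $\langle \quad\rangle$ is a regular isotopy invariant of unoriented diagrams, which is the first hypothesis of Lemma \ref{KauffmanRecipe}.

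Next I would verify the two curl relations with the claimed value $a = -A^3$. These are exactly the two identities of the earlier proposition computed under (\ref{conditionsABz}): adding a positive curl multiplies the bracket by $-A^3$, and adding a negative curl multiplies it by $-A^{-3}$. Writing $a = -A^3$, the first reads $\langle\,\cdot\,\rangle \mapsto a\,\langle\,\cdot\,\rangle$ and the second reads $\langle\,\cdot\,\rangle \mapsto a^{-1}\langle\,\cdot\,\rangle$, since $(-A^3)^{-1} = -A^{-3}$. This matches the hypotheses $f(\text{positive curl}) = a\,f(\text{uncurled})$ and $f(\text{negative curl}) = a^{-1}f(\text{uncurled})$ of the recipe, and the two relations are compatible in that they use inverse multipliers of the \emph{single} element $a$ --- precisely what the recipe needs.

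With all three hypotheses in hand, Lemma \ref{KauffmanRecipe} applies verbatim and yields that $F(L) = (-A^3)^{-w(D)}\langle |D|\rangle$ is independent of the chosen diagram $D$ of $L$ and invariant under ambient isotopy of oriented links; since $F = f$, this is the assertion. I expect no genuine obstacle here: the entire difficulty has already been absorbed into the earlier propositions, and the only point demanding care is the bookkeeping that the writhe prefactor $a^{-w(D)}$ compensates exactly for the R1 defect of $\langle \quad\rangle$ --- so that, whereas the bracket alone respects only R2 and R3, the normalization $f$ respects all of R1, R2 and R3 and hence, by Theorem \ref{ReidemeisterTh}, descends to an invariant of oriented links.
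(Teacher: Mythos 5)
Your proposal is correct and follows exactly the paper's route: the paper defines $f(L)=(-A^3)^{-w(D)}\langle\vert D\vert\rangle$ and invokes Lemma \ref{KauffmanRecipe} with $a=-A^3$, relying on the previously established facts that under (\ref{conditionsABz}) the Bracket is a regular isotopy invariant with the stated curl behaviour. You have merely made explicit the hypothesis-checking (R2 via Lemma \ref{respectR1R2}, R3 and the curl identities via the earlier propositions) that the paper leaves implicit.
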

\begin{remark}\rm
By making $A=t^{-1/4}$, we have that $f(L)$ becomes the  seminal  Jones polynomial of oriented links. It is a very interesting point, that such an  important modern mathematical object can be constructed in such  a simple way.
\end{remark}

The invariant $f$ is useful to detect if a link is equivalent to that obtained by reflection,  also called mirror image. Given a link $L$, its mirror image  is the link $L^{\ast}$ obtained by  reflection of $L$ in a plane.  Notice that the diagrams  of $L$ are in bijection with the diagrams of $L^{\ast}$: a  diagram $D$ of $L$ determines the diagram $D^{\ast}$ of $L^{\ast}$ obtained by exchanging the positive  with  the negative crossing in $D$.
\begin{definition}\rm
If $L$ and $L^{*}$ are ambient isotopic we say that the links are amphicheiral, otherwise  we say that the links are cheiral.
\end{definition}
\begin{example}
In the first row of the figure below we have, respectively, the Hopf link, the trefoil and the figure--eight. In the second row their respective reflected. The Hopf link and the figure--eight are amphicheiral and the trefoil is cheiral.
\begin{center}
\begin{figure}[H]
\includegraphics
[scale=.4]
{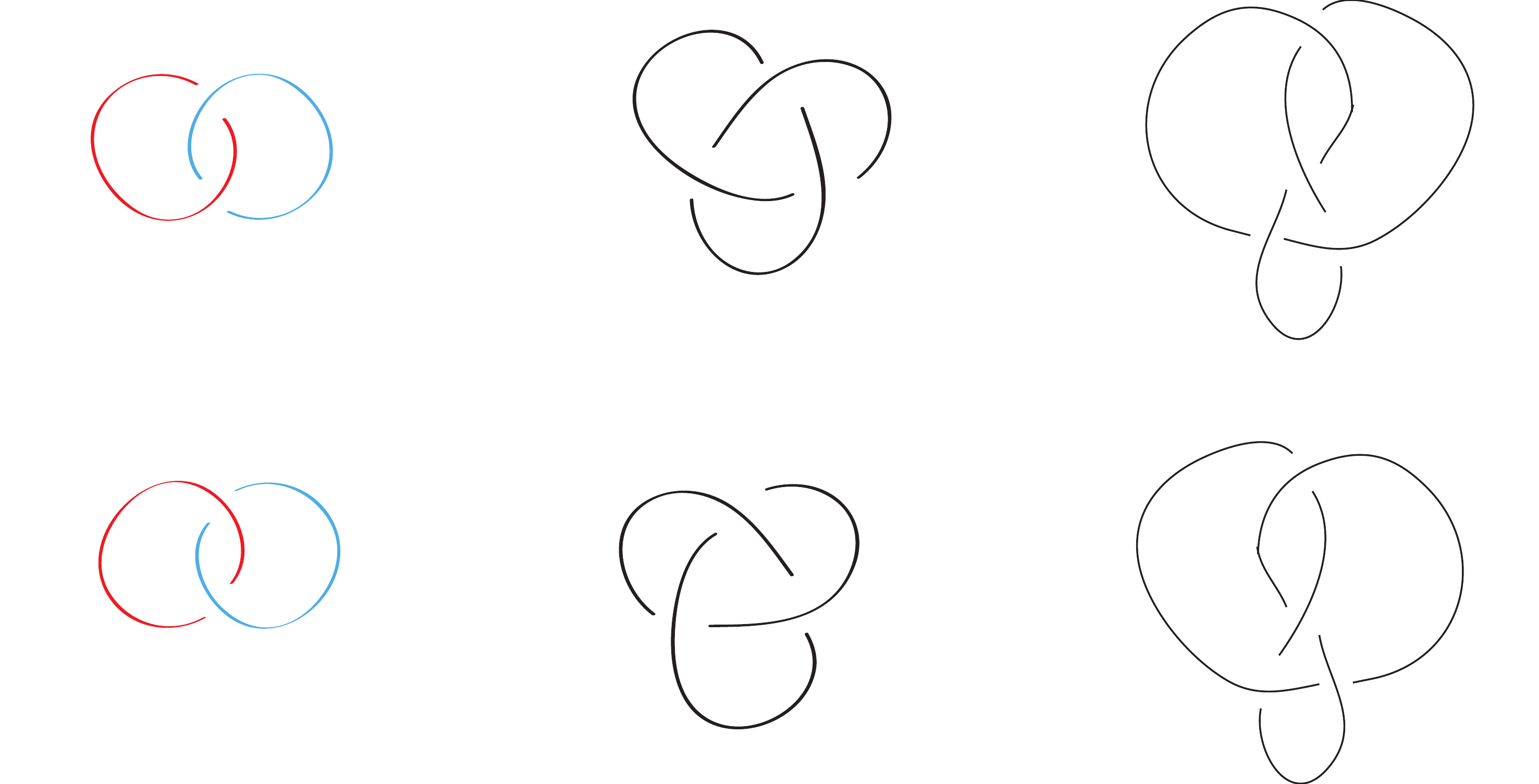}
\end{figure}
\end{center}
\end{example}
\begin{proposition}
Let $L$ an oriented link and $D$ a diagram of $L$, we have:
\begin{enumerate}
\item $\left\langle \vert D^{\ast} \vert \right\rangle = \left\langle \vert D \vert \right\rangle$,
\item $f(L^{\ast}) = f(L)$.
\end{enumerate}
\end{proposition}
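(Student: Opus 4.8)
The plan is to attack both equalities by a state-sum bijection, tracking exactly how passing from $D$ to $D^{\ast}$ affects the data $(\vert S\vert,\langle D,S\rangle)$ that enter the defining sum $\langle D\rangle=\sum_{S}z^{\vert S\vert}\langle D,S\rangle$. The key geometric observation is that switching a crossing (exchanging over and under) leaves \emph{both} of its smoothings unchanged: the $A$-smoothing and the $B$-smoothing are determined by the shadow alone, not by which strand lies on top. Hence $\vert D^{\ast}\vert$ and $\vert D\vert$ have literally the same shadow, and there is a canonical bijection $\mathrm{St}(D)\longrightarrow\mathrm{St}(D^{\ast})$, $S\mapsto S^{\ast}$, keeping the smoothing choice at every crossing. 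Under it the underlying unmarked curves coincide, so $\vert S^{\ast}\vert=\vert S\vert$ for every state.

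First I would settle what happens to the marks. The rule assigning $A$ and $B$ uses the over-arc together with a counter-clockwise sweep; when the crossing is switched, the over-arc becomes the under-arc and the sweep that produced the $A$-regions now produces the $B$-regions. Thus at each crossing the roles of $A$ and $B$ are interchanged, giving $\langle D^{\ast},S^{\ast}\rangle=\langle D,S\rangle\big\vert_{A\leftrightarrow B}$. Summing over states and using $\vert S^{\ast}\vert=\vert S\vert$ yields
$$
\langle\vert D^{\ast}\vert\rangle=\langle\vert D\vert\rangle\big\vert_{A\leftrightarrow B}.
$$
For part (2) I would then record that switching every crossing negates every crossing sign, so $w(D^{\ast})=-w(D)$, and feed this together with the displayed identity into $f(L)=(-A^{3})^{-w(D)}\langle\vert D\vert\rangle$; a short bookkeeping of the sign $(-1)^{w(D)}$ and the attendant power of $A$ converts the writhe factor correctly.

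Here, however, is where the statement as written meets its obstacle, and this is the heart of the matter. The state correspondence is \emph{forced} to interchange $A$ and $B$, so after imposing the running normalisation $B=A^{-1}$, $z=-(A^{2}+A^{-2})$ of (\ref{conditionsABz}) the operation $A\leftrightarrow B$ becomes the involution $A\mapsto A^{-1}$. Consequently the argument produces
$$
\langle\vert D^{\ast}\vert\rangle(A)=\langle\vert D\vert\rangle(A^{-1}),\qquad f(L^{\ast})(A)=f(L)(A^{-1}),
$$
and the literal equalities $\langle\vert D^{\ast}\vert\rangle=\langle\vert D\vert\rangle$ and $f(L^{\ast})=f(L)$ hold only when these polynomials are palindromic under $A\mapsto A^{-1}$, i.e. exactly when $L$ is amphicheiral. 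They genuinely fail for a cheiral $L$ such as the trefoil of the preceding example, whose bracket is not invariant under $A\mapsto A^{-1}$. Thus the real difficulty is not the bijection or the bookkeeping but the claim itself: to prove (1) and (2) as stated one must either restrict to amphicheiral $L$, or read the equalities in $\ZZ[A,A^{-1}]$ modulo the involution $A\mapsto A^{-1}$, in which reading they are precisely the substitution identities just displayed.
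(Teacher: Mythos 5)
The paper states this proposition without any proof, so there is nothing of the author's to compare your route against; judged on its own, your argument is the standard Kauffman state-sum proof and is correct as mathematics. Switching every crossing preserves the shadow, hence induces the canonical bijection of states with $\vert S^{\ast}\vert=\vert S\vert$, and the counterclockwise-sweep rule indeed interchanges the $A$- and $B$-regions at each switched crossing, so $\langle\vert D^{\ast}\vert\rangle$ is obtained from $\langle\vert D\vert\rangle$ by $A\leftrightarrow B$; since the specialization $B=A^{-1}$, $z=-(A^{2}+A^{-2})$ is stable under $A\mapsto A^{-1}$, this gives $\langle\vert D^{\ast}\vert\rangle(A)=\langle\vert D\vert\rangle(A^{-1})$, and combined with $w(D^{\ast})=-w(D)$ and the identity $(-A^{3})^{w(D)}=\bigl(-(A^{-1})^{3}\bigr)^{-w(D)}$ it yields $f(L^{\ast})(A)=f(L)(A^{-1})$. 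Your diagnosis of the literal statement is also right: if item (2) held verbatim, $f$ could never detect chirality, which contradicts both the sentence introducing the proposition (``the invariant $f$ is useful to detect if a link is equivalent to\dots its mirror image'') and the preceding example, which asserts the trefoil is cheiral --- a fact witnessed precisely by the failure of the trefoil's bracket to be palindromic. So the proposition must be read, or corrected, as the substitution identities you derived, and your proof establishes exactly the intended statement.

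One genuine slip in your closing sentence deserves correction: the condition under which the literal equalities hold is palindromicity of the polynomials, and that is \emph{not} equivalent to amphicheirality of $L$. Amphicheirality implies $f(L^{\ast})=f(L)$, but the converse fails, since $f$ (equivalently the Jones polynomial) is not a complete chirality detector: there are cheiral knots, the smallest being $9_{42}$, whose Jones polynomial is invariant under $t\mapsto t^{-1}$. So your phrase ``i.e.\ exactly when $L$ is amphicheiral'' should read ``exactly when $f(L)$ is invariant under $A\mapsto A^{-1}$,'' a condition that amphicheirality implies but does not characterize. With that repair, your restriction-or-reinterpretation dichotomy for the statement is accurate.
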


\section{Links via braids}
Another way to study knot theory is through  braids. Braids were introduced by E. Artin and the equivalence between braids and knots is due to two theorems:  the Alexander and the Markov theorems. This section is a necessary compilation, for this  exposition, on the equivalence  between knot theory and braid theory.

\smallbreak
Throughout these notes we denote by $\mathtt{S}_n$ the symmetric group on $n$ symbols and we denote by $\mathtt{s}_i$ the elementary transposition $(i, i+1)$.  Recall that the  Coxeter presentation of $\mathtt{S}_n$, for $n>1$, is that with generators $\mathtt{s}_1, \ldots ,\mathtt{s}_{n-1}$ and the following relations:
\begin{eqnarray*}
\mathtt{s}_i\mathtt{s}_j & = & \mathtt{s}_j\mathtt{s}_i \quad \text{for}\quad \vert i-j\vert >1,\\
\mathtt{s}_i\mathtt{s}_j \mathtt{s}_i & = & \mathtt{s}_j\mathtt{s}_i \mathtt{s}_j\quad \text{for}\quad \vert i-j\vert =1.
\end{eqnarray*}

\subsection{}
Let $P_1, \ldots ,P_n$ be points in a plane  and $Q_1,\ldots , Q_n$ points in a\-no\-ther plane parallel to the first.
A $n$--geo\-me\-trical braid  is a collection  of  $n$  arcs $a_1, \ldots , a_n$ connecting the initial points $P_i$'s  with  the  ending  points $Q_{w(i)}$, where $w\in \mathtt{S}_n$,  such that:
\begin{enumerate}
\item for every different  $i$ and $j$ the arcs $a_i$ and $a_j$ are disjoint,
\item every plane parallel to  the plane containing the points $P_i$'s meets each arc in only one point.
\end{enumerate}
Geometrically, the arcs cannot be in the following situations:
\begin{center}
\begin{figure}[H]
\includegraphics
[scale=.5]
{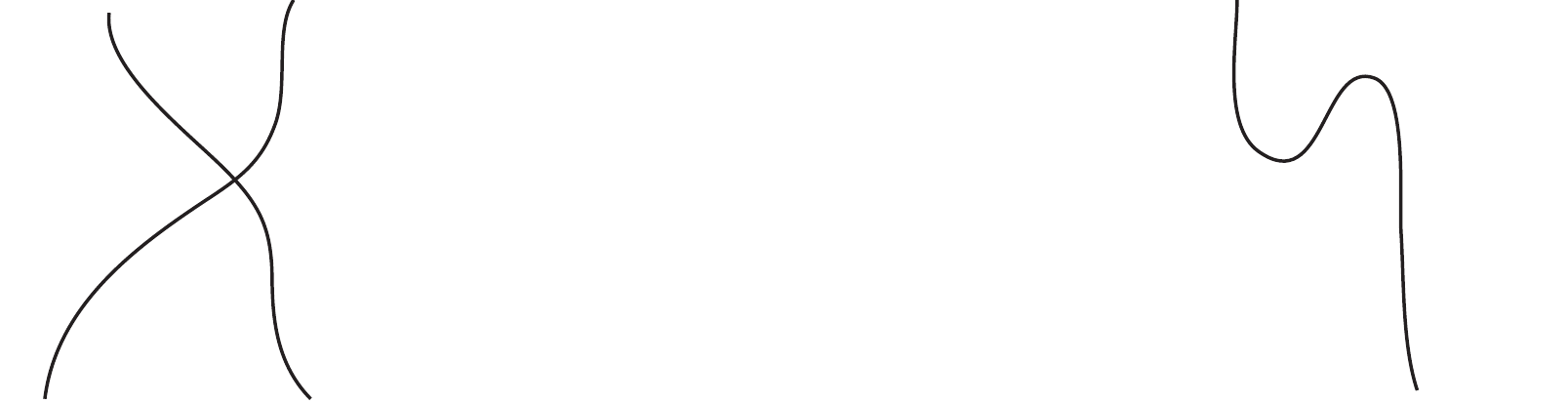}
\end{figure}
\end{center}

Define $\mathfrak{B}_n$ as the set formed by the $n$--geometrical braids. We define on  $\mathfrak{B}_n$ the equivalence relation, denoted by $\approx$,  given by continues deformation. That is, two geometrical braids $\alpha$ and $\beta$ are equivalents if there exists a family of geometrical braids $\{\gamma_t\}_{t\in\left[0,1\right]}$ such that
$\gamma_0= \alpha$ and $\gamma_1=\beta$.

As in knot theory we can translate, equivalently, the  geometrical braids to diagrams of them.
More precisely,  a {\it diagram of a geometrical braid} is  the image of a generic projection of the braid in a plane. Thus, we have the analogous of Reidemeister theorem for braids.
\begin{theorem}
Two geometrical braids are equivalent by $\approx$  if and only if their diagrams are equivalents, that is, every diagram  of one of  them can be transformed in a diagram of  the other,  by using a finite number of times the following  replacement:
\begin{center}
\begin{figure}[H]
\includegraphics
[scale=.5]
{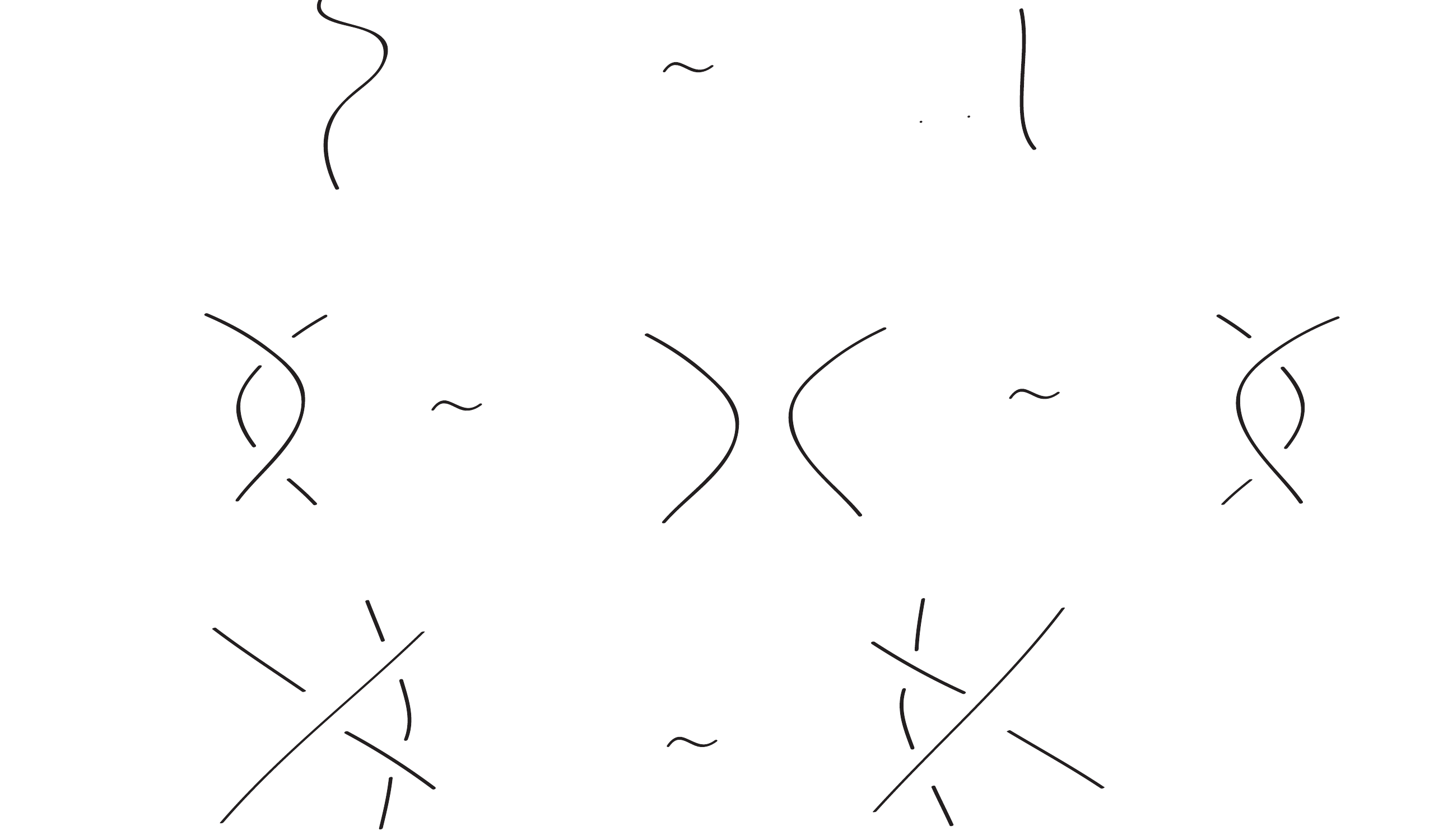}
\end{figure}
\end{center}
\end{theorem}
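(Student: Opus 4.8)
The plan is to follow the same general--position strategy that underlies the Reidemeister theorem (Theorem~\ref{ReidemeisterTh}), but carried out entirely inside the class of geometric braids, where the monotonicity condition~(2) in the definition of $\mathfrak{B}_n$ plays the decisive role. Fix coordinates so that the two parallel planes containing the $P_i$'s and the $Q_i$'s are horizontal, so that the braids run in the vertical (height) direction, and so that diagrams are obtained by projecting onto a fixed vertical plane. Condition~(2) says precisely that each strand is the graph of a function of the height parameter; hence along any braid, and along any isotopy through braids, every strand remains a graph over the height axis. This is exactly the feature that will forbid an R1--type (curl) move from ever occurring.

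First I would dispose of the easy implication. If two diagrams differ by a single one of the depicted replacements, then that replacement takes place inside a small ball and can be realized by an explicit continuous deformation of the corresponding arcs that keeps every strand monotone in the height direction; hence the underlying geometric braids are related by $\approx$. Concatenating finitely many such local deformations, together with the planar isotopies that slide crossings occurring at distinct heights and distinct horizontal positions past one another, shows that diagram--equivalence implies $\approx$.

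For the substantial implication, suppose $\alpha \approx \beta$ and choose a family $\{\gamma_t\}_{t \in [0,1]}$ of geometric braids with $\gamma_0 = \alpha$ and $\gamma_1 = \beta$. I would perturb this family, rel endpoints and within the class of braid isotopies, into general position with respect to the chosen projection. In general position the projected diagram $\pi(\gamma_t)$ is a genuine braid diagram for all but finitely many values $t_1 < \cdots < t_k$, and between consecutive critical values the diagram changes only by a planar isotopy, that is, by no move. At each critical value the projection exhibits a single codimension--one degeneracy, and the monotonicity constraint restricts these to exactly two types: a tangency of two strand--shadows, at which a cancelling pair of crossings is created or destroyed, which is the R2--type replacement; and a triple point, at which three strand--shadows meet, which is the R3--type (braid) replacement. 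The vertical--tangency degeneracy responsible for R1 cannot arise, because each $\gamma_t$ is a braid, so its strands stay graphs over the height axis and their shadows never turn back to form a curl.

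The main obstacle is precisely this general--position step: one must show that a braid isotopy can be approximated by one whose one--parameter family of projections is generic, while never leaving the class of height--monotone braids and without disturbing the endpoints. This is a transversality argument for the family $\{\pi(\gamma_t)\}$, and the care lies in verifying that the perturbations needed to make double points transverse, to isolate triple points, and to separate distinct critical events in time can all be chosen among monotone deformations. The same argument, applied to a one--parameter family of projection directions of a single fixed braid, shows that any two diagrams of the same braid are related by the moves, which is what lets one pass from the particular diagrams $D$ and $D'$ to the ones produced by the isotopy. Once this is in place, reading off the local model at each $t_j$ identifies the change in the diagram with one of the allowed replacements, and concatenating these over $j = 1, \ldots, k$ yields a finite sequence of moves transforming the diagram of $\alpha$ into that of $\beta$, completing the proof.
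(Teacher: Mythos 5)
The paper itself states this theorem without any proof, treating it as background compilation (in the spirit of its Reidemeister-theorem discussion), so there is no internal argument to compare yours against; your proposal must be judged on its own. On those terms it is essentially correct as a sketch of the standard general-position proof. Your key structural observation is the right one: condition (2) in the definition of a geometric braid makes each strand a graph over the height coordinate, and since the projection to the vertical plane preserves the height coordinate, every strand-shadow is likewise monotone in height; hence the cusp degeneracy that would generate an R1 curl can never occur, and the only codimension-one events in a generic one-parameter family of projected diagrams are tangencies (creating or cancelling a pair of crossings, the braid R2 move) and triple points (the braid R3 move), with planar isotopies, including sliding distant crossings past one another in height, in between. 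The transversality step that you honestly flag as the main obstacle is genuinely fillable by a standard device you could have invoked to close it cleanly: reparameterize every strand by height, so that a geometric braid becomes an $n$-tuple of maps $[0,1]\to\RR^2$ with disjoint graphs, and a braid isotopy becomes a homotopy of such tuples rel endpoints. In this formulation monotonicity is automatic and cannot be destroyed by perturbation, so ordinary jet-transversality for families of plane curves applies verbatim to make double points transverse, isolate triple points and tangencies, and separate the critical times $t_1<\cdots<t_k$. Your remark that the same argument run over a family of projection directions handles the passage from the given diagrams $D$, $D'$ to the ones produced by the isotopy is also the standard and correct way to deal with the arbitrariness of the chosen diagrams. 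In short: the approach is sound and complete in outline, with the one technical step asserted rather than proved, and the paper offers nothing to measure it against since it omits the proof entirely.
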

\begin{notation}
We shall use also  $\mathfrak{B}_n$ to denote  the set of diagrams of $n$--geometrical braids and $\approx$ to denote the equivalence of diagrams of braids.
\end{notation}

Now, we can define the \lq product by concatenation\rq\ between $ n $--geo\-me\-trical braids; more precisely, given $ \alpha $ and $ \beta $ in $ B_n $ we define by $ \alpha \beta \in \mathfrak {B} _n $ as that one defined by rescaling  the result of the $ n $--geometrical braid obtained by identifying the ending points of $ \alpha $ with the initial points of $ \beta $.
\begin{lemma}\label{rightdef}
For $\alpha$, $\alpha'$, $\beta$ and $\beta'$ in $\mathfrak{B}_n$ such that
$\alpha \approx \alpha'$ and $\beta\approx \beta'$, we have $\alpha\beta\approx\alpha'\beta'$.
\end{lemma}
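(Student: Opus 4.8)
The plan is to reduce the two-sided statement to two one-sided statements and then combine them using the fact that $\approx$ is an equivalence relation (it is reflexive, symmetric and transitive, being defined through continuous deformation of geometrical braids). Concretely, I would first establish the two implications
$$
\alpha \approx \alpha' \ \Rightarrow\ \alpha\beta \approx \alpha'\beta
\qquad\text{and}\qquad
\beta \approx \beta' \ \Rightarrow\ \alpha\beta \approx \alpha\beta',
$$
and then chain them together as $\alpha\beta \approx \alpha'\beta \approx \alpha'\beta'$.

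For the first implication, fix a deformation family $\{\gamma_t\}_{t\in[0,1]}\subset\mathfrak{B}_n$ witnessing $\alpha\approx\alpha'$, so that $\gamma_0=\alpha$ and $\gamma_1=\alpha'$. I would then set $\delta_t := \gamma_t\beta$, the concatenation (after rescaling) of $\gamma_t$ with the fixed braid $\beta$. By construction $\delta_0=\alpha\beta$ and $\delta_1=\alpha'\beta$, so it suffices to show that $\{\delta_t\}_{t\in[0,1]}$ is an admissible deformation family, i.e.\ that each $\delta_t$ is a genuine $n$--geometrical braid and that the assignment $t\mapsto\delta_t$ is continuous. The second implication is identical with the roles of the two factors interchanged, concatenating a fixed $\alpha$ on top of the deforming family $\{\gamma_t\}$ joining $\beta$ to $\beta'$.

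Checking that each $\delta_t$ is a geometrical braid is where the (mild) content lies. Since in a braid deformation the endpoints are held fixed---the arcs of every $\gamma_t$ begin at the points $P_i$ and end at the points $Q_{w(i)}$---the ending points of $\gamma_t$ coincide with the initial points of $\beta$ for all $t$, so the identification of planes that defines the concatenation is compatible throughout the deformation. The disjointness condition (1) for $\delta_t$ holds because the arcs of $\gamma_t$ and of $\beta$ live in disjoint slabs meeting only along the gluing plane, and there exactly at the shared endpoints; the monotonicity condition (2) is preserved because a plane parallel to the base meets $\delta_t$ either inside the rescaled $\gamma_t$--part or inside the rescaled $\beta$--part, in a single point in each case. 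Continuity of $t\mapsto\delta_t$ is then inherited from continuity of $t\mapsto\gamma_t$, since concatenation followed by rescaling is a fixed, $t$--independent operation applied to the deforming factor.

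The step I expect to be the main obstacle is making the gluing rigorous at the identification plane: one must ensure that the rescaling used to fit $\gamma_t\beta$ back into the standard slab is chosen uniformly in $t$ (for instance, with each factor occupying a fixed half of the slab), so that no discontinuity or loss of monotonicity is introduced at the seam and the arcs remain embedded across the gluing plane. Once the rescaling is normalized independently of $t$, these verifications are routine, and the lemma follows.
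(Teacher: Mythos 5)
Your proposal is correct and follows exactly the paper's route: the paper's proof is precisely the chain $\alpha\beta \approx \alpha'\beta \approx \alpha'\beta'$, invoking the two one-sided statements and transitivity of $\approx$. The only difference is that you also prove the one-sided implications (via the deformation family $\delta_t = \gamma_t\beta$ with a $t$--uniform rescaling), which the paper simply asserts without proof, so your write-up is a strictly more detailed version of the same argument.
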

\begin{proof}
We have $\alpha\beta \approx \alpha'\beta$ and also $\alpha'\beta\approx \alpha'\beta'$, then  $\alpha\beta\approx\alpha'\beta'$.
\end{proof}
Denote by $B_n$ the set of equivalence class of $\mathfrak{B}_n$ relative  to $\approx$; thus the elements of $B_n$ are the equivalence classes $[\alpha]$ of $\alpha\in\mathfrak{B}_n$. The Lemma \ref{rightdef} allows to  pass the product  by concatenation of $\mathfrak{B}_n$ to $B_n$:
$$
[\alpha][\beta] = [\alpha\beta]\qquad (\alpha , \beta\in B_n).
$$
\begin{theorem}
$B_n$ is a  group with the product by concatenation.
\end{theorem}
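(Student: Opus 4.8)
The plan is to verify the group axioms for $(B_n,\cdot)$ one at a time, establishing each identity first for representatives in $\mathfrak{B}_n$ by a continuous deformation (equivalently, by the braid moves) and then passing to classes. That the operation $[\alpha][\beta]=[\alpha\beta]$ is genuinely defined on $B_n$, rather than on $\mathfrak{B}_n$, has already been settled in Lemma \ref{rightdef}; so it remains to check associativity, the existence of a two-sided identity, and the existence of inverses.

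First I would treat associativity. Given $\alpha,\beta,\gamma\in\mathfrak{B}_n$, both $(\alpha\beta)\gamma$ and $\alpha(\beta\gamma)$ are obtained by stacking the same three braids in the same order and then rescaling the vertical coordinate; the two results differ only in the heights at which the gluing planes sit, and linearly interpolating between those heights yields a family $\{\gamma_t\}_{t\in[0,1]}$ of geometrical braids realizing $(\alpha\beta)\gamma\approx\alpha(\beta\gamma)$. Passing to classes gives $([\alpha][\beta])[\gamma]=[\alpha]([\beta][\gamma])$. For the identity, let $\mathbf{1}_n$ be the trivial braid whose arcs are the straight vertical segments joining $P_i$ to $Q_i$ (so $w=\mathrm{id}$ and there are no crossings). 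Concatenating $\alpha$ with $\mathbf{1}_n$ on either side produces a braid agreeing with $\alpha$ after a rescaling of the vertical coordinate, and that rescaling is itself a continuous deformation; hence $\alpha\mathbf{1}_n\approx\alpha\approx\mathbf{1}_n\alpha$ and $[\mathbf{1}_n]$ is a two-sided identity.

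Finally, and this is where the real work lies, I would construct inverses. For $\alpha\in\mathfrak{B}_n$ let $\overline{\alpha}$ denote the braid obtained by reflecting $\alpha$ in a horizontal plane, so that in a diagram the crossings of $\overline{\alpha}$ are those of $\alpha$ read in reverse order with each crossing replaced by the one of opposite type; if $\alpha$ realizes the permutation $w$ then $\overline{\alpha}$ realizes $w^{-1}$. The claim is that $\alpha\overline{\alpha}\approx\mathbf{1}_n$, and I would prove it by induction on the number $k$ of crossings of a diagram of $\alpha$. When $k=0$ we have $\mathbf{1}_n\mathbf{1}_n\approx\mathbf{1}_n$. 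For $k>0$, let $c$ be the crossing of $\alpha$ nearest the middle gluing plane; by construction the crossing of $\overline{\alpha}$ nearest that plane is the mirror $\overline{c}$, of the opposite type and on the same pair of strands, so $c$ and $\overline{c}$ sit adjacently as a cancelling pair that the braid R2--move removes. After this cancellation the diagram is exactly that of $\alpha'\overline{\alpha'}$, where $\alpha'$ is $\alpha$ with its innermost crossing deleted and thus has $k-1$ crossings; the inductive hypothesis finishes the argument. The symmetric computation gives $\overline{\alpha}\alpha\approx\mathbf{1}_n$, so $[\overline{\alpha}]=[\alpha]^{-1}$, completing the verification.

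The main obstacle is precisely this last step. The subtlety is to be sure that after removing the innermost cancelling pair the remaining diagram still has the special symmetric form $\alpha'\overline{\alpha'}$, so that the induction can proceed; this is what forces one to peel crossings off symmetrically from the middle outward, rather than trying to simplify the two halves independently. One also needs that the R2--type cancellation is among the permitted braid moves, which is guaranteed by the braid analogue of Reidemeister's theorem quoted above. Granting the symmetric cancellation, associativity and the identity reduce to rescalings of the vertical parameter and present no genuine difficulty.
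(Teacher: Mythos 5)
Your proof is correct and follows exactly the route the paper itself indicates (though the paper only sketches it with pictures): the trivial braid as identity, the horizontal reflection $\overline{\alpha}$ as inverse, well-definedness via Lemma \ref{rightdef}, and associativity/unit laws by rescaling the vertical coordinate. Your inductive R2-cancellation of crossing pairs from the middle outward is a careful and valid justification of the claim $\alpha\overline{\alpha}\approx\mathbf{1}_n$ that the paper leaves implicit in its picture of $\alpha^{-1}$.
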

From now on we denote $[\alpha]$ simply by $\alpha$. Thus,
observe that the identity can be pictured by:

\vspace{2cm}

\noindent and the inverse $\alpha^{-1}$ of $\alpha$  is obtained by a reflection of $\alpha$:

\vspace{2cm}

\begin{remark}\rm
Observe that  $B_1$ is the trivial group and $B_2$ is the group of $\ZZ$.
\end{remark}

For $1\leq i\leq n-1$, denote by $\sigma_i$'s  the following {\it elementary braid}:

\vspace{3cm}
\begin{theorem}[Artin]
For $n>1$, $B_n$ can be presented by generators $\sigma_1,\ldots , \sigma_{n-1}$ and the following relations:
\begin{eqnarray}\label{braid1}
\sigma_i\sigma_j & = & \sigma_j\sigma_i \quad \text{for}\quad \vert i-j\vert >1,\\
\label{braid2}
\sigma_i\sigma_j \sigma_i & = & \sigma_j\sigma_i \sigma_j\quad \text{for}\quad \vert i-j\vert =1.
\end{eqnarray}
\end{theorem}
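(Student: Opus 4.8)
The plan is to compare $B_n$ with the abstract group $G_n$ presented by generators $\sigma_1, \ldots, \sigma_{n-1}$ subject to the relations (\ref{braid1}) and (\ref{braid2}), and to show that the natural map $\Phi : G_n \to B_n$ sending each abstract generator to the corresponding elementary braid is a group isomorphism. First I would verify that the elementary braids genuinely satisfy both families of relations, so that $\Phi$ is well defined. For $|i-j| > 1$ the strands moved by $\sigma_i$ and by $\sigma_j$ are disjoint, so the two crossings may be performed in either vertical order; a planar isotopy slides one past the other, giving $\sigma_i \sigma_j = \sigma_j \sigma_i$. For $|i-j| = 1$ the identity $\sigma_i \sigma_{i+1} \sigma_i = \sigma_{i+1} \sigma_i \sigma_{i+1}$ is a direct isotopy of three adjacent strands, confirmed by an explicit picture. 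This yields the homomorphism $\Phi$.

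Next I would establish surjectivity of $\Phi$. Given any diagram of a braid in $\mathfrak{B}_n$, a small isotopy puts it in general position so that all crossings occur at pairwise distinct heights and, away from those heights, every strand is vertical. Reading the diagram from top to bottom, each horizontal slab between two consecutive crossing heights is the trivial braid, while each crossing height carries a single crossing of two strands that are adjacent at that level, hence an elementary braid $\sigma_i$ or its inverse $\sigma_i^{-1}$. Concatenating these slabs expresses the class of the diagram as a product of the $\sigma_i^{\pm 1}$, so the elementary braids generate $B_n$ and $\Phi$ is onto.

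The substance of the proof is injectivity, that is, showing that every relation holding in $B_n$ is a consequence of (\ref{braid1}) and (\ref{braid2}). Here I would invoke the Reidemeister theorem for braid diagrams stated above: two words in the $\sigma_i^{\pm 1}$ represent the same element of $B_n$ if and only if their diagrams differ by a finite sequence of the permitted local braid moves. The key observation is that each such move corresponds to a relation already available in $G_n$. The planar interchange of two crossings on disjoint pairs of strands is exactly the far-commutativity (\ref{braid1}); the triangle move on three adjacent strands is exactly the braid relation (\ref{braid2}); and the annihilation of a crossing against its reverse, $\sigma_i \sigma_i^{-1} = e$, is already a free-group consequence of the presentation. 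Crucially, the monotonicity condition in the definition of a geometric braid (every horizontal plane meets each strand exactly once) forbids any R1--type kink, so no further relations can occur. Consequently any word representing the trivial braid can be reduced to the empty word using only (\ref{braid1}), (\ref{braid2}) and the group axioms, whence $\ker \Phi$ is trivial.

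The hard part will be the bookkeeping inside this injectivity step: one must check that \emph{every} local modification allowed by the braid Reidemeister moves, in all of its orientations and crossing signs, is realized by a sequence of the two defining relations, and also that the decomposition of Step~2 into elementary braids is itself well defined up to these relations, i.e. independent of the chosen general position. Once that case analysis is complete, $\Phi$ is simultaneously surjective and injective, hence an isomorphism, and the Artin presentation follows.
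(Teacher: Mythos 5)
The paper states Artin's theorem without any proof --- it is quoted as a classical result and immediately put to use (to obtain the epimorphism $\mathsf{p}: B_n\to \mathtt{S}_n$ and later the map into the Hecke algebra) --- so there is no in-paper argument to compare yours against. Your outline is the standard textbook route and its logic is sound: well-definedness of $\Phi$ by explicit isotopies, surjectivity by putting a diagram in general position with crossings at distinct heights, and injectivity by reducing diagram equivalence to the two relations via the braid-diagram analogue of Reidemeister's theorem, which the paper also states (and likewise does not prove). Two points deserve emphasis. First, essentially all the depth of the theorem is concentrated in that diagrammatic equivalence theorem: granting it, what remains is exactly the finite case analysis you flag --- checking that every signed variant of the local moves is a group-theoretic consequence of (\ref{braid1}) and (\ref{braid2}) (it is; for instance, for $\vert i-j\vert =1$ the identity $\sigma_i^{-1}\sigma_j\sigma_i = \sigma_j\sigma_i\sigma_j^{-1}$ follows by conjugating the positive relation, and the creation/annihilation move is a free-group consequence), and that reading a word off a diagram is well defined up to the relations, where the only ambiguity --- two crossings on disjoint strand pairs at the same height --- is resolved by (\ref{braid1}) alone, since crossings sharing a strand cannot exchange heights by planar isotopy. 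Second, your remark that monotonicity forbids R1-type kinks is true but is not by itself an argument that no further relations arise; that completeness claim is precisely the content of the invoked equivalence theorem, so if you were required to prove it too, the genuinely hard combinatorial work (as in Artin's original treatment, or Birman, or Kassel--Turaev) would still be ahead of you. As a sketch conditional on the diagram-equivalence theorem the paper itself states, your proposal is correct and is the proof one would expect to see supplied here.
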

 An immediate consequence of the above is that  we have the following epimorphism:
\begin{equation}\label{BnontoSn}
 \mathsf{p}: B_n\longrightarrow \mathtt{S}_n,\quad \text{defined by   mapping $\sigma_i	\mapsto \mathtt{s}_i$}.
\end{equation}

Notice that for every $n$ we have a natural monomorphism $\iota_n$ from $B_n$ in $B_{n+1}$, where  for every braid
$\sigma\in B_n$, $ \iota_n(\sigma)$ is the  braid  of $B_{n+1}$ coinciding with $\sigma$ up to $n^{th}$ strand and having one more strand  with no crossing with the preceding strand.

\begin{notation}
We denote by $B_{\infty}$ the group obtained as the  inductive limit of $\{(B_n, \iota_n)\}_{n\in \NN}$.
\end{notation}

\subsection{}
Given $\alpha\in B_n$, the identification of the initial points with the end points of $\alpha$ determines a diagram of oriented links, which is denoted by $\widehat{\alpha}$; this process of identification is known as the closure of a braid. Thus we have  the \lq function closure\rq\
$$
\widehat{\,\,}: B_{\infty}\longrightarrow \mathfrak{L}, \quad \alpha\mapsto \widehat{\alpha}.
$$
The proof  that the  function closure is epijective is due to Alexander; we will outline this  proof, since it gives  an efficient  method to compute the preimage, by $\widehat{\,\,}$, of a given link.

\begin{theorem}[Alexander, 1923]\label{Alexandertheorem}
Every  oriented link  is the closure of a braid.
\end{theorem}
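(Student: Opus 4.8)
The plan is to follow Alexander's braiding algorithm, which produces from any oriented diagram $D$ of $L$ a braid $\alpha$ with $\widehat{\alpha}\sim L$. First I would fix a diagram $D$ of $L$ in the plane and choose a point $O$ not lying on $D$, thinking of $O$ as the projection of the braid axis, and introduce polar coordinates centered at $O$. The strategy is to modify $D$ by ambient isotopies of $\RR^3$ (which do not change the link type, by Theorem \ref{ReidemeisterTh}) until, traversing the diagram according to its orientation, the angular coordinate is \emph{strictly increasing} along every arc; that is, every strand winds around $O$ in the counterclockwise sense. Once this is achieved, cutting the plane along a ray emanating from $O$ turns $D$ into a braid whose closure recovers $D$.

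To make this precise, I would subdivide $D$ into finitely many arcs, each lying in an angular sector and monotone in the angular coordinate. Call an arc \emph{good} if its orientation agrees with the counterclockwise direction around $O$, and \emph{bad} otherwise. If $D$ has no bad arcs we are already done; the content of the proof is to eliminate the bad arcs one at a time, reducing to the good case by induction on their number.

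The key step, and the point where I expect the real work to lie, is Alexander's trick for removing a bad arc. Given a bad arc $c$, I would replace it by a new arc $c'$ joining the same endpoints but travelling around $O$ in the \emph{opposite} angular direction, sweeping past $O$ so that $c'$ now circulates counterclockwise, and route $c'$ entirely \emph{over} (alternatively, entirely \emph{under}) the rest of the diagram. Because $c'$ is pushed over (or under) everything else, the move is realized by an admissible isotopy of $\RR^3$, so the link type is unchanged, while the number of bad arcs strictly decreases. Iterating this operation finitely many times yields a diagram all of whose arcs are good.

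Finally, from a diagram in which every arc winds counterclockwise around $O$, I would choose a ray $\rho$ from $O$ to infinity meeting the diagram transversally; cutting $D$ along $\rho$ and straightening produces an $n$--geometrical braid $\alpha\in B_n$, where $n$ is the number of intersection points of $D$ with $\rho$. By construction the closure $\widehat{\alpha}$ is isotopic to $D$, whence $\widehat{\alpha}\sim L$. The hard part will be the rigorous justification of the braiding move: one must check that a bad arc can always be thrown across $O$ by an admissible isotopy and that this genuinely lowers the count of bad arcs without creating new ones, thereby guaranteeing that the procedure terminates.
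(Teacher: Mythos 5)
Your proposal is correct and takes essentially the same route as the paper's (sketched) proof: choose a point $O$ around which every strand of the diagram travels counterclockwise, then cut along a ray from $O$ to obtain a braid whose closure is $D$. The only difference is one of emphasis --- the paper simply cites Alexander's result that such an $O$ exists in the isotopy class of the diagram, while you outline the standard bad-arc elimination that establishes it; when filling in that step, note that one should first subdivide so that each bad arc contains at most one crossing, throwing it across $O$ entirely over or entirely under according to whether it is the over- or under-strand there, which is exactly what guarantees the count of bad arcs strictly decreases.
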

\begin{proof}
The sketch of the proof is as follows. Suppose we have a diagram $D$ of an oriented link $L$.
\begin{enumerate}
\item
We fix a point $O$ in the plane  not lying on any arc of $D$, such that  a point moving along  each component of the link  is  always seen  from $O$     going counterclockwise (or clockwise). Alexander proved that such a point $O$ always exists in  the  isotopy  class  of  the  link diagram of $D$.
\item
We divide the diagram  in sectors, by rays  starting from $O$, with the condition  that each sector  contains only one crossing.
\item
Finally, we open the diagram along one of the  rays obtaining  a braid whose closure is the diagram $D$.
\end{enumerate}
\end{proof}
\begin{center}
\begin{figure}[H]
\includegraphics
[scale=.3]
{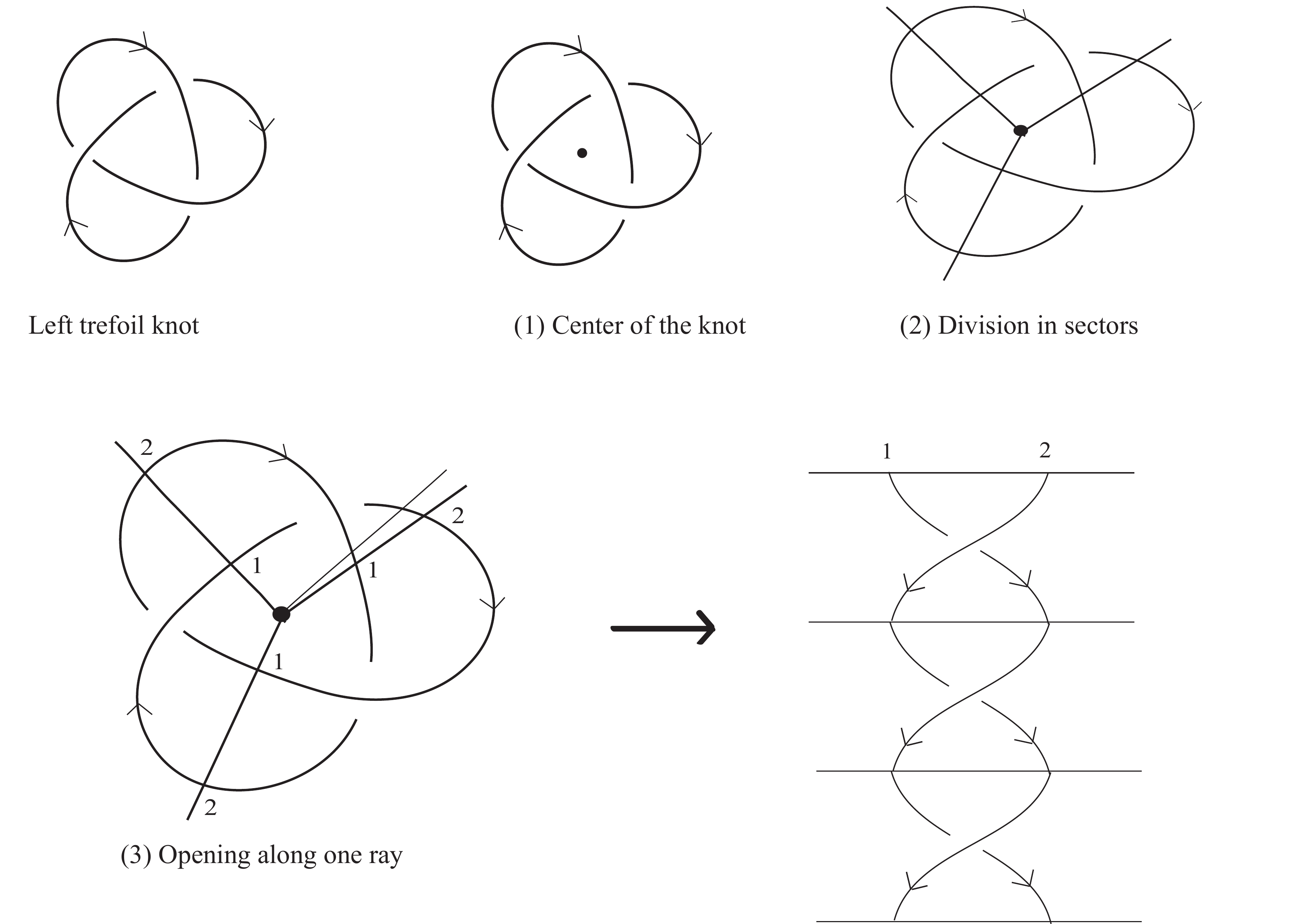}
\caption{}
\label{AlexanderTheorem}
\end{figure}
\end{center}
\begin{example}
Fig. \ref{AlexanderTheorem}  shows  that the  left trefoil  is the closure of the braid $\sigma_1^3$.
\end{example}

In order to describe the links through braids we need to know when the closure of two braids yields the same link. In fact the map closure is not injective; for instance the braids $ 1_{B_n}$ and $\sigma_{n-1}\in B_n$ yield the same link. The answer to which braids in $B_{\infty}$ yield the same link is due to Markov.

Denote by $\sim_M$ the equivalence relation on $B_{\infty}$ generated by the following replacements
(also called moves):
\begin{enumerate}
\item M1: $\alpha\beta$ can be replaced by $\beta\alpha$ (commutation),
\item M2:  $\alpha$ can be replaced by $\alpha\sigma_n$ or by $\alpha\sigma_n^{-1}$ (stabilization),
\end{enumerate}
where $\alpha$ and $\beta$ are in $B_n$.

The relation $\sim_M$ defines in fact an equivalence relation in $B_{\infty}$. Two elements in the same $\sim_M$--class are called Markov equivalent.

\begin{theorem}[Markov]\label{MarkovTh}
For $\alpha$ and $\beta$ in $B_{\infty}$, we have:
$\widehat{\alpha}$ and $\widehat{\beta}$ are  ambient isotopic links  if and only if $\alpha$ and $\beta$ are Markov equivalent.
\end{theorem}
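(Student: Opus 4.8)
The plan is to prove the two implications of Theorem \ref{MarkovTh} separately. The forward implication (Markov equivalent $\Rightarrow$ isotopic closures) is routine, while the converse is the substantial part, and it is where I expect essentially all the difficulty to lie. For the direction $\alpha \sim_M \beta \Rightarrow \widehat{\alpha} \sim \widehat{\beta}$, it suffices to verify that each generating move M1 and M2 preserves the isotopy class of the closure. For M1, the closures $\widehat{\alpha\beta}$ and $\widehat{\beta\alpha}$ are ambient isotopic because in the closed diagram the block $\beta$ can be slid around the identification arcs from the top to the bottom of the braid; concretely $\beta\alpha = \beta(\alpha\beta)\beta^{-1}$ is a conjugate, and conjugation is absorbed by the closure. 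For M2, passing from $\alpha \in B_n$ to $\alpha\sigma_n^{\pm 1}\in B_{n+1}$ adds, in the closed diagram, one extra strand carrying a single crossing that forms a curl; this curl is removed by a Reidemeister R1 move, so by Theorem \ref{ReidemeisterTh} the two closures are isotopic. This direction thus reduces entirely to exhibiting the corresponding Reidemeister moves.

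For the converse, $\widehat{\alpha} \sim \widehat{\beta} \Rightarrow \alpha \sim_M \beta$, I would use the two tools already at hand: Reidemeister's theorem (\ref{ReidemeisterTh}) and Alexander's braiding process (\ref{Alexandertheorem}). By Theorem \ref{ReidemeisterTh} the diagrams $\widehat{\alpha}$ and $\widehat{\beta}$ are joined by a finite chain of link diagrams $D_0, D_1, \dots, D_k$ in which consecutive diagrams differ by a single move among R0--R3. Each $D_i$ is a link diagram but generally not a closed braid, so I would apply the Alexander process to each $D_i$ to produce a braid $\gamma_i$ with $\widehat{\gamma_i}$ isotopic to $D_i$, with $\gamma_0 \sim_M \alpha$ and $\gamma_k \sim_M \beta$. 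The core of the argument is then the following two claims: (i) the braid produced by the Alexander process is independent, up to $\sim_M$, of the auxiliary choices made (the base point $O$, the subdivision into sectors, and the ray along which the diagram is cut); and (ii) if $D_i$ and $D_{i+1}$ differ by a single Reidemeister move, then $\gamma_i \sim_M \gamma_{i+1}$. Granting (i) and (ii), chaining the equivalences yields $\alpha \sim_M \gamma_0 \sim_M \cdots \sim_M \gamma_k \sim_M \beta$, as desired.

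The main obstacle lies in establishing (i) and (ii), which together amount to showing that the braiding algorithm descends to a well-defined map on Markov classes that is invariant under Reidemeister moves. For (i), changing the cutting ray is a conjugation and hence an M1 move; changing a sector subdivision amounts to the braid relations (\ref{braid1})--(\ref{braid2}) together with an M1 move; and moving the base point $O$ across a strand is the delicate case, realized by a combination of M1 and M2. For (ii), one reduces each Reidemeister move to its effect on the intermediate braid word: R0 is planar isotopy and is handled trivially, R2 and R3 are absorbed using the braid relations together with conjugation, and an R1 move that creates or removes a kink on the outermost strand is precisely a stabilization M2.

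The genuinely hard step is controlling the situation in which a Reidemeister move occurs in a region that the Alexander process must re-braid against the base point: the move can force a strand to cross the cutting ray, and one must verify that the resulting bookkeeping closes up using only M1 and M2, with no further relations creeping in. This is the combinatorial heart of Markov's theorem, and I would expect to discharge it by a careful case-by-case analysis of how each elementary move interacts with the rays and sectors of the Alexander subdivision; alternatively, one may invoke the Birman--Menasco braid-foliation machinery to obtain the same conclusion in a more conceptual but heavier framework.
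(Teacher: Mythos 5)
The paper itself states this theorem without proof, citing it as Markov's classical result, so there is no internal proof to compare against; judged on its own terms, your proposal is the standard route (Alexander braiding plus invariance of the braid, up to $\sim_M$, under choices and Reidemeister moves), and your forward direction is complete and correct: M1 is absorbed by the closure since $\beta\alpha$ is conjugate to $\alpha\beta$, and M2 produces a curl on the new strand removed by R1, so Theorem \ref{ReidemeisterTh} applies.

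The converse, however, is a roadmap rather than a proof, and the gap is genuine: your claims (i) and (ii) jointly \emph{are} Markov's theorem, and the proposal establishes neither. The assertions offered in their support do not hold as stated. Independence of the sector subdivision is not ``the braid relations (\ref{braid1})--(\ref{braid2}) together with an M1 move'': words equal modulo braid relations are already the \emph{same} element of $B_n$, whereas different subdivisions (and different choices of how arcs are thrown over the point $O$ to restore the counterclockwise condition) generically yield genuinely different braids, possibly in braid groups of different index, so the required equivalence is again a nontrivial combination of M1 and M2 --- this is circular as written. Similarly, an R1 kink occurring in the interior of the diagram is not ``precisely a stabilization M2''; M2 only appends $\sigma_n^{\pm1}$ at the top index, and sliding an interior kink to that position is exactly the kind of manipulation whose legitimacy is in question. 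The hardest configuration, which you correctly flag --- a Reidemeister move forcing a strand across the cutting ray, or the base point $O$ across a strand --- is where every complete proof (Birman's original argument, Morton's threadings, Traczyk's proof, or the Lambropoulou--Rourke $L$-move reformulation, in which both destabilization and the base-point moves become instances of a single local move) must do real work; deferring it to ``a careful case-by-case analysis'' or to an unproved appeal to Birman--Menasco machinery leaves the combinatorial heart of the theorem unestablished. To close the gap you would need, at minimum, a precise well-definedness lemma for the braiding map on diagrams (fixing conventions for $O$, the rays, and the re-braiding of offending arcs) and an explicit verification, move by move, that each oriented Reidemeister move changes the output braid by a finite sequence of M1 and M2 --- which is precisely the content that a citation-free proof of Theorem \ref{MarkovTh} must supply.
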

From the  Theorems \ref{Alexandertheorem} and \ref{MarkovTh}, it follows  that:
\begin{corollary}\label{LinksAsBraids}
There is a bijection between    $B_{\infty}/\sim_M$ and $\mathfrak{L}/\sim$ defined through  the mapping $\alpha\mapsto \widehat{\alpha}$.
\end{corollary}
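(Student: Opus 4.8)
The plan is to recognize that the corollary is a purely formal consequence of the two deep results already established, namely Alexander's Theorem~\ref{Alexandertheorem} and Markov's Theorem~\ref{MarkovTh}; all the genuine topology is hidden in those statements, and what remains is to push the closure map through the quotients. Accordingly, I would first define the candidate bijection
\[
\Phi\colon B_{\infty}/\sim_M \longrightarrow \mathfrak{L}/\sim, \qquad [\alpha]_M \longmapsto [\widehat{\alpha}],
\]
where $[\alpha]_M$ denotes the Markov class of a braid $\alpha\in B_{\infty}$ and $[\widehat{\alpha}]$ the ambient isotopy class of its closure, and then verify that $\Phi$ is well defined, injective, and surjective.

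For well-definedness, I would take two Markov equivalent braids $\alpha\sim_M\beta$ and invoke the forward implication of Theorem~\ref{MarkovTh} to conclude $\widehat{\alpha}\sim\widehat{\beta}$; hence $[\widehat{\alpha}]=[\widehat{\beta}]$, so the value $\Phi([\alpha]_M)$ does not depend on the chosen representative. Injectivity is the converse bookkeeping: if $\Phi([\alpha]_M)=\Phi([\beta]_M)$, that is $\widehat{\alpha}\sim\widehat{\beta}$, then the backward implication of Theorem~\ref{MarkovTh} yields $\alpha\sim_M\beta$, so $[\alpha]_M=[\beta]_M$. Thus the two directions of Markov's theorem are precisely what make $\Phi$ simultaneously well defined and injective.

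For surjectivity, I would start from an arbitrary class $[L]\in\mathfrak{L}/\sim$ with representative an oriented link $L$. By Alexander's Theorem~\ref{Alexandertheorem}, $L$ is the closure of some braid $\alpha\in B_{\infty}$, that is $\widehat{\alpha}\sim L$, and therefore $\Phi([\alpha]_M)=[\widehat{\alpha}]=[L]$. Combining the three verifications shows that $\Phi$ is a bijection, which is the assertion of the corollary.

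Finally, I would note that there is essentially no obstacle internal to this argument: every nontrivial step is a direct citation of Theorem~\ref{Alexandertheorem} or Theorem~\ref{MarkovTh}. The only point demanding care is matching each required property of $\Phi$ to the correct direction of Markov's theorem, using the \emph{if} direction for well-definedness and the \emph{only if} direction for injectivity, while keeping the distinction between a braid $\alpha$, its Markov class $[\alpha]_M$, and the isotopy class $[\widehat{\alpha}]$ of its closure. The genuine difficulty, the proof of Markov's theorem itself, and in particular of its converse, lies entirely outside this corollary.
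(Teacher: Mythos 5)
Your proof is correct and follows exactly the paper's route: the paper simply states that the corollary follows from Theorems~\ref{Alexandertheorem} and \ref{MarkovTh}, and your write-up spells out the routine verification (Markov's two directions giving well-definedness and injectivity, Alexander giving surjectivity) that the paper leaves implicit.
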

\begin{remark}\rm\label{Invariant}
The Markov theorem says that  constructing  an invariant for links  is equivalent to defining a map $\I: B_{\infty}\longrightarrow \mathrm{Set}$, such that for all $\alpha$ and $\beta$ in $B_n$,  agrees  with the replacements of Markov M1 and M2, that is,  $\I$ satisfies:
\begin{enumerate}
\item $\I(\alpha\beta) = \I(\beta\alpha)$,
\item $\I(\alpha\sigma_n)= \I(\alpha)= \I(\alpha\sigma_n^{-1})$.
\end{enumerate}
\end{remark}
\section{Hecke algebra}
Let $\FF$ be a field, from now on   the denomination  {\it $\FF$--algebra} mean an associative unital, with unity 1, algebra over the field $\FF$; thus we can regard  $\FF$ as a subalgebra of the center of the algebra.
\smallbreak
Given a group  $G$, we denote by $\FF G$ the $\FF$--algebra known as the group algebra   of $G$ over $\FF$. Recall that the set $G$ is a linear basis for
 $\FF G$, regarded  as a $\FF$--vector space.  Also recall that if $G$ has a presentation $\langle X; R\rangle$, then the $\FF$--algebra  $\FF G$ can be presented by generators $X$ and  the relations in $R$.

\subsection{}Let $\u$ be an indeterminate in $\CC$ and set  $\KK$ the field of the rational functions $\CC(\u)$. For  $n\in \NN$, the Hecke algebra, denoted by $\Hn(\u)$ or simply $\Hn$,  is defined by $\mathrm{H}_1=\KK$ and for $n>1$ as  the   $\KK$--algebra  presented  by generators $h_1,\ldots , h_{n-1}$ and the relations:
\begin{eqnarray}
\label{hecke1}
h_ihj & = & h_jh_i \quad \text{for}\quad \vert i-j\vert >1,\\
\label{hecke2}
h_ih_j h_i & = &h_jh_i h_j\quad \text{for}\quad \vert i-j\vert =1,\\
\label{hecke3}
h_i^2 & = & \u + (\u-1)h_i \quad \text{for all $i$}.
\end{eqnarray}
The  $h_i$'s are invertible,  indeed we have
\begin{equation}\label{inverseh}
h_i^{-1} =(\u^{-1}-1) + \u^{-1}h_i.
\end{equation}
\begin{remark}\rm\label{remarkHecke}
\begin{enumerate}
\item Taking $\u$ as power of a prime number,  the Hecke algebra above appears in representation theory as a  centralizer of a natural representation associated to the action of the finite general linear group on the variety of flags. This feature of the Hecke algebra will be the key point to construct here certain new invariants of links by using other Hecke algebra or other algebras of type Hecke. Consequently, the next subsection will be devoted to present  the Hecke algebras in the context of representation theory capturing in particular the Hecke algebra defined above.
\item The natural map $\sigma_i \mapsto h_i$ defines an algebra epimorphism  from $\KK B_n$ to $\Hn$. Then,
we have  that  the Hecke algebra $\Hn$ is the quotient of $\KK B_n$ by the two sided  ideal generator by
$$
\sigma_i^2 - \u \ - (\u-1)\sigma_i\quad \text{for}\quad 1\leq i\leq n-1.
$$
\item
By taking the specialization $\u=1$, the Hecke algebra becomes the group algebra of the symmetric group.
For this reason  the Hecke algebra is known also as  a deformation of the symmetric group.
\end{enumerate}
\end{remark}

We construct now a basis of the Hecke algebra; this basis is constructed in an inductive way and  is used to prove that the algebra  supports a Markov trace. We start with the following lemma.
\begin{lemma}\label{atmost}
In $\Hn$ every word in $1, h_1, \ldots , h_{n-1}$ can be written as a linear combination of words in the $1$ and the $h_i$'s  such that each of them  contains at most one $h_{n-1}$. Hence, $\Hn$ is finite dimensional.
\end{lemma}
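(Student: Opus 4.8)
The plan is to prove a sharper, module-theoretic statement from which the lemma follows at once. Let $\mathrm{H}_{n-1}\subseteq\Hn$ denote the subalgebra generated by $h_1,\dots,h_{n-2}$, and for $1\le j\le n-1$ set $w_j:=h_{n-1}h_{n-2}\cdots h_j$, together with $w_n:=1$. Each $w_j$ involves $h_{n-1}$ at most once. I would show that the left $\mathrm{H}_{n-1}$--module
\[
V:=\sum_{j=1}^{n}\mathrm{H}_{n-1}\,w_j
\]
is all of $\Hn$. Since every element of $V$ is an $\mathrm{H}_{n-1}$--linear combination of the $w_j$, and the elements of $\mathrm{H}_{n-1}$ are words not involving $h_{n-1}$, the equality $V=\Hn$ immediately yields that every element of $\Hn$ is a $\KK$--linear combination of words containing at most one $h_{n-1}$, which is the assertion.

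To prove $V=\Hn$ I would argue that $V$ is a right ideal. It contains $1=w_n$, so it suffices to check that $V$ is stable under right multiplication by each generator, i.e.\ $Vh_i\subseteq V$ for $1\le i\le n-1$; then $V\cdot\Hn\subseteq V$ by induction on word length, and $1\in V$ forces $V=\Hn$. Because $V$ is a left $\mathrm{H}_{n-1}$--module, it is enough to verify $w_j h_i\in V$ for all $i,j$, and this splits into cases governed by the relations (\ref{hecke1})--(\ref{hecke3}). When $i\le j-2$ the generator $h_i$ commutes past the whole word, so $w_j h_i=h_i w_j\in\mathrm{H}_{n-1}w_j$; when $i=j-1$ one gets $w_j h_i=w_{j-1}$ directly; when $i=j$ the quadratic relation gives $w_j h_j=\u\,w_{j+1}+(\u-1)w_j$; and when $i=n-1$ with $j\le n-2$, commuting $h_{n-1}$ leftward past $h_{n-3},\dots,h_j$ and applying the braid relation $h_{n-1}h_{n-2}h_{n-1}=h_{n-2}h_{n-1}h_{n-2}$ yields $w_j h_{n-1}=h_{n-2}w_j$.

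The main obstacle is the remaining case $j+1\le i\le n-2$, where $h_i$ meets the interior of the word $w_j$. Here I would commute the trailing $h_i$ leftward past $h_{j},\dots,h_{i-2}$ (all of index $\le i-2$) to create the configuration $h_i h_{i-1}h_i$, rewrite it by the braid relation as $h_{i-1}h_i h_{i-1}$, and then commute the newly produced leading $h_{i-1}$ back to the far left past $h_{i+1},\dots,h_{n-1}$; the net effect is $w_j h_i=h_{i-1}w_j\in\mathrm{H}_{n-1}w_j$. Keeping track of the indices in this commute--braid--commute maneuver is the only delicate bookkeeping, and one checks that the five buckets above exhaust all pairs $(i,j)$.

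Finally, finite dimensionality follows by induction on $n$: the base case $\mathrm{H}_1=\KK$ is trivial, and $\Hn=V=\sum_{j=1}^{n}\mathrm{H}_{n-1}w_j$ is spanned by the products $b\,w_j$ with $b$ ranging over a basis of $\mathrm{H}_{n-1}$, giving $\dim\Hn\le n\dim\mathrm{H}_{n-1}\le n!$.
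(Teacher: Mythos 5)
Your proof is correct, but it takes a genuinely different route from the paper's. The paper argues by a word-rewriting induction on $n$: a word $M$ containing $h_n$ twice is written $M = M_1 h_n M_2 h_n M_3$, the induction hypothesis is applied to the middle word $M_2$ to arrange at most one $h_{n-1}$ in it, and then either the quadratic relation (\ref{hecke3}) (when $M_2$ contains no $h_{n-1}$, so the two $h_n$'s merge into $h_n^2$) or commutation plus the braid relation $h_n h_{n-1} h_n = h_{n-1} h_n h_{n-1}$ (when $M_2$ contains exactly one $h_{n-1}$) eliminates one occurrence; words with more occurrences are handled by repeating the reduction. You instead prove the sharper module-theoretic statement $\Hn = \sum_{j=1}^{n}\mathrm{H}_{n-1}w_j$ with $w_j = h_{n-1}\cdots h_j$, by exhibiting the right-hand side as a right ideal containing $1$; your five buckets do exhaust all pairs $(i,j)$, and each computation checks out, including the commute--braid--commute identity $w_j h_i = h_{i-1} w_j$ for $j+1 \le i \le n-2$ and the edge cases $w_{n-1}h_{n-1}=\u\, w_n + (\u-1)w_{n-1}$ and $w_n h_{n-1}=w_{n-1}$. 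The two arguments lean on the same three relations in the same roles, but yours buys more: your decomposition is exactly the spanning statement underlying the paper's Theorem \ref{basisHecke} (the normal words $\mathcal{C}_n$, cf.\ (\ref{Cn})), so you obtain the lemma, the spanning half of the basis theorem, and the bound $\dim \Hn \le n!$ in a single pass, whereas the paper keeps the lemma elementary and defers the structural statement to the theorem. One point your dimension count leaves implicit (the paper glosses over it too): to apply the inductive bound $\dim \mathrm{H}_{n-1}\le (n-1)!$ to your subalgebra $\mathrm{H}_{n-1}\subseteq \Hn$, you should note that the map from the abstractly presented $\mathrm{H}_{n-1}$ sending generators to generators is a well-defined surjection onto that subalgebra, so the subalgebra's dimension is at most that of the abstract algebra.
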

\begin{proof}
The proof is by induction on $n$. For $n=2$ the lemma holds since $H_2$ is the algebra generated by $1$ and $h_1$.
Suppose now that the lemma is valid for every $\mathrm{H}_k$, with $k< n+1$. We  prove the lemma for $n+1$; let $M$  be
a word in $1, h_1, \ldots , h_n$ containing  two  times $h_n$, then we can write
$$
M= M_1 h_nM_2h_nM_3,
$$
where $M_i$'s are words in $1, h_1, \ldots , h_{n-1}$. But now, using the induction hypothesis we have to consider two  situations   according to $M_2$ contains none  or only one $h_{n-1}$. If $M_2$ does not contain $h_{n-1}$, we have $ M_1h_nM_2h_nM_3=M_1h_n^2M_2M_3 = M_1(\u + (\u-1)h_n)M_2M_3$.; hence $M= \u M_1M_2M_3 + (\u-1)M_1h_nM_2M_3$, thus
$M$ is reduced as the lemma claims. On the other hand, if  $M_2$ contains only one $h_{n-1}$, we can write it as
$
M_2= M'h_{n-1}M''
$,
where $M'$ and $M''$ are words in $1, h_1, \ldots , h_{n-2}$; so
$
M = M_1 h_n (M'h_{n-1}M'')h_nM_3
$; by applying now (\ref{hecke1}) and (\ref{hecke2}) we obtain $M = M_1  M'h_{n-1}h_nh_{n-1}M''nM_3$, then $M$ is as
the lemma  claims.

In the case that $M$ contains more than two generators  $h_n$,  we reduce two of them using the argument above; so  arguing   inductively we deduce that $M$ can be written as the lemma is claiming.
\end{proof}
In $\Hn$, define: $C_1=\{1, h_1\} $ and $C_i =\{1, h_ix\,;\, x \in C_{i-1}\}$, for $2\leq i\leq n-1.$
\begin{definition}
The elements  $\mathfrak{n}_1\mathfrak{n}_2\cdots \mathfrak{n}_{n-1}\in \Hn$, with $\mathfrak{n}_i\in C_i$, are called normal words. This set formed by the normal words will be denoted by $\mathcal{C}_n$.
\end{definition}
Observe  that:
\begin{equation}\label{Cn}
\mathcal{C}_n = \mathcal{C}_{n-1}\cup \{xh_{n-1}h_{n-2}\cdots h_i\, ; x\in \mathcal{C}_{n-1}\, 1\leq i\leq n-1\}.
\end{equation}
\begin{theorem}\label{basisHecke}
The set $\mathcal{C}_n$ is a linear basis of  $\Hn$. In particular, the dimension of $\Hn$ is $n!$.
\end{theorem}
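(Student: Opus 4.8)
The plan is to establish the two inequalities $\dim_{\KK}\Hn \le n!$ and $\dim_{\KK}\Hn \ge n!$ independently, and then combine them with a cardinality count to conclude that $\mathcal{C}_n$ is a basis. First I would record that there are exactly $n!$ normal words: writing $c_i$ for the number of elements of $C_i$, the definition gives $c_1=2$ and $c_i=c_{i-1}+1$, hence $c_i=i+1$, so the number of products $\mathfrak{n}_1\cdots\mathfrak{n}_{n-1}$ is $\prod_{i=1}^{n-1}(i+1)=n!$. For the upper bound I would show $W:=\mathrm{span}_{\KK}(\mathcal{C}_n)=\Hn$ by induction on $n$, using the recursion (\ref{Cn}). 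Since $1\in\mathcal{C}_n$ and $\Hn$ is generated as an algebra by $\mathrm{H}_{n-1}$ together with $h_{n-1}$, it is enough to check that $W$ is stable under left multiplication by $\mathrm{H}_{n-1}$ and by $h_{n-1}$. Stability under $\mathrm{H}_{n-1}=\mathrm{span}(\mathcal{C}_{n-1})$ (the inductive hypothesis) is immediate from (\ref{Cn}): left-multiplying $y$ or $y\,h_{n-1}\cdots h_i$ (with $y\in\mathcal{C}_{n-1}$) by an element of $\mathcal{C}_{n-1}$ keeps the $\mathrm{H}_{n-1}$-part inside $\mathrm{span}(\mathcal{C}_{n-1})$ and leaves the tail $h_{n-1}\cdots h_i$ untouched.

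The delicate point, which I expect to be the main obstacle, is the stability $h_{n-1}W\subseteq W$. Here I would reduce each product $h_{n-1}\,(y\,h_{n-1}\cdots h_i)$ to a $\KK$-combination of normal words exactly by the moves of Lemma \ref{atmost}: commute $h_{n-1}$ to the right past the letters of $y$ involving only $h_1,\dots,h_{n-3}$ via (\ref{hecke1}), absorb the possible $h_{n-2}$ using the braid relation (\ref{hecke2}), and collapse any $h_{n-1}^2$ that appears by the quadratic relation (\ref{hecke3}). The content of Lemma \ref{atmost} is precisely that after these moves at most one $h_{n-1}$ survives, which is what returns the result to $W$; this bookkeeping is routine but genuinely uses all three defining relations.

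For the reverse inequality I would exhibit an $\Hn$-module $M$ of dimension $n!$ together with a vector whose $\Hn$-orbit spans $M$. Let $M$ be the $\KK$-space with basis $\{T_w : w\in\mathtt{S}_n\}$ and, writing $\ell$ for the Coxeter length, define operators $\widehat{h}_i$ by $\widehat{h}_i T_w = T_{\mathtt{s}_i w}$ if $\ell(\mathtt{s}_i w)>\ell(w)$ and $\widehat{h}_i T_w = \u\,T_{\mathtt{s}_i w}+(\u-1)T_w$ otherwise. A direct computation shows the $\widehat{h}_i$ satisfy (\ref{hecke1})--(\ref{hecke3}): the quadratic relation and the far commutation are immediate from the two cases, while the braid relation reduces to a finite case analysis on the relative order of the three values $w^{-1}(i-1),w^{-1}(i),w^{-1}(i+1)$. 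Thus $M$ is an $\Hn$-module, and for a reduced expression $w=\mathtt{s}_{i_1}\cdots\mathtt{s}_{i_k}$ one checks inductively (each suffix being again reduced, so lengths strictly increase) that $\widehat{h}_{i_1}\cdots\widehat{h}_{i_k}T_{\mathrm{id}}=T_w$; hence the orbit map $\Hn\to M$, $a\mapsto \widehat{a}\,T_{\mathrm{id}}$, is onto, and $\dim_{\KK}\Hn\ge\dim_{\KK}M=n!$.

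Combining the bounds gives $\dim_{\KK}\Hn=n!$. Since $\mathcal{C}_n$ spans $\Hn$ and consists of at most $n!$ elements, it must have exactly $n!$ distinct elements and be linearly independent, hence a basis. The substantive work lies in the two relation checks — the closure $h_{n-1}W\subseteq W$ for the upper bound and the braid-relation verification for the operators $\widehat{h}_i$ in the lower bound — while the remaining steps are formal.
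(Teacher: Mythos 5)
Your proposal is correct, and in fact it is more complete than what the paper actually writes down. For the spanning half you do essentially what the paper does: its proof of this theorem is an induction on $n$ whose engine is Lemma \ref{atmost} (reduce any word to ones containing at most one $h_{n-1}$, then push that surviving $h_{n-1}$ into normal position using (\ref{hecke1})--(\ref{hecke3}) and the recursion (\ref{Cn})); your reformulation as stability of $W=\mathrm{span}_{\KK}(\mathcal{C}_n)$ under left multiplication by $\mathrm{H}_{n-1}$ and by $h_{n-1}$ is the same computation organized slightly differently. The genuine divergence is on linear independence: the paper defers it entirely --- its proof ends with the placeholder \lq\lq Linear independency (LATER)\rq\rq\ --- whereas you close the gap with the classical Iwahori--Bourbaki construction of a module of dimension $n!$ on the basis $\{T_w : w\in \mathtt{S}_n\}$, giving $\dim_{\KK}\Hn\geq n!$, which combined with your count $\vert \mathcal{C}_n\vert\leq n!$ forces $\mathcal{C}_n$ to be a basis. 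This is exactly the step a complete proof cannot avoid, since nothing in the rewriting argument rules out further collapses among normal words, and your route is the standard one in the literature (e.g.\ Humphreys, \emph{Reflection Groups and Coxeter Groups}, Ch.~7). Two small remarks. First, the direct verification of the braid relation for the operators $\widehat{h}_i$ is the notoriously fiddly point you correctly flag: the six orderings of $w^{-1}(i)$, $w^{-1}(i+1)$, $w^{-1}(i+2)$ each mix the two branches of the definition, and many sources avoid this by introducing both left operators and right operators (acting by $w\mapsto w\mathtt{s}_j$), checking the much easier statement that the two families commute, and deducing the relations from that; either route works. Second, your opening count of \lq\lq exactly $n!$ normal words\rq\rq\ is a count of formal products, which a priori only bounds $\vert\mathcal{C}_n\vert$ above by $n!$ as a subset of $\Hn$ --- but you use it in precisely that one-sided way at the end, so the conclusion (a spanning set of size at most $\dim_{\KK}\Hn = n!$ must be a basis with exactly $n!$ distinct elements) stands.
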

\begin{proof}
We will prove, by induction on $n$,  that $\mathcal{C}_n$ is a spanning set of $\Hn$. For $n=2$ the theorem is clear. Suppose now that the theorem is true for every $k<n$. From  Lemma \ref{atmost} it follows  that  $\Hn$ is linearly spanned  by the   elements of the form (i) and (ii):
$$
\text{(i)}\, M_0\qquad \text{and} \qquad \text{(ii)}\,M_1h_nM_2,
$$
where $M_i$'s are words in $1, h_1, \ldots , h_{n-2}$. By  the induction hypothesis, follows that  $M_i$'s are linear combination of elements of $\mathcal{C}_{n-1}$, so we can suppose that $M_i$'s belong to $\mathcal{C}_{n-1}$. Thus, it is enough to prove that the elements of (ii) are a  linear combination of the elements of $\mathcal{C}_n$ (notice that $M_0\in \mathcal{C}_n$). Set $M_2=\mathfrak{n}_1\mathfrak{n}_2\cdots \mathfrak{n}_{n-2}$, where $\mathfrak{n}_i\in C_i$; we have
$$
M_1h_{n-1}M_2 = M_1h_{n-1}\mathfrak{n}_1\mathfrak{n}_2\cdots \mathfrak{n}_{n-2} =M_1\mathfrak{n}_1\mathfrak{n}_2\cdots h_{n-1}\mathfrak{n}_{n-2}.
$$
From the induction hypothesis  $M_1\mathfrak{n}_1\mathfrak{n}_2\cdots$ is a linear combination of elements of $\mathcal{C}_{n-1}$ and notice that $h_{n-1}\mathfrak{n}_{n-2}\in \mathcal{C}_{n-1}$. So, having in mind the second observation of (\ref{Cn}) we deduce that the elements in (ii) belong to the linear span of $\mathcal{C}_n$.

Linear independency  (LATER)

\end{proof}
The above theorem  and (\ref{Cn}) imply that we have a natural  tower of algebras
$$
\mathrm{H}_1=\KK \subset \mathrm{H}_2\subset \cdots \subset\Hn\subset\Hnn\subset \cdots
$$
We will denote by $\mathrm{H}_{\infty}$ the inductive limit associated to this tower.

Notice that the inclusion of algebras  $\mathrm{H}_{n}  \subset \mathrm{H}_{n+1} $ allows to obtain a structure of  $(\mathrm{H}_{ n}, \mathrm{H}_{ n})$--bimodule for $\mathrm{H}_{n+1} $; further, we can  consider the $(\mathrm{H}_{ n}, \mathrm{H}_{ n})$--bimodule $\Hn\otimes_{\mathrm{H}_{n-1} }\Hn$ since $\Hn$ is a $(\mathrm{H}_{ n}, \mathrm{H}_{ n-1})$--bimodule and  also $(\mathrm{H}_{ n-1}, \mathrm{H}_{ n})$--bimodule.
\begin{proposition}\label{}
The dimension of the $\KK$--vector space $\Hn\otimes_{\mathrm{H}_{n-1} }\Hn$ is at most $n!n$.
\end{proposition}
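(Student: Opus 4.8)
The plan is to exploit the explicit spanning set $\mathcal{C}_n$ of $\Hn$ produced in the proof of Theorem \ref{basisHecke}, together with the defining balancing relation of the tensor product over $\mathrm{H}_{n-1}$, so as to bound the dimension by counting a small family of generators. The dimension count $n!\,n$ is suggestive: it equals $n\cdot\dim_{\KK}\Hn$, which hints that the tensor product is spanned by $n$ copies of (a linear image of) $\Hn$.

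First I would read off from the decomposition (\ref{Cn}) that $\Hn$ is spanned, as a \emph{left} $\mathrm{H}_{n-1}$--module, by the $n$ elements
$$
u_0 = 1, \qquad u_k = h_{n-1}h_{n-2}\cdots h_{n-k} \quad (1 \le k \le n-1).
$$
Indeed, every element of $\mathcal{C}_n$ is either an element of $\mathcal{C}_{n-1}$, hence lies in $\mathrm{H}_{n-1}\cdot u_0$, or has the form $x\, h_{n-1}h_{n-2}\cdots h_i$ with $x \in \mathcal{C}_{n-1}$ and $1 \le i \le n-1$, hence lies in $\mathrm{H}_{n-1}\cdot u_{n-i}$, because $\mathcal{C}_{n-1}$ spans $\mathrm{H}_{n-1}$.

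Next I would use this to shrink the second tensor factor. Given $a \otimes b \in \Hn \otimes_{\mathrm{H}_{n-1}}\Hn$, I write $b = \sum_{k=0}^{n-1} c_k u_k$ with $c_k \in \mathrm{H}_{n-1}$; the balancing relation of the tensor product then gives
$$
a \otimes b = \sum_{k=0}^{n-1} a \otimes c_k u_k = \sum_{k=0}^{n-1} (a c_k)\otimes u_k .
$$
Hence $\Hn \otimes_{\mathrm{H}_{n-1}}\Hn$ is spanned by the set $\{\,a \otimes u_k : a \in \Hn,\ 0 \le k \le n-1\,\}$. For each fixed $k$ the assignment $a \mapsto a \otimes u_k$ is $\KK$--linear, so the span of $\{a \otimes u_k : a \in \Hn\}$ is a linear image of $\Hn$ and therefore has dimension at most $\dim_{\KK}\Hn \le n!$ by Theorem \ref{basisHecke}. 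Summing over the $n$ values of $k$ yields $\dim_{\KK}(\Hn\otimes_{\mathrm{H}_{n-1}}\Hn)\le n\cdot n! = n!\,n$.

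The only delicate point is the bookkeeping of sides: one must use the \emph{left} $\mathrm{H}_{n-1}$--module generators for the \emph{second} factor, so that the balancing relation transports the $\mathrm{H}_{n-1}$--coefficients onto the first factor and leaves only the fixed elements $u_k$ on the right. I also note that at this stage only the spanning half of Theorem \ref{basisHecke} is available (the linear independence is deferred), which is precisely why the conclusion is an inequality rather than an equality.
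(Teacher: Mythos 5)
Your proof is correct and follows essentially the same route as the paper: both arguments use the decomposition (\ref{Cn}) to reduce the second tensor factor, via the balancing relation over $\mathrm{H}_{n-1}$, to the $n$ elements $1$ and $h_{n-1}h_{n-2}\cdots h_i$, leaving a spanning set of size at most $n!\,n$. Your packaging in terms of left $\mathrm{H}_{n-1}$--module generators $u_k$ and the linear maps $a\mapsto a\otimes u_k$ is just a cleaner phrasing of the paper's direct manipulation of the elements $a\otimes b$ with $a,b\in\mathcal{C}_n$, and your observation that only the spanning half of Theorem \ref{basisHecke} is needed is exactly right.
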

\begin{proof}
 Theorem \ref{basisHecke} implies that every element in $\Hn\otimes_{\mathrm{H}_{n-1} }\Hn$ is a  $\KK$--linear combination of elements   of the form $a\otimes b$, where $a, b\in \mathcal{C}_n$. Now, we have two possibilities: $b$ is in $\mathcal{C}_{n-1}$ or $b= xh_{n-1}\cdots h_i$, with $x\in \mathcal{C}_{n-1}$ (see (\ref{Cn})). Now, if  $b\in \mathcal{C}_{n-1}$, we have $a\otimes b= ab\otimes1$ and in the other case we can write  $a\otimes b= ax \otimes h_{n-1}\cdots h_i$. Therefore, every element of $\Hn\otimes_{\mathrm{H}_{n-1} }\Hn$ is a linear combination of elements of the form $a\otimes 1$ and $a\otimes h_{n-1}\cdots h_i$, where $a\in \mathcal{C}_n$ and $1\leq i\leq n-1$. Hence the proof follows.
\end{proof}
The following lemma will be used in the next section.
 \begin{lemma}\label{bimoduleHecke}
 The  map $\phi: \Hn \oplus \Hn\otimes_{\mathrm{H}_{n-1} }\Hn \longrightarrow \Hnn$,  defined by
 $$
 x+ \sum_i y_i\otimes z_i\mapsto x + \sum_i y_ih_n z_i
 $$
 is an isomorphism of $(\mathrm{H}_{ n}, \mathrm{H}_{ n})$--bimodules.
 \end{lemma}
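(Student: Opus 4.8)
The plan is to exhibit $\phi$ as a surjective homomorphism of $(\mathrm{H}_n,\mathrm{H}_n)$-bimodules between two spaces whose dimensions I can pin down, and then to conclude that it is an isomorphism by counting dimensions. Concretely, I would verify in turn that (i) $\phi$ is well defined, (ii) $\phi$ respects the left and right $\mathrm{H}_n$-actions, (iii) $\phi$ is surjective, and (iv) the dimensions of source and target coincide.

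For (i), the only point needing care is that the second summand is a tensor product over $\mathrm{H}_{n-1}$, so I must check that the $\KK$-bilinear map $\Hn \times \Hn \to \Hnn$, $(y,z)\mapsto y h_n z$, is $\mathrm{H}_{n-1}$-balanced; that is, $y c h_n z = y h_n c z$ for every $c\in \mathrm{H}_{n-1}$. Since $\mathrm{H}_{n-1}$ is generated by $h_1,\dots,h_{n-2}$ and each of these commutes with $h_n$ by relation (\ref{hecke1}) (as $|n-i|>1$ for $i\le n-2$), the element $h_n$ commutes with all of $\mathrm{H}_{n-1}$, so the map is balanced and factors through $\Hn\otimes_{\mathrm{H}_{n-1}}\Hn$. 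Combined with the inclusion $\Hn\hookrightarrow\Hnn$ on the first summand this defines $\phi$. For (ii), writing the left action of $a\in\Hn$ on the source as $a\cdot(x+\sum_i y_i\otimes z_i)=ax+\sum_i (ay_i)\otimes z_i$ and the right action on the last tensor factor, one checks immediately that $\phi(a\cdot\xi)=a\,\phi(\xi)$ and $\phi(\xi\cdot a)=\phi(\xi)\,a$; these are one-line computations, since $h_n$ sits in the middle and is untouched.

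The substantive step is (iii). Here I would invoke Lemma \ref{atmost} applied to $\Hnn$: every word in $1,h_1,\dots,h_n$ is a $\KK$-linear combination of words containing at most one occurrence of $h_n$. A word with no $h_n$ lies in $\Hn$ and is $\phi(x)$ with $x\in\Hn$; a word with exactly one $h_n$ has the shape $M_1 h_n M_2$ with $M_1,M_2$ words in $1,h_1,\dots,h_{n-1}$, hence elements of $\Hn$, and thus equals $\phi(M_1\otimes M_2)$. Therefore a spanning set of $\Hnn$ lies in the image of $\phi$, proving surjectivity.

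Finally, for (iv) I would combine $\dim_\KK\Hn=n!$ and $\dim_\KK\Hnn=(n+1)!$ from Theorem \ref{basisHecke} with the bound $\dim_\KK(\Hn\otimes_{\mathrm{H}_{n-1}}\Hn)\le n\cdot n!$ from the preceding Proposition, giving $\dim_\KK(\Hn\oplus\Hn\otimes_{\mathrm{H}_{n-1}}\Hn)\le n!+n\cdot n!=(n+1)!=\dim_\KK\Hnn$. A surjection cannot increase dimension, so this inequality is forced to be an equality and $\phi$ is injective as well, hence an isomorphism (and, as a by-product, the bound in the Proposition is sharp). The hard part is really this last step, and its only delicate input is the exact value $\dim_\KK\Hnn=(n+1)!$: surjectivity and the homomorphism property are formal, but identifying the dimension rests on the linear independence of the Hecke basis $\mathcal{C}_{n+1}$ (the part of Theorem \ref{basisHecke} deferred in the text). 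An alternative that sidesteps this would be to show directly, using (\ref{Cn}), that $\phi$ carries the natural spanning set $\mathcal{C}_n$ together with the elements $x\otimes 1$ and $x\otimes h_{n-1}\cdots h_i$ ($x\in\mathcal{C}_n$) of the source bijectively onto $\mathcal{C}_{n+1}$, so that a basis on one side transfers to a basis on the other.
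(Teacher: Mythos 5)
Your proof is correct and follows precisely the route the notes set up: the proof environment for this lemma is left blank in the text, but the scaffolding placed immediately before it---Lemma \ref{atmost} and the proposition bounding $\dim_\KK\bigl(\Hn\otimes_{\mathrm{H}_{n-1}}\Hn\bigr)\le n!\,n$---exists exactly to feed your surjectivity-plus-dimension-count argument, and your check that $(y,z)\mapsto yh_nz$ is $\mathrm{H}_{n-1}$-balanced via relation (\ref{hecke1}) is the one well-definedness point that genuinely needs verifying. You are also right to flag that the count rests on $\dim_\KK\Hnn=(n+1)!$, i.e.\ on the linear-independence half of Theorem \ref{basisHecke}, which the notes themselves defer; your alternative of matching the spanning set $\{x,\;x\otimes 1,\;x\otimes h_{n-1}\cdots h_i\}$ bijectively onto $\mathcal{C}_{n+1}$ makes that dependence explicit and is a sound fallback.
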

\begin{proof}

\end{proof}

\subsection{}From now on  $\z$ denotes a  new variable commuting with $\u$.
\begin{theorem}[Ocneanu]\label{Ocneanu}
There exists a unique family of linear maps $\tau=\{\tau_n\}_{n\in \NN}$, where $\tau_n: \Hn\longrightarrow\KK(\z)$ is defined inductively by the following rules:
\begin{enumerate}
\item $\tau_1 (1) =1$,
\item $\tau_n(ab) =\tau_n (ba)$,
\item $\tau_{n+1}(ah_nb) = \z\tau_n (ab)$,
\end{enumerate}
where $a,b\in \Hn$.
\end{theorem}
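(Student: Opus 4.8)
The plan is to build the family $\{\tau_n\}$ by induction on $n$, using at each step the bimodule decomposition of Lemma \ref{bimoduleHecke}, which identifies $\Hnn$ with $\Hn \oplus (\Hn \otimes_{\mathrm{H}_{n-1}} \Hn)$ via $x + \sum_i y_i \otimes z_i \mapsto x + \sum_i y_i h_n z_i$. Thus every element of $\Hnn$ is of the form $x + \sum_i y_i h_n z_i$ with $x, y_i, z_i \in \Hn$. I would first dispose of uniqueness, which tells me exactly what $\tau_{n+1}$ must be, and then verify that this forced prescription is well defined and satisfies the three rules; the trace property (2) will be the only real difficulty.

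\emph{Uniqueness.} Rule (1) and $\KK(\z)$-linearity fix $\tau_1$ on $\mathrm{H}_1 = \KK$. Assuming $\tau_n$ determined, rule (3) read through the decomposition above forces $\tau_{n+1}(y_i h_n z_i) = \z\,\tau_n(y_i z_i)$ on the second summand, while on the first summand the compatibility $\tau_{n+1}\vert_{\Hn} = \tau_n$ forces $\tau_{n+1}(x) = \tau_n(x)$. Since the two summands span $\Hnn$, the functional $\tau_{n+1}$ is completely determined, and the whole family is unique by induction.

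\emph{Existence.} I would take the forced prescription as a definition:
$$
\tau_{n+1}\Big(x + \sum_i y_i h_n z_i\Big) := \tau_n(x) + \z \sum_i \tau_n(y_i z_i),
$$
and first check it is well defined. By Lemma \ref{bimoduleHecke} the only ambiguity lies in the second summand, and there the $\KK$-bilinear assignment $(y,z) \mapsto \tau_n(yz)$ is $\mathrm{H}_{n-1}$-balanced, because $\tau_n((yw)z) = \tau_n(y(wz))$ for $w \in \mathrm{H}_{n-1}$ merely by associativity; hence it descends to $\Hn \otimes_{\mathrm{H}_{n-1}} \Hn$ and $\tau_{n+1}$ is well defined and linear. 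Rule (3) and the compatibility $\tau_{n+1}\vert_{\Hn} = \tau_n$ hold by construction, so only (2) remains.

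\emph{The trace property (the main obstacle).} I would use the standard reduction that a linear functional on $\Hnn$ is a trace as soon as $\tau_{n+1}(ac) = \tau_{n+1}(ca)$ holds for every $a \in \Hnn$ and every $c$ in an algebra-generating set, here $\Hn \cup \{h_n\}$ (one transports a generator across a word one letter at a time). For $c \in \Hn$, writing $a = x + \sum_i y_i h_n z_i$ and using the inductive trace property of $\tau_n$ gives $\tau_{n+1}(ac) = \tau_{n+1}(ca)$ at once. The genuine content is $c = h_n$: since $\tau_{n+1}(x h_n) = \z\tau_n(x) = \tau_{n+1}(h_n x)$, everything collapses to the single identity
$$
\tau_{n+1}(y h_n z h_n) = \tau_{n+1}(h_n y h_n z) \qquad (y, z \in \Hn).
$$
To prove it I would expand $y$ and $z$ through the one-step-lower decomposition $\Hn = \mathrm{H}_{n-1} \oplus \mathrm{H}_{n-1} h_{n-1} \mathrm{H}_{n-1}$, slide the $\mathrm{H}_{n-1}$-factors (which commute with $h_n$) to the outside, collapse the resulting $h_n h_{n-1} h_n$ by the braid relation (\ref{hecke2}) into $h_{n-1} h_n h_{n-1}$, and reduce the squares $h_n^2$ and $h_{n-1}^2$ by the quadratic relation (\ref{hecke3}). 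Applying rule (3) and the inductive trace property (available for all smaller indices, so for both $\tau_n$ and $\tau_{n-1}$) to the outcome, the two sides reduce to one and the same expression. This interplay of the braid and quadratic relations is where the theorem's content really sits; everything else is bookkeeping.
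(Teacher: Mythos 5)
Your proposal follows essentially the same route as the paper: both construct $\tau_{n+1}$ inductively through the bimodule isomorphism $\phi$ of Lemma \ref{bimoduleHecke}, setting $\tau_{n+1}\bigl(\phi(x+\sum_i y_i\otimes z_i)\bigr)=\tau_n(x)+\z\sum_i\tau_n(y_iz_i)$, so that rule (3) holds by construction. You in fact go beyond the paper's sketch, which leaves the verification of the trace rule (2) unwritten: your reduction to the single identity $\tau_{n+1}(yh_nzh_n)=\tau_{n+1}(h_nyh_nz)$ via generators, together with the case analysis through $\Hn=\mathrm{H}_{n-1}\oplus\mathrm{H}_{n-1}h_{n-1}\mathrm{H}_{n-1}$ and the braid and quadratic relations, is the standard way to complete exactly the step the paper omits.
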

\begin{proof}
The definition of $\tau_n$ is based on the homomorphism $\phi$ of Lemma \ref{bimoduleHecke}. For $n=1$, $\tau_1$ is  defined as the identity on $\KK$. Now, given $a\in \Hnn$, the Lemma \ref{bimoduleHecke} says that, we can write uniquely, $a = \phi (x + \sum_iy_i\otimes z_i) $, then we define $\tau_{n+1}$ by
$$
\tau_{n+1}(a) := \tau_n(x) + \z\sum_i\tau_{n}(y_iz_i).
$$
For instance $\tau_2(h_1) = \z$, since $\phi (0 + 1\otimes 1) =h_1$.

For every $a,b\in \Hn$, we have $\phi(a\otimes b)=ah_nb$,  then the rule (3) is satisfied.

We are going to check now the rule (2) BLABLA...
\end{proof}

\section{The Homflypt polynomial}\label{sectionHomflypt}
We  show now the construction of  the Homflypt polynomial due to V. Jones. This Jones construction gives a method (or Jones recipe) which is our main tool  to  construct   invariants.
\smallbreak
\subsection{}
We have a natural representation from $B_n$ in $\Hn$, defined by mapping $\sigma_i$ in $h_i$, however we need to consider a
slightly more general  representation, denoted by $\pi_{\theta}$  and defined by mapping $\sigma_i$ in $ \theta h_i$, where $\theta$ is  a scalar factor; the reason for taking this factor $\theta$ will be clear soon. Now, composing $\pi_{\theta}$  with the Markov trace $\tau_n$, we have the maps
$$
\tau_n\circ \pi_{\theta}: B_n \longrightarrow \KK(\z) \quad (n\in \NN).
$$
This family of maps yields  a unique map $\mathrm{X}_{\theta}$ from $B_{\infty}$ to $\KK(\z)$.  Now, according to Remark \ref{Invariant}, the   function $\mathrm{X}_{\theta}$ defines  an invariant of links, if it agrees with the replacements of Markov M1 and M2, or equivalently,  for every $n$ the function $\tau_n\circ \pi_{\theta}$  agrees with M1 and M2. The fact that $\pi_{\theta}$ is a homomorphism and the rule (2) of the Ocneanu trace implies that, for every $n$ and $\theta$ the maps $\tau_n\circ \pi_{\theta}$  agree with the Markov replacement M1.  For the replacement M2 we note that, in particular, the maps $\tau_n\circ \pi_{\theta}$ must  satisfy:
$$
(\tau_n\circ \pi_{\theta})(\sigma_n)=(\tau_n\circ \pi_{\theta})(\sigma_n^{-1})\quad\text{for all}\; n.
$$
We have $(\tau_n\circ \pi_{\theta})(\sigma_n)= \theta\z$  and from (\ref{inverseh}), we get:
$$
(\tau_n\circ \pi_{\theta})(\sigma_n^{-1})= \theta^{-1}\tau_n(h_n^{-1}) = \theta^{-1}((\u^{-1}) + \u^{-1}\z)
$$
Then we derive  that  $\theta\z=\theta^{-1}((\u^{-1}-1) + \u^{-1}\z)$, from where the factor $\theta$ satisfies:
\begin{equation}\label{RelationLambdaZeta}
\lambda:=\theta^2 =\frac{1-\u+\z}{\u\z} \quad \text{or equivalently}\quad \sqrt{\lambda} \z=(\u^{-1}-1) +\u^{-1}.
\end{equation}
So, extending the ground field $\KK(\z)$ to $\KK(\z,\sqrt{\lambda})=\mathbb{C}(\z, \sqrt{\lambda})$, the family of maps $\{\tau_n\circ \pi_{\sqrt{\lambda}}\}_{n\in \NN}$  agrees with
the Markov replacements M1 and M2. However, it  is desirable  that the invariant takes the values 1 on the unknot, that is, we want  $(\tau_n\circ \pi_{\sqrt{\lambda}}) (\sigma) =1$, for every $n$ and  every $\sigma$ whose closure is the unknot;
notice that the unknot is the closure of the braid $\sigma_1\sigma_2\cdots \cdots \sigma_{n-1}$, for all $n$. So, we have:
\begin{equation}\label{homflypFactor}
(\tau_n\circ \pi_{\sqrt{\lambda}}) (\sigma_1\sigma_2\cdots \cdots \sigma_{n-1} )= ( \theta \z)^{n-1} = ( \sqrt{\lambda}\,\z)^{n-1}.
\end{equation}
Then,  we need to  normalize $\tau_n\circ \pi_{\sqrt{\lambda}}$ by  $( \sqrt{\lambda}\,\z)^{n-1} $.
\begin{theorem}\label{Homflypt}
Let $L$ be  an oriented link obtained as the closure of the braid $\alpha\in B_n$. We define $\X : \mathfrak{L}\longrightarrow \mathbb{C}(\z, \sqrt{\lambda})$, by
\begin{equation}
\X (L) :=\left(\frac{1}{\sqrt{\lambda}\,\z}\right)^{n-1}(\tau_n\circ \pi_{\sqrt{\lambda}})(\alpha).
\end{equation}
Then  $\X$ is an invariant of ambient isotopy for oriented links.
\end{theorem}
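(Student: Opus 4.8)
The plan is to read the defining formula first as a function on braids and then show, via the Markov theorem, that it descends to a well-defined function on links. By Alexander's Theorem \ref{Alexandertheorem} every oriented link is the closure of some braid, so the formula does assign a value to every $L\in\mathfrak{L}$; the entire content of the statement is that this value is independent of the chosen braid representative. By Corollary \ref{LinksAsBraids}, two braids in $B_\infty$ have ambient isotopic closures exactly when they are Markov equivalent, so by Remark \ref{Invariant} it suffices to prove that the normalized map $\alpha\mapsto(\sqrt{\lambda}\,\z)^{-(n-1)}(\tau_n\circ\pi_{\sqrt{\lambda}})(\alpha)$, for $\alpha\in B_n$, is unchanged under the two Markov moves M1 and M2. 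I first record that $\pi_{\sqrt{\lambda}}$ really is an algebra homomorphism $\KK B_n\to\Hn$: scaling each $\sigma_i$ by the common factor $\sqrt{\lambda}$ preserves the braid relations (\ref{braid1})--(\ref{braid2}), since the two sides of each relation have equal length in the generators, so the relations (\ref{hecke1})--(\ref{hecke2}) among the $h_i$ yield the scaled relations.

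For the commutation move M1 the two braids $\alpha\beta$ and $\beta\alpha$ lie in the same $B_n$, so the normalization factor is identical and can be ignored. Since $\pi_{\sqrt{\lambda}}$ is a homomorphism, $\pi_{\sqrt{\lambda}}(\alpha\beta)=\pi_{\sqrt{\lambda}}(\alpha)\pi_{\sqrt{\lambda}}(\beta)$, and the trace property (rule (2) of Theorem \ref{Ocneanu}) gives $\tau_n(\pi_{\sqrt{\lambda}}(\alpha)\pi_{\sqrt{\lambda}}(\beta))=\tau_n(\pi_{\sqrt{\lambda}}(\beta)\pi_{\sqrt{\lambda}}(\alpha))$. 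Hence the value is invariant under M1 for every $n$ and every choice of scalar, without using the specific value of $\lambda$.

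The stabilization move M2 is where the normalization enters essentially, since it sends $\alpha\in B_n$ to $\alpha\sigma_n^{\pm1}\in B_{n+1}$ and thereby changes the exponent from $n-1$ to $n$. Writing $a:=\pi_{\sqrt{\lambda}}(\alpha)\in\Hn$, for the positive stabilization we have $\pi_{\sqrt{\lambda}}(\alpha\sigma_n)=\sqrt{\lambda}\,a\,h_n$, and rule (3) of Theorem \ref{Ocneanu} gives $\tau_{n+1}(a h_n)=\z\,\tau_n(a)$; thus $\tau_{n+1}(\pi_{\sqrt{\lambda}}(\alpha\sigma_n))=\sqrt{\lambda}\,\z\,\tau_n(a)$, and the extra factor $\sqrt{\lambda}\,\z$ is cancelled exactly by the change $n-1\mapsto n$ in the normalizing power. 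For the negative stabilization I would use $\pi_{\sqrt{\lambda}}(\sigma_n^{-1})=\sqrt{\lambda}^{-1}h_n^{-1}$ together with the inverse formula (\ref{inverseh}) to write $\pi_{\sqrt{\lambda}}(\alpha\sigma_n^{-1})=\sqrt{\lambda}^{-1}\bigl((\u^{-1}-1)a+\u^{-1}a h_n\bigr)$; applying $\tau_{n+1}$, using rule (3) and the fact that $\tau_{n+1}$ restricts to $\tau_n$ on $\Hn$ (immediate from the inductive construction through Lemma \ref{bimoduleHecke}, as such $a$ has trivial tensor component under $\phi$), gives $\tau_{n+1}(\pi_{\sqrt{\lambda}}(\alpha\sigma_n^{-1}))=\sqrt{\lambda}^{-1}\bigl((\u^{-1}-1)+\u^{-1}\z\bigr)\tau_n(a)$. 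Relation (\ref{RelationLambdaZeta}) was set up precisely so that $\sqrt{\lambda}^{-1}\bigl((\u^{-1}-1)+\u^{-1}\z\bigr)=\sqrt{\lambda}\,\z$, so again the factor cancels against the normalization and the value is preserved.

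The only place where genuine information is used, rather than formal manipulation, is the negative stabilization: it is exactly the requirement $(\tau_n\circ\pi_{\sqrt{\lambda}})(\sigma_n^{-1})=(\tau_n\circ\pi_{\sqrt{\lambda}})(\sigma_n)$ that forced the choice of $\lambda$ in (\ref{RelationLambdaZeta}), so the verification amounts to confirming that the scalar $\theta=\sqrt{\lambda}$ is simultaneously consistent with the positive and the negative move. Once M1 and M2 are established, the normalized map is constant on Markov classes, so by Corollary \ref{LinksAsBraids} it factors through $\mathfrak{L}/\sim$ and defines the ambient-isotopy invariant $\X$. I do not anticipate any serious obstacle beyond carefully bookkeeping the scalar factors $\sqrt{\lambda}$ and $\z$ through the change of strand number, and confirming the restriction property $\tau_{n+1}|_{\Hn}=\tau_n$ used in the negative case.
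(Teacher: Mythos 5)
Your proposal is correct and takes essentially the same route as the paper's proof: reduction to the Markov moves M1 and M2 via Corollary \ref{LinksAsBraids}, invariance under M1 from trace rule (2) of Theorem \ref{Ocneanu}, and the negative stabilization handled by expanding $h_n^{-1}$ through (\ref{inverseh}) and invoking (\ref{RelationLambdaZeta}) so that the factor $\sqrt{\lambda}\,\z$ cancels against the change in the normalizing exponent. You moreover carry out explicitly what the paper leaves to the reader (the positive stabilization and the restriction property $\tau_{n+1}\vert_{\Hn}=\tau_n$ drawn from Lemma \ref{bimoduleHecke}), which completes rather than alters the paper's argument.
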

\begin{proof}
Thanks to Corollary \ref{LinksAsBraids}, we need only to check that:
 $$
\text{(i)} \; \X(\widehat{\alpha}\; \widehat{\beta}) = \X(\widehat{\beta}\;\widehat{\alpha})\quad \text{and}\quad
  \text{(ii)} \; \X(\widehat{\alpha}) =  \X(\widehat{\alpha\sigma_n}) =   \X(\widehat{\alpha\sigma_n^{-1}}),
 $$
where $\alpha , \beta\in B_n$. Clearly   (i)  holds. We are going to check now only the second  equality of (ii), the checking of first equality is left to the reader. We have:
\begin{eqnarray*}
(\tau_{n+1}\circ \pi_{\sqrt{\lambda}})(\alpha\sigma_n^{-1}) & = &\tau_{n+1} (\pi_{\lambda}(\alpha)\pi_{\lambda}(\sigma_n^{-1}))
\, = \, (\sqrt{\lambda})^{-1}\tau_{n+1} (\pi_{\lambda}(\alpha)h_n^{-1})\\
& = & (\sqrt{\lambda})^{-1}\tau_{n+1} (\pi_{\lambda}(\alpha)((\u^{-1}-1) + \u^{-1}h_n))\\
& = & (\sqrt{\lambda})^{-1}((\u^{-1}-1) + \z\u^{-1}) \tau_{n} (\pi_{\lambda}(\alpha))\\
& = & (\sqrt{\lambda})^{-1}\sqrt{\lambda}\,\z\tau_{n} (\pi_{\lambda}(\alpha)) \quad (\text{see}\, (\ref{RelationLambdaZeta})).
\end{eqnarray*}
Then, by using now  the rule (3) of the Ocneanu trace, we get $(\tau_{n+1}\circ \pi_{\sqrt{\lambda}})(\alpha\sigma_n^{-1}) = \tau_{n+1} (\pi_{\lambda}(\alpha\sigma_n))$; hence  $\X(\widehat{\alpha\sigma_n}) =   \X(\widehat{\alpha\sigma_n^{-1}})$.
\end{proof}
\begin{example}
Trefoil
\begin{figure}
\end{figure}
\end{example}
\vspace{3cm}
\subsection{}
The Homflypt polynomial has a definition by skein rules. This definition is useful to  calculate it and also to study its relations with other invariants such as the Jones polynomial and the Alexander polynomial.

Denote by  $L_+, L_-$ and $L_0$ three oriented links  with, respectively, diagrams $D_+, D_-$ and $D_0$, which are different only  inside a disk, where they are respectively placed,  as shows Fig \ref{ConwayTriple}.
\begin{center}
\begin{figure}[H]
\includegraphics[scale=.5]{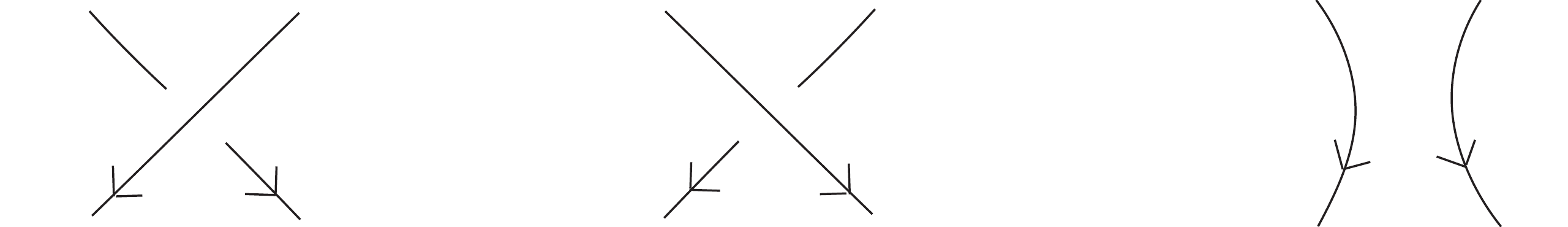}
\caption{ }\label{ConwayTriple}
\end{figure}
\end{center}
\vspace{-1cm}
The links  $L_+, L_-$ and $L_0$ are called a Conway triple; notice that in terms of braids,  they can be written as:
$$
L_+ = \widehat{w\sigma_i},\quad L_- = \widehat{w\sigma_i^{-1}}\quad \text{and}\quad L_0 = \widehat{w}
$$
for some  braid $w$ in $B_n$. 

Keeping the notation above, we are going to compute the Homflypt polynomial on a Conway triple. First, we have,
\begin{eqnarray*}
 \X(L_-)
  & = & 
  D^{n-1}(\sqrt{\lambda})^{e(w\sigma_i^{-1})} (\tau_n\circ\pi)(w\sigma_i^{-1}) \\
 &=&
 D^{n-1}(\sqrt{\lambda})^{e(w)}(\sqrt{\lambda})^{-1} \tau_n(\pi(w)h_i^{-1}),
\end{eqnarray*}
by considering now (\ref{inverseh}), we obtain:
$$
 \X(L_-) =D^{n-1}(\sqrt{\lambda})^{e(w)}(\sqrt{\lambda})^{-1}((\u^{-1}-1) (\tau_n\circ\pi)(w)+ \u^{-1}(\tau_n\circ\pi)(w\sigma_i))
$$
Also a direct computation yields:
 $$
\X(L_+) =D^{n-1}(\sqrt{\lambda})^{e(w)}\sqrt{\lambda} (\tau_n\circ\pi)(w\sigma_i),
 $$
 $$
 \X (L_0) = D^{n-1}(\sqrt{\lambda})^{e(w)}( \tau_n\circ\pi)(w).
 $$
From these last three equations we obtain the following proposition.
\begin{proposition}\label{SkeinX}
The invariant $\X$ satisfies the following skein relation.
$$
\frac{1}{\sqrt{\lambda \u}}\,\X (L_+) - \sqrt{\lambda \u}\,\X (L_-)  = \left( \sqrt{\u}- \frac{1}{\sqrt{\u}}\right)\X (L_0).
$$
\end{proposition}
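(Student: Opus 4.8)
The plan is to substitute the three Conway-triple evaluations obtained immediately above into the asserted identity and verify that it collapses to a formal identity in $\KK(\z,\sqrt{\lambda})$. To keep the bookkeeping transparent I would first abbreviate the common prefactor $K := D^{n-1}(\sqrt{\lambda})^{e(w)}$ and the two trace values $P := (\tau_n\circ\pi)(w\sigma_i)$ and $Q := (\tau_n\circ\pi)(w)$. In this notation the three displayed formulas read
$$
\X(L_+) = K\sqrt{\lambda}\,P,\qquad \X(L_0) = KQ,\qquad \X(L_-) = K(\sqrt{\lambda})^{-1}\big((\u^{-1}-1)Q + \u^{-1}P\big).
$$
Every term carries the factor $K$, so it suffices to check the identity after dividing through by $K$.

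Next I would simplify each summand of the left-hand side separately. For the first, the two powers of $\sqrt{\lambda}$ cancel, giving $\frac{1}{\sqrt{\lambda\u}}\X(L_+) = \frac{K}{\sqrt{\u}}P$. For the second, using $\sqrt{\lambda\u}\,(\sqrt{\lambda})^{-1} = \sqrt{\u}$ one gets $\sqrt{\lambda\u}\,\X(L_-) = K\sqrt{\u}\big((\u^{-1}-1)Q + \u^{-1}P\big)$, and since $\sqrt{\u}\,\u^{-1} = 1/\sqrt{\u}$ the $P$-part of this equals exactly $\frac{K}{\sqrt{\u}}P$ as well.

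The crux is then a single cancellation: in the difference $\frac{1}{\sqrt{\lambda\u}}\X(L_+) - \sqrt{\lambda\u}\,\X(L_-)$ the two $P$-contributions are both $\frac{K}{\sqrt{\u}}P$ and drop out, leaving only the $Q$-term $-K\sqrt{\u}(\u^{-1}-1)Q = K\big(\sqrt{\u}-\tfrac{1}{\sqrt{\u}}\big)Q$. Because $\X(L_0) = KQ$, this is precisely $\big(\sqrt{\u}-\tfrac{1}{\sqrt{\u}}\big)\X(L_0)$, the right-hand side.

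There is no genuine conceptual obstacle here; the entire content of the proposition is the cancellation of the two terms carrying the factor $(\tau_n\circ\pi)(w\sigma_i)$, and the only care required is the arithmetic of the half-integer powers of $\lambda$ and $\u$ (in particular $\sqrt{\lambda\u}/\sqrt{\lambda} = \sqrt{\u}$ and $\sqrt{\u}\,\u^{-1} = 1/\sqrt{\u}$). I would remark that the normalization constraint (\ref{RelationLambdaZeta}) is not even needed for this step: given the three Conway-triple evaluations, the skein relation is a purely algebraic identity in $\sqrt{\lambda}$ and $\sqrt{\u}$.
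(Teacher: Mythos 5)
Your proof is correct and is essentially the paper's own argument: the paper derives exactly the three Conway-triple evaluations you quote and then asserts the proposition follows ``from these last three equations,'' which is precisely the substitution-and-cancellation you carry out (including the cancellation of the two terms carrying $(\tau_n\circ\pi)(w\sigma_i)$). Your closing observation that the constraint (\ref{RelationLambdaZeta}) is not needed for this purely algebraic step is also accurate, since that relation is only required earlier to make $\X$ well defined as an invariant.
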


\begin{theorem}\label{SkeinHomflypt}
There exists a unique  function 
$$\P:\mathfrak{L}\longrightarrow \ZZ [\t, \t^{-1}, \x ,\x^{-1}]$$ such that:
\begin{enumerate}
\item $\P(\bigcirc) = 1$,
\item $\t^{-1}\P (L_+) - \t\P (L_-)  = \x \P (L_0)$.
\end{enumerate}
\end{theorem}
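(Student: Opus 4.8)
The statement splits into \emph{existence} and \emph{uniqueness}, and I would treat them separately, obtaining existence essentially for free from the invariant $\X$ already constructed in Theorem \ref{Homflypt}, while proving uniqueness by a reduction-to-unlinks argument that will also deliver the integrality (values in $\ZZ[\t^{\pm1},\x^{\pm1}]$) claimed in the statement.

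For existence, the plan is to \emph{renormalize $\X$}. Comparing axiom (2) with the skein relation of Proposition \ref{SkeinX}, I would perform the change of variables $\t=\sqrt{\lambda\u}$ and $\x=\sqrt{\u}-\u^{-1/2}$. Under this substitution Proposition \ref{SkeinX} becomes exactly $\t^{-1}\X(L_+)-\t\X(L_-)=\x\X(L_0)$, and Theorem \ref{Homflypt} guarantees $\X(\bigcirc)=1$, so the function $\P:=\X$ satisfies (1) and (2) — a priori with values in $\mathbb{C}(\z,\sqrt{\lambda})$. Since $\sqrt{\u}$ and $\sqrt{\lambda}$ are algebraically independent, $\t$ and $\x$ generate a Laurent polynomial subring of that field, so the only remaining point for existence is to check that $\P(L)$ in fact lands in $\ZZ[\t^{\pm1},\x^{\pm1}]$; this I would fold into the uniqueness computation below, since that computation expresses every value through operations with coefficients in $\ZZ[\t^{\pm1},\x^{\pm1}]$.

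For uniqueness, I would show that the axioms \emph{force} the value of $\P$ on every link. First compute $\P$ on the $k$-component unlink $U_k$: applying the skein relation at the single crossing of a figure-eight–shaped unknotted curve (disjoint-unioned with $U_{k-2}$) gives $L_+=L_-=U_{k-1}$ and $L_0=U_k$, whence $\P(U_k)=\delta\,\P(U_{k-1})$ with $\delta:=(\t^{-1}-\t)\x^{-1}\in\ZZ[\t^{\pm1},\x^{\pm1}]$, so $\P(U_k)=\delta^{\,k-1}$. Next, fix on a diagram $D$ an ordering of the components with base points, and call a crossing \emph{bad} if it is first reached along the under-strand; a descending (no bad crossing) diagram represents an unlink, whose value is now known. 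Solving (2) as $\P(L_+)=\t^2\P(L_-)+\t\x\P(L_0)$ and $\P(L_-)=\t^{-2}\P(L_+)-\t^{-1}\x\P(L_0)$, I can switch any bad crossing: this relates $\P(D)$ to $\P(D')$ (the switch, with one fewer bad crossing) and $\P(L_0)$ (the oriented smoothing, with one fewer crossing), all coefficients lying in $\ZZ[\t^{\pm1},\x^{\pm1}]$. A lexicographic induction on the pair (number of crossings, number of bad crossings) then shows $\P(D)$ is uniquely determined — which both proves uniqueness and, since the base values $\delta^{k-1}$ and all the transition coefficients are Laurent polynomials over $\ZZ$, confirms $\P(L)\in\ZZ[\t^{\pm1},\x^{\pm1}]$, completing existence.

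The main obstacle is the \emph{well-definedness} hidden in the reduction: the uniqueness induction shows that \emph{if} a function satisfying (1)--(2) exists then its value is forced, but it does not by itself guarantee that the forced value is independent of all the choices (which bad crossing to switch, the ordering, the diagram of $L$). This is precisely where invoking the already-established invariance of $\X$ is essential: because the renormalized $\X$ is a genuine ambient-isotopy invariant satisfying the skein relation, the reduction procedure applied to it necessarily returns $\X(L)$ regardless of the choices, so no independent Reidemeister-move check is needed. Thus the delicate coherence step is discharged by Theorem \ref{Homflypt} rather than reproved, and the remaining work is the routine bookkeeping of the two inductions described above.
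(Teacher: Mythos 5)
Your proposal is correct, and for existence it follows exactly the paper's route: the paper's proof consists of the single sentence that after a suitable change of variables $\X$ satisfies the defining properties of $\P$ (your substitution $\t=\sqrt{\lambda\u}$, $\x=\sqrt{\u}-1/\sqrt{\u}$ is precisely the one that turns Proposition \ref{SkeinX} into axiom (2)), and then explicitly leaves uniqueness unproved. So the genuine difference is that you supply the part the paper omits: the standard Lickorish--Millett descending-diagram induction on the lexicographically ordered pair (crossing number, number of bad crossings), with base value $\P(U_k)=\delta^{k-1}$, $\delta=(\t^{-1}-\t)\x^{-1}$, obtained from the kinked unknot. Your treatment also handles the one conceptual trap correctly: the induction only shows values are \emph{forced}, and the coherence of the reduction (independence of the choice of crossing, ordering, and diagram) is not reproved but discharged by the already-established invariance of $\X$ from Theorem \ref{Homflypt} — without that remark the argument would prove uniqueness but not re-derive existence, and with it you additionally extract the integrality statement $\P(L)\in\ZZ[\t^{\pm1},\x^{\pm1}]$, which the bare change of variables alone does not give (a priori $\X$ takes values in $\mathbb{C}(\z,\sqrt{\lambda})$; your observation that $\t$ and $\x$ are algebraically independent, so the forced Laurent-polynomial expression is the unique representation of the value, closes that gap). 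In short: same existence mechanism as the paper, plus a complete and correct proof of the uniqueness and integrality claims the paper asserts without argument.
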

\begin{proof}
After a suitable change of variables, $\X$  satisfies the defining properties of $\P$, so it remains to prove the uniqueness of $\P$.
\end{proof}

\begin{remark}\rm
Making $\x=\t$ the polynomial $\P$ becomes the Jones polynomial and making $\x=\t^{-1}$ the polynomial $\P$ becomes the Alexander polynomial.
\end{remark}
 
\section{Hecke algebras in representation theory}
\subsection{}
Let $G$ be a finite group. A complex representation of $G$ is a pair $(V,\rho)$, where $V$ is a finite dimensional  space over $\CC$ and $\rho$ is a homomorphism group from $G$ to $\mathrm{GL}(V)$. A subspace $U$ of $V$ is called $G$--stable if $\rho_g(U)=U$; the representation is called irreducible if the unique  $G$--stable subspaces are  trivial. It is well  known that every complex representation can be decomposed  as  a direct sum of stable subspaces, see  \cite{seGTM42}. A fundamental problem in representation theory  is:  given a representation of $G$,  write out  such a decomposition. A powerful tool that helps the  understanding of the decomposition of a  representation is  its centralizer, that is,  the algebra formed by the  endomorphisms of $V$ commuting with $\rho_g$, for all $g\in G$. The centralizer of the representation $(V,\rho)$ is denoted by $\mathrm{End}_G(V)$.
\smallbreak
Now, given a subgroup $H$ of $G$ we can  construct the so called natural or induced representation, of $G$ relative a $H$. More precisely,  this representation  can be made explicit  as $(\mathrm{Ind}_H^G1, \rho)$, where:
$$
\mathrm{Ind}_H^G1:= \{ f: G/H\longrightarrow \CC\, ; \, f\,\text{is  function}\}
$$
and
$$
\rho_g (f)(xH)=f(g^{-1}xH).
$$
The centralizer of this representation is known as the Hecke algebra of $G$ with respect to $H$ and is  usually denoted by $\mathcal{H}(G, H)$.
\smallbreak
A fundamental piece in the   theory of finite group of Lie type is the   representation theory of the finite general linear group
$G=\mathrm{GL}_n(\FF_q)$, where $\FF_q$  denotes the finite field with $q$ elements; and  an important family of irreducible  representation of $G$ appears in the decomposition of  $\mathrm{Ind}_B^G1$, where $B$ is the subgroup of $G$ formed by the upper triangular matrices. The centralizer $\mathcal{H}(G, B)$ of this representation was studied by N. Iwahori in the sixties, in  a more general context for $G$: the finite Chevalley groups.
In the case $G$ is as above, that is, the finite general group, the algebra $\mathcal{H}(G, B)$ corresponds  to
 those of type $A$ in the classification of Chevalley groups and the  Iwahori theorem for  $\mathcal{H}(G, B)$ is  the following.
\begin{theorem}[N. Iwahori, \cite{iwJFSUT}]\label{Iwahori}
The Hecke algebra $\mathcal{H}(G, B)$ can be presented, as $\CC$--algebra, by generators $\phi_1, \phi_2, \ldots , \phi_{n-1}$ and the following relations:
\begin{enumerate}
\item $\phi_i^2 = q +(q-1)\phi_i$, for all $i$,
\item $\phi_i\phi_j = \phi_j\phi_i$, for $\vert i- j\vert >1$,
\item  $\phi_i\phi_j\phi_i =\phi_j \phi_j\phi_i$, for $\vert  i-j \vert =1$.
\end{enumerate}
\end{theorem}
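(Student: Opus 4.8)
The plan is to realize $\mathcal{H}(G,B)$ concretely through the Bruhat decomposition and then read off the three relations from the multiplication rules of a natural basis. First I would recall that $G=\mathrm{GL}_n(\FF_q)$ admits the Bruhat decomposition $G=\bigsqcup_{w\in \mathtt{S}_n}BwB$, where $\mathtt{S}_n$ is identified with the permutation matrices and $w$ denotes a fixed representative of each double coset. The centralizer $\mathcal{H}(G,B)=\mathrm{End}_G(\mathrm{Ind}_B^G 1)$ is canonically isomorphic to the convolution algebra of $B$--biinvariant complex functions on $G$; since the $B$--$B$ double cosets are exactly the Bruhat cells $BwB$, the (suitably normalized) indicator functions $\{T_w\}_{w\in \mathtt{S}_n}$ form a $\CC$--basis, so $\dim_\CC \mathcal{H}(G,B)=n!$. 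I would then set $\phi_i:=T_{\mathtt{s}_i}$ for the simple reflections $\mathtt{s}_i=(i,i+1)$.

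The core of the argument is the single multiplication rule
\begin{equation*}
T_{\mathtt{s}_i}T_w=\begin{cases} T_{\mathtt{s}_i w}, & \ell(\mathtt{s}_i w)>\ell(w),\\[2pt] q\,T_{\mathtt{s}_i w}+(q-1)\,T_w, & \ell(\mathtt{s}_i w)<\ell(w),\end{cases}
\end{equation*}
where $\ell$ is the Coxeter length on $\mathtt{S}_n$ relative to the $\mathtt{s}_i$. From this everything follows formally. Taking $w=\mathtt{s}_i$ (so that $\ell(\mathtt{s}_i w)=\ell(1)<\ell(\mathtt{s}_i)$) gives the quadratic relation $\phi_i^2=q+(q-1)\phi_i$; the value $q$ here is exactly the index $[B:B\cap \mathtt{s}_iB\mathtt{s}_i]$, i.e. the number of $\FF_q$--points of a one--dimensional root subgroup, which is the reason the parameter of the algebra is $q$. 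Iterating the rule shows $T_wT_{w'}=T_{ww'}$ whenever $\ell(ww')=\ell(w)+\ell(w')$; applying this to reduced expressions for $\mathtt{s}_i\mathtt{s}_j$ (when $\vert i-j\vert>1$) and for $\mathtt{s}_i\mathtt{s}_j\mathtt{s}_i=\mathtt{s}_j\mathtt{s}_i\mathtt{s}_j$ (when $\vert i-j\vert=1$) yields the commutation and braid relations for the $\phi_i$, since in each case both sides equal $T_{w}$ for the common underlying permutation $w$.

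The main obstacle is establishing the multiplication rule itself, which is where the field size $q$ genuinely enters. I would prove it by a rank--one reduction inside the minimal parabolic $P_i=B\sqcup B\mathtt{s}_iB$, whose Levi factor carries a single $\mathrm{GL}_2(\FF_q)$ block: the only nontrivial content is the case $w=\mathtt{s}_i$ with $\ell(\mathtt{s}_iw)<\ell(w)$, which reduces to the $\mathrm{GL}_2(\FF_q)$ computation $\phi^2 = q\cdot 1 + (q-1)\phi$. This amounts to counting the left $B$--cosets in $B\mathtt{s}_iB$ and tracking how the product set $(B\mathtt{s}_iB)(B\mathtt{s}_iB)$ decomposes as $B\sqcup B\mathtt{s}_iB$; the coefficient $q-1$ records the \emph{diagonal} contribution and $q$ the \emph{off--diagonal} one. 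The general length--additive statement $T_wT_{w'}=T_{ww'}$ then follows from the cell identity $(BwB)(Bw'B)=Bww'B$ valid precisely when lengths add.

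Finally I would confirm that $\{\phi_1,\dots,\phi_{n-1}\}$ generates $\mathcal{H}(G,B)$ and that the listed relations give a complete presentation. Generation is immediate from $T_w=\phi_{i_1}\cdots\phi_{i_k}$ for any reduced word $w=\mathtt{s}_{i_1}\cdots \mathtt{s}_{i_k}$. For completeness, the abstract $\CC$--algebra $A$ defined by relations (1)--(3) has dimension at most $n!$ by exactly the spanning argument of Lemma \ref{atmost} and Theorem \ref{basisHecke} (the quadratic relation lets one reduce any word to a normal form indexed by $\mathtt{S}_n$). Since $\mathcal{H}(G,B)$ satisfies these relations and is generated by the $\phi_i$, there is a surjection $A\twoheadrightarrow\mathcal{H}(G,B)$; as the target has dimension exactly $n!$ while $\dim_\CC A\le n!$, this surjection is forced to be an isomorphism, proving that (1)--(3) is a presentation of $\mathcal{H}(G,B)$.
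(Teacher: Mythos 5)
Your proposal is correct, but there is no internal proof to compare it against: the paper states this theorem purely as a citation to Iwahori's 1964 article, and the surrounding section only develops the general convolution framework --- the algebra $\mathcal{K}_G(X)$ of $G$--invariant kernels on $X\times X$ with its orbit basis $\phi_i=\Phi(K_i)$ --- to explain what the $\phi_i$ are, without deriving the relations. Your argument is the standard (essentially Iwahori's original) proof, and it meshes well with that framework: your $B$--biinvariant functions on $G$ are exactly the paper's $G$--invariant kernels on $G/B\times G/B$, the Bruhat cells $BwB$ are the orbits $O_i$ in the case $X=G/B$, and your rank--one computation in the minimal parabolic is the same count one would do with pencils of flags forming a $\mathbb{P}^1(\FF_q)$. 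The overall logic is sound: the multiplication rule gives the quadratic relation and, via length--additivity, the commutation and braid relations; generation follows from $T_w=\phi_{i_1}\cdots\phi_{i_k}$ for reduced words; and the dimension count ($\dim_\CC A\le n!$ by the normal--form argument of Lemma \ref{atmost} and Theorem \ref{basisHecke}, against $\dim_\CC\mathcal{H}(G,B)=n!$ from the Bruhat double cosets) forces the surjection $A\twoheadrightarrow\mathcal{H}(G,B)$ to be an isomorphism. Three minor remarks. First, your labels are swapped in the rank--one count: with $(x,y)$ and $(y,z)$ both in relative position $\mathtt{s}_i$, the coefficient $q$ comes from the \emph{diagonal} terms $z=x$ (one for each of the $q$ intermediate flags $y$) and $q-1$ from the \emph{off--diagonal} terms $z\ne x$; this does not affect the relation $\phi_i^2=q+(q-1)\phi_i$ itself. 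Second, the set--level identity $(BwB)(Bw'B)=Bww'B$ when lengths add is not quite enough for $T_wT_{w'}=T_{ww'}$; one also needs that the convolution multiplicity equals $1$, which is where the normalization of the indicator functions by $\vert B\vert$ enters --- worth one sentence in a complete write--up. Third, relation (3) as printed in the statement, $\phi_i\phi_j\phi_i=\phi_j\phi_j\phi_i$, is evidently a typo for the braid relation $\phi_i\phi_j\phi_i=\phi_j\phi_i\phi_j$, and it is the latter that your length--additivity argument correctly establishes.
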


\smallbreak
Now, we want to  explain a little bit how the $\phi_i$'s  look and work. To better explain we shall  work  in a   more general setting.
Let $G$ be a finite group and set $X$ a finite $G$--space. Define $\CC(X)$ the $\CC$--vector space of all complex valued functions on $X$. Then, we have the representation $(\CC (X) , \rho)$ of $G$, where $(\rho_gf)(x):= f(g^{-1}x)$.  Consider now   the $G$--space $X\times X$ with action $g(x,y)=(gx,gy)$ and the vector space  $\mathcal{K}(X):=\CC(\X\times X)$, this vector space results to be an algebra with the convolution product:
$$
 I  \ast J : (x,y)\mapsto \sum_{z\in X}I(x,z)J(z,y)\qquad (I,J\in \mathcal{K}(X) ).
$$

\begin{theorem}
The function $\Phi$ from $\mathcal{K}(X)$ to $\mathrm{End}(\CC(X))$ is an algebra isomorphism, where $\Phi : I\mapsto \Phi_I$, is defined by
$$
(\Phi_If)(x):= \sum_{y\in X}I(x,y)f(y) \quad (f\in \CC(X), x\in X).
$$
\end{theorem}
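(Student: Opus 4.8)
The plan is to verify directly that $\Phi$ is a linear, multiplicative bijection, treating the three properties in turn; none of them is genuinely hard once the correct bookkeeping is in place. First I would note that $\Phi$ is well defined and linear in its argument: for each $I$ the map $\Phi_I$ is visibly a linear endomorphism of $\CC(X)$ (it is linear in $f$), and $(\Phi_{I+cI'}f)(x)=\sum_y(I(x,y)+cI'(x,y))f(y)=(\Phi_If)(x)+c(\Phi_{I'}f)(x)$ for all $f$ and $x$, so $\Phi\colon\mathcal{K}(X)\to\mathrm{End}(\CC(X))$ is a linear map.

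Next I would check the one computation of substance, namely that $\Phi$ is an algebra homomorphism, $\Phi_{I\ast J}=\Phi_I\circ\Phi_J$. This is exactly the identity matching the convolution product with composition of operators. For $f\in\CC(X)$ and $x\in X$,
$$
(\Phi_I(\Phi_Jf))(x)=\sum_{y\in X}I(x,y)(\Phi_Jf)(y)=\sum_{y\in X}I(x,y)\sum_{z\in X}J(y,z)f(z),
$$
and interchanging the two finite sums and using the definition of $\ast$ gives $\sum_{z}\bigl(\sum_yI(x,y)J(y,z)\bigr)f(z)=\sum_z(I\ast J)(x,z)f(z)=(\Phi_{I\ast J}f)(x)$. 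The only point requiring care is the reordering of the double sum, which is legitimate precisely because $X$ is finite; this finiteness is really the whole content of the "hard" step.

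Finally I would establish bijectivity through the basis of delta functions. Let $\delta_x\in\CC(X)$ be defined by $\delta_x(y)=1$ if $y=x$ and $0$ otherwise; the family $\{\delta_x\}_{x\in X}$ is a basis of $\CC(X)$. A one-line computation gives $(\Phi_I\delta_y)(x)=\sum_z I(x,z)\delta_y(z)=I(x,y)$, hence $\Phi_I\delta_y=\sum_{x\in X}I(x,y)\delta_x$; in other words, in this basis the operator $\Phi_I$ has matrix $[I(x,y)]_{x,y\in X}$, so $\Phi$ is just the standard correspondence between functions on $X\times X$ and $\vert X\vert\times\vert X\vert$ matrices. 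Injectivity is then immediate, since $\Phi_I=0$ forces $I(x,y)=(\Phi_I\delta_y)(x)=0$ for all $x,y$. For surjectivity I would either run the correspondence backwards, recovering any $T\in\mathrm{End}(\CC(X))$ from the function $I(x,y)$ equal to the $(x,y)$ entry of its matrix, or invoke the dimension count $\dim\mathcal{K}(X)=\vert X\vert^2=\dim\mathrm{End}(\CC(X))$ together with the already-proved injectivity. Combined with the homomorphism property, this shows $\Phi$ is an algebra isomorphism.
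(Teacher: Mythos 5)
Your proof is correct and complete; note that the paper itself leaves the proof environment of this theorem empty (these are lecture notes with several proofs omitted), so there is no argument in the source to compare against. What you give is exactly the standard completion: the convolution--composition computation $\Phi_{I\ast J}=\Phi_I\circ\Phi_J$, whose only delicate point you correctly identify as the interchange of the two sums, justified by finiteness of $X$; and bijectivity via the delta basis, where $\Phi_I\delta_y=\sum_{x\in X}I(x,y)\delta_x$ exhibits $\Phi$ as the canonical identification of $\mathcal{K}(X)$ with the $\vert X\vert\times\vert X\vert$ matrix algebra acting on $\CC(X)$. One cosmetic addition you could make: for $\Phi$ to be an isomorphism of \emph{unital} algebras one should also observe that the diagonal delta function $\delta(x,y)$ (equal to $1$ when $x=y$ and $0$ otherwise) is the unit of $\mathcal{K}(X)$ and that $\Phi$ sends it to the identity operator -- this is a one-line check, and in any case follows automatically from the bijectivity and multiplicativity you have already established.
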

\begin{proof}

\end{proof}
Recall that  $\mathrm{End}_G(\CC(X))$  denote the centralizer of the representation $(\CC(X), \rho)$ and denote now by $\mathcal{K}_G(X)$ the subalgebra of  all $I\in \mathcal{K}(X) $,  that are $G$--invariant, that is, $I(gx,gy)=I(x,y)$, for
$(x,y)\in X\times X$ and $g\in G$.
\begin{theorem}
The algebras $\mathrm{End}_G(\CC(X))$ and $\mathcal{K}_G(X)$ are isomorphic.
\end{theorem}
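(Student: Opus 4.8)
The plan is to show that the algebra isomorphism $\Phi$ furnished by the preceding theorem restricts to an isomorphism carrying $\mathcal{K}_G(X)$ onto $\mathrm{End}_G(\CC(X))$. Since $\Phi\colon \mathcal{K}(X)\to \mathrm{End}(\CC(X))$ is already known to be an isomorphism of algebras, it suffices to identify, among all $I\in\mathcal{K}(X)$, exactly those for which $\Phi_I$ lies in the centralizer $\mathrm{End}_G(\CC(X))$. Concretely, I would prove the equivalence: $\Phi_I$ commutes with every $\rho_g$ if and only if $I$ is $G$-invariant. The algebra structure is then transported automatically, because a bijective algebra homomorphism restricts to an isomorphism between any subalgebra and its image.

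The heart of the argument is a single computation comparing $\Phi_I\circ\rho_g$ with $\rho_g\circ\Phi_I$. Using $(\rho_g f)(x)=f(g^{-1}x)$ and the definition of $\Phi_I$, for $f\in\CC(X)$ and $x\in X$ one has $(\Phi_I\rho_g f)(x)=\sum_{y}I(x,y)f(g^{-1}y)$; reindexing the finite sum by $y\mapsto gz$ (legitimate since $g$ acts as a bijection of $X$) and expanding the other composite gives
\begin{equation*}
(\Phi_I\rho_g f)(x)=\sum_{z\in X}I(x,gz)f(z),\qquad (\rho_g\Phi_I f)(x)=\sum_{z\in X}I(g^{-1}x,z)f(z).
\end{equation*}
These two functions agree for every $f$ and every $x$ precisely when $I(x,gz)=I(g^{-1}x,z)$ for all $x,z\in X$ and $g\in G$; replacing $x$ by $gx$, this is equivalent to $I(gx,gz)=I(x,z)$, which is exactly the defining condition for $I\in\mathcal{K}_G(X)$. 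Hence $\Phi_I\in\mathrm{End}_G(\CC(X))$ if and only if $I\in\mathcal{K}_G(X)$.

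It follows that $\Phi$ maps $\mathcal{K}_G(X)$ bijectively onto $\mathrm{End}_G(\CC(X))$. Since $\mathrm{End}_G(\CC(X))$ is visibly a subalgebra of $\mathrm{End}(\CC(X))$—the composite of two $G$-equivariant endomorphisms is again $G$-equivariant—and $\Phi$ is an algebra isomorphism, its restriction to the preimage $\mathcal{K}_G(X)$ is an algebra isomorphism onto $\mathrm{End}_G(\CC(X))$; in particular this confirms that $\mathcal{K}_G(X)$ is closed under the convolution product. The only point requiring mild care is the reindexing of the finite sum over $X$ by the $G$-action in the displayed computation, which is entirely routine given that $X$ is finite. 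I do not anticipate any genuine obstacle here, as the whole statement collapses to the single equivariance identity above once the isomorphism $\Phi$ of the previous theorem is in hand.
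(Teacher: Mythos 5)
Your proof is correct, and it is exactly the argument the paper intends: the proof environment for this theorem is left empty in the source, but the subsequent corollary (which sets $\phi_i := \Phi(K_i)$) presupposes precisely that the isomorphism is the restriction of $\Phi$ to $\mathcal{K}_G(X)$, which is what you establish via the equivariance computation $I(gx,gz)=I(x,z) \iff \Phi_I\rho_g=\rho_g\Phi_I$. No gaps: the reindexing by the $G$-action and the comparison of coefficients (delta functions) are both legitimate since $X$ is finite.
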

\begin{proof}
\end{proof}
Let $O_1, \ldots , O_{m}$ be the orbits of the $G$--space $X\times X$, hence the canonical basis of $\mathcal{K}_G(X)$ is
$\{K_1, \ldots ,K_m\}$, where the $K_i $'s are the function delta of Dirac,
$$
K_i(x,y) :=\left\{\begin{array}{l}
1 \quad \text{if} \;(x,y)\; \in O_i\\
0 \quad \text{if} \;(x,y)\; \notin O_i.
\end{array}\right.
$$
\begin{corollary}
 A basis of $\mathrm{End}_G(\CC(X))$ is the set $\{\phi_1,\ldots , \phi_m\}$, where
$$
(\phi_i f)(x) = \sum_{y \;\text{s.t.}\; (x,y)\in O_i}f(y).
$$
Hence, the dimension  of $\mathrm{End}_G(\CC(X))$  is the number of $G$--orbits of $X\times X$.
\end{corollary}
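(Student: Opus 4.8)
The plan is to transport the explicit basis of $\mathcal{K}_G(X)$ through the isomorphism $\Phi$ of the preceding theorem, landing exactly on the stated operators $\phi_i$. First I would recall the fact recorded just above the corollary: the Dirac indicator functions $K_1,\ldots,K_m$ of the $G$--orbits $O_1,\ldots,O_m$ of $X\times X$ form a basis of $\mathcal{K}_G(X)$. Indeed, any $G$--invariant function $I$ on $X\times X$ is constant on each orbit, so $I=\sum_i I(O_i)K_i$, and the $K_i$ are linearly independent because the orbits are disjoint and cover $X\times X$.

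Next I would invoke the two structural theorems in tandem. The map $\Phi:\mathcal{K}(X)\to \mathrm{End}(\CC(X))$ is an algebra isomorphism, and by the second theorem its restriction identifies the $G$--invariant subalgebra $\mathcal{K}_G(X)$ with the centralizer $\mathrm{End}_G(\CC(X))$. The one point that must be checked here is that the isomorphism of the second theorem is precisely $\Phi$ restricted to $\mathcal{K}_G(X)$, i.e.\ that $I$ is $G$--invariant if and only if $\Phi_I$ commutes with every $\rho_g$, so that the restriction is a bijection onto the centralizer. Granting this, a linear isomorphism sends a basis to a basis, so $\{\Phi_{K_1},\ldots,\Phi_{K_m}\}$ is a basis of $\mathrm{End}_G(\CC(X))$.

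Then I would compute $\Phi_{K_i}$ directly from the definition of $\Phi$: for $f\in\CC(X)$ and $x\in X$,
$$
(\Phi_{K_i}f)(x)=\sum_{y\in X}K_i(x,y)f(y)=\sum_{y\;\text{s.t.}\;(x,y)\in O_i}f(y),
$$
which is exactly the operator $\phi_i$ of the statement. Hence $\{\phi_1,\ldots,\phi_m\}$ is a basis of $\mathrm{End}_G(\CC(X))$, and consequently $\dim \mathrm{End}_G(\CC(X))=m$, the number of $G$--orbits of $X\times X$.

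The main obstacle --- really the only substantive step --- is the identification $\Phi(\mathcal{K}_G(X))=\mathrm{End}_G(\CC(X))$, which is the content of the second theorem; everything else is bookkeeping, namely transport of a basis under a linear isomorphism together with one direct convolution computation. Once the second theorem is taken as given (with the isomorphism realized by $\Phi$), the corollary follows immediately.
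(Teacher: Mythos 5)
Your proposal is correct and is essentially the paper's own argument: the paper's proof is the one-line observation that $\phi_i = \Phi(K_i)$, i.e.\ transporting the basis $\{K_1,\ldots,K_m\}$ of $\mathcal{K}_G(X)$ through the isomorphism of the preceding theorem, which is exactly your route spelled out in full (including the direct computation $(\Phi_{K_i}f)(x)=\sum_{(x,y)\in O_i}f(y)$). Your added care in checking that the second theorem's isomorphism is the restriction of $\Phi$ to $\mathcal{K}_G(X)$ is a reasonable explicit filling-in of what the paper leaves implicit, not a different method.
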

\begin{proof}
Notice that  $\phi_i :=\Phi(K_i)$, so the proof is clear.
\end{proof}
We shall finish the section by showing the basis $\phi_i$ in the case $G=\mathrm{GL}_n(\FF_q)$ and $X$ the variety of flags in $\FF_q^n$.
\begin{remark}\rm\label{remarkJones}
In \cite[p. 336 ]{joAM} Jones raised the question whether  his method of  construction of the Homflypt polynomial can be used for  Hecke algebras of  other type. In this light it is  natural to ask also whether   the Jones method can be used  still keeping $G=\mathrm{GL}_n(\FF_q)$ but changing $B$ for other notable subgroups of $G$. This is what we  did  to define new invariants for links. More precisely, we change  $B$ by his  unipotent part, that is, the  subgroup formed by the upper unitriangular matrices.
\end{remark}

\section{Yokonuma--Hecke algebra}
In this section we introduce   the Yokonuma--Hecke whose origin is in representation theory of finite Chevalley groups; indeed, it  appears  as a centralizer of a natural representation of a Chevalley group. With the aim to applying this algebra to knot theory, we noted that it  is linked, not only to the braid group, but also to the framed braid group.  We show an inductive  basis, the existence of a Markov trace  and the E--system, all that fundamental ingredients to construct our invariants.

\subsection{}
The Yokonuma--Hecke algebra comes from a centralizer  of the permutation representation associated to the finite general linear  group $\mathrm{GL}_n(\FF_q)$ with respect to the subgroup  consisting of the upper  unitriangular matrix, cf. Remark \ref{remarkJones}. This centralizer was studied by T. Yokonuma who obtained  a presentation  of it, analogous to that found by N. Iwahori for the Hecke algebra, see Theorem \ref{Iwahori}. However, the presentation  of Yokonuma was slightly modified in order to be applied  to knot theory; a little modification of this presentation defines the so called  Yokounuma--Hecke algebra.
\begin{definition}
The  Yokonuma--Hecke algebra, in short called Y--H algebra and denoted by  ${\rm Y}_{d,n}(\u)$, is defined as follows:
$\mathrm{Y}_{1,1}(\u) = \KK $,  and for $d, n\geq 2$ as the algebra presented  by  braid generators  $g_1, \ldots , g_{n-1}$  and  framing generators $t_1, \ldots , t_n$, subject to the following relations:
\begin{eqnarray}
\label{yh1}
 g_i g_j & = & g_j g_i \quad \text{for $\vert i-j\vert >1$}, \\
\label{yh2}
g_i g_jg_i & = & g_jg_ig_j \quad  \text{for $\vert i-j\vert =1$}, \\
\label{yh3}
t_i t_j & = & t_j t_i \quad  \text{for all $i,j$}, \\
\label{yh4}
 t_j g_i & = & g_it_{\mathtt{s}_i(j)}\quad  \text{for all $i,j$},\\
\label{yh5}
 t_i^d & = & 1 \quad  \text{for all $i$,} \\
\label{yh6}
 g_i^2 & = &  1 + (\u-1)e_i(1 + g_i) \quad  \text{for all $i$},
\end{eqnarray}

where  $\mathtt{s}_i= (i,  i+1)$ and
$$
e_i := \frac{1}{d}\sum_{s=0}^{d-1}t_i^st_{i+1}^{d-s}.
$$
\end{definition}
Whenever the variable $\u$ is irrelevant,  we shall denote $\Yn(\u)$  simply  by  $\Yn$.
\begin{remark}\label{remarkYn}
\begin{enumerate}
\item The elements $e_i$'s result to be idempotents, i.e. $e_i^2= e_i$ and the $g_i$'s are invertible,
\begin{equation}\label{inversgi}
g_i^{-1} = g_i + (\u^{-1}-1)e_i + (\u^{-1}-1)e_ig_i.
\end{equation}
These facts will be frequently used  in what  follows.
\item
Notice that for $d=1$, the algebra $\Yn$ becomes $\Hn$, while
for $n=1$, $\Yn$ becomes the group algebra of the cyclic group with $d$ elements.
\item
The mapping $g_i\mapsto h_i$ and $t_i\mapsto 1$, define an homomorphism algebra from $\Yn$ onto $\Hn$.
\end{enumerate}
 \end{remark}
\subsection{}
We have a natural representation of the braid group in the Y--H algebra since relations (i) and  (ii) correspond to the defining relations of the braid group. Moreover, the  relations (i)--(iv) correspond to the defining relations of the framing group.  To be more precise, the framed braid group, denoted by $\mathcal{F}_n$, is the semi--direct product $\ZZ^n\rtimes B_n$, where the action that defines the semidirect product  is  through the homomorphism $\mathsf{p}$ from $B_n$ onto $\mathtt{S}_n$ (see (\ref{BnontoSn})), that is:
$$
\sigma (z_1,\ldots, z_n) = (z_{\sigma(1)},\ldots, z_{\sigma(n)}),\quad  \text{where $\sigma(i):=\mathsf{p}(\sigma)(i)$}.
$$
In multiplicative notation  the group $\ZZ$  has the presentation $\langle t\,;\,-\rangle$ and the direct product $\ZZ^n$ has the presentation $\langle t_1,\ldots ,t_n\;;\;t_it_j =t_jt_i \quad \text{for all $i,j$} \rangle$. Consequently, $\mathcal{F}_n$ can be presented by  (braids) generators $\sigma_1,\ldots ,\sigma_{n-1}$ together with the (framing) generators $t_1,\ldots ,t_n$ subject to the  relations (\ref{braid1}), ( \ref{braid2}), $t_it_j =t_jt_i $, for all $i,j$ and the relations:
\begin{equation}
t_j \sigma_i  =  \sigma_it_{\mathtt{s}_i(j)}  \quad \text{for all $i,j$}.
\end{equation}

Now, because $\mathcal{F}_n$ is a semidirect product, we have that every element in it can be written in the form
$t_1^{a_1}\cdots t_n^{a_n}\sigma$,  where $\sigma\in B_n$ and the  $a_i$'s are integers called the framing.  Further, observe that:
\begin{equation}\label{productFn}
(t_1^{a_1}\cdots t_n^{a_n}\sigma)(t_1^{b_1}\cdots t_n^{b_n}\tau) = t_1^{a_1 + b_{\sigma(1)}}\cdots t_n^{a_n+ b_{\sigma(n)}}\sigma\tau.
\end{equation}

In diagrams, the element $t_1^{a_1}\cdots t_n^{a_n}\sigma$ can be represented by the usual diagram braid for $\sigma$ together
with $a_1,\ldots ,a_n$ written at   the top of the braid: $a_i$ is placed where  the strand $i$ starts. For instance,  Fig. \ref{FramedBraid} represents the framed braid $t_1^at_2^bt_3^ct_4^d  \sigma_2 \sigma_3^{-1} \sigma_1^{-1}\sigma_2^2\sigma_3$.
\begin{center}
\begin{figure}[H]
\includegraphics
[scale=.3]
{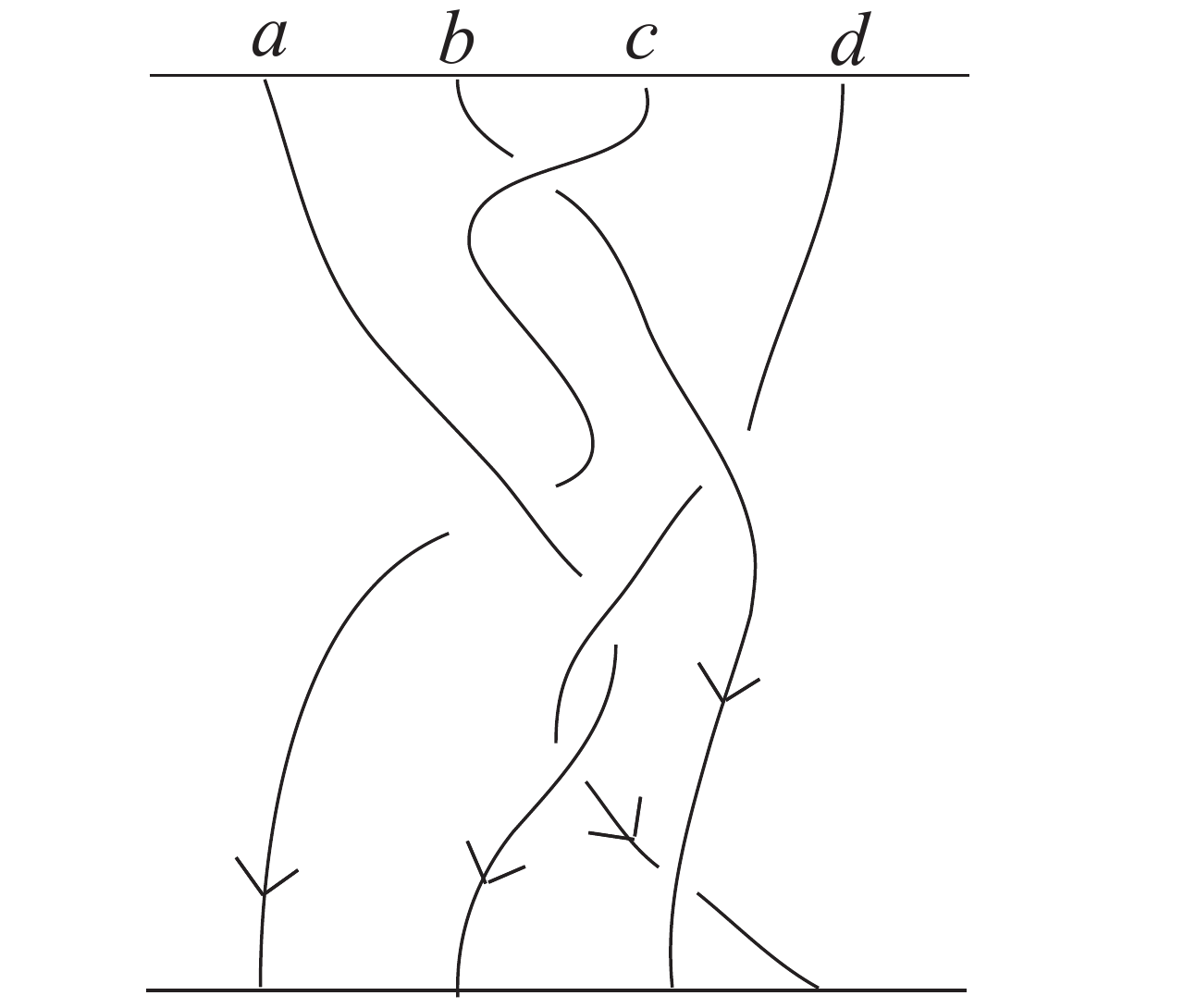}
\caption{}
\label{FramedBraid}
\end{figure}
\end{center}

In terms of diagrams, the formula  (\ref{productFn})  is translated as follows: we place the diagram of the braid $\tau$ under the diagram of the braid $\sigma$ and  the framing $b_i$ travels along the strand up to the top of the diagram of $\sigma\tau$, so that the framings of the product are $(a_i + b_{\sigma(i)})$'s. For instance, for  $\sigma =\sigma_3\sigma_1^{-1}\sigma_2^{-1}\sigma_3^{-1}$ and $\tau = \sigma_1\sigma_3^{-1}$, the product $(t_1^at_2^bt_3^ct_4^d\sigma)(t_1^xt_2^yt_3^zt_4^w\tau)$
in terms of diagrams is showed in Fig. \ref{ProductFramedBraid}.
\begin{center}
\begin{figure}[H]
\includegraphics
[scale=.3]
{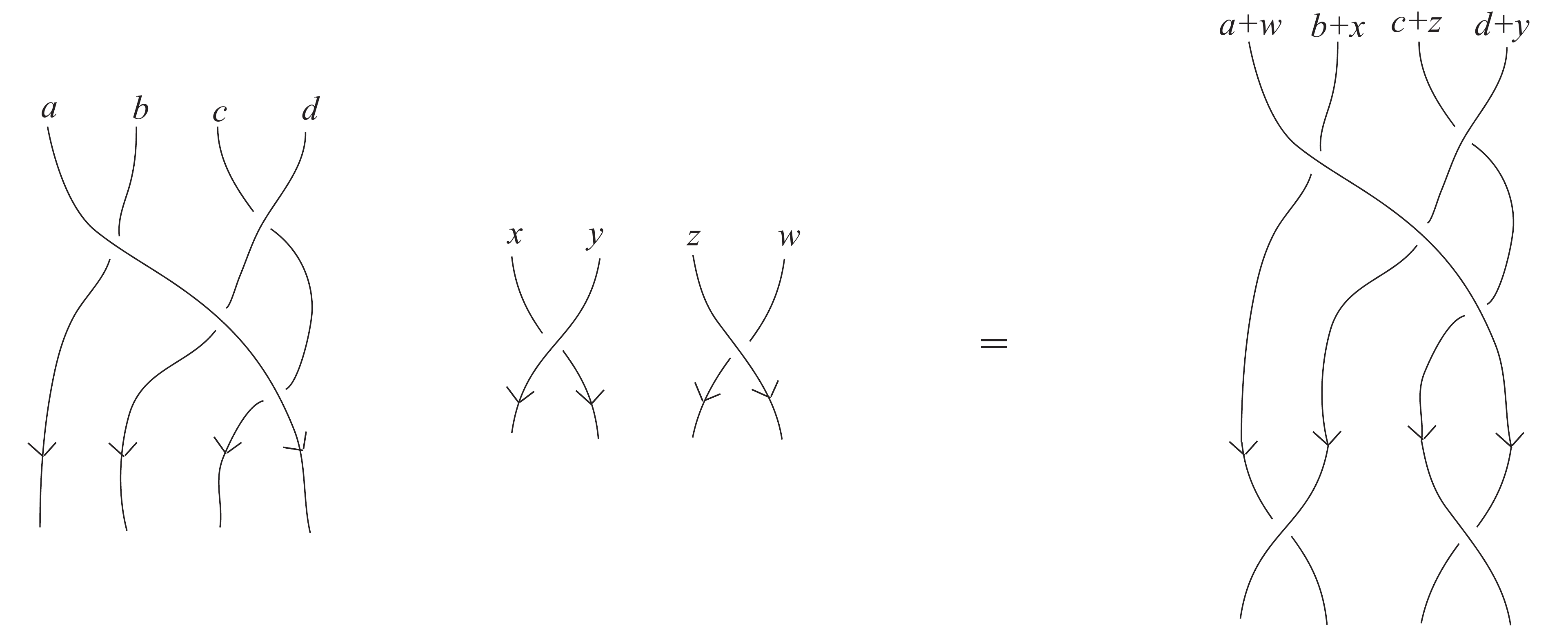}
\caption{}
\label{ProductFramedBraid}
\end{figure}
\end{center}

Finally,  relations (i)--(v) correspond to the defining relations of the framing  module  $d$. The $d$--modular framed braid group  $\mathcal{F}_{d,n}$ is the semidirect product $\ZZ/d\ZZ \rtimes B_n$; in other terms, it  is the group obtained by imposing  the relation $t_i^d = 1$ to the above presentation of $\mathcal{F}_{n}$.

\subsection{}
As  for the Hecke algebra, we can construct an  inductive basis for the Y--H algebra. To do that,  we define in
$\Yn$, the following sets:
$$
R_0=\{1, t_1^a\,;\, 1\leq  a \leq d-1\}, \quad R_1=\{1, t_2^a, g_1r \,;\, x\in R_0, 1\leq  a \leq d-1\}
$$
and
$$
R_{k} = \{1, t_{k+1}^a, g_kr\,;\, 1\leq  a \leq d-1, r\in R_{k-1}\}, \quad \text{for}\quad 2\leq k\leq n-1.
$$
\begin{definition}
Every element in $\Yn$ of the form $r_1\cdots r_{n-1}$, with $r_i\in R_i$, $0\leq i\leq n-1$ is called normal word. We denote by $\mathcal{R}_n$ the set of normal words.
\end{definition}

\begin{theorem}
The set $\mathcal{R}_n$  is  linear basis for $\Yn$; hence $\Yn$ has dimension $d^nn!$.
\end{theorem}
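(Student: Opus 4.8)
The plan is to follow exactly the template of Theorem~\ref{basisHecke}. First note that $\mathcal{R}_n$ has precisely $d^nn!$ elements: since $|R_0|=d$, $|R_1|=2d$, and $|R_k|=|R_{k-1}|+d=(k+1)d$, the product $\prod_{k=0}^{n-1}|R_k|$ equals $d^n\,n!$. Hence it suffices to prove two statements: that $\mathcal{R}_n$ spans $\Yn$ over $\KK$, and that it is $\KK$-linearly independent. I expect the spanning to be routine and the independence to be the genuine obstacle, just as the independence was deferred (\lq LATER\rq) in the Hecke case.

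For the spanning I would first establish the exact analogue of Lemma~\ref{atmost}: every word in $1,g_1,\dots,g_{n-1},t_1,\dots,t_n$ is a $\KK$-linear combination of words containing at most one $g_{n-1}$. The reduction is driven by the quadratic relation~(\ref{yh6}): whenever two occurrences of $g_{n-1}$ are separated by a subword with no $g_{n-1}$, I bring them together using~(\ref{yh1})--(\ref{yh2}) and the framing-commutation rule~(\ref{yh4}), and then replace $g_{n-1}^2$ by $1+(\u-1)e_{n-1}(1+g_{n-1})$. The new feature compared with the Hecke case is that $e_{n-1}$ reintroduces the framings $t_{n-1},t_n$; these are harmless since they commute among themselves by~(\ref{yh3}), can be transported to the correct slot by~(\ref{yh4}), and have their exponents reduced modulo $d$ by~(\ref{yh5}). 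With this lemma I would run the same induction on $n$ as in Theorem~\ref{basisHecke}, using the recursive description $\mathcal{R}_n=\mathcal{R}_{n-1}\,R_{n-1}$ together with the $\Yn$-analogue of Lemma~\ref{bimoduleHecke}, namely $\Yn\cong \mathrm{Y}_{d,n-1}\oplus\bigl(\mathrm{Y}_{d,n-1}\otimes_{\mathrm{Y}_{d,n-2}}\mathrm{Y}_{d,n-1}\bigr)$ via the map sending the tensor factor through $g_{n-1}$: every element is reduced to the form $M_0$ or $M_1g_{n-1}M_2$ with $M_i$ lower-rank normal words decorated by framings, and $M_1g_{n-1}M_2$ is then straightened into the span of $\mathcal{R}_n$ exactly as $h_{n-1}\mathfrak{n}_{n-2}\in\mathcal{C}_{n-1}$ was handled there.

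The main obstacle is linear independence, which does not follow from the spanning argument. My primary route is a Bergman diamond-lemma computation carried out over the ring $\CC[\u]$, where $1/d$ is available so that the $e_i$ make sense. I would orient the defining relations into rewriting rules — $g_i^2\to 1+(\u-1)e_i(1+g_i)$, $t_jg_i\to g_it_{\mathtt{s}_i(j)}$, $t_i^d\to 1$, plus an ordering of the commuting generators from~(\ref{yh1}) and~(\ref{yh3}) — and verify that all overlap ambiguities are confluent; the irreducible words are then exactly the normal words, and they form a free $\CC[\u]$-basis, hence a $\KK$-basis after inverting $\u$. The delicate ambiguities are those in which the reduction of $g_i^2$ collides with a braid relation~(\ref{yh2}) while simultaneously interacting with the framings produced by $e_i$; checking that these resolve is where the real work lies.

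A second, more conceptual route is a deformation argument, which I would use as a sanity check. The normal words span the $\CC[\u]$-form of the algebra (all reductions above have coefficients in $\CC[\u]$, no $\u^{-1}$ being needed for words in the generators). Specialising at $\u=1$ collapses~(\ref{yh6}) to $g_i^2=1$, so the presentation becomes that of the group algebra $\CC[(\ZZ/d\ZZ)^n\rtimes \mathtt{S}_n]$, of dimension exactly $d^nn!$; thus the $d^nn!$ images of the normal words are a basis at $\u=1$. To transfer independence to generic $\u$ one must upgrade this to freeness of the $\CC[\u]$-form, i.e.\ rule out a drop of fibre dimension away from $\u=1$ (upper semicontinuity alone only gives $\dim_\KK\Yn\le d^nn!$, the wrong direction). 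Establishing this flatness is precisely what the diamond-lemma computation settles unconditionally, which is why I would ultimately rest the independence on the first route and use the specialisation only to confirm the count.
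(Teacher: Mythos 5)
Your plan cannot be checked against the paper's own argument, because the paper states this theorem with no proof at all --- and even for the Hecke analogue (Theorem~\ref{basisHecke}) it proves only the spanning half, explicitly deferring linear independence. Measured against that, your proposal is more complete than the source: the count $|R_k|=(k+1)d$, hence $|\mathcal{R}_n|=d^n n!$, is correct; the reduction to words with at most one $g_{n-1}$ via (\ref{yh6}), (\ref{yh4}), (\ref{yh5}) is the right analogue of Lemma~\ref{atmost}; and you correctly observe that the specialisation $\u=1$ to $\CC[(\ZZ/d\ZZ)^n\rtimes\mathtt{S}_n]$ only bounds the dimension above, so that independence must come from an unconditional argument such as the diamond lemma --- which is consistent with how the dimension is actually settled in the literature \cite{juJKTR, chpoAM, esryJPAA}.

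There is, however, one concretely false step: your claimed analogue of Lemma~\ref{bimoduleHecke}, namely $\Yn\cong \mathrm{Y}_{d,n-1}\oplus\bigl(\mathrm{Y}_{d,n-1}\otimes_{\mathrm{Y}_{d,n-2}}\mathrm{Y}_{d,n-1}\bigr)$, fails for $d\geq 2$. Since $\mathrm{Y}_{d,n-1}$ is free of rank $d(n-1)$ over $\mathrm{Y}_{d,n-2}$, the right-hand side has dimension $d^{n-1}(n-1)!\,(1+d(n-1))$, not $d^n n!$; for $d=2$, $n=3$ this reads $40\neq 48$. The defect is exactly the framing of the last strand: a word containing no $g_{n-1}$ need not lie in $\mathrm{Y}_{d,n-1}$, since it may still carry $t_n^a$ --- this is visible in (\ref{Rn}), whose first alternative is $\mathfrak{r}t_n^a$ and not merely $\mathfrak{r}$, and in your own recursion $|R_k|=|R_{k-1}|+d$ rather than $+1$. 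The correct decomposition is $\Yn\cong \bigoplus_{a=0}^{d-1}\mathrm{Y}_{d,n-1}t_n^a\oplus\bigl(\mathrm{Y}_{d,n-1}\otimes_{\mathrm{Y}_{d,n-2}}\mathrm{Y}_{d,n-1}\bigr)$, of dimension $d\cdot d^{n-1}(n-1)!+d^{n-1}(n-1)!\,d(n-1)=d^n n!$ (these extra $d$ summands are precisely what later forces the trace rule $\tr(a t_{n+1}^k)=\x_k\tr(a)$). Your reduction repairs easily: in a word with no $g_{n-1}$ the $t_n$'s commute with everything else and collect into a single right-hand $t_n^a$; in a word with one $g_{n-1}$ push each $t_n$ through via $g_{n-1}t_n=t_{n-1}g_{n-1}$ to reach $M_1g_{n-1}M_2$ with $M_1,M_2\in\mathrm{Y}_{d,n-1}$, then straighten as in Theorem~\ref{basisHecke}, noting $g_{n-1}$ commutes past $r_0\cdots r_{n-3}$ and $g_{n-1}r_{n-2}\in R_{n-1}$. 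On independence, two wrinkles in your rewriting system should be made explicit: the braid relations (\ref{yh2}) must themselves be oriented as rules (otherwise $g_1g_2g_1$ and $g_2g_1g_2$ are distinct irreducibles and the count is wrong), and your rule $t_jg_i\to g_it_{\mathtt{s}_i(j)}$ pushes all framings to the right, so the irreducible words are \emph{not} the elements of $\mathcal{R}_n$ (e.g.\ $t_1^ag_1t_1^b$ is reducible) but words of the form (braid normal form)$\,t_1^{a_1}\cdots t_n^{a_n}$. That is harmless, but only via a bridging step you should state: once any $\KK$-basis of cardinality $d^nn!$ exists, your spanning set $\mathcal{R}_n$ of the same cardinality is automatically a basis.
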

Observe that every element of $\mathcal{R}_n$ has one of the following forms:
\begin{equation}\label{Rn}
\mathfrak{r}t_n^a\quad  \text{or }\quad \mathfrak{r}g_{n-1}\ldots g_1 t_i^a,
\end{equation}
where $\mathfrak{r}\in \mathcal{R}_{n-1}$ and $0\leq a\leq d-1 $.
\begin{example} For $n=3$, we have:
$$
R_0=\{1, t_1^a\,;\, 1\leq  a \leq d-1\}, \quad R_1=\{1, t_2^a, g_1r \,;\, r\in R_0, 1\leq  a \leq d-1\}
$$
and
$$
R_2=\{1, t_3^a, g_2r \,;\, r\in R_1, 1\leq  a \leq d-1\}
$$
Thus,  $\mathcal{R}_1$ of $\mathrm{Y}_{d,1}$ is $R_0$. The basis $\mathcal{R}_2$ of $\mathrm{Y}_{d,2}$ is formed by  the elements in the form:
$$
t_1^at_2^b, \quad t_1^ag_1t_1^b.
$$
The elements of the basis $\mathcal{R}_3$ of $\mathrm{Y}_{d,3}$, are of the form:
$$
t_1^at_2^b\underline{t_3^c},\quad t_1^at_2^b\underline{g_2t_2^c}, \quad t_1^at_2^b\underline{g_2g_1t_1^c}, \quad t_1^ag_1t_1^b\underline{t_3^c}\quad t_1^ag_1t_1^b\underline{g_2t_2^c},\quad
t_1^ag_1t_1^b\underline{g_2g_1t_1^c}.
$$
We used the underline to indicate the form  of  (\ref{Rn}) for the elements of $\mathcal{R}_3$.
\end{example}

Now, in particular,  (\ref{Rn}) said that $\mathcal{R}_{n-1}\subset\mathcal{R}_n$, for all $n$. Then, for every $d$ we have the
following tower of algebras:
\begin{equation}\label{towerYn}
\mathrm{Y}_{d,1}\subset \mathrm{Y}_{d,2}\cdots \subset\cdots  \mathrm{Y}_{d,n}\subset  \mathrm{Y}_{d,n+1}\subset\cdots
\end{equation}
\begin{notation}
We denote by $ \mathrm{Y}_{d, \infty}$ the algebra associated to the tower of algebras  above.
\end{notation}
\subsection{}
Set $\x_0:=1$ and let $ \x_1,\ldots \x_{d-1}$ be $d-1$ independent parameters commuting among them and with the parameters $\z$.

\begin{theorem}
The algebra $ \mathrm{Y}_{d, \infty}$ supports a unique Markov trace, that is,  a family $\tr_d =\{\tr_{d,n}\}_{n\in \NN}$ of linear maps 
$$\tr_{d,n} :\Yn \longrightarrow\CC[\z, \x_1,\ldots \x_{d-1}]$$ defined uniquely by the following rules:
\begin{enumerate}
\item $\tr_{d,n}(1)= 1$,
\item $\tr_{d,n} (ab) = \tr_{d,n} (ba)$,
\item $\tr_{d,n+1} (ag_n) = \z\tr_{d,n} (a)$,
\item $\tr_{d,n+1} (at_{n+1}^k) = \x_k\tr_{d,n} (a)$,
\end{enumerate}
where $a,b\in \Yn$, $0\leq k\leq d-1$.
\end{theorem}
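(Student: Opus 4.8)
The plan is to follow verbatim the strategy used for the Ocneanu trace (Theorem~\ref{Ocneanu}): build $\tr_d$ by an inductive definition resting on a module decomposition of $\Ynn$ over $\Yn$, read rules (1), (3), (4) off that definition, and then concentrate the real work on the conjugation invariance~(2). First I would establish the Y--H analogue of Lemma~\ref{bimoduleHecke} for the tower (\ref{towerYn}): the linear map
\[
\Phi:\ \Big(\bigoplus_{k=0}^{d-1}\Yn\Big)\oplus\big(\Yn\otimes_{{\rm Y}_{d,n-1}}\Yn\big)\longrightarrow\Ynn
\]
sending $\big((a_k)_{k},\,\sum_i y_i\otimes z_i\big)$ to $\sum_{k=0}^{d-1}a_k t_{n+1}^{\,k}+\sum_i y_i g_n z_i$ is an isomorphism of $(\Yn,\Yn)$--bimodules. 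Well-definedness and surjectivity are immediate from the normal form (\ref{Rn}), and injectivity follows from a dimension count: the source has dimension $d\cdot d^n n!+d^{n+1}n\cdot n!=d^{n+1}(n+1)!=\dim\Ynn$, the rank of $\Yn$ over ${\rm Y}_{d,n-1}$ being $dn$.

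I would then define the family by induction on $n$. Put $\tr_{d,1}(t_1^{\,k}):=\x_k$, so that $\x_0=1$ gives $\tr_{d,1}(1)=1$; and, using the unique preimage under $\Phi$, set
\[
\tr_{d,n+1}\Big(\sum_k a_k t_{n+1}^{\,k}+\sum_i y_i g_n z_i\Big):=\sum_{k=0}^{d-1}\x_k\,\tr_{d,n}(a_k)+\z\sum_i \tr_{d,n}(y_i z_i).
\]
Rule (1) is the base case, rule (4) is the framing part of the definition, and, because $\Phi(a\otimes b)=a g_n b$, rule (3) holds in the strengthened form $\tr_{d,n+1}(a g_n b)=\z\,\tr_{d,n}(ab)$ for all $a,b\in\Yn$. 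Thus only property~(2) requires argument.

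The heart of the proof, and the step I expect to be the main obstacle, is the trace property (2). By linearity it suffices to verify $\tr_{d,n+1}(ab)=\tr_{d,n+1}(ba)$ for $a$ a normal word and $b$ a single generator among $g_1,\dots,g_n,t_1,\dots,t_{n+1}$; the passage to arbitrary $b$ is the standard word-length induction, since for $b=b's$ with $s$ a generator one has $\tr(ab's)=\tr(s\,ab')=\tr((sa)b')=\tr(b'(sa))=\tr(ba)$. I would then induct on $n$, separating the \emph{low} generators $g_1,\dots,g_{n-1},t_1,\dots,t_n\in\Yn$ from the two \emph{high} generators $g_n$ and $t_{n+1}$. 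For a high generator the identity is read from rules (3) and (4) and the decomposition $\Phi$; the only genuinely new feature is that a product creating $g_n^2$ must be reduced through the quadratic relation (\ref{yh6}), $g_n^2=1+(\u-1)e_n(1+g_n)$, so one evaluates the trace on the idempotent term by expanding $e_n=\tfrac1d\sum_s t_n^{\,s}t_{n+1}^{\,d-s}$ and applying rule (4) in the variable $t_{n+1}$. For a low generator $s$ one commutes $s$ past $t_{n+1}^{\,k}$ and $g_n$ using (\ref{yh3}), (\ref{yh4}) and the braid relations (\ref{yh1})--(\ref{yh2}), reducing both sides to expressions in $\Yn$ that the induction hypothesis already equates. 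This is precisely the Y--H counterpart of the step left unwritten in the proof of Theorem~\ref{Ocneanu}, but strictly more delicate: the framing generators $t_i$ must be tracked, and (\ref{yh6}) contributes the idempotents $e_i$ rather than a scalar multiple of $g_i$.

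Finally, uniqueness is the easy direction. Suppose $\{\tr_{d,n}\}$ satisfies (1)--(4). By (\ref{Rn}) every normal word has one of the two shapes $\mathfrak{r}\,t_n^{\,a}$ or $\mathfrak{r}\,g_{n-1}g_{n-2}\cdots g_1\,t_i^{\,a}$ with $\mathfrak{r}\in\mathcal{R}_{n-1}\subset{\rm Y}_{d,n-1}$. On the first, rule (4) gives $\x_a\,\tr_{d,n-1}(\mathfrak{r})$; on the second, writing it as $\mathfrak{r}\,g_{n-1}\,(g_{n-2}\cdots g_1 t_i^{\,a})$ with both outer factors in ${\rm Y}_{d,n-1}$, the strengthened rule (3) (a consequence of (2) and (3)) gives $\z\,\tr_{d,n-1}(\mathfrak{r}\,g_{n-2}\cdots g_1 t_i^{\,a})$. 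Both cases reduce to level $n-1$, so induction on $n$ forces the values on the basis $\mathcal{R}_n$; linearity then determines $\tr_{d,n}$ uniquely.
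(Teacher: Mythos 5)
Your proposal is correct and follows essentially the route the paper itself intends: the theorem is stated here without proof, but the paper's sketched construction of the Ocneanu trace (Theorem \ref{Ocneanu}, resting on the bimodule decomposition of Lemma \ref{bimoduleHecke}) is exactly the template you generalize, and your extended decomposition $\Phi$ --- with the $d$ framing summands $\Yn t_{n+1}^{k}$ alongside $\Yn\otimes_{{\rm Y}_{d,n-1}}\Yn$, justified by freeness of $\Yn$ over ${\rm Y}_{d,n-1}$ on the $dn$ tails from (\ref{Rn}) and the count $d^{n+1}n!+d^{n+1}n\cdot n!=d^{n+1}(n+1)!$ --- is the right Yokonuma--Hecke analogue, while your uniqueness argument forcing the values on the inductive basis is sound. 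The one caveat is that your verification of rule (2) remains an outline at the same level as the paper's own unfinished check in Theorem \ref{Ocneanu} (for instance, terms such as $e_n g_n t_{n+1}^{a}$ arising from the quadratic relation (\ref{yh6}) are not directly covered by the strengthened rule (3), since $t_{n+1}^{a}\notin\Yn$, and must first be rewritten using $t_{n+1}g_n=g_nt_n$), but this is routine bookkeeping rather than a gap in the method.
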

Notice that for $d=1$  the trace $\tr$ becomes the Ocneanu trace.

From now on  we fix a positive integer $d$, thus we shall write  $\tr_{n}$ instead of $\tr_{d,n}$. Moreover, whenever it is not necessary to explicit $n$, we write simply $\tr$ instead of $\tr_{n}$.
\smallbreak
We compute some values of  $\tr$ that will be used later. First, we compute $\tr (	\alpha e_ng_n)$, for every $\alpha\in \Yn$.
\smallbreak
Notice that by using the rule of  commutation $ t_n^st_{n+1}^{d-s} g_n = 	t_n^s g_n t_n^{d-s}$. Then, for every $\alpha\in \Yn$, we have:
$$
\tr (\alpha t_n^st_{n+1}^{d-s} g_n)  = \tr(	\alpha t_n^s g_n t_n^{d-s} ) =\z	\tr(\alpha t_n^s  t_n^{d-s})  = \tr (\alpha).
$$
Thus $\tr(\alpha e_ng_n) = d^{-1}\sum_s\tr(\alpha t_n^st_{n+1}^{d-s}g_n) = \z d^{-1}\sum_s \tr(\alpha) $. Then,
\begin{equation}\label{treng}
\tr (\alpha e_n g_n) = \z \tr(\alpha ) =\tr (\alpha)\tr(g_n).
\end{equation}

Now, we compute   $\tr (e_i)$.

$$
\tr (e_i)  =  \frac{1}{d}\sum_{s}^{}\tr(t_i^st_{i+1}^{d-s})
   =  \frac{1}{d}\sum_{s}^{}\tr(t_i^s)\tr(t_{i+1}^{d-s}) =\frac{1}{d}\sum_{s}^{}\x_s\x_{d-s}.
$$
Hence for all $i$ the trace $\tr$ takes the same values on $e_i$; we denote these values by $E$. More generally,
 we define the elements $e_i^{(m)}$ as follows,
\begin{equation}\label{em}
e_i^{(m)} := \frac{1}{d}\sum_{s=0}^{d-1}t_i^{m+s}t_{i+1}^{d-s},\quad \text{where $0\leq m\leq d-1$},
\end{equation}
where the subindices are regarded as module $d$.
Then, denoting $\tr (e_i^{(m)})$ by $E^{(m)}$, we have
\begin{equation}\label{em}
E^{(m)}:= \tr (e_i^{(m)}) =\frac{1}{d}\sum_{s=0}^{d-1}\x_{m+s}\x_{d-s}\quad \text{where $0\leq m\leq d-1$}.
\end{equation}
Notice that $E^{(0)}= E :=\tr(e_i)$.
\smallbreak
The 	following lemmas will be useful in the next section and their proofs   are a good example to see how  $\tr$ works.	
\begin{lemma}\label{trasis1}
Set $\alpha = w t_n^k$, with $w\in \mathrm{Y}_{n-1}$. We have
$$
\tr( \alpha e_n^{(m)} )= \frac{E^{(m+k)}}{\x_k}\tr(\alpha).
$$
\end{lemma}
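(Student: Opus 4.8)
The plan is to reduce the whole computation to repeated use of the defining rule (4) of the Markov trace, $\tr_{n+1}(a\,t_{n+1}^k)=\x_k\,\tr_n(a)$, by moving all of $w$ and $t_n^k$ to the left so that the top generators $t_n,t_{n+1}$ become visible one strand at a time. The first thing I would record are the commutations that make this legitimate. Since $w\in\mathrm{Y}_{n-1}$ is a word in $g_1,\dots,g_{n-2}$ and $t_1,\dots,t_{n-1}$, relation (\ref{yh4}) together with $\mathtt{s}_i(n)=n$ and $\mathtt{s}_i(n+1)=n+1$ for $i\le n-2$, and relation (\ref{yh3}), show that $w$ commutes with both $t_n$ and $t_{n+1}$; the $t$-powers of course commute among themselves. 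Expanding $e_n^{(m)}=\tfrac1d\sum_{s=0}^{d-1}t_n^{m+s}t_{n+1}^{d-s}$ and sliding $w$ and $t_n^k$ across then gives
$$\alpha e_n^{(m)} = w\,t_n^k\cdot\frac{1}{d}\sum_{s=0}^{d-1}t_n^{m+s}t_{n+1}^{d-s} = \frac{1}{d}\sum_{s=0}^{d-1} w\,t_n^{k+m+s}\,t_{n+1}^{d-s},$$
with all exponents read modulo $d$.

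Next I would apply rule (4) twice, peeling off strand $n+1$ and then strand $n$. Each summand is of the form $(w\,t_n^{k+m+s})\,t_{n+1}^{d-s}$ with $w\,t_n^{k+m+s}\in\mathrm{Y}_n$, so rule (4) yields $\tr_{n+1}(w\,t_n^{k+m+s}t_{n+1}^{d-s})=\x_{d-s}\,\tr_n(w\,t_n^{k+m+s})$, and a further application to $w\in\mathrm{Y}_{n-1}$ gives $\tr_n(w\,t_n^{k+m+s})=\x_{k+m+s}\,\tr_{n-1}(w)$. Summing over $s$,
$$\tr(\alpha e_n^{(m)}) = \left(\frac{1}{d}\sum_{s=0}^{d-1}\x_{m+k+s}\,\x_{d-s}\right)\tr_{n-1}(w),$$
and the parenthesized factor is exactly $E^{(m+k)}$ by the defining formula for $E^{(m)}$.

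Finally I would re-express $\tr_{n-1}(w)$ in terms of $\tr(\alpha)$: one more application of rule (4) gives $\tr(\alpha)=\tr_n(w\,t_n^k)=\x_k\,\tr_{n-1}(w)$, hence $\tr_{n-1}(w)=\tr(\alpha)/\x_k$, and substituting produces $\tr(\alpha e_n^{(m)})=\tfrac{E^{(m+k)}}{\x_k}\tr(\alpha)$, as claimed. The argument is essentially mechanical, so the only points that need genuine care are the bookkeeping ones: justifying that $w$ and $t_n^k$ commute past every factor of $e_n^{(m)}$, which is precisely where the framing relations (\ref{yh3})--(\ref{yh4}) enter, and keeping the subscripts of the $\x$'s reduced modulo $d$ so that the sum is recognized as $E^{(m+k)}$ and not some shifted variant. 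The apparent division by $\x_k$ is harmless, since $\tr(\alpha)$ itself carries the factor $\x_k$.
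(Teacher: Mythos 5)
Your proof is correct and follows essentially the same route as the paper's: both expand $e_n^{(m)}$, peel off $t_{n+1}^{d-s}$ and then $t_n^{m+k+s}$ by two applications of trace rule (4) to get $\x_{d-s}\x_{m+k+s}\tr(w)$, sum over $s$ to recognize $E^{(m+k)}$, and finish with $\tr(\alpha)=\x_k\tr(w)$. Your explicit justification of the commutations via relations (\ref{yh3})--(\ref{yh4}) merely spells out what the paper leaves as a ``direct computation.''
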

\begin{proof}
A direct computation shows:
$$
\tr (w_{n-1}t_n^kt_n^{m+s}t_{n+1}^{d-s} )= \x_{d-s}\tr(w_{n-1}t_n^{m+k+s})=\x_{d-s}\x_{m+k+s}\tr(w_{n-1}).
$$
Then
$$
\tr (\alpha e_n^{(m)} ) =  \frac{1}{d}\sum_s\tr (w_{n-1}t_n^kt_n^{m+s}t_{n+1}^{d-s}) = \frac{1}{d}\sum_s \x_{d-s}\x_{m+k+s}\tr(w_{n-1}),
$$
so, $\tr (\alpha e_n^{(m)} ) = \tr(w_{n-1})E^{(m+k)}$.
Now, $\tr(\alpha)=\x_k \tr (w_{n-1})$. Therefore, the proof follows.
\end{proof}
\begin{lemma}\label{trbasis2}
Set $\alpha = wg_{n-1}g_{n-2}\cdots g_i t_i^k$, with $w\in \mathrm{Y}_{n-1}$. We have
$$
\tr (\alpha e_n )=\z\tr(\alpha^{\prime}e_{n-1} ),
$$
where $\alpha^{\prime} = g_{n-2}\cdots g_it_i^kw$.
\end{lemma}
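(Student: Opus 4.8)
The plan is to peel off the two framings hidden inside $e_n$ and the single braid generator $g_{n-1}$ sitting at the top of $\alpha$, reducing everything by one level so that the idempotent $e_{n-1}$ reappears one step down. First I would expand $e_n=\frac1d\sum_{s=0}^{d-1}t_n^st_{n+1}^{d-s}$ and exploit that $t_{n+1}^{d-s}$ is the top framing at level $n+1$: since $\alpha t_n^s\in\mathrm{Y}_n$, rule (4) of the Markov trace strips it off at the cost of a factor $\x_{d-s}$, giving
$$
\tr(\alpha e_n)=\frac1d\sum_{s}\x_{d-s}\,\tr(\alpha t_n^s),
$$
where the trace on the right is now taken at level $n$.

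Next, using the commutation relation \eqref{yh4}, I would transport $t_n^s$ leftwards through $\alpha$: it commutes with $t_i^k$ and with $g_{n-2}\cdots g_i$ (all of index $\le n-2$, for which $\mathtt{s}_j(n)=n$), while the only nontrivial move is $g_{n-1}t_n^s=t_{n-1}^s g_{n-1}$. This rewrites $\alpha t_n^s=w\,t_{n-1}^s\,g_{n-1}\,g_{n-2}\cdots g_i t_i^k$. Setting $a=w t_{n-1}^s$ and $b=g_{n-2}\cdots g_i t_i^k$, both in $\mathrm{Y}_{n-1}$, I would apply cyclicity (rule (2)) followed by rule (3) in the form $\tr(a g_{n-1} b)=\tr(b a\,g_{n-1})=\z\,\tr(ba)$. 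Here $ba=g_{n-2}\cdots g_i t_i^k\,w\,t_{n-1}^s=\alpha' t_{n-1}^s$ by the very definition $\alpha'=g_{n-2}\cdots g_it_i^kw$ (note $t_{n-1}^s$ simply remains at the tail), so that
$$
\tr(\alpha t_n^s)=\z\,\tr(\alpha' t_{n-1}^s).
$$

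Finally I would run the first step backwards one level lower. Since $e_{n-1}=\frac1d\sum_s t_{n-1}^s t_n^{d-s}$ and $t_n^{d-s}$ is the top framing at level $n$, rule (4) gives $\tr(\alpha' e_{n-1})=\frac1d\sum_s\x_{d-s}\,\tr(\alpha' t_{n-1}^s)$. Substituting the previous display into the sum for $\tr(\alpha e_n)$ then identifies it with $\z\,\tr(\alpha' e_{n-1})$, as claimed. The main obstacle is purely bookkeeping: verifying via \eqref{yh4} that every framing of index $\le n-1$ and every $g$ of index $\le n-2$ genuinely commutes with $t_n^s$, so that $g_{n-1}t_n^s=t_{n-1}^s g_{n-1}$ is the sole crossing that matters, and checking that the cyclic rearrangement under rule (2) returns exactly the word order defining $\alpha'$ with $t_{n-1}^s$ left undisturbed at the end.
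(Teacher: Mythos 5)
Your proof is correct and follows essentially the same route as the paper's: both expand $e_n$, strip $t_{n+1}^{d-s}$ with trace rule (4) at the cost of $\x_{d-s}$, slide $t_n^s$ leftward through the word via relation (\ref{yh4}) so that only $g_{n-1}t_n^s = t_{n-1}^sg_{n-1}$ acts nontrivially, remove $g_{n-1}$ by cyclicity together with rule (3) to get $\tr(\alpha t_n^s)=\z\,\tr(\alpha' t_{n-1}^s)$, and reassemble $e_{n-1}$ using rule (4) in reverse. The only cosmetic difference is organizational: the paper carries the factors $\x_{d-s}$ through the computation of each summand $A_s=\alpha t_n^st_{n+1}^{d-s}$ and rebuilds $t_{n-1}^st_n^{d-s}$ at the end, whereas you match the two expansions of $\tr(\alpha e_n)$ and $\tr(\alpha' e_{n-1})$ directly.
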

\begin{proof}
We have to compute firstly the trace of $A_s$, 
$$
A_s:=wg_{n-1}g_{n-2}\cdots g_i t_i^kt_n^st_{n+1}^{d-s}.
$$
We have
$$
\tr (A_s)  =  \x_{d-s}\tr(wg_{n-1}g_{n-2}\cdots g_i t_i^kt_n^s)  = \x_{d-s}\tr(wt_{n-1}^sg_{n-1}g_{n-2}\cdots g_i t_i^k),
$$
where the second equality is obtained by moving $t_{n-1}^{s}$ to the left. Using  first the trace rule (4) and later the trace rule (2),  we get
$$
\tr (A_s) =\z \x_{d-s}\tr(wt_{n-1}^sg_{n-2}\cdots g_i t_i^k) = \z \x_{d-s}\tr((g_{n-2}\cdots g_i t_i^k)(wt_{n-1}^s)),
$$
By using again the trace rule (4), we obtain
$$
\tr (A_s)= \z\tr ( \alpha^{\prime}t_{n-1}^st_n^{d-s}).
$$
Then,
$$\tr(\alpha e_n) = \frac{1}{d}\sum_s\tr(A_s) =\frac{1}{d}\sum_s \z\tr ( \alpha^{\prime}t_{n-1}^st_n^{d-s})= \z\tr(\alpha^{\prime}e_{n-1}).
$$
\end{proof}
\section{The $E$--system}
The $E$--system is the following non--linear system equation of $(d-1)$ equation in the variable $\x_1, \ldots ,\x_{d-1}$.

$$
\begin{array}{ccc}
E^{(1)} & = & \x_1 E\\
E^{(2)} & = & \x_2 E\\
 & \vdots & \\
E^{(d-1)} & = & \x_{d-1} E\\
\end{array}
$$
\begin{example}
For  $d=3$, the $E$--system is:
$$
\begin{array}{ccc}
\x_1 +  \x_2^2 = 2\x_1^2\x_2\\
\x_1^2 + \x_2 = 2\x_1\x_2^2.
\end{array}
$$

For  $d=4$, the $E$--system is:
$$
 \begin{array}{ccc}
 \x_1 + 2\x_2\x_3 & = &  2\x_1^2 \x_3 + \x_1\x_2^2  \\
 \x_1^2 + \x_2 +\x_3^2 & = & 2\x_1\x_2\x_3 + \x_2^3 \\
\x_3 + 2\x_1 \x_2 & = &  2\x_1\x_3^2 + \x_2^2\x_3.
 \end{array}
$$
\end{example}
The $E$--system plays a key role to define the invariants in the next section. The $E$--system was solved by P. Gerardin, see
 \cite[Appendix]{julaAM}.
 \begin{theorem}[P. Gerardin, 2013]
 The solutions of the $E$--system are parametrized by the non--empty subset of the group $\ZZ/d\ZZ$. Moreover, given a  such subset  $S$, the solutions are:
 $$
\x_{k} = \frac{1}{d}\sum_{s\in S}\exp\left( \frac{2\pi ks}{d}\right),\quad 0\leq k\leq d-1.
 $$
\end{theorem}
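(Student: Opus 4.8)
The plan is to recognize the $E$--system as a convolution identity on the cyclic group $\ZZ/d\ZZ$ and to diagonalize it by the discrete Fourier transform. First I would rewrite the quantities $E^{(m)}$ as a cyclic convolution of the function $k\mapsto\x_k$. Reading all subscripts modulo $d$ and substituting $u=-s$ in (\ref{em}), one gets
$$
E^{(m)} = \frac{1}{d}\sum_{s=0}^{d-1}\x_{m+s}\x_{-s} = \frac{1}{d}\sum_{u=0}^{d-1}\x_{u}\x_{m-u} = \frac{1}{d}(\x\ast\x)(m),
$$
where $\ast$ denotes cyclic convolution on $\ZZ/d\ZZ$. Since $\x_0=1$, the equation $E^{(0)}=\x_0E$ holds automatically, so the $E$--system is equivalent to the single functional equation $\x\ast\x = dE\,\x$ on $\ZZ/d\ZZ$, with $E=E^{(0)}$ regarded as determined by the $\x_k$.

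Next I would apply the discrete Fourier transform $\widehat{\x}(k)=\sum_{j=0}^{d-1}\x_j\zeta^{-jk}$, with $\zeta=\exp(2\pi i/d)$ a primitive $d$--th root of unity. By the convolution theorem $\widehat{\x\ast\x}=\widehat{\x}^{\,2}$, so the functional equation becomes the pointwise relation $\widehat{\x}(k)^2 = dE\,\widehat{\x}(k)$ for every $k$; hence for each $k$ either $\widehat{\x}(k)=0$ or $\widehat{\x}(k)=dE$. I would first observe that $E\neq 0$, for if $E=0$ then $\widehat{\x}\equiv 0$, whence $\x\equiv 0$, contradicting $\x_0=1$. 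Setting $S:=\{\,k\in\ZZ/d\ZZ : \widehat{\x}(k)=dE\,\}$ and inverting the transform,
$$
\x_j = \frac{1}{d}\sum_{k}\widehat{\x}(k)\zeta^{jk} = E\sum_{k\in S}\zeta^{jk}.
$$
Imposing the normalization $\x_0=1$ gives $E\,|S|=1$, so $S$ is non--empty and $E=1/|S|$; this produces exactly the asserted formula $\x_k=\frac{1}{|S|}\sum_{s\in S}\exp(2\pi i ks/d)$, the normalizing constant $1/|S|$ being forced by $\x_0=1$.

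Finally I would check that this is a genuine parametrization by non--empty subsets. For the converse, given any non--empty $S\subseteq\ZZ/d\ZZ$ and the $\x_k$ defined by the displayed formula, a one--line transform computation gives $\widehat{\x}(k)=\frac{d}{|S|}\mathbf 1_S(k)$, so $\widehat{\x}$ takes only the values $0$ and $d/|S|=dE$; this verifies $\widehat{\x}(k)^2=dE\,\widehat{\x}(k)$, hence that the $\x_k$ solve the $E$--system, while $\x_0=1$ holds by construction. Injectivity of the parametrization is then immediate, since $S$ is recovered as the support of $\widehat{\x}$, so distinct non--empty subsets give distinct solutions.

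The main obstacle is the reformulation step. The $E$--system looks like an opaque family of quadratic equations, and the entire argument hinges on spotting that $E^{(m)}$ is a convolution, so that passing to the Fourier side converts the quadratic system into the trivially solvable idempotent condition $\widehat{\x}(k)\in\{0,dE\}$. After that, the only care needed is the bookkeeping of the normalizing constant dictated by $\x_0=1$ and the short verification of the converse.
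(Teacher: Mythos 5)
Your proof is correct. Note that the paper contains no proof of this theorem at all---it defers entirely to G\'erardin's appendix in \cite{julaAM}---and your argument is in substance G\'erardin's: reading the $E$--system as the single identity $\x\ast\x = dE\,\x$ of cyclic convolution is the same as saying that $(dE)^{-1}\sum_k \x_k t^k$ is an idempotent of the group algebra $\CC[\ZZ/d\ZZ]$, and classifying the idempotents of $\CC[\ZZ/d\ZZ]$ through its characters is exactly your discrete Fourier transform step, with $S$ appearing as the support of $\widehat{\mathsf{x}}$. Your reduction is careful on all the points that matter: the $m=0$ equation holds automatically because $\x_0=1$; the dichotomy $\widehat{\mathsf{x}}(k)\in\{0,dE\}$ uses that $\CC$ is an integral domain; the case $E=0$ is correctly excluded; and the converse verification together with the recovery of $S$ as the support of $\widehat{\mathsf{x}}$ makes the correspondence with non--empty subsets a genuine bijective parametrization rather than a mere list of solutions.

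One discrepancy deserves flagging, in your favor. You obtain $\x_k=\frac{1}{\vert S\vert}\sum_{s\in S}\exp\left(\frac{2\pi i ks}{d}\right)$, whereas the statement prints the constant $\frac{1}{d}$ and omits the imaginary unit in the exponent. The printed formula is a typo, not a gap in your argument: with the constant $\frac{1}{d}$ one would get $\x_0=\vert S\vert/d\neq 1$ in general, contradicting the normalization $\x_0:=1$ imposed when the trace parameters were introduced, and your constant $\frac{1}{\vert S\vert}$ is the only one compatible with the paper's own subsequent claim (\ref{tr(ei)}) that $E=\tr(e_i)=1/\vert S\vert$. So your version is the correct statement of G\'erardin's theorem, and your derivation of the normalization from $\x_0=1$ is precisely the bookkeeping that exposes the misprint.
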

\begin{remark}\rm Let $S$ be a non--empty subset of $\ZZ/d\ZZ$. We have:
\begin{enumerate}
\item For $\vert S\vert=1$, the solutions of the $E$--system are the $d$--roots of the unity.
\item If $S$ consists of the coprimes with $d$, the solution of the $E$--system  are Ramanujan sums.
\item By taking the parameters trace $\x_1, \ldots , \x_{d-1}$ as a solution of the system, then
\begin{equation}\label{tr(ei)}
E=\tr (e_i) = \frac{1}{\vert S\vert}.
\end{equation}
\end{enumerate}
\end{remark}
\begin{proposition}\label{tralphaen}
If the parameters trace $\x_k$'s are taken as solution of the $E$--system, then for every $\alpha \in \Yn$, we have:
$$
\tr(\alpha e_n) = \tr(\alpha)\tr(e_n).
$$
\end{proposition}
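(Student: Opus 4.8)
The plan is to check the identity on the basis $\mathcal{R}_n$ of $\Yn$ and then extend by linearity of $\tr$, the whole argument being an induction on $n$ that feeds the two computational Lemmas \ref{trasis1} and \ref{trbasis2} with the $E$-system relations. Indeed, by (\ref{Rn}) every normal word $\alpha\in\mathcal{R}_n$ has one of the two shapes $wt_n^k$ or $wg_{n-1}\cdots g_i t_i^k$ with $w\in\mathrm{Y}_{n-1}$, and these are exactly the two situations governed by the two lemmas. Throughout I will use that $\tr(e_n)=E$ for every $n$, so the goal reads $\tr(\alpha e_n)=E\,\tr(\alpha)$.

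First I would dispose of the base case $n=1$ and, simultaneously, of every first-shape word $\alpha=wt_n^k$. Since $e_n=e_n^{(0)}$, Lemma \ref{trasis1} (taken in its un-normalised form, to avoid dividing by a possibly vanishing $\x_k$) gives $\tr(\alpha e_n)=E^{(k)}\tr(w)$, while $\tr(\alpha)=\x_k\tr(w)$. The defining equations of the $E$-system say $E^{(k)}=\x_k E$ for $1\le k\le d-1$, and for $k=0$ we have $E^{(0)}=E$ with $\x_0=1$; hence $E^{(k)}=\x_k E$ for all $0\le k\le d-1$ and therefore $\tr(\alpha e_n)=\x_k E\,\tr(w)=E\,\tr(\alpha)$. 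This case uses the hypothesis (the $E$-system) but no induction, and since $\mathcal{R}_1=R_0$ consists only of first-shape words it settles $n=1$ outright.

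For the inductive step I would assume the statement at level $n-1$ and treat a second-shape word $\alpha=wg_{n-1}\cdots g_i t_i^k$. Lemma \ref{trbasis2} rewrites $\tr(\alpha e_n)=\z\,\tr(\alpha' e_{n-1})$ with $\alpha'=g_{n-2}\cdots g_i t_i^k\,w\in\mathrm{Y}_{n-1}$, and the induction hypothesis applied to $\alpha'$ gives $\tr(\alpha' e_{n-1})=E\,\tr(\alpha')$. It then remains to recognise $\z\,\tr(\alpha')$ as $\tr(\alpha)$: writing $\alpha=w g_{n-1}\beta$ with $\beta=g_{n-2}\cdots g_i t_i^k$, trace rule (2) cycles $\beta$ to the front and trace rule (3) absorbs the top generator $g_{n-1}$, yielding $\tr(\alpha)=\z\,\tr(\beta w)=\z\,\tr(\alpha')$. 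Combining, $\tr(\alpha e_n)=E\cdot\z\,\tr(\alpha')=E\,\tr(\alpha)$, which closes the induction.

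The only substantive input beyond bookkeeping is the relation $E^{(k)}=\x_k E$, i.e.\ precisely the $E$-system, and it enters exactly once, in the first-shape case; the $g$-generator case is then purely formal given the induction together with Lemmas \ref{trasis1}--\ref{trbasis2}. I do not expect a serious obstacle here, but the point requiring care is the degenerate index in Lemma \ref{trasis1}: rather than using the form with $\x_k$ in a denominator, I would keep $\tr(\alpha e_n)=E^{(k)}\tr(w)$ and $\tr(\alpha)=\x_k\tr(w)$ separately, so the argument never presumes $\x_k$ to be invertible and remains valid even when some trace parameter vanishes.
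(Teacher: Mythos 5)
Your proof is correct and takes essentially the same route as the paper's: reduction by linearity to the basis $\mathcal{R}_n$, induction on $n$ splitting into the two shapes of (\ref{Rn}), with Lemma \ref{trasis1} plus the $E$-system equations $E^{(k)}=\x_k E$ handling the first shape, and Lemma \ref{trbasis2} plus the inductive hypothesis (together with the verification $\z\tr(\alpha')=\tr(\alpha)$, which the paper merely asserts) handling the second. Your one genuine refinement is keeping Lemma \ref{trasis1} in the un-normalised form $\tr(\alpha e_n)=E^{(k)}\tr(w)$, $\tr(\alpha)=\x_k\tr(w)$, thereby avoiding the division by $\x_k$ that the paper's invocation of the lemma implicitly performs --- a real point of care, since solutions of the $E$-system can have vanishing parameters (e.g.\ $\x_1=0$ for $d=2$ and $S=\ZZ/2\ZZ$).
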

\begin{proof}
From the linearity of $\tr$, it is enough to consider $\alpha$ in the basis $\mathcal{R}_n$. The proof will be done by induction on $n$. For $n=1$, the claim is clear since $\alpha\in \mathrm{Y}_1 =\KK$; suppose now the proposition be true for every $n-1$ and let $\alpha\in \mathcal{R}_{n-1}$. Because (\ref{Rn}), we have two cases according to the form of $\alpha$: (i)
$\alpha=\mathfrak{r}t_n^a\quad$ or  (ii) $\alpha = \mathfrak{r}g_{n-1}\ldots g_1 t_n^a$,
where $\mathfrak{r}\in \mathcal{R}_{n-1}$ and $0\leq a\leq d-1 $.

For the case (i),  from Lemma \ref{trasis1} we have: $\tr (\alpha e_n) = \left(E^k/\x_k \right)\tr (\alpha)  = E\tr (\alpha)$ since  the $\x_k$'s are solution of the $E$--system.

For the case (ii), we use  the Lemma \ref{trbasis2}, so:
$\tr (\alpha e_n) = \z\tr (\alpha^{\prime}e_{n-1} )$, where $ \alpha^{\prime} = g_{n-2}\cdots g_it_i^k\mathfrak{r}$. From the inductive hyphotesis we have $\tr (\alpha' e_{n-1}) =  \z\tr (\alpha^{\prime})\tr (e_{n-1} )$; but $ \z\tr (\alpha^{\prime})= \tr (\alpha) $. Then,
$ \tr (\alpha e_n) = \tr (\alpha)\tr (e_{n-1} )= \tr (\alpha)\tr (e_{n} )$.

\end{proof}
\section{The invariants  $\Delta_m$ and $\Theta_m$ }
In this section we define the  invariants $\Delta_m$ and $\Theta_m$ which are constructed using the method  due to V. Jones to  construct the Homflypt polynomial, see Section \ref{sectionHomflypt}. Essentially, in the Jones method   we use now the Y--H algebra instead of the Hecke algebra and instead of the Ocneanu trace we use the trace $\tr$.
Observe that the construction below follows  what has been done in Section \ref{sectionHomflypt}.
\subsection{}
For $\theta$, we denote by $\pi_{\theta}$ the homomorphism from $B_n$ to $\Yn$, such that  $\sigma_i\mapsto\theta g_i$.
Since  $\pi_{\theta}$ is an homomorphism and the trace rule (2) of $\tr$, it follows that $\tr \circ \pi_{\theta}$  agrees with the Markov replacement M1; thus, it remains to see if $\tr \circ \pi_{\theta}$  agrees with the Markov replacements M2, that is, we need
\begin{equation}\label{theta1}
(\tr \circ \pi_{\theta} )(\sigma\sigma_n^{-1}) = (\tr \circ \pi_{\theta} )(\sigma\sigma_n)\quad \text{for all $\sigma\in B_n$.}
\end{equation}
Put $\alpha :=\pi_{\theta}(\sigma)$, then we have $(\tr \circ \pi_{\theta} )(\sigma\sigma_n) =\theta\tr (\alpha g_n) =\z\theta\tr (\alpha)$ and
 $(\tr \circ \pi_{\theta} )(\alpha\sigma_n^{-1}) =\theta^{-1} \tr (\alpha g_n^{-1})$.
 So, the equation (\ref{theta1}) is equivalent to:
\begin{equation}\label{theta2}
\theta^2 =\frac{\tr (\alpha g_n^{-1})}{\z\tr(\alpha)}.
\end{equation}
 Now, applying the formula  (\ref{inversgi}) to $g_n^{-1}$  and the linearity of $\tr$, we get:
\begin{equation}\label{theta3}
\tr (\alpha g_n^{-1}) = \tr(\alpha g_n) + (\u^{-1}-1)\tr (\alpha e_n)  +(\u^{-1} -1)\tr (\alpha e_ng_n).
\end{equation}
 Therefore, in order to get the condition on $\theta$ in (\ref{theta2}), we need to factorize by $\tr (\alpha)$ in the second member of the equality of (\ref{theta3}). Now, observe that $ \tr(\alpha g_n) = \z\tr (\alpha)$ and from (\ref{treng}) we get $\tr (\alpha e_ng_n) =\z \tr(\alpha)$. Unfortunately,  we cannot take out the factor $\tr(\alpha)$ in $ \tr (\alpha e_n)$. However, resorting to Proposition \ref{tralphaen}, we can do it whenever  the $\x_i$'s are the solution of the E--system. Thus, if  $\x_S:=(\x_1,\ldots ,\x_{d-1})$ is the solution of the $E$--system determined by the set $ S$, the equation  (\ref{theta3}) can be written as
 $$
 \tr (\alpha g_n^{-1}) = \tr(\alpha)\tr(g_n) + (\u^{-1}-1)\tr (\alpha)\tr(e_n)  +(\u^{-1} -1)\tr (\alpha) \tr(e_ng_n).
 $$
 Then, (\ref{theta2}) yields
 \begin{equation*}
 \theta^2 =\frac{\z + (\u^{-1}-1)E +  (\u^{-1}-1)\z }{\z} =\frac{ (1-\u)E +  \z }{\u\z}.
 \end{equation*}
 So, by (\ref{tr(ei)}) we get:
 \begin{equation*}
  \theta^2  = \frac{ (1 -\u) + \z \vert S\vert}{\u\z \vert S\vert}.
 \end{equation*}
Because  $\theta^2$ is depending only on the  cardinal of $S$,  we can define   $\lambda_m$ for every $m\in   \NN$,  as  the $\theta^2$ above, i.e.
\begin{equation} \label{lambdam}
\lambda_m :=\frac{  (1 -\u) + \z m}{\u\z m}.
\end{equation}
Recapitulating, by extending the field $\KK$ to $\KK(\sqrt{\lambda_m})$, we can  consider the following homomorphism
$\pi_{\sqrt{\lambda_m}}$,
$$
\pi_{\sqrt{\lambda_m}}: B_n \longrightarrow \Yn,	\quad\text{through} \quad \sigma_i\mapsto\sqrt{\lambda_m}\,g_i.
$$
Thus, we have a family of functions $\{\tr_n \circ \pi_{\sqrt{\lambda_m}} \}_{n}$ agreeing with the replacements M1 and M2.
Proceeding   as in  (\ref{homflypFactor}) we get  that, now, the factor of normalization for $\tr_n \circ \pi_{\sqrt{\lambda_m}}$ is  $\left(\z\sqrt{\lambda_m}\right)^{n-1}$.
\begin{definition}
Let $m\in \NN$. For $\sigma\in B_n$, we define
$$
\Delta_m (\sigma) := \left( \frac{1}{\z\sqrt{\lambda_m}}\right)^{n-1}(\tr_n \circ \pi_{\sqrt{\lambda_m}} )(\sigma).
$$
\end{definition}
\begin{theorem}
Let $L$ be  a link s.t. $L=\widehat{\sigma}$, where $\sigma\in B_n$, then the map
$\Delta_m$ is an invariant of ambient isotopy for oriented links,
$$
\Delta_m : \mathfrak{L}\longrightarrow \CC(\z, \sqrt{\lambda_m}),  \quad L\mapsto \Delta_m (\sigma).
$$
\end{theorem}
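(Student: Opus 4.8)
The plan is to follow the Jones recipe verbatim as in the proof of Theorem \ref{Homflypt}, transporting it from the pair (Hecke algebra, Ocneanu trace) to the pair ($\Yn$, Markov trace $\tr$). By Corollary \ref{LinksAsBraids} and Remark \ref{Invariant}, every oriented link is $\widehat{\sigma}$ for some $\sigma\in B_n$, so it suffices to prove that $\sigma\mapsto\Delta_m(\sigma)$ is constant on Markov equivalence classes, i.e.\ that it respects the moves M1 and M2. The normalization factor $(\z\sqrt{\lambda_m})^{n-1}$ depends only on the strand number $n$; this is the book-keeping that makes stabilization balance.

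First I would dispose of M1. For $\alpha,\beta\in B_n$ the braids $\alpha\beta$ and $\beta\alpha$ have the same strand number, so their normalization factors coincide; since $\pi_{\sqrt{\lambda_m}}$ is an algebra homomorphism and $\tr$ satisfies $\tr(ab)=\tr(ba)$ (trace rule (2)), we get $(\tr_n\circ\pi_{\sqrt{\lambda_m}})(\alpha\beta)=(\tr_n\circ\pi_{\sqrt{\lambda_m}})(\beta\alpha)$ and hence $\Delta_m(\widehat{\alpha\beta})=\Delta_m(\widehat{\beta\alpha})$.

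The substance is M2: I must show $\Delta_m(\widehat{\alpha})=\Delta_m(\widehat{\alpha\sigma_n})=\Delta_m(\widehat{\alpha\sigma_n^{-1}})$ for $\alpha\in B_n$. Setting $a:=\pi_{\sqrt{\lambda_m}}(\alpha)\in\Yn$, the positive stabilization is immediate from trace rule (3): $(\tr_{n+1}\circ\pi_{\sqrt{\lambda_m}})(\alpha\sigma_n)=\sqrt{\lambda_m}\,\tr_{n+1}(ag_n)=\sqrt{\lambda_m}\,\z\,\tr_n(a)$, and the extra factor $\sqrt{\lambda_m}\,\z$ is exactly what is absorbed when the normalization passes from $(\z\sqrt{\lambda_m})^{n-1}$ to $(\z\sqrt{\lambda_m})^{n}$, giving $\Delta_m(\widehat{\alpha\sigma_n})=\Delta_m(\widehat{\alpha})$. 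The negative stabilization is the crux and is the computation already rehearsed in the passage around (\ref{theta1})--(\ref{lambdam}): expanding $g_n^{-1}$ by (\ref{inversgi}) produces the three terms $\tr_{n+1}(ag_n)$, $\tr_{n+1}(ae_n)$ and $\tr_{n+1}(ae_ng_n)$; the first and third factor through $\tr_n(a)$ at once using trace rule (3) and (\ref{treng}), but the middle term $\tr_{n+1}(ae_n)$ does \emph{not} factor in general.

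The main obstacle is therefore precisely the control of $\tr_{n+1}(ae_n)$, and it is overcome by choosing the trace parameters $\x_1,\dots,\x_{d-1}$ to be a solution $\x_S$ of the $E$--system: Proposition \ref{tralphaen} then yields $\tr_{n+1}(ae_n)=\tr_n(a)\,\tr(e_n)=E\,\tr_n(a)$ with $E=1/\vert S\vert$ by (\ref{tr(ei)}). With this, $\tr_{n+1}(ag_n^{-1})=\tr_n(a)\bigl(\z+(\u^{-1}-1)E+(\u^{-1}-1)\z\bigr)$, and since $\lambda_m$ in (\ref{lambdam}) (with $m=\vert S\vert$) is by construction the value of $\theta^2$ solving the balancing equation (\ref{theta1})--(\ref{theta2}), the two stabilizations agree: $(\tr_{n+1}\circ\pi_{\sqrt{\lambda_m}})(\alpha\sigma_n^{-1})=(\tr_{n+1}\circ\pi_{\sqrt{\lambda_m}})(\alpha\sigma_n)$, whence $\Delta_m(\widehat{\alpha\sigma_n^{-1}})=\Delta_m(\widehat{\alpha\sigma_n})$. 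Together with M1 this shows $\Delta_m$ is well defined on $\mathfrak{L}/\sim$, so it is an ambient isotopy invariant, as claimed.
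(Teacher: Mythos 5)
Your proof is correct and follows essentially the same route as the paper: the paper's own justification is precisely the derivation preceding the definition of $\Delta_m$ (the computation from (\ref{theta1}) through (\ref{lambdam}), with Proposition \ref{tralphaen} and (\ref{tr(ei)}) handling the term $\tr(\alpha e_n)$ via the $E$--system), combined with the Jones-recipe argument already given for Theorem \ref{Homflypt}. Your write-up simply assembles these same ingredients explicitly, including the correct bookkeeping of the normalization factor $(\z\sqrt{\lambda_m})^{n-1}$ under stabilization.
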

\begin{remark}\label{Delta1}\rm
Regarding (\ref{RelationLambdaZeta}) and (\ref{lambdam}) it results clear that $\Delta_1$ is the Homflypt polynomial.
\end{remark}

For the  benefit of  the writing, the main results on the invariant $\Delta_m$ will be established after we define a cousin of $\Delta_m$,  denoted denoted by $\Theta_m$. These invariants are not equivalents but share several  properties.

\subsection{}\label{Yn(v)}
The Yokonuma--Hecke algebra has another presentation due to M. Chlouveraki and  L. Poulain d'Andecy \cite{chpoAM}, cf. \cite{chjukalaIMRN, esryJPAA}. This presentation is constructed as follows.  Firstly,  the field ${\Bbb K}$ is extended to ${\Bbb K}(\v)$ with $\v^2=\u$; secondly, new generators  $f_i$  are defined  by
\begin{equation}\label{deffi}
f_i := g_i + (\v^{-1}-1)e_ig_i, \quad (1\leq i\leq n-1 ).
\end{equation}
It is a routine to check that  the $f_i$'s and the $e_i$'s  satisfy the relations (\ref{yh1})--(\ref{yh5}) if one substitutes $g_i$  with  $f_i$ and the relation,
\begin{equation}\label{quadfi}
f_i^2 = 1 + (\v - \v^{-1})e_if_i.
\end{equation}
Notice that $f_i$'s are invertibles, and
\begin{equation}\label{inversefi}
f_i^{-1} = f_i - (\v - \v^{-1})e_i.
\end{equation}
Thus one obtains a presentation of the Yokonuma--Hecke algebra by the generators $f_i$'s and $e_i$'s and the same relations of those  given for the defining generators $g_i$'s  and $e_i$'s except for the relation (\ref{yh6}) which is replaced by the relation (\ref{quadfi}). We shall use the notation $\mathrm{Y}_{d,n}(\v)$, or simply $\mathrm{Y}_{n}(\v)$, whenever the Yokonuma--Hecke algebra is considered with the presentation by the $f_i$'s and $e_i$'s.
\begin{remark}\rm
Notice that $\mathrm{Y}_{1,n}(\v)$ coincides  with the presentation of the Hecke through  the generators $\tilde{h}_i$'s  of Exercise X.
\end{remark}

By using now, in the Jones recipe,  $\mathrm{Y}_{d,n}(\v)$ instead of $\Yn(\u)$,  one obtains  an invariant  $\Theta_m$ instead $\Delta_m$. To be precise, notice firstly  that for every $\alpha\in \Yn$, according to  properties of $\tr$, (\ref{deffi}) and  (\ref{treng}), we have:
$$
\tr (\alpha f_n) = \tr (g_n) \tr (\alpha) + (\v^{-1} -1)\tr(g_n)\tr(e_n\alpha) =\z\v^{-1}\tr(\alpha).
$$
Secondly, given a non--empty subset  $S$  of $\ZZ/d\ZZ$ and $(\x_1, \ldots ,\x_{d-1})$  the solutions determined by $S$,  for every $\alpha\in\Yn$ the  Proposition \ref{tralphaen} together with  (\ref{deffi}) and (\ref{inversefi},) imply:
\begin{eqnarray*}
\tr (\alpha f_n^{-1}) 
&= & 
\v^{-1}\tr(g_n)\tr (\alpha) -  (\v - \v^{-1})\tr(\alpha)\tr(e_n) \\
& = & 
\tr (\alpha) (\v^{-1}\z -(\v -\v^{-1})E).
\end{eqnarray*}
Therefore, the rescaling factor  is:
$$
\frac{\tr (\alpha f_n^{-1})}{\tr (\alpha f_n) } =\frac{\z\v^{-1}-(\v - \v^{-1})E}{\z\v^{-1} } = \frac{\z-(\v^2-1)E}{\z}
= \frac{\vert S\vert \z - (\v^2-1)}{\vert S\vert\z};  
$$
the last equality is due to (\ref{tr(ei)}). Again the rescaling factor depends only on the cardinal $\vert S\vert$, so, by using the  procedure used   to get the  definition of $\Delta_m$, we get firstly   the  rescaling factor, denoted by $\lambda'_m$, namely
\begin{equation}\label{lambdamprime}
\lambda'_m = \frac{\z m - (\v^2-1)}{\z m} \quad (m\in \NN);
\end{equation}
and secondly $\Theta_m$ as follows.
\begin{definition}\label{Theta}
For $\sigma\in B_n$, we define
\begin{equation*}
\Theta_m (\sigma) := \left( \frac{\v}{\z\sqrt{\lambda'_m}}\right)^{n-1}(\tr_n \circ \pi_{\sqrt{\lambda'_m}} )(\sigma), \quad (m\in \NN).
\end{equation*}
\end{definition}
Keeping the notations above, we have the following theorem.
\begin{theorem}
The function $\Theta_m : \mathfrak{L}\longrightarrow \KK(\z, \sqrt{\lambda'_S})$, defined by $L\mapsto \Theta_m(\sigma)$, where $L $ is the closure of the braid $\sigma$,
is an invariant of ambient isotopy        of oriented    links.
\end{theorem}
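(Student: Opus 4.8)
The plan is to mimic, almost word for word, the argument used for the Homflypt polynomial in Theorem~\ref{Homflypt} and afterwards for $\Delta_m$. By Corollary~\ref{LinksAsBraids} together with Remark~\ref{Invariant}, proving that $\Theta_m$ is a well defined ambient isotopy invariant amounts to checking that the normalized family $\{\tr_n\circ\pi_{\sqrt{\lambda'_m}}\}_n$ underlying it is compatible with the two Markov moves. Invariance under M1 is free: $\pi_{\sqrt{\lambda'_m}}$ is a homomorphism and $\tr$ satisfies $\tr(ab)=\tr(ba)$ by rule~(2), while the normalizing factor $(\v/\z\sqrt{\lambda'_m})^{n-1}$ is the same on $\alpha\beta$ and $\beta\alpha$ since both lie in $B_n$. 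Hence the entire substance of the proof is the stabilization move M2, which I would split into its positive and negative versions.

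For the positive stabilization I would fix $\sigma\in B_n$, put $\alpha:=\pi_{\sqrt{\lambda'_m}}(\sigma)\in\Yn$, and use $\pi_{\sqrt{\lambda'_m}}(\sigma\sigma_n)=\sqrt{\lambda'_m}\,\alpha f_n$ together with the already computed value $\tr(\alpha f_n)=\z\v^{-1}\tr(\alpha)$. Feeding this into Definition~\ref{Theta} and comparing the $n$-fold and $(n-1)$-fold normalizing powers, the surplus factor $\frac{\v}{\z\sqrt{\lambda'_m}}\cdot\sqrt{\lambda'_m}\,\z\v^{-1}$ collapses to $1$, so $\Theta_m(\sigma\sigma_n)=\Theta_m(\sigma)$.

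For the negative stabilization I would use $\pi_{\sqrt{\lambda'_m}}(\sigma\sigma_n^{-1})=(\sqrt{\lambda'_m})^{-1}\alpha f_n^{-1}$ and the computed value $\tr(\alpha f_n^{-1})=\tr(\alpha)(\v^{-1}\z-(\v-\v^{-1})E)$. After substituting into Definition~\ref{Theta}, the desired equality $\Theta_m(\sigma\sigma_n^{-1})=\Theta_m(\sigma)$ reduces to the single scalar identity $\frac{\v}{\z\lambda'_m}(\v^{-1}\z-(\v-\v^{-1})E)=1$, equivalently $\lambda'_m=\frac{\z-(\v^2-1)E}{\z}$. Since the trace parameters $\x_1,\dots,\x_{d-1}$ are taken as the solution of the $E$-system attached to a subset of cardinality $m$, one has $E=1/m$ by \eqref{tr(ei)}, and this identity is exactly the defining formula \eqref{lambdamprime} for $\lambda'_m$. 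Combining the two stabilizations with M1 then yields the theorem.

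The step I expect to be the real crux---and the only point where the argument could break---is the factorization $\tr(\alpha f_n^{-1})=\tr(\alpha)(\v^{-1}\z-(\v-\v^{-1})E)$. Expanding $f_n^{-1}$ via \eqref{inversefi} produces a term $\tr(\alpha e_n)$ which can be pulled apart as $\tr(\alpha)\tr(e_n)=\tr(\alpha)E$ only because of Proposition~\ref{tralphaen}, and that proposition holds precisely when the $\x_i$'s solve the $E$-system. Everything else is bookkeeping of the exponents of the rescaling factor; the conceptual obstacle is entirely the need to work over $\KK(\z,\sqrt{\lambda'_m})$ with $E$-system parameters, which is what makes both this factorization and the very definition of $\lambda'_m$ legitimate.
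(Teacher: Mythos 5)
Your proposal is correct and follows essentially the same route as the paper: the paper's own justification is exactly the pre-theorem computation of $\tr(\alpha f_n)=\z\v^{-1}\tr(\alpha)$ and $\tr(\alpha f_n^{-1})=\tr(\alpha)\bigl(\v^{-1}\z-(\v-\v^{-1})E\bigr)$ via Proposition~\ref{tralphaen}, yielding the rescaling factor (\ref{lambdamprime}), all fed into the Jones recipe as in Theorem~\ref{Homflypt}. You also correctly single out the $E$-system factorization of $\tr(\alpha e_n)$ as the one non-routine ingredient.
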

\begin{remark}\rm
Exactly as $\Delta_1$ in Remark \ref{Delta1}, we have that $\Theta_1$ coincide with the  Homflypt polynomial.
\end{remark}
\begin{theorem}
The invariants $\Delta_m$ and $\Theta_m$ coincide with the Homflypt polynomial whenever they are evaluated on knots.
\end{theorem}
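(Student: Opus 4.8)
The plan is to reduce the statement to a purely combinatorial condition on braids and then to show that, under this condition, the normalized trace loses all dependence on the parameter $m$ (equivalently, on the chosen solution of the $E$--system). First I would record the standard fact that the closure $\widehat{\alpha}$ of a braid $\alpha\in B_n$ is a knot, i.e.\ has a single component, exactly when the underlying permutation $\mathsf{p}(\alpha)\in\mathtt{S}_n$ is an $n$--cycle (each cycle of $\mathsf{p}(\alpha)$ produces one component of the closure). By Corollary \ref{LinksAsBraids} it therefore suffices to prove that, for every $\alpha\in B_n$ with $\mathsf{p}(\alpha)$ an $n$--cycle, the values $\Delta_m(\widehat{\alpha})$ and $\Theta_m(\widehat{\alpha})$ do not depend on $m$; since $\Delta_1$ and $\Theta_1$ are the Homflypt polynomial (Remark \ref{Delta1} and the analogous remark for $\Theta_1$), this common value is then forced to be the Homflypt polynomial.

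The key step is to establish a decomposition of $\tr_n\circ\pi_{\sqrt{\lambda_m}}$ that is indexed by the cycles of $\mathsf{p}(\alpha)$. The mechanism is that, once the $\mathsf{x}_k$ are taken to be the solution of the $E$--system attached to a set $S$ with $|S|=m$, the factorization $\tr(\alpha e_n)=\tr(\alpha)\tr(e_n)=E\,\tr(\alpha)$ of Proposition \ref{tralphaen}, together with $E=1/m$ from (\ref{tr(ei)}), makes each framing generator $t_i$ behave like a ``colour'' constrained to lie in $S$ and constant along the strand--cycle into which it closes. Concretely, I would push the framings through $\pi(\alpha)$ using the commutation relation (\ref{yh4}), then collect them cycle by cycle by repeated application of the trace rules (3) and (4) and of Proposition \ref{tralphaen}; the outcome is a formula in which the $g_i$--content reproduces exactly the Ocneanu--trace computation underlying $\X$ (hence the Homflypt polynomial), while each of the $\ell$ cycles of $\mathsf{p}(\alpha)$ contributes one free colour ranging independently over $S$.

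Specializing to the knot case $\ell=1$ then finishes the argument. For a single $n$--cycle there is exactly one free colour, so the coloured sum collapses to $|S|=m$ equal terms, and the powers of $E=1/m$ produced along the way combine into a single explicit power of $m$. The remaining work is bookkeeping: I would check that this power of $m$ is cancelled precisely by the powers of $m$ occurring in the rescaling factor $\lambda_m$ of (\ref{lambdam}) and in the normalization $(\z\sqrt{\lambda_m})^{-(n-1)}$, so that $\Delta_m(\widehat{\alpha})$ is genuinely independent of $m$. The same computation applies verbatim to $\Theta_m$: one uses the presentation $\mathrm{Y}_{d,n}(\v)$, the values $\tr(\alpha f_n)=\z\v^{-1}\tr(\alpha)$ and $\tr(\alpha f_n^{-1})=\tr(\alpha)(\v^{-1}\z-(\v-\v^{-1})E)$ computed in Subsection \ref{Yn(v)}, and the rescaling factor $\lambda'_m$ of (\ref{lambdamprime}) in place of $\lambda_m$.

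The hard part will be the cycle--indexed colouring formula of the second paragraph, since this is where the whole phenomenon lives: for closures with $\ell>1$ components the colours of distinct cycles are truly independent and the invariant does depend on $m$, so the argument cannot be a blunt specialization but must isolate exactly the effect of connectedness that collapses the coloured sum to one term up to the overall factor of $m$. I expect that making this decomposition precise—rather than the subsequent normalization check, which is routine once the formula is in hand—will require the bulk of the effort, and that it is cleanest to phrase it as an induction on $n$ using the inductive basis $\mathcal{R}_n$ and the reductions (\ref{Rn}) already exploited in the proof of Proposition \ref{tralphaen}.
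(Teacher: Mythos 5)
These notes contain no proof of this theorem to compare against: it is stated bare, being imported from \cite{chjukalaIMRN}, so your proposal has to be judged against the arguments in that literature. Judged so, the mechanism in your second paragraph is the right phenomenon: at a solution of the $E$--system attached to $S$ with $\vert S\vert=m$, the trace evaluation decomposes over colourings by elements of $S$ that are constant along the cycles of $\mathsf{p}(\alpha)$, one free colour per component of the closure, and for a knot the coloured sum collapses to $m$ equal monochromatic terms; this is essentially how the theorem is proved in \cite{chjukalaIMRN} (and in Lickorish's appendix there, and in the decomposition approach of \cite{jaPoJLMS}). But your proposal defers exactly this decomposition, and your sketch of it is not yet an argument: $\pi_{\sqrt{\lambda_m}}(\alpha)$ is a word in the $g_i$'s alone, so there are no framing generators in it to ``push through'' via (\ref{yh4}); the $t_i$'s enter only packaged inside the idempotents $e_i$ created by the quadratic relation (\ref{yh6}), and to make colours appear at all one needs the decomposition of the unit of ${\rm Y}_{d,n}$ into the primitive idempotents indexed by characters of $(\ZZ/d\ZZ)^n$ (only those supported on $S^n$ surviving the trace) --- essentially the isomorphism of ${\rm Y}_{d,n}$ with a direct sum of matrix algebras over Hecke algebras. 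That decomposition \emph{is} the theorem, not a lemma one picks up along the way; the induction behind Proposition \ref{tralphaen} does not by itself see the cycle structure of $\mathsf{p}(\alpha)$.

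There is moreover a concrete error in your reduction: it is false that $\Delta_m(\widehat{\alpha})$ is independent of $m$ for knots when $\u$ and $\z$ are held fixed. For the trefoil $\widehat{\sigma_1^3}$, using (\ref{yh6}), (\ref{treng}) and $E=1/m$ from (\ref{tr(ei)}), one computes $\tr(g_1^3)=(\u^2-\u+1)\z+\u(\u-1)E$, whence
$$
\Delta_m(\widehat{\sigma_1^3})=\frac{\lambda_m}{\z}\left((\u^2-\u+1)\z+\frac{\u(\u-1)}{m}\right),
$$
which visibly varies with $m$. What is true --- and what ``coincide with the Homflypt polynomial'' must mean here --- is an identity after a change of variables: from (\ref{lambdam}) one has $\lambda_m(\u,\z)=\lambda(\u,m\z)$, and on knots the whole evaluation depends on $\z$ only through the product $m\z$, so that $\Delta_m$, written as a function of $\u$ and $\sqrt{\lambda_m}$, equals the Homflypt polynomial written as a function of $\u$ and $\sqrt{\lambda}$; in the trefoil example both sides reduce to $\lambda_m(1+\u^2-\lambda_m \u^2)$. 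So the ``routine bookkeeping'' of your third paragraph is not a cancellation of powers of $m$ at fixed variables: the statement your induction must propagate is that $\z$ and $E$ occur only in combinations expressible through $m\z$ and $\lambda_m$, and without this reformulation your plan concludes something false. The same caveat applies verbatim to your treatment of $\Theta_m$ with $\lambda'_m$ from (\ref{lambdamprime}).
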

To prove that the invariants $\Delta_m$ and $\Theta_m$ were not equivalents to the Hompflypt polynomial was a not trivial matter. Firstly, in \cite{chjukalaIMRN} were found six pairs of links with equivalents Homflypt polynomial but different $\Theta_m$, see \cite[Table1]{chjukalaIMRN};  a such pair is shown in Fig. \ref{FirstTheta}.
\begin{center}
\begin{figure}[H]
\includegraphics[scale=.36]{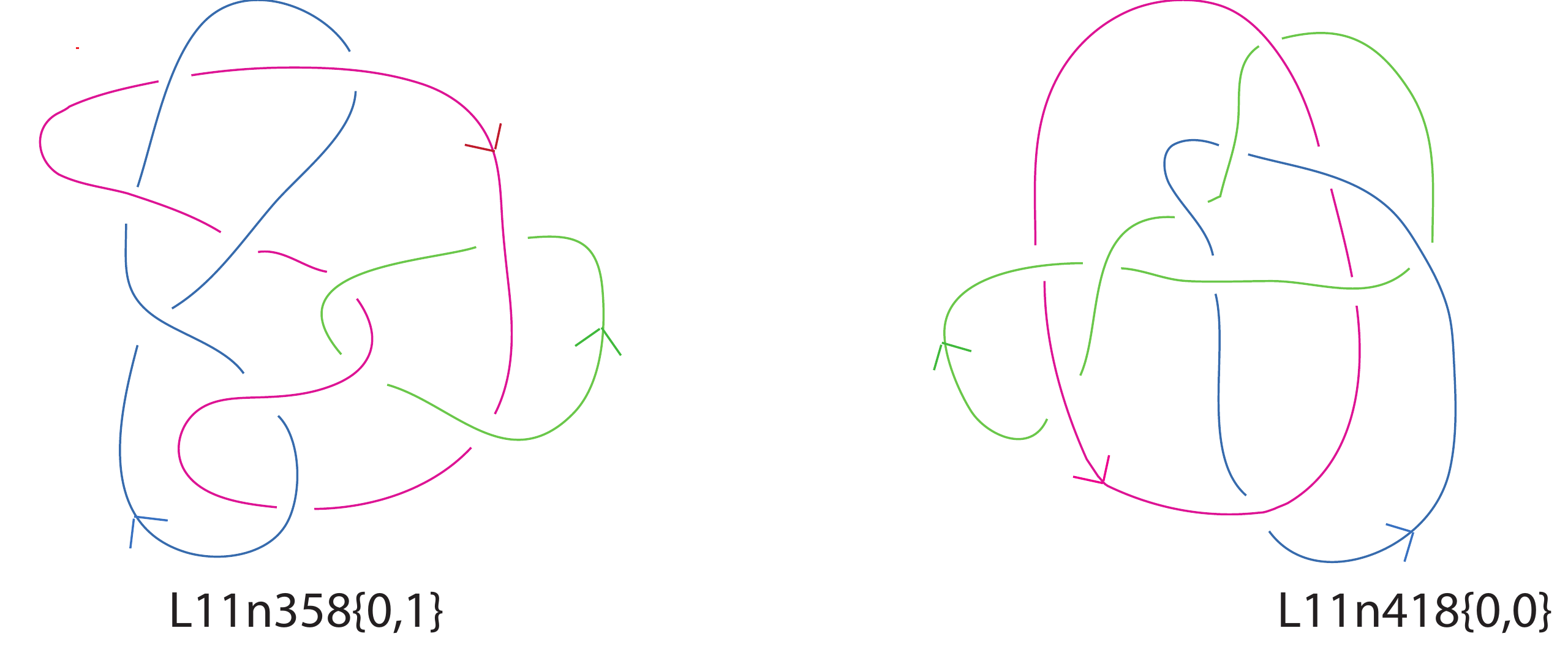}
\caption{}
\label{FirstTheta}
\end{figure}
\end{center}
Hence.
\begin{theorem}
For every $m\geq 2$, the invariants $\Theta_m$ are not equivalent to the Homflypt polynomial.
\end{theorem}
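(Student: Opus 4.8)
The plan is to refute equivalence by producing, for each $m\geq 2$, a pair of oriented links on which the Homflypt polynomial $\P$ agrees while $\Theta_m$ does not. If $\Theta_m$ were equivalent to $\P$, then each would be recoverable from the other by a change of variables; in particular $\P$ would determine $\Theta_m$, so that $\P(L_1)=\P(L_2)$ would force $\Theta_m(L_1)=\Theta_m(L_2)$ for every pair of links $L_1,L_2$. A single pair violating this implication is therefore enough to conclude.

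First I would invoke the theorem stated just above, that $\Theta_m$ coincides with $\P$ on knots. This immediately tells us that any separating pair must consist of links with at least two components, and it directs attention to the genuinely multi--component behaviour of the trace $\tr$. I would then take the pairs $(L_1,L_2)$ tabulated in \cite[Table~1]{chjukalaIMRN}, one of which is drawn in Fig.~\ref{FirstTheta}, and for such a pair carry out two independent verifications.

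The first verification, $\P(L_1)=\P(L_2)$, is a finite computation from the skein relation of Theorem \ref{SkeinHomflypt}. The second, $\Theta_m(L_1)\neq\Theta_m(L_2)$, is the substantive one. Writing each $L_i=\widehat{\beta_i}$ with $\beta_i\in B_n$, I would expand the image of $\beta_i$ in the $\v$--presentation $\mathrm{Y}_{d,n}(\v)$, reducing it to the normal basis $\mathcal{R}_n$ by repeated use of the quadratic relation (\ref{quadfi}) and the inverse formula (\ref{inversefi}), and then apply $\tr$ term by term via its defining rules. Crucially, once the parameters $\x_k$ are specialized to the solution of the $E$--system attached to a set $S$ with $\vert S\vert=m$, Proposition \ref{tralphaen} lets every factor $e_i$ be pulled out, so that each $\tr(e_i)$ collapses to $E=1/m$ by (\ref{tr(ei)}). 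The two outputs are thus rational functions of $\z,\v$ and the integer $m$, which I would normalize by the factor $(\v/(\z\sqrt{\lambda'_m}))^{n-1}$ of Definition \ref{Theta} and subtract.

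The main obstacle is controlling this difference uniformly in $m$. Since $\Theta_1=\P$ and $\P(L_1)=\P(L_2)$, the difference $\Theta_m(L_1)-\Theta_m(L_2)$ necessarily vanishes at $m=1$; the task is to show it does not vanish for any integer $m\geq 2$. I expect it to factor, after simplification, as a nonzero rational expression in $\z,\v$ times a factor in $E=1/m$ (equivalently $\lambda'_m$) whose only admissible root is $m=1$, and the delicate point is to certify that this factor has no further integer root $m\geq 2$. It is precisely this residual $m$--dependence, absent in the Homflypt specialization $m=1$, that shows $\Theta_m$ is strictly finer than $\P$ and hence not equivalent to it.
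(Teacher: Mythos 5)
Your proposal is correct and follows essentially the same route as the paper, which likewise establishes the theorem by pointing to the six pairs of links in \cite[Table 1]{chjukalaIMRN} (one shown in Fig.~\ref{FirstTheta}) that share the same Homflypt polynomial but are distinguished by $\Theta_m$. The uniformity-in-$m$ issue you flag is resolved in that reference exactly as you anticipate: the computed difference factors as $1-E$ (i.e.\ vanishing only at $m=1$) times an expression in $\z,\v$ that is manifestly nonzero.
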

The people working on theses invariants thought  that the invariants  $\Theta_m$ and $\Delta_m$  were equivalents\footnote{This is due to the fact that in the definitions of $\Theta_m$ and $\Delta_m$ the unique difference  is the change of presentations for the Y--H algebra,  apparently unimportant thing.} but surprisingly F. Aicardi shows that it is not the case.
\begin{theorem}[Aicardi]
For every $m\geq 2$, the invariants $\Delta_m$ and $\Theta_m$ are not topologically equivalent to the Homflypt polynomial: indeed,
there exists pairs of non isotopic links distinguished by $\Delta_m$  and/or $\Theta_m$ but not by the Homflypt polynomial.
\end{theorem}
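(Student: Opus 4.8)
The plan is to read this as a \emph{separation} statement and to prove it by exhibiting explicit witnesses rather than by any structural argument: to show that $\Delta_m$ (respectively $\Theta_m$) is not topologically equivalent to $\P$, it suffices to produce, for each fixed $m\geq 2$, a pair of oriented links $L,L'$ with $\P(L)=\P(L')$ but $\Delta_m(L)\neq\Delta_m(L')$ (respectively $\Theta_m(L)\neq\Theta_m(L')$). Two preliminary observations focus the search. First, by the preceding theorem $\Delta_m$ and $\Theta_m$ agree with $\P$ on knots, so every witnessing pair must consist of genuine multi-component links; the extra discriminating power can only come from the framing generators $t_i$ and the idempotents $e_i$, which are invisible in the Hecke quotient. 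Second, since $\Delta_1=\Theta_1=\P$, the separation must genuinely exploit $m\geq 2$, i.e.\ a subset $S\subseteq\ZZ/d\ZZ$ with $\vert S\vert=m>1$.

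First I would set up the computational engine. Each candidate link is presented as a braid closure $L=\widehat{\sigma}$ with $\sigma\in B_n$; one forms $\pi_{\sqrt{\lambda_m}}(\sigma)\in\Yn$ for $\Delta_m$, and the analogous element in the $f_i$--presentation for $\Theta_m$, after fixing a framing modulus $d$ and a subset $S$ with $\vert S\vert=m$ so that the trace parameters solve the $E$--system. The value is then computed by running the Markov trace $\tr$ through its inductive rules, invoking Proposition~\ref{tralphaen} to factor the $e_n$--contributions, reducing to the normal basis $\mathcal{R}_n$, and normalizing by the appropriate power of $\z\sqrt{\lambda_m}$ (resp.\ $\z\sqrt{\lambda'_m}$). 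Because $\Yn$ is finite dimensional with the explicit basis $\mathcal{R}_n$, each evaluation is a finite calculation.

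Next I would produce the witnesses. A natural starting point is the list of Homflypt--equivalent link pairs already recorded in \cite{chjukalaIMRN}; evaluating $\Delta_m$ and $\Theta_m$ on them and checking that the resulting rational functions differ already yields non-equivalence with $\P$. For the sharper phenomenon highlighted in the footnote --- that $\Delta_m$ and $\Theta_m$ are not even equivalent to each other --- I would compare the two computed values directly. Since the passage from the $g_i$ to the $f_i$ presentation differs only by the $e_i$--terms of (\ref{deffi}), and collapses to a mere rescaling on the Hecke quotient, one must rule out that any admissible change of variables together with an overall rescaling identifies $\Delta_m$ with $\Theta_m$. The clean way to do this is to specialize the variables on a carefully chosen pair, or to compare extremal degrees and leading coefficients, so that no substitution can reconcile all the values simultaneously.

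The main obstacle is precisely this search-and-certify step. The trace bookkeeping in $\Yn$, while laborious, is routine; the creative content --- Aicardi's contribution --- is in locating link pairs on which the $e_i$--corrections survive normalization and force the values apart, and then certifying that the discrepancy persists under every change of variables rather than being an artifact of the chosen presentation. I expect the genuine difficulty to lie in this certification of non-reconcilability, not in the individual trace computations.
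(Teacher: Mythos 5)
Your proposal takes essentially the same route as the paper: the paper's proof is precisely the witness-pair strategy, citing Aicardi's work \cite{aica} for several explicit pairs of non-isotopic links with equal Homflypt polynomial, and displaying one pair distinguished by $\Delta_m$ ($m\geq 2$) but by neither Homflypt nor $\Theta_m$. One remark: since \emph{topological} equivalence here concerns only which pairs of links an invariant separates, the computed witness pairs already suffice, so your final certification step about ruling out changes of variables is unnecessary for this statement (it matters only for the stronger, algebraic notion of equivalence).
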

\begin{proof} In \cite{aica} we can find several pairs proving this theorem. In Fig.  \ref{DistingueDeltaNoTheta}, we have a pair of non isotopic links distinguished by $\Delta_m$, $m\geq 2$, but neither by Homflypt nor $\Theta_m$.
\begin{center}
\begin{figure}[H]
\includegraphics
[scale=.36]
{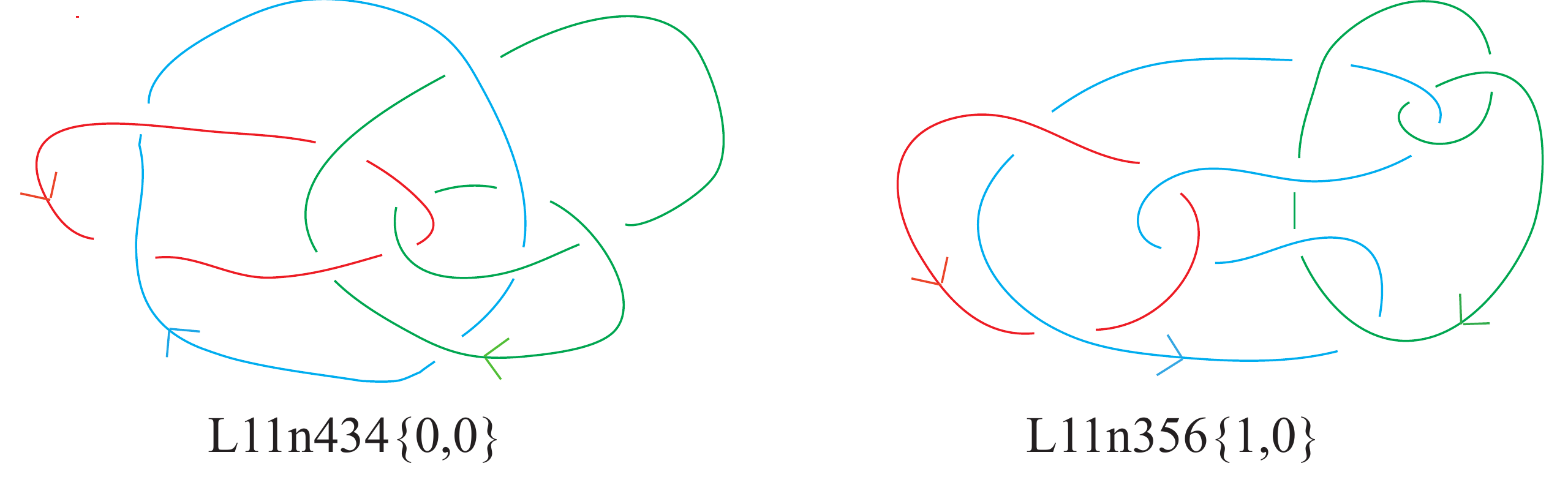}
\caption{}
\label{DistingueDeltaNoTheta}
\end{figure}
\end{center}

\end{proof}
The next theorems establish the main properties of the invariants $\Delta_m$ and $\Theta_m$.

\begin{proposition}
The invariants $\Delta_m$ and $\Theta_m$ share several properties with  the Homflypt polynomial, e.g.: the  behavior under connected sums and mirror image.
\end{proposition}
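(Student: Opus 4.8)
The plan is to prove both properties algebraically, working throughout with the braid description $L=\widehat{\sigma}$, $\sigma\in B_n$, and exploiting the two ingredients that already entered the construction of the invariants: the inductive rules of the Markov trace $\tr$ and the $E$--system factorization of Proposition \ref{tralphaen}. I treat $\Delta_m$ in detail; the argument for $\Theta_m$ is identical after passing to the $f_i$--presentation of the Yokonuma--Hecke algebra and using (\ref{quadfi}), (\ref{inversefi}) and the normalization of Definition \ref{Theta}.

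\emph{Connected sum.} First I would realize the connected sum on the level of braids: if $L_1=\widehat{\alpha}$ with $\alpha\in B_n$ and $L_2=\widehat{\beta}$ with $\beta\in B_{m}$, then $L_1\#L_2$ is the closure of $\alpha\,\beta^{[n-1]}\in B_{n+m-1}$, where $\beta^{[n-1]}$ is the image of $\beta$ under the strand shift $\sigma_i\mapsto\sigma_{i+n-1}$, so that the last strand of $\alpha$ is glued to the first strand of $\beta$. The central step is a multiplicativity lemma for the trace: for $x\in\Yn$ supported on $g_1,\dots,g_{n-1}$ and $y$ in the shifted copy generated by $g_n,\dots,g_{n+m-2}$, one has $\tr_{n+m-1}(xy)=\tr_n(x)\,\tr_m(y')$, where $y'$ is the unshifted preimage of $y$. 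I would prove this by induction on $m$, peeling the top strand off $y$ with rules (3) and (4) of the Markov trace. Granting it, the normalization cooperates because the strand count is additive in the sense $(n+m-1)-1=(n-1)+(m-1)$, so that $(\z\sqrt{\lambda_m})^{-(n+m-2)}$ splits as the product of the two normalization factors; hence $\Delta_m(L_1\#L_2)=\Delta_m(L_1)\,\Delta_m(L_2)$, exactly as for the Homflypt polynomial ($m=1$).

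\emph{Mirror image.} The mirror $L^{\ast}$ is the closure of the braid $\bar\sigma$ obtained from $\sigma$ by inverting every crossing, i.e.\ by the automorphism $\sigma_i\mapsto\sigma_i^{-1}$ of $B_n$. On the algebra side I would check that $g_i\mapsto g_i^{-1}$, $t_j\mapsto t_j$ extends to an isomorphism $\Psi\colon\Yn(\u)\to\Yn(\u^{-1})$: the relations (\ref{yh1})--(\ref{yh5}) are visibly preserved, and substituting (\ref{inversgi}) into (\ref{yh6}) shows that $g_i^{-1}$ satisfies precisely relation (\ref{yh6}) with $\u$ replaced by $\u^{-1}$. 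Transporting $\tr$ through $\Psi$, and keeping track of the normalization factor $(\z\sqrt{\lambda_m})^{-(n-1)}$ together with the value $\tr(g_n^{-1})$ computed from (\ref{inversgi}) and (\ref{tr(ei)}), yields a closed relation expressing $\Delta_m(L^{\ast})$ as $\Delta_m(L)$ after the substitution $\u\mapsto\u^{-1}$ with the attendant change of $\z$ and of $\lambda_m$. In particular an amphicheiral $L$ forces this symmetry, the same chirality obstruction the Homflypt polynomial provides; for $\Theta_m$ one uses instead $f_i\mapsto f_i^{-1}$ with $\v\mapsto\v^{-1}$, via (\ref{quadfi}) and (\ref{inversefi}).

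The main obstacle is the trace-multiplicativity lemma underlying the connected-sum statement, and specifically the behaviour on the \emph{connecting strand}: framings there add, $t_n^{a}\,t_n^{b}=t_n^{a+b}$, rather than multiply, and the idempotents $e_i$ couple neighbouring strands, so the naive induction does not immediately decouple the two factors. Here I expect to lean on Proposition \ref{tralphaen}, the $E$--system factorization $\tr(\alpha e_n)=\tr(\alpha)\tr(e_n)$, to separate them cleanly; this is precisely the property that made $\Delta_m$ and $\Theta_m$ well defined in the first place, and I anticipate it is again the crux that makes connected-sum multiplicativity hold. The mirror-image computation, by contrast, is routine once the isomorphism $\Psi$ is in place.
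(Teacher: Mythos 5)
The paper states this proposition without any proof (the arguments it alludes to live in \cite{chjukalaIMRN} and \cite{aica}), so there is no in-paper argument to compare against; judged on its own, your outline follows exactly the route those sources take: a braid-level realization of $L_1\# L_2$ as the closure of $\alpha\,\beta^{[n-1]}$ with one shared strand, a trace-multiplicativity lemma across that strand, and for the mirror image the automorphism $\sigma_i\mapsto\sigma_i^{-1}$ transported to $g_i\mapsto g_i^{-1}$. The mirror half is essentially complete as sketched: $g_i^{-1}$ does satisfy (\ref{yh6}) with $\u$ replaced by $\u^{-1}$, and the transported functional is identified, via the uniqueness clause of the trace theorem, with the trace at the new parameter $\z'=\u^{-1}\z+(\u^{-1}-1)E$ and unchanged $\x_k$'s (hence unchanged $m$); one can check that then $\lambda_m(\u^{-1},\z')=\lambda_m(\u,\z)^{-1}$, giving precisely the Homflypt-type symmetry. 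Note, however, that even this step quietly uses Proposition \ref{tralphaen}: verifying rule (3) for the transported trace requires $\tr(b\,e_n)=E\,\tr(b)$ for \emph{arbitrary} $b\in\Yn$, so the E--system specialization must be in force before $\Psi$ is of any use.

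The one genuine gap is where you suspected it, but the tool you name is not strong enough. Proposition \ref{tralphaen} factorizes $\tr(\alpha e_n)$ only; when you peel the shifted factor $y$ with rules (3) and (4), the base of the induction leaves terms of the form $\tr(x\,t_n^{a})$ with $t_n$ sitting on the \emph{connecting} strand, where rule (4) does not apply ($t_n$ is not the top framing generator of $\mathrm{Y}_{d,n+m-1}$). The identity you actually need is $\tr(x\,t_n^{a})=\x_a\,\tr(x)$, and it is \emph{false} for general $x\in\Yn$: for $x=t_n$ it reads $\x_{a+1}=\x_a\x_1$, which fails for every E--system solution with $\vert S\vert=m\geq 2$ (e.g.\ $d=2$, $S=\ZZ/2\ZZ$ gives $\x_1=0$ but $\x_2=\x_0=1$) --- and $m\geq2$ is exactly the case of interest. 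The identity does hold for $x$ in the subalgebra generated by the $g_i$'s (equivalently by the $g_i$'s and $e_i$'s), because there the only framing contributions on strand $n$ come in the balanced combinations $e_{n-1}^{(k)}$, and then Lemma \ref{trasis1} together with the \emph{full} E--system, $E^{(k)}=\x_k E$ for all $k$ --- not merely the $k=0$ instance recorded in (\ref{tr(ei)}) and Proposition \ref{tralphaen} --- yields the factorization; for instance $\tr(e_{n-1}t_n^a)=E^{(a)}=\x_a E$. So your induction must carry the stronger invariant that the specialized trace is balanced on the connecting strand, i.e.\ $\tr(x\,e_n^{(k)})=\x_k E\,\tr(x)$ and $\tr(x\,t_n^a)=\x_a\tr(x)$ for all $x$ in that subalgebra, and must check that the reductions via (\ref{yh6}), (\ref{inversgi}) and (\ref{treng}) never leave it. With that lemma in place, the additivity $(n+m-1)-1=(n-1)+(m-1)$ of the normalization exponents gives multiplicativity exactly as you wrote, and the rest of the proposal (including the $f_i$-presentation variant for $\Theta_m$) goes through.
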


In the next section we generalize, respectively,  the invariants
 $\Delta_m$ and $\Theta_m$ to, respectively, certain    invariants in three parameters, $\overline{\Delta}$ and $\overline{\Theta}$ for classical links.  We will do this,  by using  the Jones recipe  applied to  the bt--algebra. Also we  will see, that $\overline{\Theta}$ can be defined through skein relations, while  $\overline{\Delta}$ cannot.

\section{The bt--algebra}
In this section we introduce   the so--called bt--algebra (or algebra of braids and ties) which is constructed by abstracting the braid generators $g_i$'s of the Yokonuma--Hecke algebra and the idempotents $e_i$'s appearing in the square
of the braid generators, see (\ref{yh6}). This algebra is used to generalize the invariants $\Delta_m$ and $\Theta_m$ to invariants with three parameters as well as  its understanding  by skein  relations. Before  introducing the bt--algebra we shall recall the main facts on set partitions since these facts will be useful in the rest of these notes.

\subsection{}
For $n\in {\Bbb N}$, we denote by ${\bf n}$ the set  $\{1, \ldots , n\}$ and by $\Pn$ the set formed by the set partitions of ${\bf n}$, that is, an  element of $\Pn$ is a collection    $I  = \{I_1, \ldots , I_k\}$ of pairwise--disjoint non--empty sets whose union is ${\bf n}$; the sets $I_1, \ldots , I_k$ are called the blocks of $I$; the cardinal of $\Pn$, denoted $b_n$, is called the   $n^{th}$ Bell number.

We can regard $\Pn$ as subset of $\mathsf{P}_{n+1}$ through the natural injective map $\iota_n: \Pn\longrightarrow  \mathsf{P}_{n+1}$,  where for $I\in \Pn$,
 the image $\iota_n(I)\in  \mathsf{P}_{n+1}$    is defined   by adding   to  $I$ the   block $\{n+1\}$.

Typically,  the set partitions are represented by scheme of arcs, see \cite[ Subsection 3.2.4.3]{MaBook}, that is:  the   point $i$ is   connected  by an  arc to  the  point $j$,    if  $j$ is   the minimum in the  same  block of $i$  satisfying $j>i$.   Figure \ref{SetPartition}  shows the  set partition $I=\{\{1,3\},\{2,5,6\},\{4\}\}$ as  represented by arcs.
\begin{center}
\begin{figure}[H] 
\begin{picture}(120,25)
\put(0,2){$\bullet$}
\put(23,2){$\bullet$}
\put(46,2){$\bullet$}
\put(69, 2){$\bullet$}
\put(92, 2){$\bullet$}
\put(115, 2){$\bullet$}
\put(0, -5){\tiny{$1$}}
\put(24, -5){\tiny{$2$}}
\put(47, -5){\tiny{$3$}}
\put(71, -5){\tiny{$4$}}
\put(93, -5){\tiny{$5$}}
\put(116, -5){\tiny{$6$}}
\qbezier(2,5)(25,30)(50,5)
\qbezier(95,5)(107,25)(119,5)
\qbezier(25,5)(60,30)(95,5)
\end{picture}
\caption{}\label{SetPartition}
\end{figure}
\end{center}

The representation by arcs of a set partition   induces  a
natural indexation of its blocks. More precisely, we say  that  the blocks $I_j$'s of the set partition $I= \{I_1, \ldots , I_m\}$ of ${\bf n}$ are  {\it standard indexed} if $\min(I_j)< \min(I_{j+1})$, for  all $j$.   For instance, in the set partition of Figure \ref{SetPartition} the blocks are indexed as:
$I_1= \{1,3\}$, $I_2=\{2,5,6\}$ and $I_3=\{4\}$.

The natural  action of $\mathtt{S}_n$ on ${\bf n}$  induces, in the obvious way,   an action of $\mathtt{S}_n$ on $\Pn$  that is,
for $I=\{I_1, \ldots , I_m\}$ we have
\begin{equation}\label{SnOnPn}
w(I) :=\{w(I_1), \ldots , w(I_m)\}.
\end{equation}
Notice that this action   preserves  the cardinal of  each block   of  the  set partition.

Now, we shall say that two set partitions $I$ and $I'$ in $\Pn$ are conjugate, denoted by $I \sim I'$, if there exists $w\in \mathtt{S}_n$ such that, $I' = w(I) $; if it is necessary to precise   such   $w$, we write $I \sim_w I'$.
Further, observe that if  $I$ and  $I^{\prime}$ are  standard  indexed with  $m$  blocks,  then the  permutation $w$  induces a permutation  of $S_m$ of the indices of the   blocks, which we   denote by $w_{I,I'}.$

\begin{example}\label{exw} Let    $I=\{\{1,2\}_1,\{3\}_2,\{4,5\}_3,\{6\}_4\}$ and  
$$
I^{\prime}=\{ \{1\}_1,\{2,5\}_2,\{3,6\}_3\{4\}_4\},$$
 so  $n=6$ and   $m=4$. We have
$I \sim_w I^{\prime}$, where:
$$
w=(1,6)(2,3,4,5)\quad \text{and}\quad   w_{I,I^{\prime}}=  (1,3,2,4).
$$
\end{example}

Given a  permutation  $w\in \mathtt{S}_n$ and  writing $w=c_1\cdots c_m$ as
product of disjoint cycles, we denote by $I_w$  the set partition  whose blocks are the cycles $c_i$'s, regarded now as
subsets of $\bf n$.    Reciprocally, given a set partition $I=\{I_1, \ldots ,I_m\}$ of $\bf n$ we denote by $w_I$ an element of $\mathtt{S}_n$ whose cycles are the blocks $I_i$'s.   Moreover, we shall say that the cycles of $w_I$ are standard indexed, if they are indexed according to the standard indexation of $I$.

\begin{notation} \rm
When there is no risk of  confusion,   we  will  omit in the  partitions the  blocks  with a  single  element.
 \end{notation}

$\Pn$ is a poset with  structure of commutative monoid. Indeed, the partial order on $\Pn$ is defined as follows:
$I\preceq J$ if and only if each block of $J$ is  a union of blocks of $I$. The  product  $I\ast J$, between $I$ and $J$ is  defined as the minimal  set partition, containing $I$ and $J$,  according to  $\preceq$;  the identity of this monoid is $ {\bold 1}_n := \{\{1\}, \{2\}, \ldots ,\{n\}\} $. Observe that:
\begin{equation}\label{partial}
I \ast J = J, \quad \text{whenever}\quad  I\preceq J,
\end{equation}
\begin{equation}\label{astwithw}
I\ast J = I \ast w_I (J).
\end{equation}
Further, the injective maps $\iota_n$'s  result to be a monoid homomorphisms and respect  $\preceq$,  that is, for every    $I, J\in\Pn$,  we have:
\begin{equation}\label{agreewithposet}
I\preceq J \quad \text{then} \quad \iota_n(I)\preceq \iota_n(J).
\end{equation}
\begin{notation}\label{Pinf}
The inductive limit associated to the    family of monoids $\{(\Pn,\iota_n)\}_{n\in \mathbb{N}}$ is denoted by $\mathsf{P}_{\infty}$,
\end{notation}

We are going to give an  abstract description of $\Pn$, that is, via a presentation which will be used in the next section.
For every $1\leq i < j\leq n$ with $i\not= j$, define $\mu_{i,j}\in \Pn$ as the set partition whose blocks are $\{i,j\}$ and $\{k\}$ where $1\leq k\leq n$ and $k\not=i,j$.    We shall write  $\mu_{i,j}\mu_{k,h}$  instead of $\mu_{i,j} \ast \mu_{k,h}$.
\begin{proposition}\label{presentationPn}
The monoid $\Pn$ can be presented by the set partitions  $\mu_{i,j}$'s subject to   the following relations:
\begin{equation}\label{relationsPn}
\mu_{i,j}^2 = \mu_{i,j} \quad \text{and }\quad \mu_{i,j}\mu_{r,s} = \mu_{r,s}\mu_{i,j}.
\end{equation}
 \end{proposition}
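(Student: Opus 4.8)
The plan is to deduce the presentation from a bijection with an abstract model together with a dimension count. Let $\mathsf{M}$ be the monoid abstractly presented by symbols $x_{i,j}$, $1\le i<j\le n$, subject only to $x_{i,j}^2=x_{i,j}$ and $x_{i,j}x_{r,s}=x_{r,s}x_{i,j}$. Since the generators $\mu_{i,j}$ satisfy exactly these identities in $\Pn$ (idempotency and commutativity of $\ast$, cf. \eqref{relationsPn}), the rule $x_{i,j}\mapsto\mu_{i,j}$ extends to a monoid homomorphism $\psi\colon\mathsf{M}\to\Pn$; proving the proposition then amounts to showing that $\psi$ is a bijection.

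Surjectivity I would obtain from the arc representation. For a block $B=\{a_1<a_2<\cdots<a_r\}$, the join $\mu_{a_1,a_2}\ast\mu_{a_2,a_3}\ast\cdots\ast\mu_{a_{r-1},a_r}$ is, directly from the definition of $\ast$ as the $\preceq$-join, the partition whose only non-singleton block is $B$. Multiplying these over the blocks of a given $I=\{I_1,\dots,I_m\}$ recovers $I$, so the $\mu_{i,j}$ generate $\Pn$ and $\psi$ is onto.

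For injectivity I would set up a normal form and count. To each $I$ I attach the word $\mathrm{NF}_I\in\mathsf{M}$ obtained by the same recipe but with $x$'s in place of $\mu$'s, so that $\psi(\mathrm{NF}_I)=I$. The key point is to show that every element of $\mathsf{M}$ can be rewritten as some $\mathrm{NF}_I$; this forces $|\mathsf{M}|\le b_n=|\Pn|$, and together with surjectivity of $\psi$ yields $|\mathsf{M}|=b_n$, whence $\psi$ is a bijection. Under the dictionary ``$x_{i,j}=$ edge $\{i,j\}$'', an element of $\mathsf{M}$ is a graph on $\mathbf{n}$ and the reduction amounts to replacing that graph by the canonical spanning forest recorded by the arcs of its connected-components partition.

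This reduction is where I expect the real difficulty, and it points to a gap in the statement as printed: the two listed relations do not suffice. With only idempotency and commutativity, $\mathsf{M}$ is the free semilattice on the $\binom{n}{2}$ generators and has $2^{\binom{n}{2}}$ elements, already $8>5=b_3$ for $n=3$; there $\psi$ collapses $x_{1,2}x_{1,3}$, $x_{1,2}x_{2,3}$, $x_{1,3}x_{2,3}$ and $x_{1,2}x_{1,3}x_{2,3}$ to the single one-block partition. Carrying out the spanning-forest reduction requires the transitivity relation $\mu_{i,j}\mu_{j,k}=\mu_{i,k}\mu_{j,k}=\mu_{i,j}\mu_{i,k}$ for distinct $i,j,k$, which is exactly what lets one trade a non-canonical edge inside a block for canonical arc-edges. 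I would therefore adjoin this relation to \eqref{relationsPn}; with it the normal-form argument closes, and the only remaining work is the routine check that the rewriting rules confluently reduce every word to its $\mathrm{NF}_I$.
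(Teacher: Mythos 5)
Your analysis is correct, and it in fact exposes a defect in the statement as printed; note that the notes state this proposition without any proof, so there is no argument of the paper to compare yours against. Your counterexample is decisive: a monoid presented by generators subject only to idempotency and pairwise commutation is the free commutative idempotent monoid, whose elements are the subsets of the generating set (with product given by union), hence of cardinality $2^{\binom{n}{2}}$. For $n=3$ this gives $8$ elements while $b_3=5$, and the canonical homomorphism $\psi$ collapses $\mu_{1,2}\mu_{1,3}$, $\mu_{1,2}\mu_{2,3}$, $\mu_{1,3}\mu_{2,3}$ and $\mu_{1,2}\mu_{1,3}\mu_{2,3}$ to the single one-block partition $\{\{1,2,3\}\}$. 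So the relations (\ref{relationsPn}) alone cannot present $\Pn$ for any $n\geq 3$ (the statement is vacuously fine for $n\leq 2$, where there is at most one generator).

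Your repair is also the right one: adjoining the triangle relations $\mu_{i,j}\mu_{j,k}=\mu_{i,k}\mu_{j,k}=\mu_{i,j}\mu_{i,k}$ for distinct $i,j,k$ yields the presentation of the monoid of set partitions used in \cite{aijuSubmitted}, on which this part of the notes is based; the omission in (\ref{relationsPn}) is evidently a slip in the statement rather than a deeper issue. With the triangle relations your normal-form argument does close: from $\mu_{i,j}\mu_{j,k}=\mu_{i,j}\mu_{i,k}$, multiplying by $\mu_{j,k}$ and using idempotency gives the derived identity $\mu_{i,j}\mu_{j,k}=\mu_{i,j}\mu_{i,k}\mu_{j,k}$, so any two edges of a triangle equal all three; iterating, a word in $\mathsf{M}$, read as a graph on ${\bf n}$, can be freely completed and pruned within each connected component, and every word reduces to the spanning path $\mu_{a_1,a_2}\cdots\mu_{a_{r-1},a_r}$ on each block $\{a_1<\cdots<a_r\}$ --- which is exactly the arc convention of Figure \ref{SetPartition}. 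This bounds $\vert\mathsf{M}\vert\leq b_n$, and together with your (correct) surjectivity computation, which shows that these consecutive-pair products recover any $I\in\Pn$, it establishes that $\psi$ is an isomorphism for the corrected list of relations.
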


\subsection{}
The original definition of the bt--algebra is the following.

\begin{definition} [See \cite{ajICTP1, rhJAC, aijuMMJ1}] The bt--algebra , denoted by $\En(\u)$, is defined by $\mathcal{E}_1(\u):={\Bbb K}$  and for $n\geq  2$ as   the unital associative ${\Bbb K}$--algebra, with unity $1$, defined by braid generators $T_1,\ldots , T_{n-1}$ and ties generators $E_1,\ldots , E_{n-1}$ subjected to the following relations:
\begin{eqnarray}
E_iE_j & =  &  E_j E_i \qquad \text{ for all $i,j$},
\label{bt1}\\
E_i^2 & = &  E_i\qquad \text{ for all $i$},
\label{bt2}\\
E_i T_j  & = &
 T_j E_i \qquad \text{for $\vert i  -  j\vert >1$},
\label{bt3}\\
E_i T_i & = &   T_i E_i,
\label{bt4}\\
E_iT_jT_i & = &  T_jT_iE_i, \qquad \text{ for $\vert i  -  j\vert =1$}
\label{bt5}\\
E_iE_jT_i & = & E_j T_i E_j  \quad = \quad T_iE_iE_j \qquad \text{ for $\vert i  -  j\vert =1$},
\label{bt6}\\
 T_i T_j & = & T_j T_i\qquad \text{ for $\vert i - j\vert > 1$},
 \label{bt7}\\
 T_i T_j T_i& = & T_j T_iT_j\qquad \text{ for $\vert i - j\vert = 1$},
 \label{bt8}\\
 T_i^2 & = & 1  + (\u-1)E_i + (\u-1)E_i T_i\qquad \text{ for all $i$}.
 \label{bt9}
\end{eqnarray}
\end{definition}
The $T_i$'s are invertible,   with $T^{-1}$ given by:
\begin{equation}\label{Tinverse}
T_i^{-1} = T_i + (\u^{-1} - 1)E_i + (\u^{-1} - 1)E_iT_i.
\end{equation}
\begin{remark}\label{remarkbt}\rm
\begin{enumerate}
\item
The bt--algebra can be seen as  a generalization of the Hecke algebra since by making $E_i=1$, the definition of the bt--algebra becomes the Hecke algebra. Further, observe that the mapping $T_i\mapsto h_i$ and $E_i\mapsto 1$ defines  an epimorphism from the bt--algebra to the Hecke algebra.
\item The mapping $T_i\mapsto g_i$ and $E_i\mapsto e_i$ defines an algebra homomorphism from the bt--algebra to the YH--algebra, which is injective if and only if $d\geq n$, see \cite{esryJPAA}, cf. \cite[Remark 3]{aijuMMJ1}.
\end{enumerate}
\end{remark}
Diagrammatically the generators $T_i$'s can be regarded as usual braids  and the generator  $E_i$ as a tie between the $i$ and $i+1$ strands, this tie doesn't have a topological  meaning: it is an auxiliary artefact to reflect the monomial--homogeneous defining relation of the bt--algebra. Thus, the tie is pictured as a spring or a dashed line  between the strands. More precisely, the diagrams for, respectively, $T_i$ and $E_i$, are:
\begin{center}
\begin{figure}[H]
\includegraphics
[scale=.4]
{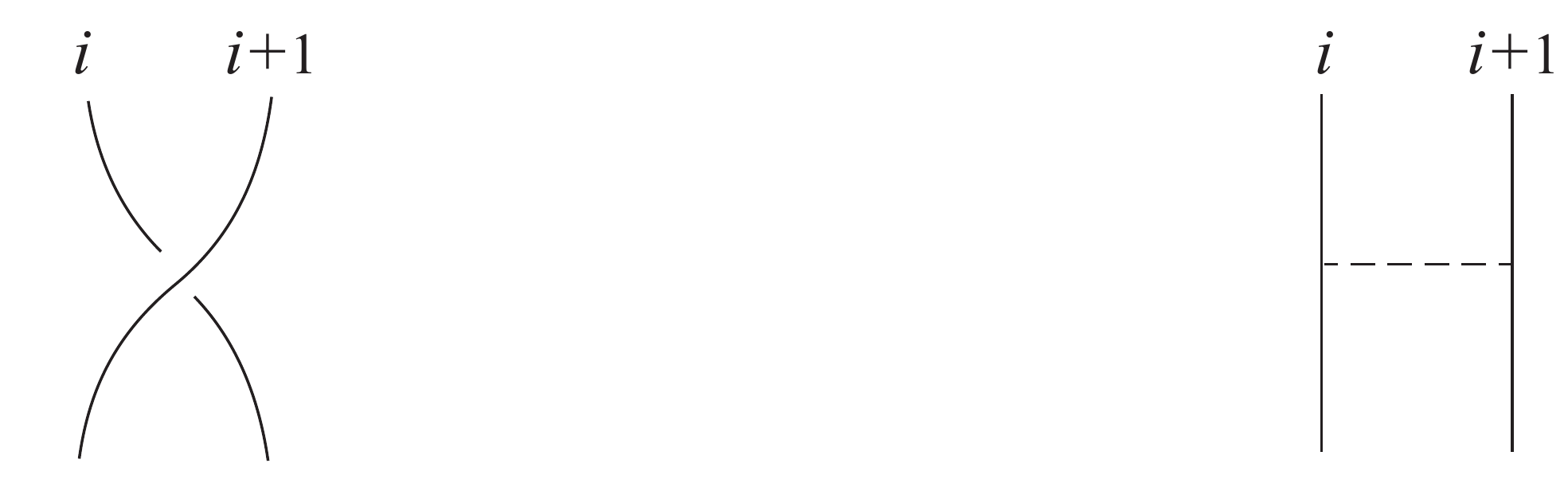}
\caption{}
\label{TiEi}
\end{figure}
\end{center}
Here comment!
\subsection{}
The bt--algebra is a finite dimensional algebra. Moreover, there is a basis due to S. Ryom--Hansen \cite{rhJAC}.
Before expliciting    the Ryom--Hansen basis we need   to introduce the  tools below.

For $i<j$, we define $E_{i,j}$ by
 \begin{equation}\label{Eij}
 E_{i,j} = \left\{\begin{array}{ll}
 E_i & \text{for} \quad j = i +1,\\
 T_i \cdots T_{j-2}E_{j-1}T_{j-2}^{-1}\cdots T_{i}^{-1}& \text{otherwise.}
 \end{array}\right.
 \end{equation}
 For any  nonempty  subset $J$  of $\bf n$ we define $E_J=1$  for  $|J|=1$ and otherwise by
$$
 E_J := \prod_{(i,j )\in J\times J, i<j}  E_{i,j}.
$$
Note that $E_{\{i,j\}} = E_{i,j}$.
For $I = \{I_1, \ldots , I_m\} \in \Pn$,  we define
 $E_I$  by
 \begin{equation}\label{EI}
 E_I = \prod_{k}E_{I_k}.
 \end{equation}
Now,  if   $w=\mathtt{s}_{i_1}\cdots \mathtt{s}_{i_k}$ is a reduced expression of $w\in \mathtt{S}_n$, then the  element $T_w:= T_{i_1}\cdots T_{i_k}$ is well defined.
The action of $\mathtt{S}_n$ on $\Pn$ is inherited from the  $E_{I}$'s and  we have:

\begin{equation}\label{w(E)}
T_w E_I T_w^{-1} =  E_{w(I)}\qquad (\text{see \cite[Corollary 1]{rhJAC}}).
\end{equation}

\begin{theorem}[{\cite[Corollary 3]{rhJAC}}] \label{basEn}
The set $\{E_I T_w  \,; \,w\in \mathtt{S}_n,\, I\in  \Pn \}$ is a ${\Bbb K}$--linear basis of
${\mathcal E}_n(\u)$. Hence the dimension of ${\mathcal E}_n(\u)$ is $b_nn!$.
\end{theorem}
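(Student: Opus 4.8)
The plan is to establish the two inequalities $\dim \En(\u) \le b_n n!$ and $\dim \En(\u) \ge b_n n!$ separately: the first by showing that the proposed set spans $\En(\u)$, the second by transporting the elements to the Yokonuma--Hecke algebra and checking that their images are linearly independent. Together these force the spanning set to be a basis of the asserted cardinality.

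For the spanning bound I would first use \eqref{Tinverse} to rewrite any word in $T_i^{\pm 1}, E_i$ as a $\KK$--linear combination of positive words in the $T_i$ and the $E_i$. Next I would record that the ties generate a commutative idempotent subalgebra whose natural spanning set is $\{E_I : I \in \Pn\}$, with $E_I E_J = E_{I \ast J}$; this uses the commutativity and idempotency of the $E_{i,j}$ together with the transitivity relations extracted from \eqref{bt5}--\eqref{bt6} (equivalently, the monoid structure of $\Pn$ from Proposition \ref{presentationPn} and the identities \eqref{partial}, \eqref{astwithw}). With this in hand, relation \eqref{w(E)} in the form $E_I T_w = T_w E_{w^{-1}(I)}$ lets me sweep every tie factor to the far left, collapsing them into a single $E_I$. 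What remains on the right is a word in the $T_i$ alone, which I reduce by the standard Iwahori--Hecke straightening: inducting on word length and using the braid relations \eqref{bt7}--\eqref{bt8} together with the quadratic relation \eqref{bt9}, each such word becomes a $\KK$--combination of the $T_w$, $w \in \mathtt{S}_n$, plus strictly shorter terms carrying extra tie factors $E_i$. Reabsorbing these ties into the leading $E_I$ at each step, the induction terminates and exhibits every element of $\En(\u)$ as a combination of the $E_I T_w$; hence $\dim \En(\u) \le b_n n!$.

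For linear independence I would use the homomorphism $\rho \colon \En(\u) \to \Yn(\u)$ of Remark \ref{remarkbt}(2), $T_i \mapsto g_i$, $E_i \mapsto e_i$, and fix $d \ge n$. Under $\rho$ one has $\rho(E_I T_w) = e_I\, g_w$, where $e_I = \prod_{\text{blocks } B}\prod_{i<j \in B} e_{i,j}$ with $e_{i,j} = \frac{1}{d} \sum_{s} t_i^s t_j^{d-s}$, and $g_w$ is the positive braid word attached to a reduced expression of $w$. Using the PBW--type basis $\{t_1^{a_1}\cdots t_n^{a_n} g_w\}$ of $\Yn$ (the normal words $\mathcal{R}_n$) one has the vector space decomposition $\Yn = \bigoplus_{w \in \mathtt{S}_n} A\, g_w$, where $A = \KK[t_1, \ldots , t_n]$ is the commutative framing subalgebra. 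A relation $\sum_{I,w}\lambda_{I,w}\, e_I g_w = 0$ then forces $\sum_I \lambda_{I,w}\, e_I = 0$ in $A$ for each fixed $w$. Passing to the character basis of $A$, the idempotent $e_I$ becomes the indicator of the set $W_I$ of framings constant on the blocks of $I$; since $d \ge n$, each partition is realized by a framing taking distinct values on distinct blocks, and evaluating the relation at these framings and inverting over the lattice $\Pn$ forces every $\lambda_{I,w}$ to vanish. Thus the $e_I$ are linearly independent in $A$, the $e_I g_w$ are linearly independent in $\Yn$, the $E_I T_w$ are linearly independent in $\En(\u)$, and $\dim \En(\u) \ge b_n n!$.

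Combining the two bounds gives $\dim \En(\u) = b_n n!$ and shows that $\{E_I T_w : w \in \mathtt{S}_n,\ I \in \Pn\}$ is a basis. The main obstacle is the independence step, and within it the claim that the tie idempotents $e_I$ are independent in the framing algebra: this is precisely where the hypothesis $d \ge n$ enters, through the need to realize each set partition by a framing separating its blocks, and it is a Vandermonde/M\"obius--type argument on the lattice $\Pn$ rather than a formal manipulation. The spanning step is comparatively routine once transitivity of ties and identity \eqref{w(E)} are available, the only delicate bookkeeping being the reabsorption of newly created ties into $E_I$ during the Hecke straightening.
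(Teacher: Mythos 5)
Your proposal is correct, but it takes a genuinely different route from the proof the paper points to. The paper does not prove the theorem itself; it cites Ryom--Hansen \cite{rhJAC}, where the spanning step is the same straightening you describe (sweep ties left via \eqref{w(E)}, absorb the ties created by the quadratic relation \eqref{bt9}, induct on length), but linear independence is obtained by constructing a concrete faithful tensor-space module extending Jimbo's representation of the Hecke algebra and checking that the operators $E_I T_w$ act independently. You instead transport the candidate basis along the homomorphism $\En(\u)\to {\rm Y}_{d,n}(\u)$, $T_i\mapsto g_i$, $E_i\mapsto e_i$, with $d\geq n$, and prove independence of the images $e_I g_w$ using the decomposition ${\rm Y}_{d,n}=\bigoplus_w A\,g_w$ and M\"obius inversion over the lattice $\Pn$ after evaluating at characters of the framing algebra $A$ (where $e_{i,j}$ becomes the indicator that the framing agrees at $i$ and $j$; this is exactly where $d\geq n$ is needed, so that every set partition arises as a level-set partition). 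Importantly, your argument is not circular: you use only the \emph{existence} of the homomorphism and the Yokonuma--Hecke normal-form basis (whose proof is independent of the bt-algebra), not the injectivity statement of Remark \ref{remarkbt}(2) --- in fact your argument re-derives that injectivity for $d\geq n$ as a byproduct, since a spanning set with independent images is a basis. What each approach buys: Ryom--Hansen's is self-contained for $\En(\u)$ and produces a faithful module he needs anyway for representation theory; yours is shorter in the context of these notes, where the ${\rm Y}_{d,n}$ basis is already on the table. One small bookkeeping point: the paper's normal words $\mathcal{R}_n$ are not literally the PBW set $\{t_1^{a_1}\cdots t_n^{a_n}g_w\}$ you invoke, but sliding framings left via \eqref{yh4} and comparing cardinalities with $\dim {\rm Y}_{d,n}=d^n n!$ closes that gap immediately.
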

\begin{example}
\end{example}
Having in mind  (\ref{agreewithposet}), the natural inclusion $\mathtt{S}_n\subset \mathtt{S}_{n+1}$, for every $n$, together with Theorem 	\ref{basEn}, we deduce the  tower of algebras:
 $$
    {\mathcal E}_1(\u)\subset {\mathcal E}_{2}(\u)\subset  \cdots\subset {\mathcal E}_n(\u)\subset {\mathcal E}_{n+1}(\u)\subset\cdots
 $$
\begin{notation}
Denote by ${\mathcal E}_{\infty}(\u)$ the inductive limit associated the bt--algebras  above.
\end{notation}

\begin{remark}[Cf. Subsection \ref{Yn(v)}]\label{Vi}\rm
Extending the field ${\Bbb K}$ to ${\Bbb K}(\v)$ with $\v^2=\u$,
 we can define  (cf. \cite[Subsection 2.3]{maIMRN}):
 \begin{equation}\label{defVi} V_i := T_i + (\v^{-1}-1)E_iT_i.\end{equation} Then the $V_i$'s and the $E_i$'s  satisfy the relations (\ref{bt3})--(\ref{bt8}) and the quadratic relation (\ref{bt9}) is transformed in
\begin{equation}\label{newbt9}
V_i^2 = 1 + (\v - \v^{-1})E_iV_i.
\end{equation}
So,
\begin{equation}\label{Ttildeinverse}
V_i^{-1} = V_i - (\v - \v^{-1})E_i.
\end{equation}
In \cite{chjukalaIMRN, esryJPAA,  jaPoJLMS}  this quadratic relation is used to define the bt--algebra. Although at algebraic level these algebras are the same, we will see  that they lead   to different invariants. Thus, in order to distinguish these two presentations of the bt--algebra, we will write $\mathcal{E}_n(\v)$ when the bt--algebra is  defined by using the quadratic relation (\ref{newbt9}).
\end{remark}
\subsection{}
In \cite{aijuMMJ1} it was proved that the bt--algebra supports a Markov trace, this was proved using the method of relative trace and the Ryom--Hansen basis.

Let $\a$ and $\b$  be two  variables commutative independent    commuting with $\u$.
\begin{theorem}[{\cite[Theorem 3]{aijuMMJ1}}]\label{tracerho}
There exists a  unique Markov trace $\rho$ on ${\mathcal E}_{\infty}(\u)$, i.e., a family $\rho :=\{\rho_n\}_{n\in {\Bbb N}}$, where  $\rho_n$'s are linear maps, defined inductively,  from $\mathcal{E}_n(\u)$ in   $\mathbb{K}[\a, \b]$   such  that  $\rho_n(1) =1$ and satisfying,  for all   $X, Y \in\mathcal{E}_n(\u)$,    the following rules:
\begin{enumerate}
\item $\rho_n (XY)=\rho_n(YX)$,
\item $\rho_{n+1}(XT_n)=\rho_{n+1}(XT_nE_n) = \a\rho_n(X)$,
\item $\rho_{n+1}(XE_n)= \b\rho_n(X)$.
\end{enumerate}
\end{theorem}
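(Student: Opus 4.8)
The plan is to run the same inductive scheme that produces the Ocneanu trace (Theorem \ref{Ocneanu}), but organized through a relative (conditional) trace, since the bt--algebra carries the extra family of idempotents $E_i$ alongside the braid generators. Throughout I would work in the Ryom--Hansen basis $\{E_I T_w\}$ of Theorem \ref{basEn}, and I would treat uniqueness and existence separately.

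For uniqueness I would argue by induction on $n$ that the rules (1)--(3) together with $\rho_n(1)=1$ force the value of $\rho_n$ on every basis element $E_I T_w$. The idea is that such an element can always be rewritten, using (1) to cyclically permute factors and the defining relations (\ref{bt1})--(\ref{bt9}), so that any occurrence of the top generators $T_{n-1}$ and $E_{n-1}$ is moved to the right end; then a single application of rule (2) or (3) strips the top strand and lands in $\mathcal{E}_{n-1}$, where the induction hypothesis applies. The one point requiring care is the double equality in (2): I must check that $\rho_{n}(XT_{n-1})$ and $\rho_{n}(XT_{n-1}E_{n-1})$ are forced to coincide. This is consistent because $E_{n-1}T_{n-1}=T_{n-1}E_{n-1}$ by (\ref{bt4}) and $E_{n-1}$ is idempotent by (\ref{bt2}), so the reduction of a basis element to a scalar is unambiguous.

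Existence is the substantial part. I would construct, for each $n$, a relative trace $\varepsilon_n:\mathcal{E}_n\to\mathcal{E}_{n-1}$ which is an $(\mathcal{E}_{n-1},\mathcal{E}_{n-1})$--bimodule map, and then set $\rho_n:=\rho_{n-1}\circ\varepsilon_n$ with $\rho_1=\mathrm{id}_{\KK}$. On the top--strand part $\varepsilon_n$ is prescribed by $\varepsilon_n(X)=X$ for $X\in\mathcal{E}_{n-1}$, together with $\varepsilon_n(T_{n-1})=\a$, $\varepsilon_n(T_{n-1}E_{n-1})=\a$, and $\varepsilon_n(E_{n-1})=\b$, extended by the bimodule property. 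To see that $\varepsilon_n$ is well defined I would use Theorem \ref{basEn} to put each basis element of $\mathcal{E}_n$ into a normal form with respect to the top strand, separating the image of the inclusion $\mathcal{E}_{n-1}\hookrightarrow\mathcal{E}_n$ from the contributions in which the $n$--th strand is involved in $T_w$ or in a block of $E_I$; this is the bt--analogue of the bimodule decomposition of Lemma \ref{bimoduleHecke}, now with an additional summand coming from $E_{n-1}$, and (\ref{w(E)}) controls how $E_I$ interacts with the top strand. With $\rho_n$ so defined, rules (2) and (3) hold by construction.

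The genuinely hard part will be rule (1), the trace property $\rho_n(XY)=\rho_n(YX)$. By linearity and since the $T_i$ and $E_i$ generate the algebra, I would reduce this to the case where $Y$ is a single generator and $X$ runs over the basis. For $i\le n-2$ the identity follows from the bimodule property of $\varepsilon_n$ and the inductive commutativity of $\rho_{n-1}$. The new cases are $Y=T_{n-1}$ and $Y=E_{n-1}$: here one must carry the top generator across $\varepsilon_n$, re--expressing $T_{n-1}XT_{n-1}$ and $E_{n-1}XE_{n-1}$ by means of the mixed relations (\ref{bt4})--(\ref{bt6}) and the braid relation (\ref{bt8}), so that the verification collapses to a compatibility between $\varepsilon_{n-1}$ and $\varepsilon_n$ (that $\varepsilon_{n-1}\varepsilon_n$ treats the two highest strands symmetrically). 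This local two--strand identity, checked on basis elements, is where essentially all of the algebra of the bt--algebra is used, and it is the step I expect to be the main obstacle.
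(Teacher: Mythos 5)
Your plan coincides with the paper's treatment: the paper offers no proof of Theorem \ref{tracerho} beyond citing \cite[Theorem 3]{aijuMMJ1} and noting that the result ``was proved using the method of relative trace and the Ryom--Hansen basis,'' which is precisely the scheme you propose, namely an $(\mathcal{E}_{n-1},\mathcal{E}_{n-1})$--bimodule relative trace $\varepsilon_n$ defined via the basis of Theorem \ref{basEn}, with $\rho_n=\rho_{n-1}\circ\varepsilon_n$ and rules (2)--(3) holding by construction. So your proposal takes essentially the same approach as the cited proof, and you correctly locate the substantive work in the well-definedness of $\varepsilon_n$ and in verifying the trace rule (1).
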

With this theorem and because the braid group is represented (naturally) in the bt--algebra  we are ready to define an invariant for links. More precisely, denote by  $\pi_{\sqrt{\mathsf{L}} }$ the  (natural) representation of $B_n$ in $\mathcal{E}_n$, namely $\sigma_i \mapsto \sqrt{\mathsf{L}} T_i$. With the same procedure used to get   (\ref{RelationLambdaZeta})  and (\ref{lambdam}), we define
\begin{equation}\label{L}
\mathsf{L}:=\frac{\a +  (1 -\u)\b}{\a\u}.
\end{equation}
\begin{definition}\label{barDelta}
For $\sigma\in B_n$, we define
$$
 \overline{\Delta}(\sigma) =\left( \frac{1}{\a \sqrt{\mathsf{L}}}\right)^{n-1}(\rho_n\circ \pi_{\sqrt{\mathsf{L}}})(\sigma).
$$
\end{definition}
\begin{theorem}\label{DeltaBarra}
 Let  $L$  be a link  obtained by  closing the braid $\sigma\in B_n$. Then the map $L\mapsto \overline{\Delta}(\sigma) $ defines an ambient isotopy  invariant for oriented links, taking values in $\KK(\a, \sqrt{\mathsf{L}})$.
\end{theorem}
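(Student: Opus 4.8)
The plan is to follow verbatim the strategy used for the Homflypt polynomial in Theorem~\ref{Homflypt} and for $\Delta_m$. By Corollary~\ref{LinksAsBraids} the closure map induces a bijection between $B_\infty/\!\sim_M$ and $\mathfrak{L}/\!\sim$, so it suffices to verify that $\sigma \mapsto \overline{\Delta}(\sigma)$ is constant on Markov equivalence classes, i.e.\ that it respects the two Markov moves M1 and M2 (cf.\ Remark~\ref{Invariant}). For M1, both $\alpha\beta$ and $\beta\alpha$ lie in the same $B_n$, so the normalising factor $(\a\sqrt{\mathsf{L}})^{-(n-1)}$ is identical for both; since $\pi_{\sqrt{\mathsf{L}}}$ is a homomorphism and $\rho_n$ satisfies $\rho_n(XY)=\rho_n(YX)$ (rule (1) of Theorem~\ref{tracerho}), one gets $(\rho_n\circ\pi_{\sqrt{\mathsf{L}}})(\alpha\beta)=(\rho_n\circ\pi_{\sqrt{\mathsf{L}}})(\beta\alpha)$, hence $\overline{\Delta}(\alpha\beta)=\overline{\Delta}(\beta\alpha)$.

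The substance is the M2 check: for $\sigma\in B_n$ one must show $\overline{\Delta}(\sigma)=\overline{\Delta}(\sigma\sigma_n)=\overline{\Delta}(\sigma\sigma_n^{-1})$. Writing $\alpha:=\pi_{\sqrt{\mathsf{L}}}(\sigma)\in\mathcal{E}_n$, the positive stabilisation is immediate from rule (2): $(\rho_{n+1}\circ\pi_{\sqrt{\mathsf{L}}})(\sigma\sigma_n)=\sqrt{\mathsf{L}}\,\rho_{n+1}(\alpha T_n)=\a\sqrt{\mathsf{L}}\,\rho_n(\alpha)$. For the negative stabilisation I would expand $T_n^{-1}$ via (\ref{Tinverse}) and apply $\rho_{n+1}$ term by term: rule (2) gives $\rho_{n+1}(\alpha T_n)=\a\rho_n(\alpha)$, rule (3) gives $\rho_{n+1}(\alpha E_n)=\b\rho_n(\alpha)$, and, using $E_nT_n=T_nE_n$ from relation (\ref{bt4}), the second equality of rule (2) gives $\rho_{n+1}(\alpha E_n T_n)=\rho_{n+1}(\alpha T_n E_n)=\a\rho_n(\alpha)$. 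Collecting these yields $\rho_{n+1}(\alpha T_n^{-1})=\bigl(\a+(\u^{-1}-1)\b+(\u^{-1}-1)\a\bigr)\rho_n(\alpha)=\u^{-1}\bigl(\a+(1-\u)\b\bigr)\rho_n(\alpha)$.

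The decisive point is that $\mathsf{L}$ in (\ref{L}) was defined precisely so that $\a\mathsf{L}=\u^{-1}(\a+(1-\u)\b)$, whence $(\rho_{n+1}\circ\pi_{\sqrt{\mathsf{L}}})(\sigma\sigma_n^{-1})=(\sqrt{\mathsf{L}})^{-1}\a\mathsf{L}\,\rho_n(\alpha)=\a\sqrt{\mathsf{L}}\,\rho_n(\alpha)$, matching the positive case exactly. Since $\sigma\sigma_n^{\pm1}\in B_{n+1}$ carries the normalising exponent $n$, both stabilisations give $\overline{\Delta}(\sigma\sigma_n^{\pm1})=(\a\sqrt{\mathsf{L}})^{-n}\,\a\sqrt{\mathsf{L}}\,\rho_n(\alpha)=(\a\sqrt{\mathsf{L}})^{-(n-1)}\rho_n(\alpha)=\overline{\Delta}(\sigma)$, establishing M2 and hence the theorem.

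There is no deep obstacle here: the argument is forced once $\mathsf{L}$ is recognised as the normalising scalar. The only place requiring care is the $T_n^{-1}$ expansion, where one must confirm via (\ref{bt4}) that the mixed term $\alpha E_n T_n$ is reordered so that all three resulting contributions factor through $\rho_n(\alpha)$; this is exactly the step where the bt--algebra relations (rather than mere Hecke relations) enter, and where a sign or a misapplied trace rule would break the match between the two stabilisations.
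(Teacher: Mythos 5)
Your proposal is correct and takes essentially the same route as the paper's own proof: M1 from rule (1) of Theorem~\ref{tracerho}, positive stabilisation from rule (2), and negative stabilisation by expanding $T_n^{-1}$ via (\ref{Tinverse}) and collecting the three trace contributions into $\u^{-1}\bigl(\a+(1-\u)\b\bigr)=\a\mathsf{L}$, which is exactly how (\ref{L}) is exploited in the paper. The only difference is cosmetic: you write out the positive stabilisation and the normalising-exponent bookkeeping that the paper dismisses as routine.
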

\begin{proof}
We need only to prove that $\overline{\Delta}$  agrees with the Markov replacements. Because $ \pi_{\sqrt{\mathsf{L}}}$  is an  homomorphism and the properties of $\rho$ imply that  $\overline{\Delta}$  agrees with M1. On the second replacement, we note that   it is a routine to check $\overline{\Delta}(\sigma) = \overline{\Delta} (\sigma \sigma_n )$. Thus it remains only to check $\overline{\Delta}(\sigma)=\overline{\Delta}(\sigma \sigma_n^{-1})$,   for every $\sigma\in B_n$ . Now, put  $\alpha = \pi_{\sqrt{\mathsf{L}}}(\sigma)$, then:
\begin{eqnarray*}
(\rho_{n+1}\circ \pi_{\sqrt{\mathsf{L}}})(\sigma\sigma_n^{-1})
& = &
\sqrt{\mathsf{L}}^{-1}\rho_{n+1}(\alpha T_n^{-1})\\
& = & \sqrt{\mathsf{L}}^{-1}\rho_{n+1}(\alpha (  T_n + (\u^{-1}-1)E_n \\
& & + (\u^{-1}-1)E_nT_n )) \\
& = & \sqrt{\mathsf{L}}^{-1}(\u^{-1}\rho_n(\alpha) \a + (\u^{-1}-1)\rho_n(\alpha)\b)\\
& =& \sqrt{\mathsf{L}}^{-1}(\u^{-1} \a + (\u^{-1}-1)\b)\rho_n(\alpha).
\end{eqnarray*}
Therefore, $ \overline{\Delta}(\sigma\sigma_{n}^{-1})= \overline{\Delta}(\sigma)$, for  all $\alpha\in B_n$.
\end{proof}

Now, in the way we  define $\Theta_m$ we can define $\bar{\Theta}$,  that is, taking  out its definition by using now the presentation of the bt--algebra $\mathcal{E}_n(\v)$ in the Jones recipe. Having in mind what we did above,  (\ref{lambdamprime}) and Definition \ref{Theta}, we define $ \mathsf{L}' $ and $\overline{\Theta}$ as follows:
\begin{equation}\label{Lprime}
\mathsf{L}' := \frac{\a - (\v^2-1)\b}{\a} ;
\end{equation}
\begin{equation}\label{ThetaPrime}
\overline{\Theta}(\sigma) := \left( \frac{\v}{\a\sqrt{\mathsf{L}^{\prime}}}\right)^{n-1}(\rho_n\circ \pi_{\sqrt{\mathsf{L}^\prime}} )
(\sigma), \quad (\sigma \in B_n).
\end{equation}
\begin{theorem}\label{ThetaBarra}
 Let  $L$  be a link  obtained by  closing the braid $\alpha\in B_n$. Then the map $L\mapsto \overline{\Theta}(\alpha) $ defines an invariant of  ambient isotopy for oriented links, which take values in $\KK(\a, \sqrt{\mathsf{L}'})$.
\end{theorem}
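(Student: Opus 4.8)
The plan is to invoke Corollary \ref{LinksAsBraids}, which reduces the statement to showing that $\overline{\Theta}$ is constant on Markov equivalence classes; concretely, following Remark \ref{Invariant}, I must check that $\overline{\Theta}$ respects the two Markov moves M1 and M2. Exactly as in the proof of Theorem \ref{DeltaBarra}, the move M1 is immediate: since $\pi_{\sqrt{\mathsf{L}'}}$ is a homomorphism and $\rho$ is a trace (rule (1) of Theorem \ref{tracerho}), one has $\rho_n(\pi(\alpha\beta))=\rho_n(\pi(\beta\alpha))$, while the normalization factor depends only on the braid index $n$, which M1 leaves unchanged. So the entire content of the theorem sits in the stabilization move M2.

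The first real step is to record the two trace values that drive the computation. Working inside the algebra $\mathcal{E}_{n+1}(\v)$ (which is the same algebra as $\mathcal{E}_{n+1}(\u)$, by Remark \ref{Vi}) and writing $V_n = T_n + (\v^{-1}-1)E_nT_n$ via (\ref{defVi}), I would commute $E_n$ past $T_n$ using (\ref{bt4}) and then apply trace rules (2)--(3) of Theorem \ref{tracerho}. A short computation gives, for every $\alpha\in\mathcal{E}_n(\v)$,
\[
\rho_{n+1}(\alpha V_n) = \v^{-1}\a\,\rho_n(\alpha).
\]
Then, substituting $V_n^{-1} = V_n - (\v-\v^{-1})E_n$ from (\ref{Ttildeinverse}) and using trace rule (3), I obtain
\[
\rho_{n+1}(\alpha V_n^{-1}) = \bigl(\v^{-1}\a - (\v-\v^{-1})\b\bigr)\rho_n(\alpha).
\]

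With these identities the positive stabilization is routine: putting $\alpha = \pi_{\sqrt{\mathsf{L}'}}(\sigma)$ and $\pi(\sigma_n)=\sqrt{\mathsf{L}'}\,V_n$, the factor $\sqrt{\mathsf{L}'}\,\v^{-1}\a$ coming from $\rho_{n+1}(\alpha V_n)$ is cancelled precisely by the extra power $\v/(\a\sqrt{\mathsf{L}'})$ in the normalization, so $\overline{\Theta}(\sigma\sigma_n)=\overline{\Theta}(\sigma)$. The genuine point, and the step I expect to carry the weight of the argument, is the negative stabilization $\overline{\Theta}(\sigma\sigma_n^{-1})=\overline{\Theta}(\sigma)$. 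After cancelling the common power of the normalization factor, this collapses to the single scalar equation
\[
\frac{\v}{\a\,\mathsf{L}'}\bigl(\v^{-1}\a - (\v-\v^{-1})\b\bigr) = 1,
\]
which simplifies to $\mathsf{L}' = (\a - (\v^2-1)\b)/\a$. This is exactly the defining relation (\ref{Lprime}) for $\mathsf{L}'$; in other words, the rescaling constant was chosen precisely so that M2 holds, and the verification closes. The main obstacle is therefore not conceptual but bookkeeping: expressing $V_n$ and $V_n^{-1}$ through the defining generators $T_n,E_n$ so that the trace rules of Theorem \ref{tracerho} become applicable, and carefully tracking the powers of $\v/(\a\sqrt{\mathsf{L}'})$ across the change of index from $n$ to $n+1$.
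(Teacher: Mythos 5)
Your proposal is correct and takes essentially the same route as the paper: the paper's proof of Theorem \ref{ThetaBarra} consists of the single remark ``same proof as Theorem \ref{DeltaBarra}'', and your argument is precisely that template transported to the presentation $\mathcal{E}_n(\v)$, computing $\rho_{n+1}(\alpha V_n)=\v^{-1}\a\,\rho_n(\alpha)$ and $\rho_{n+1}(\alpha V_n^{-1})=(\v^{-1}\a-(\v-\v^{-1})\b)\rho_n(\alpha)$ from (\ref{defVi}), (\ref{Ttildeinverse}) and the trace rules of Theorem \ref{tracerho}, then checking that the constant $\mathsf{L}'$ of (\ref{Lprime}) is exactly what makes both stabilizations hold. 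Your computations verify correctly, so the proposal is a faithful (and more detailed) rendering of the paper's intended proof.
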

\begin{proof}
Same proof as Theorem \ref{DeltaBarra}.
\end{proof}
\begin{remark}\rm
The invariant $\overline{\Delta}$ contains the invariant $\Delta_m$, for every $m\in\NN$; that is specializing the variable $\b$ to $1/m$ we get $\Delta_m$.  With the same specialization,  the invariant $\overline{\Theta}$ contains the invariants $\Theta_m$, for every $m$.
\end{remark}

\section{Tied links}
In this  section we introduce the tied links. This class of knotted-like objects contains the classical link, so invariants of tied links yield invariants of classical links. We start the section recalling the definition of tied links and the monoid of braid tied links. Also, the respective   Alexander and  Markov theorems are exhibited.
\subsection{}
Tied links were introduced in \cite{aijuJKTR1} and roughly correspond   to  links  whose components may be  connected   by   ties; thus ties are connecting pairs of  points  of  two  components or of the  same  component.

The   ties   in the picture of the tied links  are   drawn  as  springs or dashed lines,  to outline   that they  can be contracted  and  extended,  letting  their  extremes  to  slide  along the  components.

\begin{center}
 \begin{figure}[H]
 \includegraphics[scale=.3]{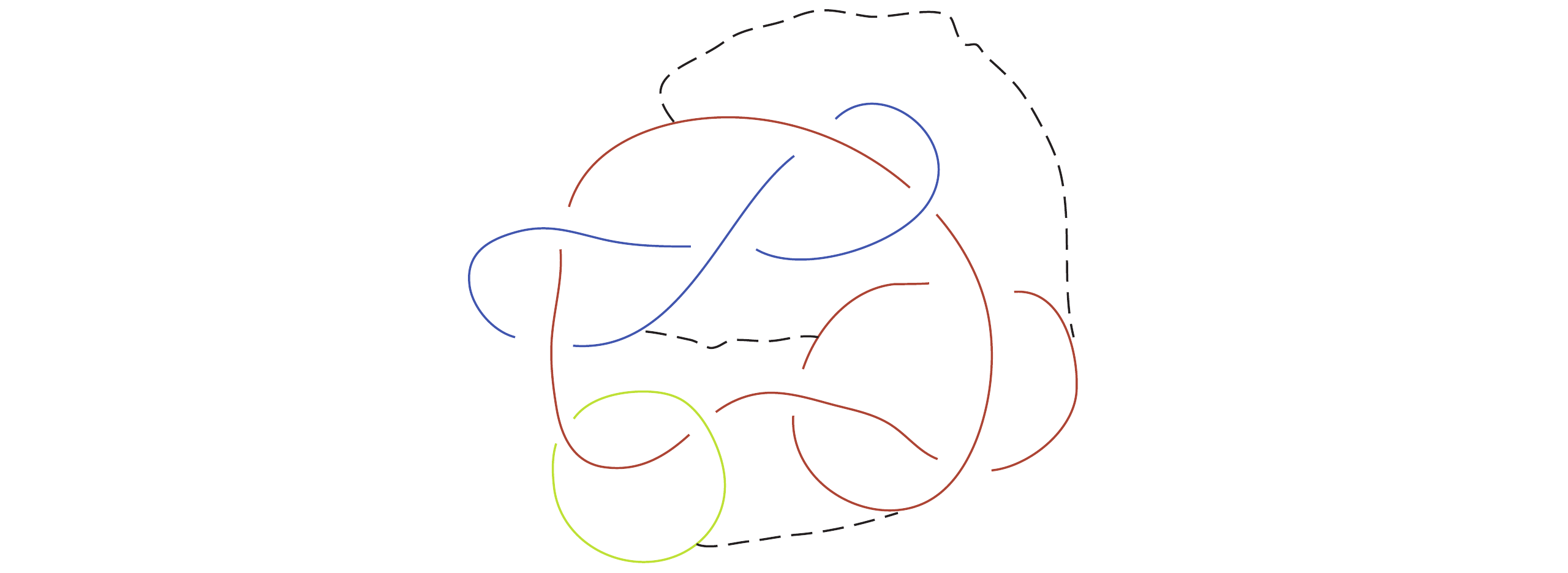}
\caption{ }\label{ExampleTiedLink}
 \end{figure}
 \end{center}
\begin{notation}
 We will use the notation $C_i \leftrightsquigarrow C_j$  to indicate that either there is a tie between the components $C_i$ and $C_j$ of a link, or
  $C_i$ and $C_j$ are  the     extremes of  a  chain  of $m>2$ components $C_1, \dots , C_m$, such that there is  a  tie  between $C_i$  and  $C_{i+1}$,  for  $i=1,\dots, m-1$.
\end{notation}
\begin{definition}[{\cite[Definition 1.1]{aijuJKTR1}}]\label{TiedLinks}
 Every 1--link is by definition a tied 1--link.   For $k > 1$, a  tied $k$--link  is a   link  whose  set of  components $\{C_1, \ldots , C_k\}$ is partitioned  into parts according to: two components $C_i$ and $C_j$ belong to  the  same   part  if
  $C_i \leftrightsquigarrow C_{i+1}$.
\end{definition}

\begin{notation}
We denote by $\widetilde{\mathfrak{L}}$ the set of oriented tied links.
\end{notation}

\begin{center}
 \begin{figure}[H]
 \includegraphics[scale=.35]{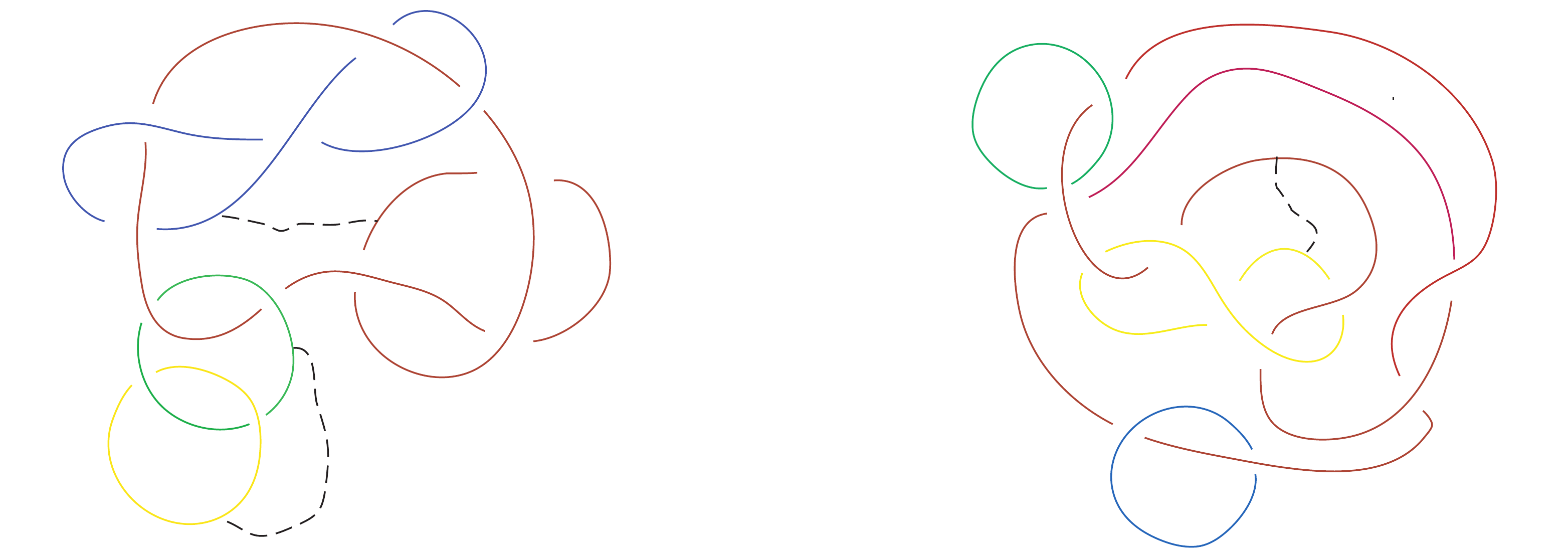}
\caption{ }\label{ExamplesTiedLinks}
 \end{figure}
 \end{center}

  In   Fig. \ref{ExamplesTiedLinks}   two  tied links with four components; moreover, if $C_1$ is the blue component, $C_2$  the red component, $C_3$  the yellow components and $C_4$  the green component, then the partition associated to the first tied link is $\{\{C_1, C_2\}, \{C_3,  C_4\}\}$ and the partitions associated to the second tied link is $\{\{C_1\},  \{C_2 , C_3\}, \{C_4\}\}$.

 Notice that a tied  $k$--link $L$,   with  components' set  $C_L=\{C_1, \ldots , C_k\}$, determines a pair $(L, I(C_L))$ in $\mathfrak{L}_k \times \mathsf{P}_k$, where $i$ and $j$ belong to the same block of $I(C_L)$,   if   $C_i \leftrightsquigarrow C_j$.
\begin{example}
For instance in Fig. \ref{ExamplesTiedLinks}, the set partition determined by the first tied link is $\{\{1,2\},\{3,4\} \}$ and the set partition determined by the second tied link is $\{\{1\}, \{4\}, \{2,3\}\}$.
The set partition determined by the tied link of Fig. \ref{ExampleTiedLink} is $\{\{1,2,3,4\}\}$.
\end{example}
\begin{definition}[{Cf. \cite[Definition 1.6]{aijuJKTR1}}]
A tie of a tied link   is said essential  if it cannot  be  removed without modifying  the  partition $I(C_L)$, otherwise  the  tie is  said  unessential.
\end{definition}
 Notice   that  between  the $c$ components indexed by   the  same block  of  the set  partition,  the  number of       essential ties    is  $c-1$; for instance, in the tied link of  Fig. \ref{ExamplesTiedLinks}, left, among the  three  ties  connecting  the first three components, only two  are  essential. The  number of  unessential ties  is  arbitrary.   Ties  connecting     one  component  with itself are  unessential.
\begin{definition}\label{tisotopy}
The $k$--tied link   $L$ and  the $k'$--tied link $L'$  with, respectively,  components $C=\{C_1, \ldots ,C_k\}$ and $C'=\{C'_1, \ldots ,C'_{k'}\}$, are t--isotopic if:
\begin{enumerate}
\item The links $L$ and $L'$ are ambient isotopic (hence $k=k')$.
\item The set partitions $I(C)$ and $I(C')$  satisfy $I(C')=w_{L, L'}(I(C))$, where  $ w_{L, L'}$  is  the bijection from $C(L)$ to $C(L')$ induced by the isotopy.
\end{enumerate}
\end{definition}
\begin{example}
In the Fig. \ref{tisotopyLinks}, we have that $L1$ and $L2$ are not t--isotopic.
Indeed, the set partition, respectively,  of $L1$  and $L2$ are  $I_1=\{\{1,2,3,4\}\}$ and  $I_2=\{\{1,3,4\},\{2\}\}$ and $w_{L1, L2}= (1,2,4,3)$, but $I_2\not=w_{L1,L2}(I_1)$.

Now,  $L3$  has associated the set partition $I_3=\{ \{1,2,3\},\{4\} \}$   and $L4$ has associated the set partition $I_4=\{\{	1\},\{2,3,4\}	\}$ and $w_{L3,L4}= (4,1,2,3)$; thus,  $I_4= w_{L3,L4} (I_3)$. Then, $L3$ and $L4$ are t--isotopic
\begin{center}
 \begin{figure}[H]
 \includegraphics[scale=.4]{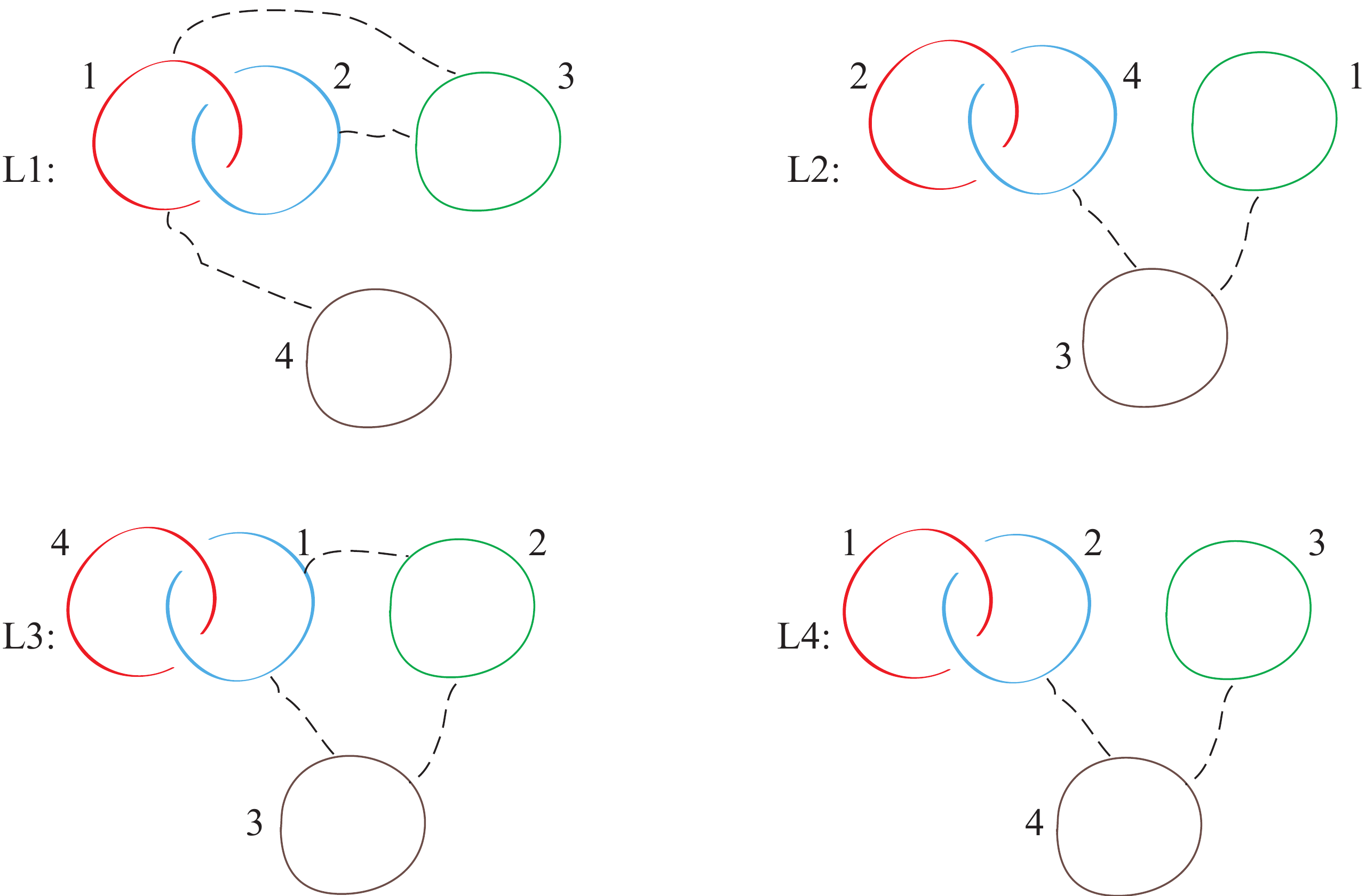}
\caption{ }\label{tisotopyLinks}
 \end{figure}
 \end{center}
\end{example}
\begin{remark}\label{LinksAsTiedLinks}\rm
The Definitions \ref{TiedLinks}  and \ref{tisotopy} not only say that classical links are included in tied links but also that the classical links can be   identified to the set of tied links which components are all tied. Both ways to see the classical links allows to study the isotopy of links  through the t--istopy of tied links.
Observe that for $k$--tied links with set of components $C$, we have $I(C)$ is $\{1, 2, \ldots , n\}$ if it  has  all components tied and is $\{\{1\}, \{2\}, \ldots ,\{n\}\}$ if does not have ties or  has  only ties that are unessential.
\end{remark}
\begin{remark}\rm
Everything established for tied links can be translated in terms of  diagrams in the obvious way. Informally, it is
enough  to change \lq links\rq\ by \lq diagrams of a link\rq\  and   so on.
\end{remark}
\subsection{}
The classical theorems of Alexander and Markov in knot theory have their analogous in the world of tied links. The starting point to establish these theorems for tied links is the so--called  tied braid monoid.

\begin{definition}{\rm \cite[Definition 3.1]{aijuJKTR1}}\label{monoidtb}
 The tied braid monoid $T\!B_n$ is  the monoid generated by usual braids
 $\sigma_1, \ldots , \sigma_{n-1}$  and the  tied generators  $\eta_i, \ldots ,\eta_{n-1}$, such the $\sigma_i$'s satisfy braid relations among them  together with the following relations:
\begin{eqnarray}
\eta_i\eta_j & =  &  \eta_j \eta_i \qquad \text{ for all $i,j$}
\label{eta1}\\
\eta_i\sigma_i  & = &   \sigma_i \eta_i \qquad \text{ for all $i$}
\label{eta2}\\
\eta_i\sigma_j  & = &   \sigma_j \eta_i \qquad \text{for $\vert i  -  j\vert >1$ }
\label{eta3}\\
\eta_i \sigma_j \sigma_i & = &   \sigma_j \sigma_i \eta_j \qquad
\text{ for $\vert i - j\vert =1$}
\label{eta4}\\
\eta_i\sigma_j\sigma_i^{-1} & = &  \sigma_j\sigma_i^{-1}\eta_j \qquad \text{ for $\vert i  -  j\vert =1$}
\label{eta5}\\
\eta_i\eta_j\sigma_i & = & \eta_j\sigma_i\eta_j  \quad = \quad\sigma_i\eta_i\eta_j \qquad \text{ for $\vert i  -  j\vert =1$}
\label{eta6}\\
 \eta_i\eta_i & = & \eta_i \qquad \text{ for all $i$}.
 \label{eta7}
\end{eqnarray}
\end{definition}

We denote $TB_{\infty}$ the inductive limit determined by the natural mono\-morphism  monoid  from $TB_n$ into $TB_{n+1}$. 
\smallbreak
 Diagrammatically, as usual, $\sigma_i$ is represented as the usual braid and the tied generator $\eta_i$,  as the diagram
  of $E_i$, that is,  a  tie connecting   the $i$ with $(i+1)$--strands, see Fig. \ref{TiEi}.
\smallbreak
The tied braid monoid is to the bt--algebra as the braid group is to the Hecke algebra.  In particular, we have the following proposition and its corollary.

\begin{proposition}\label{TildePi}
 The mapping $\eta_i\mapsto E_i$, $\sigma_i\mapsto T_i$ defines an homomorphism, denoted by $\widetilde{\pi}$, from  $TB_n$ to $\En$.
\end{proposition}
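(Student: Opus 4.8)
The plan is to invoke the universal property of the monoid presentation given in Definition \ref{monoidtb}: a monoid homomorphism out of $TB_n$ is determined by the images of the generators and exists precisely when those images satisfy every defining relation. Accordingly I would set $\widetilde{\pi}(\sigma_i) := T_i$ and $\widetilde{\pi}(\eta_i) := E_i$, and---since $\sigma_i^{-1}$ occurs in relation (\ref{eta5}) and the $T_i$ are invertible in $\En$ by (\ref{Tinverse})---also $\widetilde{\pi}(\sigma_i^{-1}) := T_i^{-1}$. It then remains only to verify that the elements $T_1,\dots,T_{n-1},E_1,\dots,E_{n-1}$ satisfy, inside $\En$, the images of all the relations defining $TB_n$.

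Most of these relations transcribe verbatim into defining relations of the bt--algebra, so the corresponding steps are immediate: the braid relations among the $\sigma_i$ become (\ref{bt7}) and (\ref{bt8}); relation (\ref{eta1}) becomes (\ref{bt1}); relation (\ref{eta2}) becomes (\ref{bt4}); relation (\ref{eta3}) becomes (\ref{bt3}); relation (\ref{eta6}) becomes (\ref{bt6}); and the idempotent relation (\ref{eta7}) becomes (\ref{bt2}). In each of these cases the image of the left--hand word and the image of the right--hand word are literally the two sides of the cited relation, so nothing beyond the defining relations of $\En$ is required.

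The relations needing genuine work---and hence the main point---are (\ref{eta4}) and, above all, (\ref{eta5}), since (\ref{eta5}) mixes $T_i^{-1}$ with the idempotents and has no direct counterpart among the defining relations of $\En$. I would treat both uniformly through the conjugation formula (\ref{w(E)}), namely $T_wE_IT_w^{-1}=E_{w(I)}$. Rewriting $\eta_i\sigma_j\sigma_i^{\pm1}=\sigma_j\sigma_i^{\pm1}\eta_j$ in conjugated form, the image of each of (\ref{eta4}) and (\ref{eta5}) asserts that $E_i = \widetilde{\pi}(\sigma_j\sigma_i^{\pm1})\,E_j\,\widetilde{\pi}(\sigma_j\sigma_i^{\pm1})^{-1}$, that is, that conjugating $E_j = E_{\{j,j+1\}}$ by $T_jT_i^{\pm1}$ yields $E_i = E_{\{i,i+1\}}$. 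Here one uses that $\sigma_i$ and $\sigma_i^{-1}$ induce the same transposition $\mathtt{s}_i$ in $\mathtt{S}_n$, so conjugation by $T_i$ and by $T_i^{-1}$ acts on the $E_I$'s through the same permutation via (\ref{w(E)}); and for $|i-j|=1$ a direct check gives $\mathtt{s}_j\mathtt{s}_i(\{j,j+1\})=\{i,i+1\}$, whence (\ref{w(E)}) produces exactly $T_jT_i^{\pm1}E_j(T_jT_i^{\pm1})^{-1}=E_i$. This settles (\ref{eta4}) and (\ref{eta5}) simultaneously. Once the images of all defining relations of $TB_n$ have been verified, the universal property of the presentation delivers the desired homomorphism $\widetilde{\pi}\colon TB_n\to\En$, completing the proof.
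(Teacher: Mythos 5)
Your proof is correct, and it is worth noting that the paper itself leaves the proof of Proposition \ref{TildePi} empty, so your argument actually supplies the missing verification rather than paralleling an existing one. The expected argument is exactly the one you give: invoke the universal property of the presentation of $TB_n$ (including, as you rightly note, images for the $\sigma_i^{-1}$, which exist because the $T_i$ are invertible by (\ref{Tinverse})) and check the defining relations; most are verbatim matches, and the only genuine content is in (\ref{eta4}) and (\ref{eta5}). Two remarks on that content. First, your decision to verify (\ref{eta4}) by conjugation rather than by verbatim matching quietly sidesteps an apparent typo in the notes: the image of (\ref{eta4}) is $E_iT_jT_i = T_jT_iE_j$, which is relation (\ref{bt5}) as it appears in \cite{rhJAC, aijuMMJ1}, whereas (\ref{bt5}) as printed here ends in $E_i$; with the relation as printed, a verbatim match would be wrong, so your route is the safe one. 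Second, your appeal to (\ref{w(E)}) is sound, including the point you assert but could have expanded by one line: since $x\mapsto T_ixT_i^{-1}$ sends $E_I$ to $E_{\mathtt{s}_i(I)}$ and $\mathtt{s}_i$ is an involution, also $T_i^{-1}E_IT_i = E_{\mathtt{s}_i(I)}$, so conjugation by $T_i^{\pm 1}$ induces the same action; composing then gives
$$
\bigl(T_jT_i^{\pm 1}\bigr)E_j\bigl(T_jT_i^{\pm 1}\bigr)^{-1} = E_{\mathtt{s}_j\mathtt{s}_i(\{j,j+1\})} = E_{\{i,i+1\}} = E_i
$$
in both cases $j=i\pm 1$, settling (\ref{eta4}) and (\ref{eta5}) at once. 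An alternative, purely equational route for (\ref{eta5}) avoids citing \cite{rhJAC}: one checks that $T_i^2$ commutes with $E_j$ for $\vert i-j\vert =1$ (expand $T_i^2$ by (\ref{bt9}); then $E_j$ commutes with $E_i$ by (\ref{bt1}) and with $E_iT_i = T_iE_i$ by (\ref{bt4}) and (\ref{bt6}), since $E_iT_iE_j = T_iE_iE_j = E_iE_jT_i = E_jE_iT_i$), whence $E_iT_jT_i^{-1} = E_iT_jT_iT_i^{-2} = T_jT_iE_jT_i^{-2} = T_jT_i^{-1}E_j$; your conjugation argument is shorter but leans on the quoted formula (\ref{w(E)}), while this derivation stays inside the defining relations of $\En$.
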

\begin{proof}
\end{proof}
\begin{corollary}
$\En$ The bt--algebra is a quotient of $\KK TB_n$. More precisely,
$$
\En \cong \KK TB_n/I,
$$
where $I$ is the two--sided ideal generated by $T_i^2- (\u-1)E_i(1+T_i)$, for $1\leq i\leq n-1$.
\end{corollary}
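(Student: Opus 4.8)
The plan is to promote the monoid homomorphism $\widetilde{\pi}$ of Proposition \ref{TildePi} to an algebra epimorphism and then to identify its kernel with $I$. Extending $\widetilde{\pi}$ by $\KK$--linearity gives an algebra homomorphism $\Phi\colon \KK TB_n\longrightarrow \En$ determined by $\sigma_i\mapsto T_i$ and $\eta_i\mapsto E_i$. Since the $T_i$'s and $E_i$'s generate $\En$, the map $\Phi$ is surjective. Moreover, the quadratic relation (\ref{bt9}) says exactly that in $\En$ one has $T_i^2-1-(\u-1)E_i-(\u-1)E_iT_i=0$; hence each generator of $I$ lies in $\ker\Phi$, so $I\subseteq\ker\Phi$ and $\Phi$ factors through a surjection
$$
\overline{\Phi}\colon \KK TB_n/I\longrightarrow \En .
$$

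First I would establish the reverse direction by constructing an inverse. The algebra $\En$ is presented by the generators $T_i,E_i$ and the relations (\ref{bt1})--(\ref{bt9}), so to define a homomorphism $\Psi\colon \En\longrightarrow \KK TB_n/I$ with $T_i\mapsto[\sigma_i]$ and $E_i\mapsto[\eta_i]$ it suffices to check that the classes $[\sigma_i],[\eta_i]$ satisfy those nine relations. Relations (\ref{bt1}), (\ref{bt2}), (\ref{bt3}), (\ref{bt4}) are the images of the monoid relations (\ref{eta1}), (\ref{eta7}), (\ref{eta3}), (\ref{eta2}); relations (\ref{bt7}) and (\ref{bt8}) are the braid relations, which hold in $TB_n$; and relation (\ref{bt9}) holds in $\KK TB_n/I$ by the very definition of $I$. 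This leaves the two mixed relations (\ref{bt5}) and (\ref{bt6}), which I would deduce from the monoid relations (\ref{eta4})--(\ref{eta6}) together with the quadratic relation and the invertibility of the $[\sigma_i]$ in $\KK TB_n/I$. Once $\Psi$ is well defined, the composites $\Psi\circ\overline{\Phi}$ and $\overline{\Phi}\circ\Psi$ fix all generators, hence are the respective identities, and $\overline{\Phi}$ is the desired isomorphism.

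The step I expect to be the main obstacle is precisely the verification of (\ref{bt5}) and (\ref{bt6}) in the quotient $\KK TB_n/I$: these are the relations in which a tie interacts with a genuine braiding, and matching the monoid form (\ref{eta4})--(\ref{eta6}) with the algebra form is where the two presentations must be carefully reconciled. Equivalently, from the dimension viewpoint, one must guarantee that $\KK TB_n/I$ does not collapse below $\dim_{\KK}\En=b_n n!$. A robust way to control this is to import the Ryom--Hansen normal form of Theorem \ref{basEn}: using (\ref{bt9}) to lower the exponents of the $[\sigma_i]$ and the relations (\ref{eta1})--(\ref{eta7}) to move the tie generators to the left, every element of $\KK TB_n/I$ can be rewritten as a $\KK$--combination of classes $[E_I\,\sigma_w]$ indexed by $I\in\Pn$ and $w\in\mathtt{S}_n$, producing a spanning set of cardinality at most $b_n n!$. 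Since $\overline{\Phi}$ is a surjection onto a space of dimension exactly $b_n n!$, this upper bound forces $\overline{\Phi}$ to be injective, giving $\En\cong\KK TB_n/I$ and completing the proof.
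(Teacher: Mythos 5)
Your argument is correct, and in fact it supplies a proof that the paper omits entirely: the corollary is stated with no proof, and even Proposition \ref{TildePi}, from which it is meant to follow, has an empty proof environment. So there is nothing in the paper to diverge from; your two-sided construction ($\overline{\Phi}$ from the universal property of the monoid algebra, $\Psi$ from the presentation of $\En$, composites fixing generators) is exactly the standard argument the paper treats as immediate. Two remarks on how your write-up interacts with the paper's text. First, you silently correct a typo in the statement: as printed, the ideal generator $T_i^2-(\u-1)E_i(1+T_i)$ maps under $\widetilde{\pi}$ to $1$ (by (\ref{bt9})), so the quotient would be zero; the intended generator is $\sigma_i^2-1-(\u-1)\eta_i(1+\sigma_i)$, which is the form you actually use when you verify $I\subseteq\ker\Phi$. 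Second, the step you single out as the main obstacle dissolves on inspection: relation (\ref{bt6}) is the verbatim image of (\ref{eta6}), and (\ref{bt5}) matches (\ref{eta4}) once one notes that (\ref{bt5}) as printed carries another typo --- the right-hand side should read $T_jT_iE_j$, not $T_jT_iE_i$, as is forced by $T_wE_IT_w^{-1}=E_{w(I)}$ in (\ref{w(E)}) and by the corresponding Yokonuma--Hecke relation. So no appeal to the quadratic relation or to invertibility of $[\sigma_i]$ is needed there; conversely, relation (\ref{eta5}), which involves $\sigma_i^{-1}$, is where invertibility genuinely enters, but only for Proposition \ref{TildePi} itself. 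Your fallback dimension count is sound but redundant once $\Psi$ exists; if you do keep it, note that pushing tie generators to the left is not achievable from the single-generator relations (\ref{eta1})--(\ref{eta7}) alone (there is no relation of the form $\sigma_i\eta_j=\eta_{\mathtt{s}_i(j)}\sigma_i$ in the presentation) and is cleanest via the semidirect decomposition $TB_n\cong \mathsf{P}_n\rtimes B_n$ of Theorem \ref{Semidirect}, which writes every monoid element uniquely as $I\sigma$ before the Hecke-type reduction of the braid part.
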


$TB_n$ has   a decomposition like  semidirect product of groups, this decomposition allows a study   purely algebraic--combinatorics of the tied link, see \cite{aijuSubmitted}, and will be used below. To establish this decomposition, we start  by noting that the action of $\mathtt{S}_n$ on $\mathsf{P}_n$, see (\ref{SnOnPn}),  together  with the natural projection $\mathsf{p}$ of (\ref{BnontoSn}), define an action of $B_n$ on $\mathsf{P}_n$. We denote by $\sigma(I)$, the action of the braid  $\sigma$ on $I\in \mathsf{P}_n$, that is, $\sigma(I)$ is the result of the application of the permutation $\mathsf{p}(\sigma)$ to the set partition $I$. Define now  the following product in $\mathsf{P}_n\times B_n$:
 $$
(I, \sigma)(I', \sigma') = (I	\ast\sigma(I'), \sigma\sigma').
 $$
 $\mathsf{P}_n\times B_n$  with this product is a monoid, which is  denoted by $\mathsf{P}_n\rtimes B_n$.   We shall  denote $I\sigma$ instead  $(I,\sigma)$.

 \begin{theorem}[{\cite[Theorem 9]{aijuSubmitted}}]\label{Semidirect}
 The monoid $TB_n$ and $\mathsf{P}_n\rtimes B_n$ are isomorphic.
 \end{theorem}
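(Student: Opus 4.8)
The plan is to exhibit an explicit homomorphism $\Psi\colon TB_n \to \mathsf{P}_n\rtimes B_n$, prove it is surjective, and then produce a left inverse. First I would set
\[
\Psi(\sigma_i) = (\mathbf{1}_n,\sigma_i), \qquad \Psi(\eta_i) = (\mu_{i,i+1},1_{B_n}),
\]
and check that the images satisfy the defining relations (\ref{eta1})--(\ref{eta7}) of $TB_n$, so $\Psi$ is a well-defined monoid homomorphism. Using the product rule $(I,\sigma)(I',\sigma')=(I\ast\sigma(I'),\sigma\sigma')$ together with the action (\ref{SnOnPn}) of $B_n$ through $\mathsf{p}$ (see (\ref{BnontoSn})), each verification collapses to an identity in $\mathsf{P}_n$. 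The braid relations hold because $\sigma(\mathbf{1}_n)=\mathbf{1}_n$; relations (\ref{eta1}) and (\ref{eta7}) come from commutativity and idempotency of $\ast$ (Proposition \ref{presentationPn}); and the interaction relations (\ref{eta4})--(\ref{eta6}) reduce to checking, for $|i-j|=1$, that the permutation $\mathsf{p}(\sigma_j\sigma_i^{\pm1})$ carries the block $\{j,j+1\}$ to $\{i,i+1\}$, and that $\mu_{i,i+1}\ast\mathtt{s}_i(\mu_{j,j+1})$ equals the join $\mu_{i,i+1}\ast\mu_{j,j+1}$.

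Next I would establish surjectivity. The elements $(\mathbf{1}_n,\beta)$ lie in the image since $B_n$ is generated by the $\sigma_i^{\pm1}$. For the partition component, a direct computation gives $\Psi(\sigma\,\eta_k\,\sigma^{-1}) = (\mathsf{p}(\sigma)(\mu_{k,k+1}),1_{B_n})$; since $\mathsf{p}$ is onto and $\mathtt{S}_n$ acts transitively on two-element blocks, every $(\mu_{i,j},1_{B_n})$ is realized. As $\mathsf{P}_n$ is generated by the $\mu_{i,j}$ (Proposition \ref{presentationPn}), every $(I,1_{B_n})$ is in the image, and then $(I,\beta)=(I,1_{B_n})(\mathbf{1}_n,\beta)$ shows $\Psi$ is onto.

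For injectivity I would construct a left inverse $\Phi\colon \mathsf{P}_n\rtimes B_n \to TB_n$ directly on pairs. Lift $\beta\mapsto\widetilde\beta$ using that the $\sigma_i$ generate a copy of $B_n$ inside $TB_n$; mimicking (\ref{Eij})--(\ref{EI}) put $\eta_{i,j} = \sigma_i\cdots\sigma_{j-2}\,\eta_{j-1}\,\sigma_{j-2}^{-1}\cdots\sigma_i^{-1}$ and $\varepsilon_I=\prod_{I_k}\prod_{(i,j)\in I_k\times I_k,\,i<j}\eta_{i,j}$, then set $\Phi(I,\beta)=\varepsilon_I\,\widetilde\beta$. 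Granting the two facts $\varepsilon_I\varepsilon_J=\varepsilon_{I\ast J}$ and $\widetilde\beta\,\varepsilon_I\,\widetilde\beta^{-1}=\varepsilon_{\beta(I)}$, the homomorphism property follows from $\varepsilon_I\widetilde\beta\,\varepsilon_{I'}\widetilde\beta' = \varepsilon_I\varepsilon_{\beta(I')}\widetilde{\beta\beta'} = \varepsilon_{I\ast\beta(I')}\widetilde{\beta\beta'}$, which is exactly $\Phi$ applied to $(I,\beta)(I',\beta')$. Finally $\Phi\circ\Psi=\mathrm{id}$ on generators (since $\varepsilon_{\mathbf{1}_n}=1$, $\varepsilon_{\mu_{i,i+1}}=\eta_i$, $\widetilde{\sigma_i}=\sigma_i$), so $\Psi$ has a left inverse and is injective; combined with surjectivity, $\Psi$ is an isomorphism with inverse $\Phi$.

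The main obstacle is precisely the pair of identities $\varepsilon_I\varepsilon_J=\varepsilon_{I\ast J}$ and $\widetilde\beta\,\varepsilon_I\,\widetilde\beta^{-1}=\varepsilon_{\beta(I)}$ in $TB_n$. These are the combinatorial heart of the argument and must be derived purely from the interaction relations (\ref{eta2})--(\ref{eta6}), crucially \emph{without} any quadratic relation, so one cannot simply import the corresponding statement (\ref{w(E)}) from the quotient algebra $\En$ (Theorem \ref{basEn}). I expect the conjugation identity to go by induction on a reduced word for $\beta$, reducing to $\beta=\sigma_k$ and splitting into the cases $|k-i|,|k-j|\neq 1$ versus $=1$ handled by (\ref{eta3})--(\ref{eta5}); the well-definedness of $\varepsilon_I$ (independence of the ordering of blocks and of the chosen representatives) and the product law then follow from (\ref{eta1}), (\ref{eta6}) and (\ref{eta7}).
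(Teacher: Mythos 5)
First, a point of comparison: the paper does not actually prove Theorem \ref{Semidirect} — it is quoted from \cite[Theorem 9]{aijuSubmitted} — so there is no in-paper argument to measure you against; the cited proof runs along essentially the lines you propose (generator-defined homomorphism, generalized ties $\eta_{i,j}$, braid action on ties, normal form $I\sigma$). Your easy half is correct: the verification that $\Psi$ respects (\ref{eta1})--(\ref{eta7}) reduces, as you say, to identities in $\mathsf{P}_n$ (for (\ref{eta4}) one checks $\mathtt{s}_j\mathtt{s}_i(\{j,j+1\})=\{i,i+1\}$; for (\ref{eta6}) all three words have partition component equal to the three-element-block partition, which $\mathtt{s}_i$ fixes), and the surjectivity argument via $\Psi(\sigma\eta_k\sigma^{-1})=(\mathsf{p}(\sigma)(\mu_{k,k+1}),1_{B_n})$ combined with Proposition \ref{presentationPn} is fine. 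One small simplification: you do not need the $\sigma_i$ to generate \emph{a copy} of $B_n$ inside $TB_n$; for $\widetilde\beta$ to be well defined it suffices that the braid relations hold in $TB_n$, which they do by definition, so $\beta\mapsto\widetilde\beta$ is a homomorphism regardless of injectivity.

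The genuine gap is the one you flag yourself: the identities $\varepsilon_I\varepsilon_J=\varepsilon_{I\ast J}$ and $\widetilde\beta\,\varepsilon_I\,\widetilde\beta^{-1}=\varepsilon_{\beta(I)}$, together with the prior fact that the elements $\eta_{i,j}$ pairwise commute in $TB_n$ (without which $\varepsilon_I$ is not even well defined), are ``granted'' rather than proved — and they are the entire content of the theorem; everything else in your argument is bookkeeping. They are indeed derivable from (\ref{eta1})--(\ref{eta7}) alone, and your strategy is the right one, but note that the two inductions interleave: the absorption needed for $\varepsilon_I\varepsilon_J=\varepsilon_{I\ast J}$ already uses conjugation. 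Concretely, multiplying (\ref{eta6}) on the right by $\sigma_i^{-1}$ gives $\eta_i\eta_{i+1}=\eta_{i+1}\,\sigma_i\eta_{i+1}\sigma_i^{-1}=\eta_{i+1}\eta_{i,i+2}$, whence by (\ref{eta7}) the absorption rule $\eta_i\eta_{i+1}=\eta_i\eta_{i+1}\eta_{i,i+2}$, which is exactly what makes $\varepsilon$ multiplicative when two blocks merge transitively; the general case $\eta_{i,j}\eta_{j,k}=\eta_{i,j}\eta_{i,k}\eta_{j,k}$ is obtained by conjugating this, so one must first establish $\sigma_k^{\pm1}\eta_{i,j}\sigma_k^{\mp1}=\eta_{\mathtt{s}_k(i),\mathtt{s}_k(j)}$ for a single crossing, with the delicate boundary cases $k\in\{i-1,i,j-1,j\}$ handled by (\ref{eta2}), (\ref{eta4}) and (\ref{eta5}), then commutation of the $\eta_{i,j}$, then absorption. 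So the architecture is sound and would close, but as written you have proved the routine half and deferred precisely the lemmas that the citation to \cite{aijuSubmitted} is carrying; a complete write-up must carry out those case analyses in full.
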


Now, as for braid, we can define the closure of a tied braid in the same way  of the closure of the braids. Evidently, the closure of a tied braid is a tied link. Moreover, in \cite[Theorem 3.5]{aijuJKTR1} we have proved the  Alexander theorem for tied links; namely.

\begin{theorem}[{\cite[Theorem 3.5]{aijuJKTR1}}]
Every oriented tied link can be obtained as closure of  a tied braid.
\end{theorem}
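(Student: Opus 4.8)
The plan is to reduce to the classical Alexander theorem (Theorem \ref{Alexandertheorem}) for the underlying link and then install the ties by means of the semidirect product decomposition $TB_n\cong\mathsf{P}_n\rtimes B_n$ of Theorem \ref{Semidirect}. Recall that an oriented tied link is encoded by a pair $(L,I(C_L))$, where $L\in\mathfrak{L}$ is the underlying classical link with components $C_L=\{C_1,\ldots,C_k\}$ and $I(C_L)\in\mathsf{P}_k$ records which components are joined by a chain of ties. First I would forget the ties and apply Theorem \ref{Alexandertheorem} to $L$ alone, producing a braid $\beta\in B_n$ with $\widehat{\beta}=L$. Upon closure the $n$ strands are glued according to the permutation $\mathsf{p}(\beta)\in\mathtt{S}_n$, so the components of $L$ correspond bijectively to the cycles of $\mathsf{p}(\beta)$; write $c\colon\{1,\ldots,n\}\to C_L$ for the surjection sending each strand to the component on which it lies after closing.

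The next step is to convert the partition $I(C_L)$ of \emph{components} into a partition $I\in\mathsf{P}_n$ of \emph{strands} whose closure reproduces it. For each block $\{C_{a_1},\ldots,C_{a_r}\}$ of $I(C_L)$ I would choose one strand $s_j$ with $c(s_j)=C_{a_j}$ for $1\le j\le r$ and declare $\{s_1,\ldots,s_r\}$ to be a block of $I$, letting all remaining strands form singletons. Then I would close the tied braid $I\beta\in\mathsf{P}_n\rtimes B_n\cong TB_n$: its underlying braid being $\beta$, the resulting classical link is again $L$, while a tie joining strands $s_j$ and $s_{j'}$ becomes, after closure, a tie joining the components $c(s_j)=C_{a_j}$ and $c(s_{j'})=C_{a_{j'}}$. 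Hence the partition induced on $C_L$ by $\widehat{I\beta}$ is exactly $I(C_L)$, and by Definition \ref{TiedLinks} the tied link $\widehat{I\beta}$ is t--isotopic to the given one.

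It remains to confirm that the abstract element $I\beta$ is genuinely expressible through the monoid generators. For this I would use that any set partition is a $\ast$--product of the elementary partitions $\mu_{i,j}$ (Proposition \ref{presentationPn}), together with the fact that a tie between non--adjacent strands $i<j$ is obtained by conjugating $\eta_{j-1}$ with $\sigma_i\cdots\sigma_{j-2}$, precisely imitating the formula (\ref{Eij}) for $E_{i,j}$ in $\En$; chaining the $r-1$ ties inside each block installs the required connections, using only essential ties. The main obstacle is the bookkeeping underlying the previous paragraph: one must verify that ties carry no topological information, so that adjoining them to $\beta$ leaves $\widehat{\beta}=L$ unaltered, and simultaneously that the partition induced on components is independent of the chosen representatives $s_j$. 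Both points hold because ties slide freely along their components, rendering the base points immaterial, and because, by Definition \ref{TiedLinks}, only the resulting partition of components is recorded; granting these, the theorem follows from Theorem \ref{Semidirect}.
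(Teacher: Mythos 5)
The paper states this theorem without proof, deferring to the original article \cite{aijuJKTR1}, so there is no in-text argument to compare against; judged on its own merits, your proof is correct and follows essentially the same strategy as that reference: braid the underlying classical link via Theorem \ref{Alexandertheorem}, then install one chain of $r-1$ essential ties per block of the component partition, realizing a tie between non-adjacent strands $i<j$ as the conjugate $\sigma_i\cdots\sigma_{j-2}\,\eta_{j-1}\,\sigma_{j-2}^{-1}\cdots\sigma_i^{-1}$, exactly mirroring (\ref{Eij}). Your invocation of $TB_n\cong\mathsf{P}_n\rtimes B_n$ (Theorem \ref{Semidirect}) to certify that the abstract pair $I\beta$ is a genuine monoid element is a clean touch (slightly anachronistic, since that decomposition postdates the original proof), and the remaining bookkeeping is sound: the tied-link datum is just the pair $(L, I(C_L))$, so the choice of representative strands $s_j$ is immaterial and the induced component partition of $\widehat{I\beta}$ is the join of the chains, which is exactly $I(C_L)$.
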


Before  establishing the Markov theorem for tied links, notice that according to Theorem \ref{Semidirect}, every element $a$ in  $TB_n$ can be written uniquely in the form $a=I\sigma$, where $I\in \Pn$ and $\sigma\in B_n$. We use this fact in the definition of Markov moves for tied links and also we use the notation of the $\mu_{i,j}$'s as in (\ref{relationsPn}).
\begin{definition}
We say that $a,b\in TB_{\infty}$ are t--Markov equivalents, denoted  $a\sim_{tM}b$, if $b$ can be obtained from $a$ by using   a finite sequence of the following replacements:
\begin{enumerate}
\item[tM1.] t--Stabilization: for all $a=I\sigma\in TB_n$, we can replace
$a$ by $ a\mu_{i,j}$, if $i,j$ belong to the  same  cycle of  $\mathsf{p}(\sigma)$.
\item[tM2.] Commuting in $TB_n$. For all $a, b\in TB_n$, we can replace
$ab$ by $ ba $,
\item[tM3.] Stabilizations: for all $a \in TB_n$, we can replace
$a$ by $ a\sigma_n $ or $a \sigma_n^{-1}.$
\end{enumerate}
\end{definition}

The relation $\sim_{tM}$ is an  equivalence relation on $TB_{\infty}$ and in \cite[Theorem 3.7]{aijuJKTR1}, cf. \cite[Theorem 5]{aijuSubmitted},  the following  theorem  was proved.

 \begin{theorem}
Two tied links define t--isotopic  tied links if and only if they are t--Markov equivalents.
\end{theorem}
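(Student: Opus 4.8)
The plan is to prove both implications by reducing to the classical Markov theorem (Theorem \ref{MarkovTh}) for the underlying braids and then controlling the set-partition data through the semidirect-product description $TB_n\cong \mathsf{P}_n\rtimes B_n$ of Theorem \ref{Semidirect}; here the statement is read as asserting that for tied braids $a,b\in TB_\infty$ the closures $\widehat a,\widehat b$ are t-isotopic if and only if $a\sim_{tM}b$. Throughout I write a tied braid as $a=I\sigma$ with $I\in\mathsf{P}_n$ and $\sigma\in B_n$, so that $\widehat a$ is the tied link whose underlying link is $\widehat\sigma$ and whose component partition is obtained by coarsening: the components of $\widehat\sigma$ correspond to the cycles of $\mathsf{p}(\sigma)$, and two components are tied exactly when some block of $I$ meets both corresponding cycles. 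Writing $P_\sigma\in\mathsf{P}_n$ for the partition into cycles of $\mathsf{p}(\sigma)$, the component partition of $\widehat a$ is thus encoded by the join $P_\sigma\vee I$ in the partition lattice. Both directions are organized around this formula.

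For the \emph{necessity} direction I would assume $a\sim_{tM}b$ and verify that each generating move preserves the t-isotopy class of the closure. The moves tM2 and tM3 act on the braid part exactly as the classical moves M1 and M2, so on underlying links they preserve ambient isotopy by Theorem \ref{MarkovTh}; one only checks that the block-structure carried along by the semidirect product matches the bijection of components induced by the isotopy, which is immediate from the product rule $(I,\sigma)(I',\sigma')=(I\ast\sigma(I'),\sigma\sigma')$. The move tM1 replaces $a=I\sigma$ by $a\mu_{i,j}$ with $i,j$ in the same cycle of $\mathsf{p}(\sigma)$; since such strands lie on a single component of $\widehat\sigma$, the extra tie joins two strands of one cycle, $P_\sigma\vee I$ is unchanged, and the tied link is the same.

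For the \emph{sufficiency} direction, suppose $\widehat a$ and $\widehat b$ are t-isotopic, with $a=I\sigma$ and $b=I'\sigma'$. First I forget the ties: the underlying links $\widehat\sigma$ and $\widehat{\sigma'}$ are ambient isotopic, so by the classical Markov theorem $\sigma$ and $\sigma'$ are related by a finite sequence of moves M1 and M2. Lifting each such move to the corresponding tM2 or tM3 — legitimate because these t-Markov moves project onto M1, M2 under the homomorphism $TB_\infty\to B_\infty$ induced by $\eta_i\mapsto 1$ — transforms $a$ into a tied braid $I''\sigma'$ with the \emph{same} braid part as $b$. Because the moves applied so far preserve the t-isotopy class of the closure (by the necessity direction), $\widehat a$ is t-isotopic to $\widehat{I''\sigma'}$; combined with the hypothesis this shows $\widehat{I''\sigma'}$ and $\widehat{I'\sigma'}$ are t-isotopic. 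Having the identical underlying braid part $\sigma'$, their component partitions coincide, that is $P_{\sigma'}\vee I''=P_{\sigma'}\vee I'$, and it remains to prove $I''\sigma'\sim_{tM}I'\sigma'$.

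The \textbf{main obstacle} is precisely this last step: two set partitions $I''$ and $I'$ with the same join against $P_{\sigma'}$ need not differ merely by within-cycle ties, so tM1 alone does not connect them — a tie joining two distinct components may sit at different strand positions in $I''$ and in $I'$. The resolution is to realize, at the level of tied braids, that such a tie can be slid along a component and hence repositioned among the strands of the same cycle; in braid terms this sliding is produced by conjugation, available through tM2, used together with tM1 to create and destroy the auxiliary within-cycle ties needed during the slide. The technical heart of the proof is therefore a normal-form argument: showing that by tM1 and tM2 every $I''\sigma'$ can be brought to a canonical tie configuration depending only on $P_{\sigma'}\vee I''$, after which the equality $P_{\sigma'}\vee I''=P_{\sigma'}\vee I'$ forces $I''\sigma'$ and $I'\sigma'$ to reach the same normal form. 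Making the sliding moves precise — in particular checking that the conjugations required remain inside $TB_{n}$ and are accounted for by tM2 — is where the bulk of the care is needed.
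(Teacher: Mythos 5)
Your overall architecture (check the moves one by one for necessity; for sufficiency, forget ties, apply the classical Markov theorem \ref{MarkovTh}, lift the chain through $TB_n\cong \mathsf{P}_n\rtimes B_n$, then compare tie data over a common braid part) is reasonable, and it is worth noting that the obstacle you flag as the ``technical heart'' is in fact the easy part. Once the two tied braids have literally the same braid part $\sigma'$ and the joins satisfy $P_{\sigma'}\vee I''=P_{\sigma'}\vee I'$, no sliding or conjugation is needed: by tM1 adjoin every $\mu_{i,j}$ with $i,j$ in the same cycle of $\mathsf{p}(\sigma')$ and in the same block of the join (right multiplication gives $(I,\sigma')(\mu_{i,j},1)=(I\ast\mu_{\mathsf{p}(\sigma')(i),\mathsf{p}(\sigma')(j)},\sigma')$, which is again a within-cycle tie, so the moves are legal). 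Since each block of the join is a union of cycles connected through blocks of the tie partition, both $I''\sigma'$ and $I'\sigma'$ reach the common normal form $(P_{\sigma'}\vee I'')\,\sigma'$ by tM1 alone.

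The genuine gap is one sentence earlier, in the unflagged claim ``Having the identical underlying braid part $\sigma'$, their component partitions coincide.'' Definition \ref{tisotopy} only gives $I(C')=w_{L,L'}(I(C))$ for the bijection of components \emph{induced by the isotopy}, and a self-isotopy of $\widehat{\sigma'}$ can permute its components nontrivially. Concretely, take $\sigma'=1\in B_3$, $I''=\mu_{1,2}$, $I'=\mu_{2,3}$: the closures are a $3$-component unlink with a tie between components $1,2$, respectively $2,3$; these are t-isotopic (the isotopy cyclically permutes the unknotted components), yet $P_{\sigma'}\vee I''=\{\{1,2\},\{3\}\}\neq\{\{1\},\{2,3\}\}=P_{\sigma'}\vee I'$, so your intermediate claim is false and your normal forms differ, even though $I''\sigma'\sim_{tM}I'\sigma'$ does hold (here via tM2-conjugation by a braid whose permutation carries $\{1,2\}$ to $\{2,3\}$, changing the tie partition through the $\mathtt{S}_n$-action — exactly the effect your final step, which freezes the braid part, cannot see). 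To repair the argument you need a component-tracking refinement of Theorem \ref{MarkovTh}: any ambient isotopy between closures must be realizable by a Markov chain inducing the \emph{prescribed} bijection of components, a control the classical statement does not provide. This bookkeeping is where the actual proof does its work; note the paper itself gives no proof of this theorem but cites \cite[Theorem 3.7]{aijuJKTR1} (cf. \cite[Theorem 5]{aijuSubmitted}), so your attempt must be measured against that source, and as written it does not close this step.
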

\begin{remark}\rm
The Markov replacements $tM2$ and $tM3$ are the classical Markov moves but including now the ties, so their  geometrical meaning is clear. The replacement tM1 says that the ties provided by $\mu_{i,j}$ in the closure of $a\mu_{i,j}$ become unessential ties, so the closure of $a$ and $a\mu_{i,j}$  defines  the same set partition.
\end{remark}
\section{The invariant $\mathcal{F}$}

We will define  the invariant  $\mathcal{F}$ for tied links. This invariant is of type Homflypt since  evaluated on  classical links coincide with the Homflypt polynomial. We start by defining it    by skein relations and later  their definition by the Jones recipe.

\subsection{}To define $\mathcal{F}$ by skein relations we need, as in (\ref{ConwayTriple}),  to introduce the following notation:
$L_+, L_-, L_0, L_{+,\sim}, L_{-,\sim}$ and  $L_{0,\sim}$ denote oriented tied links which have, respectively, oriented  diagrams of tied links  $D_+, D_-, D_0, D_{+,\sim}, D_{-,\sim}$ and  $D_{0,\sim}$, that are identical outside a small disk into which enter two strands, whereas inside the disk the strands look, respectively,  as Fig. \ref{ConwayTied} shows.
\begin{center}
\begin{figure}[H]
\includegraphics[scale=.5]{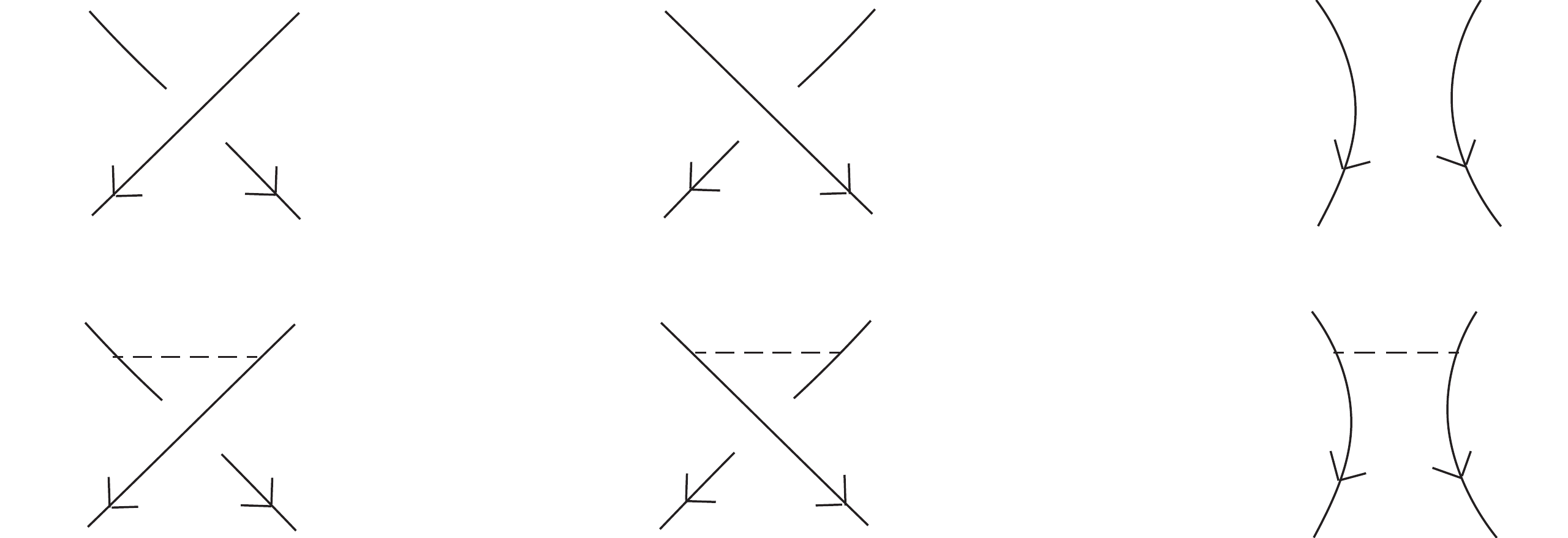}
\caption{ }\label{ConwayTied}
\end{figure}
\end{center}
We shall call  $L_{+,\sim}, L_{-,\sim}$ and  $L_{0,\sim}$ a  tied Conway triple.

Set $\w$ a variable with unique condition that  commutes with $\u$ and  $\a$.
\begin{theorem}[{\cite[Theorem 2.1]{aijuJKTR1}}]\label{SkeinF}
There is a unique function $\mathcal{F}: \widetilde{\mathfrak{L}} \longrightarrow \KK(\a, \w)$, defined by the rules:
\begin{enumerate}
\item  $\mathcal{F} (\bigcirc)=  1$,
\item For all tied link $L$,
$$
\mathcal{F} (\bigcirc\sqcup  L)=  \frac{1}{\a\w} \mathcal{F} (L),
$$
\item The Skein rule:
$$
\frac{1}{\w} \mathcal{F}(L_+) -\w\mathcal{F}(L_- )= (1-\u^{-1})\mathcal{F}(L_{0,\sim}) -\frac{1-\u^{-1}}{\w}\mathcal{F}(L_{+,\sim}).
$$
\end{enumerate}
\end{theorem}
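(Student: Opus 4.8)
The plan is to prove existence and uniqueness separately: existence by producing $\mathcal{F}$ through the Jones recipe applied to the Markov trace $\rho$ on the bt--algebra, and uniqueness by a skein induction showing that rules (1)--(3) force every value. Throughout I would use the Markov theorem for tied links, which identifies t--isotopy classes with t--Markov classes of tied braids, so that giving a function on $\widetilde{\mathfrak{L}}$ is the same as giving a t--Markov invariant function on $TB_\infty$; this is what lets the algebraic construction descend to an honest tied--link invariant.

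For existence I would imitate the construction of $\overline{\Delta}$, but feeding in the homomorphism $\widetilde{\pi}\colon TB_n\to\En$ of Proposition \ref{TildePi} rather than the braid representation. Concretely, after rescaling $\sigma_i\mapsto\sqrt{\mathsf{L}}\,T_i$, $\eta_i\mapsto E_i$ and normalizing by the appropriate power of $(\a\sqrt{\mathsf{L}})^{-1}$, I set $\mathcal{F}(\widehat{a})$ to be the normalized value of $(\rho_n\circ\widetilde{\pi})(a)$, with $\w$ playing the role of $\sqrt{\mathsf{L}}$ after a change of variables so that the target is $\KK(\a,\w)$. The verification that this is well defined reduces to invariance under the three moves tM1, tM2, tM3. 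Move tM2 (commutation/conjugation) is immediate from trace rule (1) of Theorem \ref{tracerho}; move tM3 (stabilization by $\sigma_n^{\pm1}$) is handled exactly as in the proof of Theorem \ref{DeltaBarra}, using $\rho_{n+1}(XT_n)=\a\rho_n(X)$, the inverse relation (\ref{Tinverse}), and the value of $\mathsf{L}$ to make the $\sigma_n$ and $\sigma_n^{-1}$ stabilizations agree; move tM1 (tie--stabilization by $\mu_{i,j}$ when $i,j$ lie in the same cycle of $\mathsf{p}(\sigma)$) uses $\rho_{n+1}(XE_n)=\b\rho_n(X)$ together with the fact that a tie joining two strands already in the same cycle is absorbed by the trace. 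Rule (1) then follows from $\rho_1(1)=1$, and rule (2) from the normalization and the trace value on a disjoint trivial strand.

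The skein rule (3) is the diagrammatic incarnation of the defining relations of $\En$. From (\ref{Tinverse}) and $E_i^2=E_i$ one gets a single linear relation among the four local tangles underlying $L_+,L_-,L_{0,\sim},L_{+,\sim}$, namely $T_i-T_i^{-1}=(1-\u^{-1})E_i(1+T_i)$; multiplying by the image of the common outside part, applying $\rho$, and keeping track of the powers of $\w$ carried by the rescaled crossing (which differ between $L_+,L_-$ and the tie--smoothed $L_{0,\sim}$) rescales this identity to precisely the stated relation. This is the only genuinely computational part of the existence half, and the coefficients $1/\w$, $-\w$, $(1-\u^{-1})$, and $-(1-\u^{-1})/\w$ should fall out of that bookkeeping.

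For uniqueness I would show that any $G$ satisfying (1)--(3) coincides with $\mathcal{F}$, by induction on the number of crossings of a tied diagram $D$. The base case is a crossingless $D$, a disjoint union of unknotted circles carrying ties, whose value is forced by (1), (2) and the tie--handling consequence of (3). For the inductive step I would solve the skein relation for $\mathcal{F}(L_+)$ (or $\mathcal{F}(L_-)$) and use crossing changes to push $D$ toward a descending diagram, whose underlying link is trivial. The hard part, and the main obstacle I expect, is that the right--hand term $L_{+,\sim}$ has the \emph{same} crossing number as $L_+$ while carrying an extra tie, so a naive induction on crossings does not terminate; this is exactly the feature that distinguishes the tied skein relation from the classical Homflypt one. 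To control it I would exploit the auxiliary relation $E_iT_i^2=\u E_i+(\u-1)E_iT_i$ obtained by multiplying (\ref{bt9}) by $E_i$, which reduces a crossing once a tie sits on it, and I would order diagrams by the lexicographic complexity (number of crossings, then number of ties sitting at crossings) to make the recursion well founded. Verifying that the outcome is independent of the order of resolution and stable under the t--Markov moves is then an adaptation of the Lickorish--Millett consistency argument for the Homflypt polynomial to the tied setting.
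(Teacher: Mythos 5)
Your overall architecture is the one the paper intends: these notes do not actually prove Theorem \ref{SkeinF} (it is quoted from \cite{aijuJKTR1}), but the closing subsection carries out exactly your existence half --- $\widetilde{\Delta}(\eta) = \left(1/(\a\sqrt{\mathsf{L}})\right)^{n-1}(\rho_n\circ\widetilde{\pi}_{\sqrt{\mathsf{L}}})(\eta)$ together with $\widetilde{\Delta}(\eta)=\mathcal{F}(L)$ under $\b = \a(\u\w^2-1)/(1-\u)$, which is precisely your change of variables $\w=\sqrt{\mathsf{L}}$ solved for $\b$ --- and the cited source's uniqueness argument is the skein induction you sketch. Two points need repair. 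First, for tM1 you invoke $\rho_{n+1}(XE_n)=\b\,\rho_n(X)$, but that rule multiplies by $\b$ and cannot absorb a tie; the absorption comes from the other half of rule (2) of Theorem \ref{tracerho}, namely $\rho_{n+1}(XT_nE_n)=\rho_{n+1}(XT_n)$, transported to a general $E_{i,j}$ with $i,j$ in one cycle via conjugation and (\ref{w(E)}). Proving $\rho_n(\widetilde{\pi}(a)E_{i,j})=\rho_n(\widetilde{\pi}(a))$ under that hypothesis is the real content of tM1--invariance and cannot be left to one clause.

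Second, if you actually do the bookkeeping, $T_i-T_i^{-1}=(1-\u^{-1})E_i(1+T_i)$ with $\pi(\sigma_i)=\w T_i$ gives $\frac{1}{\w}\mathcal{F}(L_+)-\w\mathcal{F}(L_-)=(1-\u^{-1})\mathcal{F}(L_{0,\sim})+\frac{1-\u^{-1}}{\w}\mathcal{F}(L_{+,\sim})$, with a \emph{plus} sign; this is the version consistent with Proposition \ref{OthersSkeinF} (specializing the skein rule to a crossing already carrying a tie recovers rule (1) of that proposition only with the plus sign), so the minus sign printed in the theorem is a typo --- your computation should surface this rather than assume the coefficients fall out. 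Finally, since existence comes from the trace, uniqueness does not require the Lickorish--Millett consistency argument you budget for: it suffices that rules (1)--(3) force all values, and your lexicographic induction (crossings, then untied crossings, using $E_iT_i^2=\u E_i+(\u-1)E_iT_i$ so that switching a tied crossing only creates terms with fewer crossings) is the right well-founded scheme for that.
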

\begin{proposition}\label{OthersSkeinF}
The defining skein relation of $\mathcal{F}$ is equivalent to  the following skein rules:
\begin{enumerate}
\item $$
\frac{1}{\u\w}\mathcal{F}(L_{+,\sim})-\w\mathcal{F}L_{-,\sim} = (1-\u^{-1})\mathcal{F}L_{0,\sim},
$$
\item $$
\frac{1}{\w}\mathcal{F}(L_{+})= \w\left[\mathcal{F}(L_{-} )+ (\u-1)\mathcal{F}(L_{-,\sim})\right] + (\u-1)\mathcal{F}(L_{0,\sim}),
$$
\item $$
\w\mathcal{F}(L_{-})= \frac{1}{\w}\left[\mathcal{F}(L_{+} )+ (\u^{-1}-1)\mathcal{F}(L_{+,\sim})\right] + (\u^{-1}-1)\mathcal{F}(L_{0,\sim}).
$$
\end{enumerate}
\end{proposition}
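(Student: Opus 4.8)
The plan is to treat the defining skein relation of Theorem~\ref{SkeinF} and each of the identities (1)--(3) as \emph{linear} relations among the six numbers $\mathcal{F}(L_+),\mathcal{F}(L_-),\mathcal{F}(L_0),\mathcal{F}(L_{+,\sim}),\mathcal{F}(L_{-,\sim}),\mathcal{F}(L_{0,\sim})$ and to pass between them by elementary manipulation. The one nonformal ingredient is the derivation of the tied Conway relation~(1): I would apply the defining skein rule not to a bare crossing but to a crossing whose two strands are \emph{already} joined by a tie. In that application the roles of $L_+,L_-,L_{0,\sim},L_{+,\sim}$ are taken by $L_{+,\sim},L_{-,\sim}$ and by the doubly tied diagrams $L_{0,\sim\sim},L_{+,\sim\sim}$; but a second tie between the same pair of strands can be absorbed into the first, since at the diagrammatic level this is exactly $E_i^2=E_i$, i.e. relations~(\ref{bt2}) and~(\ref{eta7}). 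Hence $L_{0,\sim\sim}=L_{0,\sim}$ and $L_{+,\sim\sim}=L_{+,\sim}$, and after collecting the $\mathcal{F}(L_{+,\sim})$ terms the defining rule collapses to relation~(1).

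Once~(1) is available, relations~(2) and~(3) are purely formal consequences. I would solve the defining relation for $\frac{1}{\w}\mathcal{F}(L_+)$ and use~(1) to eliminate $\mathcal{F}(L_{-,\sim})$, which produces~(2); solving instead for $\w\mathcal{F}(L_-)$ and eliminating $\mathcal{F}(L_{+,\sim})$ produces~(3). The coefficients $\u-1$ in~(2) and $\u^{-1}-1$ in~(3) come out of the factors $1-\u^{-1}$ in the defining rule after multiplying through by $\u$ and regrouping, using $1-\u^{-1}=\u^{-1}(\u-1)$. Conversely each of~(2) and~(3), together with the same tie--absorption relation, returns the defining rule, and~(2) and~(3) are interchanged through~(1); so the four statements are interderivable, which is the content of \emph{equivalent} here. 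Concretely I would organise the argument as the chain (defining rule)$\Rightarrow$(1), then (defining rule)$+$(1)$\Rightarrow$(2)$\Leftrightarrow$(3), and finally (2)$+$(1)$\Rightarrow$(defining rule).

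As a conceptual check, and as a second route should the diagrammatic step prove delicate, I would use that $\mathcal{F}$ is realised through the bt--algebra: by Definition~\ref{barDelta} it is $\rho_n\circ\pi_{\sqrt{\mathsf{L}}}$ up to the normalisation factor, so every linear identity in $\En$ among $1,T_i,T_i^{-1},E_i,E_iT_i,E_iT_i^{-1}$ for one fixed pair of strands transcribes, after inserting a power of $\w$ for each crossing according to its sign and applying $\rho$, into a linear relation among the six $\mathcal{F}$--values. In this language~(1) is the image of $E_iT_i^{-1}=\u^{-1}E_iT_i+(\u^{-1}-1)E_i$, relation~(3) is the image of the inverse formula~(\ref{Tinverse}), and~(2) is the image of $T_i=T_i^{-1}+(\u-1)E_iT_i^{-1}+(\u-1)E_i$; since all three are consequences of the single quadratic relation~(\ref{bt9}) together with $E_i^2=E_i$, their equivalence is automatic. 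I expect the main obstacle to be bookkeeping rather than ideas: one must keep the powers of $\w$ and the signs of the coefficients consistent through every substitution, and one must justify the tie--absorption step with care, as it is the only point where the combinatorics of ties, rather than formal linear algebra, is actually used.
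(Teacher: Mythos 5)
Your proposal cannot diverge from the paper's proof for a simple reason: the proof environment for Proposition~\ref{OthersSkeinF} in these notes is empty, so you are supplying the argument the notes omit. What you propose is the intended one, and it is essentially complete. Deriving rule~(1) by applying the skein rule of Theorem~\ref{SkeinF} to a crossing whose two strands already carry a tie, and then absorbing the doubled tie, is exactly right, and your justification is the correct one: at the level of tied links the second tie is unessential (it joins two components already in the same block of $I(C_L)$, so removing it does not change the partition, hence not the t--isotopy class, by Definition~\ref{tisotopy}), and at the level of $TB_n$ and $\En$ it is $\eta_i^2=\eta_i$ and $E_i^2=E_i$. The closing of the loop, (defining rule)$\Rightarrow$(1), then (defining rule)$+$(1)$\Rightarrow$(2)$\Leftrightarrow$(3), then (2)$+$(1)$\Rightarrow$(defining rule), is a genuine equivalence proof. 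Your algebraic dictionary also checks out: since $\mathsf{L}=\w^2$ (compare (\ref{L}) with the relation $\b=\a(\u\w^2-1)/(1-\u)$ of the last theorem), rule~(1) is the image of $E_iT_i^{-1}=\u^{-1}E_iT_i+(\u^{-1}-1)E_i$, rule~(3) of (\ref{Tinverse}), and rule~(2) of $T_i=T_i^{-1}+(\u-1)E_iT_i^{-1}+(\u-1)E_i$, all consequences of (\ref{bt9}) and (\ref{bt2}).

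One concrete warning, located exactly at the bookkeeping point you flagged. The skein rule of Theorem~\ref{SkeinF} as printed in these notes carries a \emph{minus} sign on the last term, $-\frac{1-\u^{-1}}{\w}\mathcal{F}(L_{+,\sim})$. Against that literal statement your tie--doubling step does \emph{not} collapse to rule~(1): collecting the $\mathcal{F}(L_{+,\sim})$ terms gives the coefficient $\frac{1}{\w}\bigl(1+(1-\u^{-1})\bigr)=\frac{2-\u^{-1}}{\w}$ rather than $\frac{1}{\u\w}$, and rules~(2), (3) become inconsistent as well. The three rules of the proposition are mutually consistent, and consistent with your bt--algebra dictionary, only for the plus sign, namely
\begin{equation*}
\frac{1}{\w}\,\mathcal{F}(L_+)-\w\,\mathcal{F}(L_-)=(1-\u^{-1})\,\mathcal{F}(L_{0,\sim})+\frac{1-\u^{-1}}{\w}\,\mathcal{F}(L_{+,\sim}),
\end{equation*}
for which the tie--doubling computation uses $1-(1-\u^{-1})=\u^{-1}$ and yields precisely the $\frac{1}{\u\w}$ of rule~(1). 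So the printed sign is a typo in the statement of Theorem~\ref{SkeinF}; your derivation silently uses the corrected sign, and a careful write--up should say so explicitly rather than let the verification expose it. With that caveat the eliminations go through as you describe: from~(2), substituting $(\u-1)\w\,\mathcal{F}(L_{-,\sim})=\frac{1-\u^{-1}}{\w}\mathcal{F}(L_{+,\sim})-(\u-1)(1-\u^{-1})\mathcal{F}(L_{0,\sim})$ via~(1) and simplifying with $(\u-1)\u^{-1}=1-\u^{-1}$ returns the corrected defining rule, and the symmetric substitution gives~(3).
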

\begin{proof}
\end{proof}

Define $\widetilde{\mathfrak{L}}_0$  the set of tied links with all components tied.
\begin{proposition}\label{FatL0}
The restriction of $\mathcal{F}$ to $\widetilde{\mathfrak{L}}_0$, is determined uniquely by the rules:
\begin{enumerate}
\item $\mathcal{F} (\bigcirc)=  1$,
\item Skein relation on tied Conway triple $L_{+, \sim}$,  $L_{-,\sim}$ and $ L_{0, \sim}$,
$$
\r^{-1}\mathcal{F}(L_{+, \sim}) - \r\mathcal{F} (L_{-,\sim})  = \s \mathcal{F} (L_{0,	\sim}),
$$
where $\r=\w \sqrt{u}$ and $\s= \sqrt{\u}- \sqrt{\u}^{-1}$.
\end{enumerate}
\end{proposition}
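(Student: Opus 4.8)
The plan is to prove two things: that $\mathcal{F}|_{\widetilde{\mathfrak{L}}_0}$ satisfies rules (1)--(2), and that these rules determine the values uniquely. The first is a short calculation. Rule (1) is the normalization already imposed in Theorem \ref{SkeinF}. For rule (2) I would take the first skein relation of Proposition \ref{OthersSkeinF}, namely $\frac{1}{\u\w}\mathcal{F}(L_{+,\sim}) - \w\mathcal{F}(L_{-,\sim}) = (1-\u^{-1})\mathcal{F}(L_{0,\sim})$, and multiply it through by $\sqrt{\u}$. Since $\frac{\sqrt{\u}}{\u\w} = \frac{1}{\w\sqrt{\u}} = \r^{-1}$, $\sqrt{\u}\,\w = \r$, and $\sqrt{\u}(1-\u^{-1}) = \sqrt{\u}-\sqrt{\u}^{-1} = \s$, this becomes exactly $\r^{-1}\mathcal{F}(L_{+,\sim}) - \r\mathcal{F}(L_{-,\sim}) = \s\mathcal{F}(L_{0,\sim})$, so the restriction obeys (1)--(2).

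For uniqueness, the first observation is that a tied Conway triple stays inside $\widetilde{\mathfrak{L}}_0$: passing from $L_{+,\sim}$ to $L_{-,\sim}$ is a crossing change and does not alter the components, while the smoothing $L_{0,\sim}$ either merges two components or splits one into two, and in the splitting case the tie placed at the crossing keeps the two resulting components in a common block. Hence if one member of the triple has all components tied, so do the other two. Moreover, because all components of a link in $\widetilde{\mathfrak{L}}_0$ are already tied, inserting an unessential tie at any chosen crossing does not change its t--isotopy class; therefore the skein relation of rule (2) may be applied at every crossing of a diagram representing a class in $\widetilde{\mathfrak{L}}_0$.

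Next I would invoke Remark \ref{LinksAsTiedLinks} to identify $\widetilde{\mathfrak{L}}_0$ with the set $\mathfrak{L}$ of classical links by forgetting ties; this is an isotopy--respecting bijection sending a tied Conway triple to a classical Conway triple $(L_+,L_-,L_0)$. Under it, rules (1)--(2) read $\mathcal{F}(\bigcirc)=1$ and $\r^{-1}\mathcal{F}(L_+)-\r\mathcal{F}(L_-)=\s\mathcal{F}(L_0)$, which are exactly the defining rules of the Homflypt polynomial in Theorem \ref{SkeinHomflypt} under the substitution $\t=\r$, $\x=\s$ (the relation has the same shape as the one in Proposition \ref{SkeinX}, with $\sqrt{\lambda\u}$ and $\sqrt{\u}-\sqrt{\u}^{-1}$ in the roles of $\r$ and $\s$). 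Since $\r=\w\sqrt{\u}$ and $\s=\sqrt{\u}-\sqrt{\u}^{-1}$ are both invertible once $\sqrt{\u}$ is adjoined, the uniqueness half of Theorem \ref{SkeinHomflypt} applies verbatim and pins down the value of any function satisfying (1)--(2); in particular that function is $\mathcal{F}|_{\widetilde{\mathfrak{L}}_0}$.

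I expect the only real work to lie in the bookkeeping behind the uniqueness step: verifying carefully that the skein operations preserve $\widetilde{\mathfrak{L}}_0$ and that forgetting ties is a well--defined bijection onto $\mathfrak{L}$ carrying tied Conway triples to classical ones. Once these are settled, the descent is the classical Lickorish--Millett induction underlying Theorem \ref{SkeinHomflypt}: one first evaluates the all--tied unlinks by taking $L_{+,\sim}=L_{-,\sim}=\bigcirc$, so that $(\r^{-1}-\r)\cdot 1 = \s\,\mathcal{F}(L_{0,\sim})$ fixes the two--component value and, inductively, every unlink, and then reduces an arbitrary diagram crossing by crossing. No new idea beyond Theorem \ref{SkeinHomflypt} is needed.
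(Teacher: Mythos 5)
Your proposal is correct and takes essentially the same route as the paper: the paper's proof consists precisely of your first step, multiplying skein relation (1) of Proposition \ref{OthersSkeinF} by $\sqrt{\u}$ to read off $\r=\w\sqrt{\u}$ and $\s=\sqrt{\u}-\sqrt{\u}^{-1}$, combined with normalization rule (1) of Theorem \ref{SkeinF}. Your extra bookkeeping for uniqueness (closure of $\widetilde{\mathfrak{L}}_0$ under tied Conway moves, the identification with classical links, and the descent via the uniqueness half of Theorem \ref{SkeinHomflypt}) is a sound elaboration of what the paper leaves implicit in its one-line assertion that the computation ``can be realized'' by those rules.
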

\begin{proof}
 The computation of $\mathcal{F}$ on a such  tied link can be realized by rules (1) Theorem \ref{SkeinF}
and the skein relation  (1) Proposition \ref{OthersSkeinF}.  Multiplying this skein relations by  $\sqrt{\u}$,
 we get the values of $\r$ and $\s$. Hence the proof is concluded.
\end{proof}
\begin{remark}\rm
According to Remark \ref{LinksAsTiedLinks}, $\widetilde{\mathfrak{L}}_0$ is identified to classical links, so, the above proposition  and Theorem(\ref{SkeinHomflypt})) say that
$\mathcal{F}$  and Homflypt polynomial coincide on classical links. For this reason we say that $\mathcal{F}$ is invariant of type Homflypt.
\end{remark}

\begin{proposition}
The restriction of $\mathcal{F}$ to classical links is more powerful than the Homflypt polynomial.
\end{proposition}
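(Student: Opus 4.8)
The plan is to split the phrase \emph{more powerful} into two claims and prove them separately: that $\mathcal{F}$ restricted to classical links determines the Homflypt polynomial, so that it is at least as strong; and that there exist classical links with equal Homflypt polynomial but different $\mathcal{F}$, so that it is strictly stronger.

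For the first claim I would show that the Homflypt polynomial is a specialization of $\mathcal{F}$, whence $\mathcal{F}(L_1)=\mathcal{F}(L_2)$ forces the two Homflypt polynomials to agree. On a classical link, regarded as a tied link carrying only unessential ties, the Jones--recipe description of $\mathcal{F}$ through the trace $\rho$ is precisely the skein--defined invariant $\overline{\Theta}$ of~(\ref{ThetaPrime}); by the Remark relating $\overline{\Theta}$ to the $\Theta_m$ through the substitution $\b\mapsto 1/m$, the value $m=1$ returns $\Theta_1$, which is the Homflypt polynomial. As a consistency check one may instead invoke Proposition \ref{FatL0}: forcing every component to be tied collapses the defining skein relation of Theorem \ref{SkeinF} to the Homflypt skein relation of Theorem \ref{SkeinHomflypt}, with $\r=\w\sqrt{\u}$ and $\s=\sqrt{\u}-\sqrt{\u}^{-1}$. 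Either route exhibits Homflypt as a specialization of $\mathcal{F}$.

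For the second claim I would exhibit an explicit pair of non--isotopic classical links whose Homflypt polynomials coincide but whose values under $\mathcal{F}$ differ. The economical choice is to reuse the pairs already known to separate the Yokonuma--Hecke invariants from Homflypt, namely those of \cite[Table 1]{chjukalaIMRN} (one such pair appears in Fig.~\ref{FirstTheta}). Since $\mathcal{F}$ specializes to $\Theta_m$ for every $m$, any pair distinguished by some $\Theta_m$ with $m\ge 2$ is automatically distinguished by $\mathcal{F}$; by construction these pairs have identical Homflypt polynomials, which yields the strict inequality. I would avoid the $\overline{\Delta}$--type examples, such as Fig.~\ref{DistingueDeltaNoTheta}, since those are tailored to $\Delta_m$ rather than to the skein--defined $\mathcal{F}$.

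The first claim is formal once the specialization is pinned down; the real work lies in the second, and the main obstacle is to produce and certify a concrete separating pair. I expect the cleanest argument to inherit the example from the already verified $\Theta_m$ computations, rather than to recompute $\mathcal{F}$ from scratch on both diagrams via the skein rules of Proposition \ref{OthersSkeinF}; the latter is possible but laborious and error--prone.
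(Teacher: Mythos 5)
Your overall architecture --- exhibit Homflypt as a specialization of $\mathcal{F}$ on classical links, then produce a pair with equal Homflypt polynomial separated by $\mathcal{F}$ --- is the right one (the paper in fact leaves this proof blank, deferring to exactly such computations), but you have attached $\mathcal{F}$ to the wrong member of the $\Delta$/$\Theta$ dichotomy, and this breaks your second, load-bearing claim. By the final theorem of the paper, $\mathcal{F}$ agrees, under the change of variables $\b=\a(\u\w^2-1)/(1-\u)$, with $\widetilde{\Delta}$, the extension to $TB_n$ of the invariant $\overline{\Delta}$ of Definition \ref{barDelta}, which is built from the presentation $\mathcal{E}_n(\u)$ with generators $T_i$; it is \emph{not} $\overline{\Theta}$, which comes from the $V_i$--presentation $\mathcal{E}_n(\v)$. (The paper's remark that $\overline{\Theta}$ admits a skein description while $\overline{\Delta}$ does not is presumably what misled you: $\mathcal{F}$ is skein-defined only at the level of \emph{tied} links, which is consistent with its restriction to classical links being the non-skein-definable $\overline{\Delta}$.) Consequently, on classical links embedded with no ties, $\mathcal{F}$ specializes via $\b\mapsto 1/m$ to $\Delta_m$, and via $m=1$ and Remark \ref{Delta1} to Homflypt; so your first claim survives once $\overline{\Theta}$ and $\Theta_m$ are replaced throughout by $\overline{\Delta}$ and $\Delta_m$. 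Note also that your ``consistency check'' via Proposition \ref{FatL0} concerns the \emph{other} embedding of classical links, namely $\widetilde{\mathfrak{L}}_0$ where all components are tied, on which $\mathcal{F}$ is exactly Homflypt; it says nothing about the no-ties restriction that the proposition is about.

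The second claim, as you argue it, has a genuine gap: the premise ``$\mathcal{F}$ specializes to $\Theta_m$ for every $m$'' is not available in the paper's framework, so the pairs of \cite[Table 1]{chjukalaIMRN} (Fig. \ref{FirstTheta}), certified only against $\Theta_m$, are not automatically separated by $\mathcal{F}$. This is not a pedantic point: Aicardi's theorem shows the two families are genuinely inequivalent, and Fig. \ref{DistingueDeltaNoTheta} exhibits a pair distinguished by $\Delta_m$, $m\geq 2$, but by \emph{neither} Homflypt \emph{nor} $\Theta_m$; symmetrically, a $\Theta_m$-separated pair may a priori be invisible to $\overline{\Delta}$ and hence to $\mathcal{F}$. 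Ironically, the examples you explicitly set aside are the ones that finish the proof: the pairs of \cite{aica} with equal Homflypt but distinct $\Delta_m$ for some $m\geq 2$ have distinct $\overline{\Delta}$ (the values of $\overline{\Delta}$ being rational in $\b$, a difference at $\b=1/m$ is a difference of $\overline{\Delta}$), hence distinct $\mathcal{F}$; together with the specialization $\Delta_1=$ Homflypt this yields the proposition. To salvage your route through the $\Theta$-pairs you would have to recompute $\mathcal{F}$ (equivalently the $\Delta_m$) on them from scratch via Proposition \ref{OthersSkeinF} --- precisely the labor you hoped to avoid.
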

\begin{proof}

\end{proof}

\subsection{}

In order to define $\mathcal{F}$ through the Jones recipe, we extend first
 the domain $\pi_{\sqrt{\mathsf{L}}}$  to $TB_n$; we denote this extension by $\widetilde{\pi}_{\sqrt{\mathsf{L}}}$. Secondly,   we define $\widetilde{\Delta}$ by
$$
\widetilde{\Delta} (\eta) := \left( \frac{1}{\a \sqrt{\mathsf{L}}}\right)^{n-1}(\rho_n\circ \widetilde{\pi}_{\sqrt{\mathsf{L}}})(\eta) \quad (\eta\in TB_n).
$$
\begin{theorem}
Let $L$ be a tied link obtained as the closure of the tied braid $\eta$, then
$$
\widetilde{\Delta} (\eta) =\mathcal{F}(L).
$$
The relation among, $\u$, $\w$ and parameters trace $\a$ and $\b$ of $\rho$, is given by the equation:
$$
\b = \frac{\a(\u\w^2-1)}{1-\u}.
$$
\end{theorem}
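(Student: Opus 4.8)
The plan is to exploit the uniqueness half of Theorem \ref{SkeinF}: since $\mathcal{F}$ is the \emph{only} function $\widetilde{\mathfrak{L}}\to\KK(\a,\w)$ obeying rules (1)--(3) there, it suffices to check that the map $L=\widehat{\eta}\mapsto\widetilde{\Delta}(\eta)$ is well defined on tied links and satisfies those same three rules, once the parameters are linked as claimed. Well-definedness means $\widetilde{\Delta}$ is constant on $\sim_{tM}$--classes: invariance under tM2 and tM3 is obtained verbatim as in the proof of Theorem \ref{DeltaBarra} (the trace property of $\rho$ gives tM2, and rule (2) of Theorem \ref{tracerho} together with the normalization factor gives the stabilization tM3), while tM1 follows from rule (3) of Theorem \ref{tracerho} and $E_i^2=E_i$. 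I would record this first, so that $\mathcal{F}(L)$ and $\widetilde{\Delta}(\eta)$ may legitimately be compared. Throughout, for a Conway configuration with common braid part $w$, write $\beta:=\widetilde{\pi}_{\sqrt{\mathsf{L}}}(w)\in\En$ and recall $\widetilde{\pi}_{\sqrt{\mathsf{L}}}(\sigma_i)=\sqrt{\mathsf{L}}\,T_i$, $\widetilde{\pi}_{\sqrt{\mathsf{L}}}(\eta_i)=E_i$.

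Rule (1) is immediate: the unknot is $\widehat{1}$ with $1\in TB_1$, so $\widetilde{\Delta}(1)=\rho_1(1)=1$. For rule (2) I would view $\eta\in TB_n$ inside $TB_{n+1}$; its closure is $\bigcirc\sqcup L$, and by compatibility of the Markov trace with the tower, $\rho_{n+1}\vert_{\En}=\rho_n$, so
$$
\widetilde{\Delta}(\bigcirc\sqcup L)=\frac{1}{\a\sqrt{\mathsf{L}}}\,\widetilde{\Delta}(L).
$$
Comparing this disjoint--circle factor with the prescribed $1/(\a\w)$ of rule (2) forces $\sqrt{\mathsf{L}}=\w$, i.e. $\mathsf{L}=\w^2$. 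Substituting $\mathsf{L}=\w^2$ into the definition (\ref{L}) and solving for $\b$ yields exactly $\b=\a(\u\w^2-1)/(1-\u)$. This is the origin of the stated relation between the variables.

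The substantive step is rule (3). Setting $s:=\sqrt{\mathsf{L}}=\w$ and $C:=(\a s)^{-(n-1)}$, and using $E_iT_i=T_iE_i$ from (\ref{bt4}), I would evaluate
$$
\widetilde{\Delta}(L_+)=C\,s\,\rho_n(\beta T_i),\quad \widetilde{\Delta}(L_-)=C\,s^{-1}\rho_n(\beta T_i^{-1}),\quad \widetilde{\Delta}(L_{0,\sim})=C\,\rho_n(\beta E_i),
$$
and $\widetilde{\Delta}(L_{\pm,\sim})=C\,s^{\pm1}\rho_n(\beta E_iT_i^{\pm1})$. Because $s=\w$, each rescaling factor $s^{\pm1}$ cancels against the $\w^{\mp1}$ weighting it in the skein relation, so the whole identity collapses to a linear relation among $\rho_n(\beta T_i)$, $\rho_n(\beta T_i^{-1})$, $\rho_n(\beta E_i)$ and $\rho_n(\beta E_iT_i)$. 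That relation is a direct consequence of the inverse formula (\ref{Tinverse}), which gives the key identity
$$
T_i-T_i^{-1}=(1-\u^{-1})\bigl(E_i+E_iT_i\bigr),
$$
together with $E_iT_i^{-1}=\u^{-1}E_iT_i+(\u^{-1}-1)E_i$, obtained from (\ref{Tinverse}) by multiplying by $E_i$ and using $E_i^2=E_i$. Applying $\rho_n(\beta\,\cdot\,)$ and linearity turns these into precisely the defining skein relation of Theorem \ref{SkeinF}; the equivalent forms of Proposition \ref{OthersSkeinF} fall out of the same two identities, a convenient internal consistency check.

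The main obstacle I anticipate is bookkeeping rather than conceptual. First, one must be scrupulous about Figure \ref{ConwayTied}: which diagram is $L_{+,\sim}$ versus $L_{-,\sim}$, and the sign of the distinguished crossing, dictate whether $T_i$ or $T_i^{-1}$ (and $s$ or $s^{-1}$) occurs, and a misreading flips a sign in rule (3). Second, and more importantly, the identity $\rho_{n+1}\vert_{\En}=\rho_n$ used in rule (2) must be justified from Theorem \ref{tracerho} as the very statement that $\rho$ is a trace on the inductive limit, since the entire identification $\b=\a(\u\w^2-1)/(1-\u)$ hinges on getting this disjoint--circle factor exactly right. Once these two points are settled, rules (1)--(3) hold and uniqueness in Theorem \ref{SkeinF} completes the proof.
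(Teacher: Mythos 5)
Your overall strategy is sound, and since these notes state the theorem with no proof at all, there is nothing to compare line by line; your route --- well-definedness of $\widetilde{\Delta}$ on $\sim_{tM}$--classes, verification of rules (1)--(3) of Theorem \ref{SkeinF}, then the uniqueness half of that theorem --- is the natural one and is essentially how the identification is carried out in \cite{aijuJKTR1}. Your derivation of the parameter relation is correct: matching the disjoint-circle factors forces $\sqrt{\mathsf{L}}=\w$, and solving (\ref{L}) with $\mathsf{L}=\w^2$ gives exactly $\b=\a(\u\w^2-1)/(1-\u)$; your algebra for the skein step, based on $T_i-T_i^{-1}=(1-\u^{-1})(E_i+E_iT_i)$ and $E_iT_i^{-1}=\u^{-1}E_iT_i+(\u^{-1}-1)E_i$, is also correct, as is your care about $\rho_{n+1}\vert_{\En}=\rho_n$ coming from the tower structure in Theorem \ref{tracerho}.

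Two points need repair. First, a genuine gap: your justification of tM1-invariance (``rule (3) of Theorem \ref{tracerho} and $E_i^2=E_i$'') is wrong. Rule (3) is $\rho_{n+1}(XE_n)=\b\,\rho_n(X)$; it introduces the factor $\b$ and is precisely the rule that must \emph{not} fire on an unessential tie. The relevant ingredient is the second equality in rule (2), $\rho_{n+1}(XT_nE_n)=\rho_{n+1}(XT_n)=\a\rho_n(X)$, and even this only handles a tie $\mu_{n,n+1}$ adjacent to a final $T_n$. In general $\widetilde{\pi}(\mu_{i,j})=E_{i,j}$ is the conjugate element of (\ref{Eij}), and what tM1 requires is the statement that $\rho_n\bigl(\widetilde{\pi}(a)E_{i,j}\bigr)=\rho_n\bigl(\widetilde{\pi}(a)\bigr)$ whenever $a=I\sigma$ and $i,j$ lie in the same cycle of $\mathsf{p}(\sigma)$; this is a nontrivial lemma proved by an inductive/conjugation argument in \cite{aijuJKTR1}, not a one-line appeal to the trace rules, and your proof must either cite it or reprove it. Second, a sign issue --- a typo in the notes rather than an error of yours, but one your write-up glosses over: your key identity produces $\frac{1}{\w}\mathcal{F}(L_+)-\w\mathcal{F}(L_-)=(1-\u^{-1})\mathcal{F}(L_{0,\sim})+\frac{1-\u^{-1}}{\w}\mathcal{F}(L_{+,\sim})$, which is rule (3) of Proposition \ref{OthersSkeinF}, whereas Theorem \ref{SkeinF}(3) as printed carries $-\frac{1-\u^{-1}}{\w}\mathcal{F}(L_{+,\sim})$. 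The printed minus sign is inconsistent both with Proposition \ref{OthersSkeinF} (asserted in the notes to be equivalent to it) and with the trace computation, so the plus sign is the correct defining rule; you should therefore not claim your relation matches ``precisely'' the printed rule (3), but state that you verify the corrected rule (equivalently, the three rules of Proposition \ref{OthersSkeinF}), after which uniqueness applies as you say.
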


\end{document}